\newcounter{kommentar}
\DeclareMathAlphabet{\mathscr}{OT1}{pzc}{m}{it}
\numberwithin{equation}{section}
\def\a{\alpha} \def\b{\beta} \def\d{\delta} \def\e{\epsilon} \def\f{\varphi}
\def\l{\lambda}  \def\R{\mathbb{R}} 
 \def\I{\mathbb{I}}
\def\({\left(} \def\){\right)} 
\def\<{\langle} \def\>{\rangle}
\def\inv{^{-1}}
\def\N{\mathbb{N}}
\renewcommand\ge{\geqslant}
\renewcommand\le{\leqslant}
\newcommand\ie{i.e.}
\newcommand\eg{e.g.}
\newcommand\forget[1]{}
\DeclareMathOperator{\im}{im}
\newcommand{\Q}{\mathbb{Q}}
\newenvironment{smatrix}{\left(\begin{smallmatrix}}{\end{smallmatrix}\right)}
\DeclareMathOperator{\tild}{\sim\!}
\newcommand{\impl}{\Rightarrow}
\newcommand{\equ}{\Leftrightarrow}
\newtheorem{thm}{Theorem}[section]
\newtheorem{lma}[thm]{Lemma}
\newtheorem{prop}[thm]{Proposition}
\newtheorem{cor}[thm]{Corollary}
\theoremstyle{definition}
\newtheorem{dfn}[thm]{Definition}
\theoremstyle{remark}
\newtheorem{rmk}[thm]{Remark}
\theoremstyle{definition}
\newtheorem{ex}[thm]{Example}
\newcommand\Sg{\mathop{\mathbf{Sg}}}
\newcommand\Sets{\mathop{\mathbf{Sets}}}
\newcommand\aSg{\mathop{\mathbf{Sg}^\#}}
\newcommand\aSets{\mathop{\mathbf{Sets}^\#}}
\newcommand\apt{\mathbin{\#}}
\DeclareMathOperator{\cker}{coker}
\newcommand\eller{\,\vee\,}
\newcommand\och{\,\wedge\,}
\newcommand\Lrel{\mathbin{\mathcal{L}}}
\newcommand\Rrel{\mathbin{\mathcal{R}}}
\newcommand\Hrel{\mathbin{\mathcal{H}}}
\newcommand\Drel{\mathbin{\mathcal{D}}}
\newcommand\Jrel{\mathbin{\mathcal{J}}}
\newcommand\lrel{\mathbin{\mathfrak{L}}}
\newcommand\rrel{\mathbin{\mathfrak{R}}}
\newcommand\hrel{\mathbin{\mathfrak{H}}}
\newcommand\drel{\mathbin{\mathfrak{D}}}
\newcommand\jrel{\mathbin{\mathfrak{J}}}
 \renewcommand{\labelenumi}{\arabic{enumi})}
\def\Z{\mathbb{Z}}
\title{Some results in constructive semigroup theory}
\author{Erik Darp\"o}
\address[Darp\"o]{Graduate School of Mathematics, Nagoya University, Furo-cho, Chikusa-ku, Nagoya, Japan}
\author{Melanija Mitrovi\'c}
\address[Mitrovi\'c]{Faculty of Mechanical Engineering, University of Ni\v{s}, Serbia}
\begin{document}
\selectlanguage{UKenglish}

\begin{abstract}
  We give a constructive treatment of some basic concepts and results in semigroup theory. Focusing on semigroups equipped with an apartness relation, we give analogues, from the point of view of apartness, of several classical constructions and results, such as transitive closure and congruence closure, free semigroups, periodicity, Rees factors, and Green's relations.
\end{abstract}

\maketitle

\tableofcontents

%%%%%%%%%%%%%%%%%%%%%%%%%%%%%%%%%%%%%%%%%%%%%%%%%%%%%%%%%%%%%%%%%%%%%%%%%%%%%%
\section{Introduction}

%%%%%%%%%%%%%%%%%%%%%%%%%%%%%%%%%%%%%%%%
\subsection{The constructive framework}

The purpose of this article is to give a treatment of some fundamental topics in semigroup theory, in the framework of constructive mathematics. This means that we use intuitionistic instead of classical logic; in particular, the \emph{law of excluded middle} (LEM): $P \vee \neg P$ is disallowed as a general principle. In line with the approach taken by Erret Bishop \cite{bishop67}, we strive to follow the classical treatment as closely as possible, and all our results are compatible with the classical theory.

The constructive framework features certain distinctions that are not present in the classical case. One, which is central to this work, is the one between \emph{inequality} and \emph{apartness}. While inequality is simply the negation of equality, apartness can be understood as a positive, constructive statement of two entities being different. For example, two binary sequences $a=(a_m)_{m\in\N}$ and $b=(b_n)_{n\in\N}$ are said to be apart if there exists an $n\in\N$ such that $a_n\ne b_n$. This is a stronger statement than $\neg\forall_{n\in\N}(a_n=b_n)$, since its proof requires the explicit construction of a natural number $n$ such that $a_n\ne b_n$, whilst the latter is merely the statement that $\forall_{n\in\N}(a_n=b_n)$ is impossible.

In this paper, the main object of study is \emph{semigroups with apartness}, which are semigroups equipped with an apartness relation satisfying a compatibility condition with the binary operation.
One of the advantages with this setup is that the axiom of \emph{cotransitivity} for apartness relations allows us make arguments by disjunction in many cases where this would otherwise not have been possible.
Some classically valid statements have constructive analogues that are most naturally formulated in terms of the apartenss. A simple example of this is the statement that $(\R,\cdot)$ is a zero-group (i.e., of the form $G\cup\{0\}$ for some group $G$). While this result is not provable in the constructive setting, the classically equivalent statement that the set of real numbers that are \emph{apart} from zero form a group under multiplication, still holds.  
Many semigroups and other algebraic structures, including $\N$, $\Q$, $\R$ and the set $\{0,1\}^{\N}$ of binary sequences, come equipped with a natural notion of apartness.

Following Bishop, we think of a set as the totality of all objects obtained through some specified construction, together with a notion of \emph{equality}, which may be any equivalence relation (this corresponds to the type-theoretic notion of a \emph{setoid}).
All predicates, functions etc.{} considered are assumed to be \emph{extensional}, that is, invariant under equality.
In this setup, a factor set $X/\e$ of a set $X$ by some (extensional) equivalence relation $\e$ is simply defined as the set $X$ with $\e$ taken as the equality relation.

Throughout the text, we try to outline the limits of our approach by giving \emph{weak counterexamples} to (classically valid) statements that cannot be proved constructively.
This means to prove that the result in question
implies some known non-constructive statement, such as LEM, or the \emph{limited principle of omniscience} (LPO), which is the statement that for each binary sequence $a=(a_n)_{n\in\N}$, either $\forall_{n\in\N}(a_n=0)$ or $\exists_{n\in\N}(a_n=1)$ holds.
Another non-constructive statement that we will employ for this purpose is the \emph{weak law of excluded middle} (WLEM): for every proposition $P$, either $\neg P$ or $\neg\neg P$ holds.

The constructive study of general algebraic structures was pioneered by Heyting \cite{heyting27,heyting41}, although its origins can be traced further back to Kronecker \cite{kronecker1882} and, in some sense, even Gauss.
Heyting gave constructive treatments of basic structures including groups, rings and field, equipped with an apartness relation. The notion of a co-substructure, which plays an important role in the present work, was introduced by Scott \cite{scott79}, drawing on ideas from Heying.
Treatments of the basic notions and theory of constructive algebra in are given in the books \cite{mrr88} and \cite{tvd88II}, summarising work by many authors. There is also a very extensive treatment of commutative algebra by Lombardi and Quitt\'{e} \cite{lq15}, and recent work on central simple algebras \cite{qln21}, featuring a constructive version of Wedderburn's structure theorem.
Much owing to the work of Crnvenkovi\'c, Mitrovi\'c and Romano, a constructive theory of semigroups with apartness has begun to emerge in the last decades, with contributions including \cite{cf19,cmr13,romano99,romano02}.

This paper is organised as follows.
Section~\ref{fundamental} contains some basic concepts and notation, and Section~\ref{preliminaries} some results about sets and relations that will be needed later on. In Section~\ref{kernels}, we give constructions of the cotransitive kernel and the co-congruence kernel of a relation. These are related to the classically well-known constructions of transitive closure and congruence closure in a way similar to how apartness is related to equality.
Section~\ref{semigroups} contains constructive treatments of free, monogenic and periodic semigroups, idempotents, and the Rees congruence.
Finally, in Section~\ref{greensrel}, we study Green's relations, first making a constructive walk-through of the classical theory (Section~\ref{greenclassical}), then defining and studying analogues (so-called ``constructive friends'') of these relations based on the apartness -- instead of equality -- relation (Section~\ref{greenfriends}).

In Section~\ref{greenfriends}, we make a significant compromise about the constructive framework: we assume that the \emph{constant domains principle} \eqref{cdaxiom} (see Section~\ref{filledcd})
holds for predicates $P(x)$ and $Q$ of certain types, defined in terms of the apartness relation.
The restrictiveness of this assumption varies depending on the set and the apartness relation. Applied to the multiplicative semigroup $\R$ with the natural apartness it entails LPO, whereas in other cases, for example for $\N$, it does not at all infringe upon the constructive validity of the results.

%%%%%%%%%%%%%%%%%%%%%%%%%%%%%%%%%%%%%%%%
\subsection{Fundamental concepts} \label{fundamental}

Here, we summarise the main definitions and concepts used in this paper. 
References include \cite{mrr88,tvd88I,tvd88II} for general concepts, and \cite{cmr13,romano99,romano02} for semigroup-specific ones. 

A (binary) relation between sets $X$ and $Y$ is a subset $\alpha\subset X\times Y$. In particular, trivially, $(=) = \{(x,x)\mid x\in X\}\subset X\times X$.
The \emph{negation}, or \emph{logical complement} of a subset $Y\subset X$ is the set $\neg Y = \{x\in X \mid x\notin Y\} = \{x\in X \mid x\in Y \impl \bot \}$.
For a relation $\alpha$, we often write $x\a y$ instead of $(x,y)\in\a$. Thus, in particular, $x(\neg\a)y$, $\neg(x\a y)$ and $(x,y)\notin\a$ all mean the same thing, namely that $(x,y)\in\a$ is impossible.

As usual, for relations $\a\subset X\times Y$ and $\b\subset Y\times Z$, we write
\begin{align*}
  \a\inv &=\{(y,x)\in Y\times X\mid (x,y)\in\a \},\;\mbox{and}\\
  \a\circ\b &= 
  \{(x,z)\in X\times Z\mid \exists_{y\in Y}(x\a y \,\wedge\, y\b z)\}\subset X\times Z \,.
\end{align*}

We view $0$ as the smallest natural number.
An element $x\in X$ is denoted by $[x]$ when considered as a member of the factor set $X/\e$ with respect to some equivalence relation $\e\subset X\times X$.
  
\begin{dfn}
\begin{enumerate}
\item An \emph{apartness relation} on a set $X$ is a relation $(\apt)\subset X\times X$ satisfying
  \begin{enumerate}
  \item $\forall_{x\in X}\neg(x\apt x)$\quad (irreflexivity);
  \item $\forall_{x,y\in X}(x\apt y \impl y\apt x)$\quad (symmetry);
  \item $\forall_{x,y,z\in X}[\: x\apt z \impl(x\apt y \eller y\apt z) \:]$ \quad
    (cotransitivity).
  \end{enumerate}
\item An apartness relation $\apt$ is \emph{tight} if $\neg(\apt )=(=)$, and
\item \emph{standard} if $\neg((\apt) \cup (=)) = \emptyset$.
\item A set $X$ with apartness $\apt$ is \emph{discrete} if $\forall_{x,y\in X}(x=y\eller x\apt y)$.
\item Let $X$ be a set with apartness, and $Y\subset X$. The \emph{apartness complement} of $Y$ in $X$ is the subset $\tild Y = \{x\in X\mid \forall_{y\in Y}(x\apt y)\}\subset X$.
\end{enumerate}
\end{dfn}

\begin{ex}
  \begin{enumerate}
  \item The sets $\N$, $\Z$, $\Q$ and $\{1,2,\ldots,n\}$, with denial apartness: $x\apt y \equ \neg(x=y)$, are discrete.
  \item The set $\{0,1\}^{\N}$ of binary sequences has a tight apartness given by $a\apt b \equ \exists_{n\in \N}(a_n\ne b_n)$. Discreteness of this set with apartness is precisely LPO, and hence cannot be proved constructively. The statement $\neg(=) = (\apt)$ or, equivalently,
    \[ \neg\forall_n(a_n=0) \quad\impl\quad \exists_n(a_n=1), \]
    known as \emph{Markov's principle} (MP), is also not considered to be constructively valid.
  \item The set $\R$ of real numbers has a tight apartness relation defined by $x\apt y \equ (x<y \eller y<x)$. Discreteness of $(\R,\apt)$ is equivalent with LPO \cite[\textsection 4.5]{bridges94}.
  \end{enumerate}
\end{ex}

Throughout this paper, unless otherwise stated, $X$, $Y$, etc.{} denote sets equipped with an apartness relation $\apt$. 
Every subset $A\subset X$ inherits an apartness relation from $X$.
The Cartesian product $X\times Y$ has an apartness relation defined by $(x_1,y_1)\apt(x_2,y_2) \:\equ\: (x_1\apt x_2 \eller y_1\apt y_2)$.

\begin{rmk} \label{apartrmk}
  \begin{enumerate}
  \item An apartness relation $\apt$ is standard if and only if $\mathrm{LEM}\vdash (\apt) = \neg(=)$. In practice, non-standard apartness relations are of limited interest to us. 
    \label{apartrmk1}
  \item Some authors, notably Troelstra and van-Dalen \cite{tvd88II}, include tightness in the definition of an apartness relation. We do not make this assumption here, partly for the reason that some constructions, such as the Rees factor semigroup (see Section~\ref{rees}) naturally give rise to apartness relations that cannot be proved to be tight in general, even when starting from a semigroup with tight apartness.
    \label{apartrmk2}
  \item The following formulae hold for all subsets $A,B\subset X$:
     \[\tild A\subset\neg A, \quad
     \tild\tild\tild A = \tild A, \quad
     \tild\,(A\cup B) = (\tild A)\cap (\tild B), \quad
     (\tild A)\cup (\tild B) \subset \tild\,(A\cap B) \,.\]
     Note also that $A\subset \tild B \:\equ\: B\subset \tild A $, and that $\tild\,(=) = (\apt)$.
     \label{apartrmk3}
  \end{enumerate}
\end{rmk}

Given the central role played by the apartness relation, subsets and functions that are well-behaved with respect to this relation are of particular importance for us.
\begin{dfn}
  \begin{enumerate}
  \item A function $f:X\to Y$ is \emph{strongly extensional} if $f(x)\apt f(y)$ implies $x\apt y$ for all $x,y\in X$; and
  \item \emph{apartness injective} if $x\apt y$ implies $f(x)\apt f(y)$, for all $x,y\in X$.
  \item A subset $A\subset X$ is \emph{strongly extensional} if the implication $a\in A \:\impl\: (x\in A \eller x\apt a)$ holds for all $x,a\in X$.
  \end{enumerate}
\end{dfn}

The term ``strongly extensional'' may be justified by the following observation: for tight apartness relations, the contrapositives of the implications in (1) and (3) are extensionality of the function $f$ and of the subset $\neg A\subset X$, respectively.
Cf.{} \cite{ruitenburg91} and \cite[Sec{.}~8.1]{tvd88II}.
The preimage of a strongly extensional subset under a strongly extensional function is again strongly extensional; see Section~\ref{atopology}.

We denote by $\Sets$ the category of sets and functions, and by $\aSets$ the category of sets with apartness, and strongly extensional functions.

\begin{rmk}
\begin{enumerate}
\item
  A (strongly extensional) function $f:X\to Y$ is injective if and only if it is a  monomorphism in $\Sets$ (in $\aSets$), and surjective if and only if it is an epimorphism in $\Sets$ (in $\aSets$) \cite[Section~4]{mrr88}.
\item
  Any isomorphism in $\Sets$ is a bijective function and, by the axiom of unique choice, the converse holds as well. Hence, by (1), a morphism in $\Sets$ is an isomorphism if and only if it is mono and epi. The corresponding statement does not hold in $\aSets$: there are strongly extensional bijections whose inverses (in $\Sets$) cannot be proved to be strongly extensional.
  A morphism in $\aSets$ is invertible if and only if it is bijective and apartness injective.
\item The apartness relation $(\apt)\subset X\times X$ is strongly extensional (see Corollary~\ref{kappanegcompl}).
  The equality relation $(=)\subset X\times X$ is strongly extensional if and only if $X$ is discrete.
\end{enumerate}
\end{rmk}

\begin{dfn}
 A relation $\a\subset X\times X$ is
  \begin{enumerate}
  \item \emph{strongly irreflexive} if $\a\subset(\apt )$;
  \item \emph{cotransitive} if $\forall_{x,y,z\in X}[\: x\a z \impl(x\a y \eller y\a z) \:]$;
  \item a \emph{co-quasiorder} if it is strongly irreflexive and cotransitive;
  \item a \emph{coequivalence} if it is strongly irreflexive, symmetric and cotransitive.
  \end{enumerate}
\end{dfn}

Co-quasiorders and coequivalences can be thought of as constructive friends, that is, apartness analogues, of quasiorders respectively equivalences. 
The negation $\neg\kappa$ of a co-quasiorder $\kappa$ is a quasiorder (i.e., a reflexive and transitive relation), and the negation of a coequivalence is an equivalence. However, it is in not possible to prove in general that the negation of a quasiorder/equivalence is a co-quasiorder/coequivalence.

\begin{dfn}
\begin{enumerate}
\item A set $C\in\Sets$ is
  \begin{enumerate}
  \item \emph{inhabited} if it contains some element $c\in C$;
  \item \emph{finite} if there exists a natural number $n\in\N$ and an invertible function $\{1,\ldots,n\}\to C$;
  \item \emph{subfinite} if there exists a finite set $B$ and an injective map $C\to B$;
  \item \emph{finitely enumerable} if there exists a finite set $B$ and a surjective map $B\to C$.
  \end{enumerate}
\item A subset $A\subset X$ is 
  \begin{enumerate}
\item \emph{detachable} if $\forall_{x\in X}(x\in A \eller x\notin A)$;
\item \emph{stable} if $\neg\neg A = A$.
  \end{enumerate}
\end{enumerate}
\end{dfn}

\begin{rmk}
\begin{enumerate}
\item Detachable subsets behave like ``classical'' objects, and are therefore of limited interest to us. Stable subsets and relations, on the other hand, occur frequently and naturally in constructive mathematics; for example, the equality relation on any set with a tight apartness is stable:
$\neg\neg(=) = \neg\neg\neg(\apt) = \neg(\apt) = (=)$.
\item Every finitely enumerable strongly extensional subset of $X$ is detachable.
  \end{enumerate}
\end{rmk}

%%%%%%%%%%%%%%%%%%%%%%%%%%%%%%%%%%%%%%%%

\begin{dfn}
\begin{enumerate}
\item A \emph{semigroup with apartness} is a semigroup $S$ equipped with an apartness relation, such that the multiplication $S\times S\to S,\:(x,y)\mapsto xy$ is strongly extensional.
\item A relation $\a$ on $S$ \emph{compatible with multiplication} if
  $\forall_{s,t,u,v\in S}(\, s\a t \wedge u\a v \,\impl\, (su)\a(tv)\,)$.
 It is \emph{left compatible} if $\forall_{s,t,u\in S}(\, s\a t \,\impl\, (us)\a(ut)\,)$.
  If $\a$ in addition is an equivalence relation, it is said to be a \emph{congruence}, respectively a \emph{left congruence}.
\item A relation $\zeta$ on $S$ is \emph{left co-compatible} with the multiplication if $(ax)\zeta(ay) \:\impl\: x\zeta y$, and \emph{co-compatible} with multiplication if 
$(ax)\zeta(by) \:\impl\: a\zeta b \eller x\zeta y$, for all $a,b,x,y\in S$.
\item A coequivalence which is co-compatible with the multiplication on $S$ is a
  \emph{co-congruence}.
\item A subset $A$ of a semigroup $S$ is \emph{right convex} if 
  $ab\in A \:\impl\: b\in A$, and \emph{convex} if it is both left and right convex.
  It is a \emph{(left) co-ideal} if (right) convex and strongly extensional, and a \emph{co-subsemigroup} if it is strongly extensional and $ab\in A  \:\impl\: a\in A \eller b\in A$. 
  \item An ideal in a semigroup $S$ is \emph{finitely generated} if it is generated by a finitely enumerable subset.
\end{enumerate}
\end{dfn}

We denote by $\Sg$ the category of semigroups, and by $\aSg$ the category of semigroups with apartness and strongly extensional morphisms.

Let $f:S\to T$ be a morphism in $\aSg$. In analogy with the classical case, the preimage under $f$ of a (left/right/two-sided) co-ideal in $T$ is a (left/right/two-sided) co-ideal in $A$, and the preimage of a co-subsemigroup of $B$ is a co-subsemigroup of $A$. However, the image under $f$ of a co-subsemigroup of $A$ need not be a co-subsemigroup of $B$.

For $S\in\aSg$, let $S^1=S\mathrel{\dot{\cup}}\{1\}$ (disjoint union) with $1\cdot s = s\cdot 1=s$ and $1\apt s$ for all $s\in S$.
Let $\rho_a,\lambda_a:S\to S,\:\rho_a(x)=ax, \:\lambda_a(x) =ax$ be the maps of right, respectively left, multiplication with an element $a\in S$.
It is straightforward to verify that the strong extensionality of the multiplication on $S$ is equivalent with the maps $\rho_a:S\to S$ and $\l_a:S\to S$ being strongly extensional for all $a\in S$. 

The following result is proved just as in the classical case.

\begin{lma} \label{groupcondition}
A semigroup $S$ is a group if and only if the maps $\rho_a$ and $\lambda_a$ are surjective for all $a\in S$. 
\end{lma}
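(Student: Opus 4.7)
The plan is to transcribe the standard classical proof verbatim, since the argument is purely algebraic and uses only associativity together with the witnesses provided by the surjectivity hypothesis; no appeal to \textsc{LEM}, Markov's principle or decidable equality is needed. The apartness structure on $S$ plays no role in the statement, so I treat $S$ simply as an object of $\Sg$.

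For the forward implication, assume $S$ is a group with identity $1_S$ and inversion $x\mapsto x\inv$. Then for every $a,y\in S$ the elements $ya\inv$ and $a\inv y$ are explicit preimages of $y$ under $\rho_a$ and $\lambda_a$ respectively, witnessing surjectivity of both maps.

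For the converse, assume $\rho_a$ and $\lambda_a$ are surjective for every $a\in S$. Fix any $a\in S$. Surjectivity of $\rho_a$ yields $e\in S$ with $ea=a$; I claim $e$ is a left identity on all of $S$. Given $b\in S$, surjectivity of $\lambda_a$ supplies $c\in S$ with $ac=b$, and then $eb=e(ac)=(ea)c=ac=b$. Next, for each $b\in S$, surjectivity of $\rho_b$ applied to $e$ gives $b'\in S$ with $b'b=e$, so every element has a left inverse. Choosing in turn a left inverse $b''$ of $b'$, the routine chain
\[ bb' \;=\; (b''b')(bb') \;=\; b''(b'b)b' \;=\; b''eb' \;=\; b''b' \;=\; e \]
shows $b'$ is a two-sided inverse of $b$, and then $be=b(b'b)=(bb')b=eb=b$ shows $e$ is a two-sided identity. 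Hence $S$ is a group.

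There is no real obstacle here: every required element ($e$, $c$, $b'$, $b''$) is produced directly from the surjectivity assumption, and at no point is a case distinction based on equality or apartness invoked. The one conceptual remark worth making is that ``surjective'' must be read in its usual constructive sense, namely that for each $y$ in the codomain an element $x$ with $\rho_a(x)=y$ (resp.\ $\lambda_a(x)=y$) is given; this is exactly what the proof consumes.
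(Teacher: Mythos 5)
Your proof is correct and matches the paper, which simply notes that this lemma ``is proved just as in the classical case'' -- precisely the standard argument you transcribe, with all witnesses extracted directly from the surjectivity hypotheses and no hidden case distinctions. Nothing further is needed.
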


For a function $f:X\to Y$, the \emph{kernel} and the \emph{cokernel} are the relations
  \begin{align*}
    \ker f &= \{(x,y)\in X\times X \mid f(x)=f(y)\} \subset X\times X, \\
    \cker f &= \{(x,y)\in X\times X \mid f(x)\apt f(y)\} \subset X\times X. 
  \end{align*}

We generally use the prefix \emph{co-} for the constructive friends of classical concepts, such as co-congruence, co-subsemigroup, cotransitive, etc. Unavoidably, this leads to some clashes with standard terminology; for example, notions like co-ideal and cokernel have other meanings in classical mathematics. It is also worth to point out that, unlike in the classical case, the constructive friends are not dual to their commonly defined counterparts: while the complement of, for example, a coequivalence is an equivalence, the complement of an equivalence need not be a coequivalence in general.

%%%%%%%%%%%%%%%%%%%%%%%%%%%%%%%%%%%%%%%%%%%%%%%%%%%%%%%%%%%%%%%%%%%%%%%%%%%%%%%%
\section{Preliminaries on sets and relations} \label{preliminaries}

%%%%%%%%%%%%%%%%%%%%%%%%%%%%%%%%%%%%%%%%
\subsection{Apartness topology} \label{atopology}

The strongly extensional subsets of a set $X$ with apartness $\apt$ form a topology $\Omega_{\apt}=\Omega_{\apt}^X$ on $X$,
called the \emph{apartness topology}; cf.{} \cite[Section~1.0]{waaldijk96}.
A function $f:X\to Y$ is strongly extensional if and only if it is continuous with respect to the apartness topologies on $X$ and $Y$ \cite[Corollary~2.3.5]{bv11}.
It is straightforward to prove that a subset $A\subset X$ is strongly extensional if and only if the apartness topology on $A$ coincides with the subspace topology induced from the apartness topology on $X$.

We say that a subset $A$ of a topological space $X$ is \emph{closed} if 
$\tild\,A\subset X$ is open ({cf.} \cite[p~23]{vickers89}).
Note that while finite unions of closed sets are closed, it is not possible to prove in general that the intersection of two closed sets is closed; see Example~\ref{notclosed} below.
A topology $\mathcal{T}$ on $X$ is \emph{Fr\'echet}, or $\mathrm{T}_1$, if, whenever $x\apt y$,
there exists a $U\in\mathcal{T}$ such that $x\in U$ and $y\in \tild\, U$.
It is \emph{Hausdorff}, or  $\mathrm{T}_2$, if $x\apt y$ implies the existence of
$U,V\in\mathcal{T}$ such that $x\in U$,  $y\in V$ and $U\cap V=\emptyset$. 
The apartness topology on $X$ is Fr\'echet. 
Frank Waaldijk's thesis \cite[2.0.3, p~73]{waaldijk96} contains an example (within the framework of Brouwer's intuitionistic mathematics) showing that the apartness topology need not be Hausdorff.

\begin{prop} \label{separation}
  Let $X=(X,\mathcal{T})$ be a topological space.
  \begin{enumerate}
  \item The space $X$ is Fr\'echet if and only if $\{x\}\subset X$ is closed for all $x\in X$. \label{separation1}
  \item The space $X$ is Hausdorff if and only if $(\apt) \subset X\times X$ is open in the product topology. \label{separation2}
  \item 
    For $X$ and $Y$ with apartness topology, the apartness topology on $X\times Y$ refines the product topology.
    If the product topology on $X\times X$ coincides with the apartness topology, then $(X,\Omega_{\apt})$ is Hausdorff. \label{separation3}
  \end{enumerate}
\end{prop}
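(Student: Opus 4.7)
The plan is to handle the three parts in order, each by direct unfolding of definitions together with the interplay between the apartness complement $\tild A$ and open sets; throughout, I will use that opens in the apartness topology are by definition strongly extensional, which is implicit in the section's setup.

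For (1), the forward direction goes by fixing $x$ and showing $\tild\{x\}$ is a neighbourhood of each of its points. Given $y \in \tild\{x\}$, symmetry yields $x \apt y$, so the Fr\'echet property produces an open $V$ with $y \in V$ and $x \in \tild V$; every $z \in V$ then satisfies $x \apt z$ and hence $z \in \tild\{x\}$, so $V \subset \tild\{x\}$. For the converse, given $x \apt y$ I take $U = \tild\{y\}$, which is open by hypothesis and contains $x$, and check $y \in \tild U$: for any $u \in \tild\{y\}$ we have $u \apt y$ and hence $y \apt u$.

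The backward direction of (2) is straightforward: from a basic rectangle $U \times V \subset (\apt)$ containing $(x,y)$, any $z \in U \cap V$ forces $z \apt z$ contradicting irreflexivity, so $U \cap V = \emptyset$. The forward direction is the genuine obstacle, because the Hausdorff condition only produces $U \cap V = \emptyset$ as a set-theoretic statement, while openness of $(\apt)$ needs the stronger conclusion that $u \apt v$ for every $(u,v) \in U \times V$. The bridge is strong extensionality of $V$: applied with $v \in V$ and an arbitrary $u \in U$, it yields $u \in V \eller u \apt v$, and the first disjunct contradicts $U \cap V = \emptyset$, leaving $u \apt v$, as required.

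For (3), the refinement claim reduces to showing that every basic product-open $U \times V \subset X \times Y$, with $U, V$ strongly extensional in their respective factors, is itself strongly extensional in $X \times Y$: given $(u,v) \in U \times V$ and arbitrary $(x,y) \in X \times Y$, strong extensionality of $U$ and $V$ supplies the disjunctions $x \in U \eller x \apt u$ and $y \in V \eller y \apt v$, and a four-way case split places $(x,y)$ either in $U \times V$ or in the apartness complement of $(u,v)$. For the Hausdorff consequence I would invoke Corollary~\ref{kappanegcompl} to conclude that $(\apt) \subset X \times X$ is strongly extensional and hence open in the apartness topology on $X \times X$; by the assumption that this coincides with the product topology, $(\apt)$ is open in the product topology, and part (2) then yields that $X$ is Hausdorff.
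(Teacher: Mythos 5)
Your treatments of part (1), of the backward direction of part (2), and of part (3) are correct and essentially coincide with the paper's: the paper dismisses (1) and (2) as ``proved just as in the classical case'', and for (3) it gives exactly your argument (basic rectangles of strongly extensional sets are strongly extensional, hence the refinement; and $(\apt)$ is strongly extensional, hence open in the apartness topology, hence open in the product topology under the coincidence hypothesis, so part (2) applies).

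The one genuine problem is the forward direction of part (2). The proposition is stated for an \emph{arbitrary} topological space $(X,\mathcal{T})$ on a set with apartness, and your bridge step invokes strong extensionality of the open set $V$ produced by the Hausdorff property. That is available when $\mathcal{T}\subset\Omega_{\apt}$ --- in particular for the apartness topology, which is the only case your opening remark actually covers --- but nothing in the hypotheses forces the members of a general $\mathcal{T}$ to be strongly extensional. Without that, disjointness $U\cap V=\emptyset$ together with $u\in U$, $v\in V$ only yields $\neg(u=v)$, and passing from $\neg(u=v)$ to $u\apt v$ is not constructively valid (even for a tight apartness this is a Markov-type inference). So as written your argument establishes a weaker statement than the one asserted. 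To be fair, the paper's own one-line proof glosses over exactly this point, and you have correctly located where the classical argument needs constructive reinforcement; but the reinforcement you chose amounts to an added hypothesis on $\mathcal{T}$. Either restrict the forward implication of (2) to topologies consisting of strongly extensional sets --- which costs nothing for the application in part (3), where only the unproblematic direction ``$(\apt)$ open $\Rightarrow$ Hausdorff'' is used --- or find an argument valid for arbitrary $\mathcal{T}$; I do not see one, and I suspect none exists.
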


Note that $(\apt) = \tild\,(=)=\tild\,\{(x,x)\in X\times X\}\subset X\times X$, so (2) is the classical result that $X$ is Hausdorff if and only if the diagonal is closed. 

\begin{proof}
The assertions (1) and (2) are proved just as in the classical case. 

For (3), observe that if $U\subset X$ and $V\subset Y$ are strongly extensional, then
$U\times V\subset X\times Y$ is strongly extensional. Since these subsets form a basis of
the product topology, it follows that the apartness topology refines the product
topology.

If every strongly extensional subset of $X\times X$ is open in the product
topology then in particular $(\apt)\subset X\times X$ is open so, by (2), $X$ is Hausdorff. \end{proof}

\begin{lma} \label{senegcompl} 
Let $A\subset X$ be a strongly extensional subset, and $B\subset X$. Then
  \begin{enumerate}
  \item $\tild A=\neg A$; \label{senegcompl1}
  \item $A\subset \neg B \;\equ\; A\subset \tild B$. \label{senegcompl2}
  \end{enumerate}
\end{lma}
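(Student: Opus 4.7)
The plan is to use strong extensionality of $A$ in the form ``$a\in A \impl (x\in A \eller x\apt a)$'' as the key tool for upgrading negative statements (non-membership) into positive ones (apartness). Since Remark~\ref{apartrmk}\eqref{apartrmk3} already gives us $\tild A\subset \neg A$ for any subset, the whole problem reduces to establishing the reverse inclusions.

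For part \eqref{senegcompl1}, I would take $x\in\neg A$ and an arbitrary $y\in A$. Applying the strong extensionality hypothesis on $A$ with $a=y$ gives $x\in A \eller x\apt y$; the first alternative contradicts $x\in\neg A$, so $x\apt y$. Since $y\in A$ was arbitrary, $x\in \tild A$.

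For part \eqref{senegcompl2}, the direction $A\subset \tild B\impl A\subset \neg B$ is immediate from $\tild B\subset \neg B$. For the converse, assuming $A\subset \neg B$, I take $a\in A$ and $b\in B$; I want $a\apt b$. Strong extensionality of $A$, applied now with $x=b$ and the given $a\in A$, yields $b\in A \eller b\apt a$. The first alternative together with $A\subset\neg B$ gives $b\notin B$, contradicting $b\in B$, so $b\apt a$ and hence $a\apt b$ by symmetry.

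There is no real obstacle here; the only subtlety is making sure to apply strong extensionality of $A$ with the right choice of free variable in each direction (once with the ``external'' point as $x$, once with the point of $B$ as $x$), and not to confuse $\neg A$ with $\tild A$ prematurely. Everything else is purely formal.
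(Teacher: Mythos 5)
Your proof is correct and follows essentially the same route as the paper: part (1) is identical, and for part (2) you simply rerun the strong-extensionality argument directly on the elements of $B$, whereas the paper deduces it formally from part (1) via the equivalences $A\subset\neg B \;\equ\; B\subset\neg A$ and $B\subset\tild A \;\equ\; A\subset\tild B$. Both of your uses of disjunctive syllogism (from $P\eller Q$ and $\neg P$ infer $Q$) are constructively valid, so there is no gap.
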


\begin{proof}
\eqref{senegcompl1}
The inclusion $\tild A\subset \neg A$ is clear.
For the converse, let $x\in\neg A$. 
By strong extensionality, $\forall_{y,a}\left(a\in A\impl y\in A \eller y\apt  a\right)$. In
particular, substituting $x$ for $y$ we get $\forall_a(a\in A\impl x\in A \eller x\apt a)$,
and since, by assumption, $\neg(x\in A)$, it follows that $x\apt a$ 
for all $a\in A$. Hence $x\in\tild A$.  

\eqref{senegcompl2}\;
$A\subset \neg B \;\equ\; B\subset \neg A \;\stackrel{\eqref{senegcompl1}}{\equ}\; B\subset \tild A \;\equ\; A\subset \tild B$.
\end{proof}

\begin{rmk}
  For a subset $A\subset X$, the statement $\neg A = \tild A$ is equivalent to
  \[\forall_{a\in A,x\in X}(x\notin A \:\impl\: x\apt a) \,,\]
  while $A$ being strongly extensional is the statement
    \[\forall_{a\in A,x\in X}(x\in A \eller x\apt a) \,.\]
    Hence, the converse of Lemma~\ref{senegcompl}\eqref{senegcompl1} cannot be proved in general.
    Indeed, let $P$ be a proposition, $X_P=\{0,1\}/P$, and $a\apt b \equ (a\ne b \och \neg P)$. Then the subset $A=\{1\}\subset X_P$ satisifes $\neg A = \tild A$, whereas strong extensionality is equivalent to $P\eller \neg P$. Thus, the converse of Lemma~\ref{senegcompl}\eqref{senegcompl1} entails the law of excluded middle.
\end{rmk}

The following example shows that the intersection of two closed sets need not be closed.

\begin{ex} \label{notclosed}
  Let $P$ be a proposition, and $\kappa$ the relation on $\{0,1\}$ defined by $x\kappa y \:\equ\: (x\ne y \och P)$. It is straightforward to verify that $\kappa$ is a (tight) apartness relation on the set $X=\{0,1\}/(\neg\kappa)$.

  Setting $A=\{[0]\}\subset X$ and $B = \{x\in X\mid P \}\subset X$, we have $\tild A = \{x\in X\mid (x=[1])\och P \}$, and $\tild B = \{x\in X \mid \neg P \}$, both of which are strongly extensional subsets of $X$. Hence, $A$ and $B$ are closed in the apartness topology.
  On the other hand,
  \begin{align*}
    \tild\,(A\cap B) &= \tild\,\{x\in X\mid (x=[0])\och P\} \\ &=
  \{ x\in X \mid \forall_y(\,(y=[0]\och P) \impl x\kappa y\,) \} 
  =\{x\in X \mid P \impl x=[1]\} 
  \end{align*}
  In particular, $[1]\in \tild\,(A\cap B)$. 
  Assume that $\tild\,(A\cap B)\subset X$ is strongly extensional, that is, that $A\cap B$ is closed. Then either $[0]\in\tild\,(A\cap B)$ or $[0]\kappa [1]$. The former condition is equivalent to $\neg P$, the latter to $P$. Hence, $A\cap B\subset X$ is closed (if and) only if $P$ is decidable.
\end{ex}

%%%%%%%%%%%%%%%%%%%%%%%%%%%%%%%%%%%%%%%%
\subsection{Relations}

\begin{lma} \label{reflexiveconsistent}
 Let $\a$ be a relation on $X$. Then
 \begin{enumerate}
 \item $\tild\a$ is reflexive if and only if $\a$ is strongly irreflexive;
   \label{reflexiveconsistent1}
 \item If $\a$ is reflexive then $\tild\a$ is strongly irreflexive;
   \label{reflexiveconsistent2}
 \item $\tild\tild\,(\apt ) = (\apt )$.
   \label{reflexiveconsistent3}
\end{enumerate}
\end{lma}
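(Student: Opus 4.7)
Throughout, recall that the apartness on $X\times X$ used in the definition of $\tild\alpha$ is the product apartness $(x,y)\apt(u,v) \equ (x\apt u \eller y\apt v)$, and that strong irreflexivity of a relation $\beta\subset X\times X$ means $\beta\subset(\apt)$, i.e.\ $(x,y)\in\beta\impl x\apt y$.

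For \eqref{reflexiveconsistent1}, I would argue as follows. Assume first that $\alpha$ is strongly irreflexive, fix $x\in X$ and any $(u,v)\in\alpha$; then $u\apt v$, so by cotransitivity applied with the middle element $x$, and then by symmetry, we obtain $x\apt u \eller x\apt v$, which is exactly $(x,x)\apt(u,v)$. Hence $(x,x)\in\tild\alpha$, so $\tild\alpha$ is reflexive. Conversely, if $\tild\alpha$ is reflexive and $(u,v)\in\alpha$, then $(u,u)\in\tild\alpha$ gives $u\apt u \eller u\apt v$; irreflexivity of $\apt$ rules out the first disjunct, leaving $u\apt v$.

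For \eqref{reflexiveconsistent2}, suppose $\alpha$ is reflexive and $(x,y)\in\tild\alpha$. Since $(y,y)\in\alpha$, we have $(x,y)\apt(y,y)$, i.e.\ $x\apt y \eller y\apt y$; by irreflexivity of $\apt$ we conclude $x\apt y$, so $\tild\alpha\subset(\apt)$.

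For \eqref{reflexiveconsistent3}, the cleanest route is to invoke the identities in Remark~\ref{apartrmk}\eqref{apartrmk3}: there it is noted that $(\apt)=\tild\,(=)$ and $\tild\tild\tild A=\tild A$ for every $A$. Applying the second with $A=(=)$ yields
\[ \tild\tild(\apt) \;=\; \tild\tild\tild\,(=) \;=\; \tild\,(=) \;=\; (\apt). \]
(One may alternatively give a direct argument: $(\apt)$ is strongly irreflexive, so by \eqref{reflexiveconsistent1} the relation $\tild\,(\apt)$ is reflexive, hence by \eqref{reflexiveconsistent2} the relation $\tild\tild\,(\apt)$ is strongly irreflexive, giving one inclusion; for the reverse, assume $x\apt y$ and any $(u,v)\in\tild\,(\apt)$, then unfolding the definition of $\tild\,(\apt)$ at the pair $(x,y)\in(\apt)$ produces $x\apt u\eller y\apt v$, so $(x,y)\in\tild\tild\,(\apt)$.)

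The only subtlety I anticipate is keeping track of which apartness (the one on $X$ versus the product one on $X\times X$) is meant at each stage; once this is fixed, all three items are immediate from the cotransitivity/irreflexivity axioms together with Remark~\ref{apartrmk}\eqref{apartrmk3}.
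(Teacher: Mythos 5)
Your proof is correct and follows essentially the same route as the paper: items (1) and (2) are the same unfolding of the product apartness at the diagonal, and your ``alternative'' argument for (3) (strong irreflexivity of $\apt$ gives reflexivity of $\tild\,(\apt)$ by (1), hence strong irreflexivity of $\tild\tild\,(\apt)$ by (2), plus the easy inclusion $(\apt)\subset\tild\tild\,(\apt)$) is exactly the paper's proof. Your primary route for (3) via the identities $\tild\tild\tild A=\tild A$ and $\tild\,(=)=(\apt)$ from Remark~\ref{apartrmk}\eqref{apartrmk3} is also valid and non-circular, but amounts to the same content.
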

 
\begin{proof}
\eqref{reflexiveconsistent1}
$\tild\a$ is reflexive $\equ$
\begin{equation*}
  \forall_a \, a(\tild\a)a  \:\equ\: \forall_{a,x,y}(x\a y \:\impl\: (x,y)\apt(a,a) ) \:\equ\: \forall_{x,y}(x\a y \:\impl\: x\apt y) \:\equ\: \a\subset(\apt)
\end{equation*}
$\equ$ $\a$ is strongly irreflexive.
  
\eqref{reflexiveconsistent2} 
Assume that $(x,y)\in \tild\a$. Then $(x,y)\apt (a,b)$ for all $(a,b)\in\a$, in particular,
since $\tild\a$ is reflexive, $(x,y)\apt (x,x)$. So $x\apt  y$, that is, $\tild\a$ is strongly irreflexive.

\eqref{reflexiveconsistent3}  
The inclusion $(\apt )\subset\tild\tild\,(\apt )$ is clear. On the other hand, $\apt $ is strongly irreflexive by construction, and thus $\tild\,(\apt )$ is reflexive by (1).
Now (2) implies that $\tild\tild\,(\apt )$ is strongly irreflexive, \ie,
$\tild\tild\,(\apt )\subset(\apt )$.
\end{proof}
  
\begin{prop} \label{kappaisse}
  Every co-quasiorder $\kappa$ is a strongly extensional subset of $X\times X$.
\end{prop}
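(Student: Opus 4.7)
The plan is to unfold the definitions and apply cotransitivity twice, once in each coordinate.

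Fix $(a_1,a_2)\in\kappa$ and an arbitrary $(x_1,x_2)\in X\times X$. I need to show that either $(x_1,x_2)\in\kappa$ or $(x_1,x_2)\apt(a_1,a_2)$, where the latter unpacks (by the definition of apartness on a Cartesian product) to $x_1\apt a_1\eller x_2\apt a_2$.

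First I would apply cotransitivity of $\kappa$ to $a_1\kappa a_2$, inserting $x_1$ as an intermediate point: either $a_1\kappa x_1$ or $x_1\kappa a_2$. In the first case, strong irreflexivity of $\kappa$ gives $a_1\apt x_1$, hence $x_1\apt a_1$ by symmetry of apartness, so $(x_1,x_2)\apt(a_1,a_2)$ and we are done. In the second case I would apply cotransitivity once more to $x_1\kappa a_2$, this time inserting $x_2$: either $x_1\kappa x_2$ or $x_2\kappa a_2$. The former directly yields $(x_1,x_2)\in\kappa$, while the latter gives $x_2\apt a_2$ by strong irreflexivity and hence again $(x_1,x_2)\apt(a_1,a_2)$.

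Since all three resulting cases are contained in the disjunction $(x_1,x_2)\in\kappa \eller (x_1,x_2)\apt(a_1,a_2)$, strong extensionality of $\kappa\subset X\times X$ follows. There is no real obstacle here; the only point worth being careful about is to remember that the apartness on the product is a disjunction, so the two applications of cotransitivity (one for each coordinate) are exactly what is needed to split off an apartness in either component, with the ``middle'' case producing membership in $\kappa$ itself.
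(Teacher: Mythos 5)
Your proof is correct and is essentially identical to the paper's: two applications of cotransitivity of $\kappa$ (one inserting each coordinate of the test pair) followed by strong irreflexivity to convert the outer cases into apartness in the respective components. No issues.
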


\begin{proof}
For all $(a,b)\in\kappa$, and all $x,y\in X$,
\begin{align*}
   & a\kappa x \eller x\kappa b &
  \impl\quad & a\kappa x \eller x\kappa y \eller y\kappa b &
  \stackrel{\kappa\subset(\apt) }{\impl}\quad & a\apt  x \eller x\kappa y \eller y\apt  b &
  \impl\quad &(a,b)\apt  (x,y) \eller x\kappa y
\end{align*}
that is, $\kappa$ is strongly extensional.
\end{proof}

The following is an immediate consequence of Lemma~\ref{senegcompl}\eqref{senegcompl1} and Proposition~\ref{kappaisse}.

\begin{cor} \label{kappanegcompl}
The identity $(\neg\kappa) = (\tild\kappa)$ holds for any co-quasiorder $\kappa$.
\end{cor}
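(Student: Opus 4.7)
The statement is essentially a direct combination of the two results cited, so the proof plan is very short. The key observation is that $\kappa$ is a relation on $X$, i.e., a subset of $X \times X$, and the product set $X \times X$ carries a natural apartness relation inherited from $X$ (namely $(x_1,y_1)\apt(x_2,y_2) \equ (x_1\apt x_2 \eller y_1\apt y_2)$, as defined earlier in the paper). Therefore, strong extensionality, negation $\neg$, and apartness complement $\tild{(\,\cdot\,)}$ all make sense for subsets of $X \times X$.

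The plan is then as follows. First, invoke Proposition~\ref{kappaisse} to conclude that $\kappa \subset X \times X$ is a strongly extensional subset. Second, apply Lemma~\ref{senegcompl}\eqref{senegcompl1}, taking $A = \kappa$ and the ambient set to be $X \times X$ with its induced apartness. The conclusion of that lemma is precisely $\tild\kappa = \neg\kappa$, which is the required identity.

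There is no real obstacle here beyond checking that the two cited results genuinely apply in the setting of $X \times X$ rather than $X$ itself; this is immediate from the construction of the product apartness and the fact that the definition of ``strongly extensional subset'' is phrased purely in terms of an apartness on the ambient set. Thus the proof reduces to a single line: by Proposition~\ref{kappaisse}, $\kappa$ is strongly extensional, and hence Lemma~\ref{senegcompl}\eqref{senegcompl1} gives $\neg\kappa = \tild\kappa$.
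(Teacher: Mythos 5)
Your proposal is correct and is exactly the paper's argument: the corollary is stated there as an immediate consequence of Proposition~\ref{kappaisse} (which gives strong extensionality of $\kappa$ in $X\times X$) and Lemma~\ref{senegcompl}\eqref{senegcompl1}. Your remark that the lemma is applied in the ambient set $X\times X$ with its product apartness is the only point needing care, and you have handled it correctly.
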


\begin{prop} \label{apartnessonquotient}
  Let $\e$ be an equivalence, and $\kappa$ a coequivalence on $X$. Then $\kappa$ is
  extensional on the factor set $X/\e$ if and only if $\e\cap\kappa = \emptyset$.
\end{prop}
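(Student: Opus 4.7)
The plan is to prove the two implications separately. For the forward direction I would assume $\kappa$ is extensional on $X/\e$ and derive $\e\cap\kappa=\emptyset$ by contradiction with the strong irreflexivity of $\kappa$. For the backward direction I would use cotransitivity of $\kappa$ twice to push an instance of $x\kappa y$ across the $\e$-classes, using the emptiness of $\e\cap\kappa$ to eliminate the unwanted disjuncts.

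In more detail, for ($\Rightarrow$): suppose $(x,y)\in\e\cap\kappa$. Since $\e$ is reflexive, $y\e y$, so extensionality of $\kappa$ on $X/\e$ applied to $x\e y$ and $y\e y$ transports $x\kappa y$ to $y\kappa y$. But $\kappa\subset(\apt)$, hence $y\apt y$, contradicting irreflexivity of $\apt$. Thus no such pair exists, i.e.\ $\e\cap\kappa=\emptyset$.

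For ($\Leftarrow$): assume $\e\cap\kappa=\emptyset$, and suppose $x\e x'$, $y\e y'$, and $x\kappa y$. Note that the hypothesis $\e\cap\kappa=\emptyset$ together with $x\e x'$ gives $\neg(x\kappa x')$, and similarly $\neg(y\kappa y')$ (using symmetry of $\kappa$ and $\e$). Now cotransitivity applied to $x\kappa y$ yields $x\kappa x'\eller x'\kappa y$; the first disjunct is impossible, so $x'\kappa y$. Applying cotransitivity again gives $x'\kappa y'\eller y'\kappa y$; the second disjunct is ruled out by symmetry of $\kappa$ and $\neg(y\kappa y')$, so $x'\kappa y'$, as required. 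For the converse direction of extensionality, one uses the symmetric roles of $(x,y)$ and $(x',y')$.

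There is no real obstacle here: the argument is the direct constructive analogue of the classical one, with cotransitivity playing the role that would classically be filled by case analysis on equality. The only thing to be careful about is that we do not need any decidability: the disjuncts produced by cotransitivity are positively eliminated via the assumption $\e\cap\kappa=\emptyset$, which rules them out as contradictions rather than negations to be refuted.
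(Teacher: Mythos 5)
Your proposal is correct and follows essentially the same route as the paper: the forward direction uses extensionality to transport $x\kappa y$ to $y\kappa y$ and then strong irreflexivity (which is a constructively legitimate way to establish the negative statement $\e\cap\kappa=\emptyset$), and the backward direction applies cotransitivity twice and eliminates the unwanted disjuncts via $\neg(x\kappa x')$ and $\neg(y'\kappa y)$, exactly as in the paper's chain of implications.
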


\begin{proof}
``$\impl$'':
Let $x,y\in X$, and assume that $x\e y$. Extensionality of $\kappa$ on $X/\e$
implies that $x\kappa z \;\equ\; y\kappa z$ for all $z\in X$. In
particular, $x\kappa y \;\equ\; y\kappa y$, so $\neg(x\kappa y)$. 
Hence, $\e\cap\kappa =\emptyset$.

``$\Leftarrow$'': Let $x\e x'$ and $y\e y'$, and assume that $x\kappa y$. Since
$\neg(x\kappa x')$ and $\neg(y'\kappa y)$, we have
$$x\kappa y \;\impl\; x\kappa x' \eller x'\kappa y \;\impl\;
x\kappa x' \eller x'\kappa y' \eller y'\kappa y  \;\impl\; x'\kappa y' \,,$$
which proves extensionality of $\kappa$ on $X/\e$.
\end{proof}

\begin{rmk}
An equivalent formulation of the result in Proposition~\ref{apartnessonquotient} is that $\kappa$ is an apartness relation on $X/\epsilon$ if and only if $\e\cap\kappa=\emptyset$.
In particular, any coequivalence $\kappa$ is an apartness relation on $X/(\neg\kappa)$.
\end{rmk}

\begin{prop} \label{serelations}
  For any relation $\alpha\subset X\times Y$, the following statements are
  equivalent:
  \begin{enumerate}
  \item $\a$ is a strongly extensional subset of $X\times Y$;
  \item the subsets $x\alpha\subset Y$ and $\a y\subset X$ are strongly extensional for all
    $x\in X$, $y\in Y$;
  \item the subsets $A\alpha\subset Y$ and $\a B\subset X$ are strongly extensional for all
    subsets $A\subset X$, $B\subset Y$.
  \end{enumerate}
\end{prop}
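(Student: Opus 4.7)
The plan is to prove (3) $\impl$ (2) $\impl$ (1) $\impl$ (3), where the implication (3) $\impl$ (2) is the immediate specialisation $A=\{x\}$, $B=\{y\}$. The two remaining implications amount to routine unwinding of the definition of strong extensionality, the only nontrivial ingredient being the product apartness on $X\times Y$.

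For (2) $\impl$ (1), I would fix $(a,b)\in\a$ and $(x,y)\in X\times Y$, and aim to establish $(x,y)\in\a \eller (x,y)\apt(a,b)$. Since $b\in a\a$ and $a\a\subset Y$ is strongly extensional by (2), we have $y\in a\a$ or $y\apt b$. In the second case $(x,y)\apt(a,b)$ and we are done. In the first case $(a,y)\in\a$, so $a\in \a y$, and strong extensionality of $\a y\subset X$ gives $x\in\a y$ or $x\apt a$; the first conclusion yields $(x,y)\in\a$ and the second yields $(x,y)\apt(a,b)$.

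For (1) $\impl$ (3), let $A\subset X$, $B\subset Y$. I verify strong extensionality of $A\a\subset Y$; the argument for $\a B$ is symmetric. Let $b\in A\a$, so there exists $a\in A$ with $(a,b)\in\a$, and let $y\in Y$. Applying (1) to $(a,b)\in\a$ and the element $(a,y)\in X\times Y$ gives $(a,y)\in\a \eller (a,y)\apt(a,b)$. The first disjunct implies $y\in a\a\subset A\a$. The second disjunct unfolds to $a\apt a \eller y\apt b$, and since $\neg(a\apt a)$ by irreflexivity, we conclude $y\apt b$.

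There is no real obstacle; the only subtlety to watch for is the correct use of irreflexivity of $\apt$ in the final step of (1) $\impl$ (3) (so that the disjunction on the product does not spuriously produce the useless alternative $a\apt a$). The entire argument can be made a few lines long and does not require any of the constructive machinery beyond the definitions themselves.
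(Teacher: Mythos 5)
Your proof is correct and follows essentially the same route as the paper's: the paper proves the four implications (1)$\Leftrightarrow$(2) and (2)$\Leftrightarrow$(3) with exactly the same ingredients (specialisation to singletons, the two-step case analysis via the product apartness for (2)$\impl$(1), and closure of strongly extensional subsets under unions for the passage to $A\a$), whereas you arrange them as the cycle (3)$\impl$(2)$\impl$(1)$\impl$(3). Your use of the disjunctive syllogism ($P\eller Q$ together with $\neg P$ yields $Q$) in the step (1)$\impl$(3) is intuitionistically valid, so there is no constructive gap.
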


\begin{proof}
The implication (3)$\,\impl\,$(2) is trivial. 
Since the strongly extensional subsets of $X$ form a topology, and $A\a=\bigcup\{x\a \mid x\in A \}$, we have (2)$\,\impl\,$(3).

(1)$\,\impl\,$(2):
The condition that $\a\subset X\times Y$ is a strongly extensional subset means that for
all $(x,y),(a,b)\in X\times Y$ such that $a\a b$, either $x\a y$ or $(x,y)\apt(a,b)$
holds. 
Specifying $a=x$ then gives that for all $y\in Y$ and all $b\in x\a$, either $y\in x\a$ or
$b\apt y$. That is, $x\a$ is a strongly extensional subset of $Y$. 
Similarly, one shows that $\a y\subset X$ is strongly extensional.

For (2)$\,\impl\,$(1), assume that $x\a\subset Y$ and $\a y\subset X$ are strongly
extensional for all $x\in X$, $y\in Y$, and let $(a,b)\in\a$, $(c,d)\in X\times Y$.
Because $a\a\subset Y$ is strongly extensional, we have that either $a\a d$
or $b\apt d$. If $a\a d$, then $\a d\subset X$ being strongly extensional implies that
either $c\a d$ or $c\apt a$. 
Taken together, this means that
\[\forall_{(a,b)\in\a,\:(c,d)\in X\times Y} (c\a d \eller a\apt c \eller b\apt d),\] 
that is, $\a\subset X\times Y$ is strongly extensional. 
\end{proof}

\begin{cor}
  If $\a\subset X\times Y$ and $\beta\subset Y\times Z$ are strongly extensional, then
  $\a\circ\b\subset X\times Z$ is strongly extensional. 
\end{cor}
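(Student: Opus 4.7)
The plan is to deduce the result directly from Proposition~\ref{serelations}, using that the strongly extensional subsets of any set with apartness form a topology and are in particular stable under unions.

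The key observation is that composition of relations interacts nicely with the ``slice'' subsets $x\a$ and $\beta z$. Concretely, for every $x\in X$ we have
\[ x(\a\circ\b) \;=\; \{z\in Z\mid \exists_{y\in Y}\,(x\a y \:\wedge\: y\b z)\} \;=\; \bigcup_{y\in x\a} y\b \;=\; (x\a)\b, \]
and symmetrically $(\a\circ\b)z = \a(\b z)$ for every $z\in Z$. So I first reduce the problem to showing that these slices of $\a\circ\b$ are strongly extensional subsets of $Z$ and $X$ respectively.

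For the reduction, I would apply Proposition~\ref{serelations} twice. First, the implication (1)$\impl$(2) applied to $\a$ yields that $x\a\subset Y$ is strongly extensional; then the implication (1)$\impl$(3) applied to $\b$ (with $A=x\a$) gives that $(x\a)\b\subset Z$ is strongly extensional, hence so is $x(\a\circ\b)$. The dual argument (starting from $\b$ and using $\a B$ with $B=\b z$) shows that $(\a\circ\b)z\subset X$ is strongly extensional. Finally, the implication (2)$\impl$(1) of Proposition~\ref{serelations} assembles these slice statements into the strong extensionality of $\a\circ\b\subset X\times Z$.

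No real obstacle is expected: the whole argument is bookkeeping around Proposition~\ref{serelations}, and the only thing that has to be checked by hand is the set-theoretic identity $x(\a\circ\b)=(x\a)\b$, which is immediate from the definition of composition.
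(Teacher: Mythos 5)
Your proposal is correct and follows essentially the same route as the paper: compute the slices $(\a\circ\b)z=\a(\b z)$ and $x(\a\circ\b)=(x\a)\b$, apply the equivalence (1)$\Leftrightarrow$(3) of Proposition~\ref{serelations} to see that each slice is strongly extensional, and then use (2)$\impl$(1) to conclude. (The intermediate step of checking that $x\a$ is strongly extensional is superfluous, since (3) of Proposition~\ref{serelations} applies to arbitrary subsets $A$, but this does no harm.)
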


\begin{proof}
Let $z\in Z$. Then
\[ (\a\circ\b)z = \{x\in X \mid \exists_{y\in Y}(y\in\b z \,\wedge\, x\in\a y) \} =
\a(\b z) \,.
\]
By Proposition~\ref{serelations}:(3)$\impl$(1), $\a B\subset X$ is strongly extensional for
any subset $B\subset Y$ so, in particular, $(\a\circ\b)z = \a(\b z)\subset X$ is strongly
extensional. Similarly, the subset $x(\a\circ\b)$ of $Z$ is strongly extensional for all
$x\in X$. With the implication (2)$\impl$(1) of  Proposition~\ref{serelations}, we
conclude that $\a\circ \b\subset X\times Z$ is strongly extensional. 
\end{proof}

%%%%%%%%%%%%%%%%%%%%%%%%%%%%%%%%%%%%%%%%
\subsection{Coarse equality}

Below, we shall see that it is always possible to pass to a context of tight apartness, by re-defining equality as the negation of the apartness relation.

\begin{dfn}
  The \emph{coarse equality} relation ``$\approx$'' on $X$ is the relation
  $(\approx)=(\neg\apt )$. 
\end{dfn}

Since $\apt $ is a coequivalence, $\approx$ is an equivalence. From Lemma~\ref{reflexiveconsistent}\eqref{reflexiveconsistent3} and  Corollary~\ref{kappanegcompl}, we get the identities $\tild\,(\approx) = (\apt)$ respectively $(\approx)=\tild\,(\apt)$.

\begin{prop} \label{relapart}
  For any relation $\a$ on $X$,
  \begin{enumerate} 
  \item $\a$ is strongly irreflexive if and only if $(\approx)\subset\tild\a$;
     \label{relapart1}
   \item
     $\a\subset(\approx) \;\equ\; \a\cap(\apt )=\emptyset \;\equ\; (\apt)\subset \tild\a$.
     \label{relapart2}
   \item If $\a$ is reflexive then
    $\a\subset(\approx) \;\equ\; (\apt) = \tild\a$.
     \label{relapart3}
  \end{enumerate}
\end{prop}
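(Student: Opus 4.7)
The plan is to prove the three parts in order: (1) by a direct unfolding plus two applications of cotransitivity, (2) as a chain of three equivalences, and (3) immediately from (2) and Lemma~\ref{reflexiveconsistent}\eqref{reflexiveconsistent2}.

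For (1), I would first observe that $\tild\a$ being reflexive is the same as $(=)\subset\tild\a$, which by Lemma~\ref{reflexiveconsistent}\eqref{reflexiveconsistent1} is equivalent to strong irreflexivity of $\a$. Since $(=)\subset(\approx)$, the hypothesis $(\approx)\subset\tild\a$ immediately gives strong irreflexivity. For the converse, assume $\a\subset(\apt)$, and let $(p,q)\in(\approx)$ and $(a,b)\in\a$. Then $a\apt b$, and two applications of cotransitivity yield $a\apt p \eller p\apt q \eller q\apt b$; the middle disjunct is incompatible with $p\approx q$, so we obtain $p\apt a \eller q\apt b$, \ie, $(p,q)\apt(a,b)$, as required.

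For (2), the equivalence $\a\subset(\approx) \equ \a\cap(\apt)=\emptyset$ is a direct unfolding of $(\approx)=\neg(\apt)$. For the implication $\a\cap(\apt)=\emptyset \impl (\apt)\subset\tild\a$, given $x\apt y$ and $(a,b)\in\a$, the same double cotransitivity gives $x\apt a \eller a\apt b \eller y\apt b$; the middle disjunct is excluded by the assumption on $\a$, so $(x,y)\apt(a,b)$. Conversely, $(\apt)\subset\tild\a$ forces any element of $\a\cap(\apt)$ to be apart from itself in the product apartness on $X\times X$, contradicting irreflexivity.

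For (3), combining (2) with Lemma~\ref{reflexiveconsistent}\eqref{reflexiveconsistent2} suffices: the reflexivity of $\a$ gives $\tild\a\subset(\apt)$, so the inclusion $(\apt)\subset\tild\a$ provided by (2) automatically strengthens to the equality $(\apt)=\tild\a$. No step presents a real obstacle; the one mild subtlety is to keep in mind that $\tild\a$ is taken with respect to the product apartness on $X\times X$, and hence that one needs cotransitivity \emph{twice} (not once) to separate a pair $(p,q)$ from a pair $(a,b)$ coordinate-wise.
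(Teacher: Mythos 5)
Your proof is correct. It differs from the paper's only in that it is carried out element-wise: where the paper's proof of each part is a one-line citation of the adjunction $A\subset\tild B\;\equ\;B\subset\tild A$ from Remark~\ref{apartrmk}\eqref{apartrmk3} together with the identities $(\apt)=\tild\,(\approx)$ and $(\approx)=\tild\,(\apt)$ recorded just after the definition of coarse equality (which in turn rest on Lemma~\ref{reflexiveconsistent}\eqref{reflexiveconsistent3} and Corollary~\ref{kappanegcompl}), you re-derive exactly those facts inline via the double-cotransitivity computation $a\apt b\impl a\apt p\eller p\apt q\eller q\apt b$ followed by discharging the middle disjunct. That computation is precisely the content of Proposition~\ref{kappaisse}/Corollary~\ref{kappanegcompl} specialised to $\kappa=(\apt)$, so the two proofs have identical mathematical content; yours is more self-contained and makes the constructive use of cotransitivity visible, while the paper's is shorter by leaning on the already-established machinery. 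Your closing remark that one needs cotransitivity twice to separate pairs coordinate-wise is exactly the right point, and your observation in part~(2) that an element of $\a\cap(\apt)$ would be apart from itself is a valid constructive proof of emptiness.
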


\begin{proof}
\eqref{relapart1}
By definition, $\a$ is strongly irreflexive if and only if $\a\subset(\apt) = \tild\,(\approx)$ which, by Remark~\ref{apartrmk}\eqref{apartrmk3}, is equivalent to  $(\approx)\subset \tild\a$.

\eqref{relapart2}
The first equivalence is immediate from the fact that $(\approx) = \neg(\apt)$, and the second follows from Remark~\ref{apartrmk}\eqref{apartrmk3}.

\eqref{relapart3}
If $\a$ is reflexive then $\tild\a\subset(\apt)$ by Lemma~\ref{reflexiveconsistent}\eqref{reflexiveconsistent2}.
\end{proof}

A constructive version of the first isomorphism theorem for semigroups with apartness was first given in \cite[Theorem~2.5]{cmr13}.
We shall need the following, somewhat generalised version.

\begin{prop} \label{basicfactor}
  Let $f:X\to Y$ be map between sets with apartness $X$ and $Y$, $K=\ker f$, and let
  $\zeta$ be a coequivalence on $X$ such that $\zeta\cap K=\emptyset$. Then:
  \begin{enumerate}
  \item $X/K$ is a set with apartness induced from $\zeta$; \label{basicfactor1}
  \item the projection map $\pi: X\to X/K,\:x\mapsto [x]$ is strongly extensional and
    surjective. \label{basicfactor2}
  \item the map $f$ induces an injective map $\theta:X/K\to Y$ given by $\theta([x])=f(x)$, and $f=\theta\pi$; \label{basicfactor3}
  \item the map $\theta$ is strongly extensional if and only if $\cker f\subset\zeta$, and apartness injective if and only if $\zeta\subset\cker f$. \label{basicfactor4}
  \end{enumerate}
  Moreover, if $X$ is a semigroup with apartness and $\zeta$ a co-congruence, then $(X/K,\zeta)$ is a semigrop with apartness, and $\pi$ a morphism in $\aSg$. If, in addition $Y$ is a semigroup with apartness and $f$ a morphism, then $\theta$ is also a morphism.
\end{prop}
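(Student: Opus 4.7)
The plan is to address each of the four enumerated items in turn, then the semigroup-level addendum. For (1), I apply Proposition~\ref{apartnessonquotient} with $\e = K$ and $\kappa = \zeta$: since $K = \ker f$ is an equivalence, $\zeta$ a coequivalence, and $\zeta \cap K = \emptyset$ by hypothesis, the proposition (together with its subsequent remark) yields that $\zeta$ is an apartness relation on $X/K$. For (2), surjectivity of $\pi$ is immediate, and for strong extensionality note that $\pi(x) \apt \pi(y)$ in $X/K$ is by construction $x\zeta y$; strong irreflexivity of the coequivalence $\zeta$ (i.e.\ $\zeta \subset (\apt)$) then gives $x\apt y$. For (3), the definition of $K = \ker f$ gives $[x] = [y] \;\equ\; xKy \;\equ\; f(x) = f(y)$, which simultaneously establishes well-definedness and injectivity of $\theta$; the factorisation $f = \theta\pi$ is immediate.

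Item (4) is once more an unpacking of definitions, using that the apartness on $X/K$ is $\zeta$. Strong extensionality of $\theta$ reads $\theta([x]) \apt \theta([y]) \impl [x] \apt [y]$, i.e., $f(x) \apt f(y) \impl x\zeta y$, which is the inclusion $\cker f \subset \zeta$. Apartness injectivity of $\theta$ reads $[x]\apt [y] \impl \theta([x])\apt \theta([y])$, i.e., $x\zeta y \impl f(x) \apt f(y)$, which is the inclusion $\zeta \subset \cker f$.

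For the semigroup-level addendum, the multiplication on $X/K$ is $[x][y] := [xy]$. Its strong extensionality with respect to $\zeta$ is exactly co-compatibility of $\zeta$: from $[x_1 y_1] \zeta [x_2 y_2]$ we deduce $x_1 \zeta x_2 \eller y_1 \zeta y_2$, i.e., $[x_1]\apt [x_2] \eller [y_1]\apt [y_2]$. Given this, $\pi(xy) = [xy] = [x][y] = \pi(x)\pi(y)$, so $\pi$ is a semigroup morphism, and strongly extensional by (2), hence a morphism in $\aSg$. For the last sentence, once $Y$ is a semigroup with apartness and $f$ a morphism, the identity $\theta([x][y]) = f(xy) = f(x)f(y) = \theta([x])\theta([y])$ shows $\theta$ is a morphism as well. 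The main obstacle, and the delicate point in this last paragraph, is that well-definedness of the multiplication on $X/K$ requires $K$ to be a congruence on $X$; this is not automatic from $\zeta$ being a co-congruence alone (co-compatibility of $\zeta$ yields information of $\neg\zeta$-type, but not compatibility of $K$). One therefore reads the semigroup-level clauses as implicitly requiring $f$ to be a semigroup morphism, which is precisely the condition making $K$ a congruence through $f(xy) = f(x)f(y)$.
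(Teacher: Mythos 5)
Your proof is correct and follows essentially the same route as the paper's: (1) is an application of Proposition~\ref{apartnessonquotient}, (2)--(4) are the same unpacking of definitions (strong irreflexivity of $\zeta$ for $\pi$, and the identifications $\theta([x])\apt\theta([y])\equ(x,y)\in\cker f$ and $[x]\apt[y]\equ x\zeta y$ for $\theta$), and the addendum uses co-compatibility of $\zeta$ in exactly the same way. Your closing observation is in fact a genuine improvement on the paper's proof, which merely asserts that multiplication on $X/K$ is ``clearly well-defined'': as you note, $K=\ker f$ need not be a congruence for an arbitrary map $f$ between sets with apartness, so the first ``moreover'' clause does implicitly require $f$ to be a semigroup morphism (or $K$ to be a congruence for some other reason) --- a hypothesis the paper only makes explicit in the final sentence about $\theta$.
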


\begin{proof}
\eqref{basicfactor1}
This was proved in Proposition~\ref{apartnessonquotient}.

\eqref{basicfactor2}
The projection map $\pi:X\to X/K$ is surjective by definition. Strong extensionality
follows from the strong irreflexivity of $\zeta$: $\pi(x)\apt \pi(y)\;\equ\; x\zeta y \;\impl\; x\apt  y$.

\eqref{basicfactor3}
This was shown in \cite{cmr13}.

\eqref{basicfactor4}
Apartness injectivity of $\theta$ means, by definition, that $\theta(x)\apt \theta(y)$
whenever $[x]\apt [y]$.
Now
\[\theta(x)\apt \theta(y) \;\equ\;f(x)\apt  f(y) \;\equ\;(x,y)\in\cker f 
\quad\mbox{and} \quad [x]\apt  [y] \;\equ\; (x,y)\in\zeta \]
so $\theta$ is injective if and only if $\zeta\subset\cker f$.
Similarly, $\cker f\subset \zeta$ is precisely the condition that 
$ \theta(x)\apt \theta(y) \;\impl\; [x] \apt [y]$, that is, that $\theta$ is strongly
extensional.

Assume that $X$ is a semigroup with apartness and $\zeta$ a co-congruence.
Clearly, multiplication in $X/K$ is well-defined and associative. Since $\zeta$ is
co-compatible with multiplication in $X$, we have
\[[x][y]\apt [z][w] \;\equ\; (xy)\zeta(zw) \;\impl\; x\zeta z \eller y\zeta w
\;\equ\; 
[x]\apt  [z] \eller [y]\apt  [w] \,,\]
which means that the multiplication in $X/K$ is strongly extensional.

The remaining statements are part of classical theory.
\end{proof}

\begin{rmk}
  The map $f$ in the proposition above is strongly extensional if and only if $\cker f$ is strongly irreflexive. By (4), if $\theta$ is strongly extensional then $\cker f\subset\zeta\subset(\apt )\subset X\times X$, implying that $f$ is strongly extensional as well.
\end{rmk}

\begin{thm} \label{tightfactor}
  Let $X$ and $Y$ be sets with apartness. Then the following hold:
  \begin{enumerate}
  \item The apartness on $X/{\approx}$ induced from $X$ is tight. \label{tightfactor1}
  \item Any strongly extensional function $f:X\to Y$ induces a strongly extensional
    function $\bar{f}:X/{\approx} \to Y/{\approx}$, defined by $\bar{f}([x])=[f(x)]$. \label{tightfactor2}
  \item The canonical map 
    \[t : (X\times Y)/{\approx} \to (X/{\approx})\times(Y/{\approx}),\:
    [(x,y)]\mapsto ([x],[y]) \]
    is an isomorphism in $\aSets$. \label{tightfactor3}
  \item If $S$ is a semigroup with apartness then $S/{\approx}$ is a semigroup with tight apartness, with apartness relation and multiplication induced from $S$. \label{tightfactor4}
  \end{enumerate}
\end{thm}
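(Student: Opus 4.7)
The plan is to treat the four parts in order, with Proposition~\ref{apartnessonquotient} (applied with $\e=(\approx)$ and $\zeta=(\apt)$) as the principal tool, and exploiting the fact that in each factor set the equality literally \emph{is} the coarse equality of the parent, so that many of the required identities collapse to tautologies once unwound.

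For~\eqref{tightfactor1}, I would observe that $(\approx)\cap(\apt)=\neg(\apt)\cap(\apt)=\emptyset$, so Proposition~\ref{apartnessonquotient} supplies an induced apartness $[x]\apt[y]\equ x\apt y$ on $X/{\approx}$. Tightness reads $\neg([x]\apt[y])\impl[x]=[y]$, but the equality on $X/{\approx}$ is by construction $\approx=\neg(\apt)$, so this is immediate. For~\eqref{tightfactor2}, strong extensionality of $f$ says $f(x)\apt f(x')\impl x\apt x'$; its contrapositive gives well-definedness of $\bar f$ (since $x\approx x'\impl f(x)\approx f(x')$), and running the implication forward on representatives gives strong extensionality of $\bar f$.

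For~\eqref{tightfactor3}, well-definedness of $t$ follows because $(x,y)\approx(x',y')$ rules out both $x\apt x'$ and $y\apt y'$ by the coordinatewise definition of apartness on $X\times Y$, and surjectivity is obvious. The key step is the chain of equivalences
\[ [(x,y)]\apt[(x',y')] \;\equ\; (x,y)\apt(x',y') \;\equ\; x\apt x'\eller y\apt y' \;\equ\; [x]\apt[x']\eller[y]\apt[y'] \;\equ\; ([x],[y])\apt([x'],[y']), \]
using only the induced apartness on each factor set and the definition of apartness on a Cartesian product. Taking contrapositives yields injectivity, while the forward and backward directions yield strong extensionality and apartness injectivity respectively; by the characterisation of invertible morphisms in $\aSets$ recorded in Section~\ref{fundamental}, $t$ is then an isomorphism.

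For~\eqref{tightfactor4}, strong extensionality of the multiplication on $S$ is exactly co-compatibility of $\apt$ with multiplication, so $xy\apt x'y'\impl x\apt x'\eller y\apt y'$; when $x\approx x'$ and $y\approx y'$ both conclusions are excluded, hence $xy\approx x'y'$, making $\approx$ a congruence and the product $[x][y]=[xy]$ well-defined. Strong extensionality of this induced multiplication reduces by the same unwinding as in~\eqref{tightfactor3} to that on $S$, and tightness is~\eqref{tightfactor1}. The only real obstacle throughout is bookkeeping: each factor set carries two inherited relations (the coarse equality, now serving as equality, and the apartness), and one must consistently keep track of which is which; once this is sorted, every step reduces to an unwinding of the induced apartness together with strong extensionality or co-compatibility in the parent structure.
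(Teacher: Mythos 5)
Your proposal is correct and follows essentially the same route as the paper: part (1) is the tautology $\neg\neg(x\apt y)$-free unwinding of the definitions, part (2) is the contrapositive/forward argument with strong extensionality, and parts (3)--(4) reduce to the identification of equality and apartness on each quotient with $\approx$ and $\apt$ in the parent. The only difference is presentational: where you verify (3) by a direct chain of equivalences and (4) by checking the congruence property by hand, the paper packages the same computations through Proposition~\ref{basicfactor} (computing $\ker f=(\approx)$ and $\cker f=(\apt)$) and then obtains the multiplication on $S/{\approx}$ as the composite $\bar{\mu}\,t\inv$ using parts (2) and (3); both versions rest on identical underlying verifications.
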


Denote by $\Sg^{!}$ the full subcategory of $\aSg$ consisting of semigroups with tight apartness.

\begin{cor}
The assignments $S\mapsto S/{\approx}$, $f\mapsto\bar{f}$ define a functor $\aSg\to\Sg^{!}$, which is left inverse to the inclusion functor $\Sg^{!}\to\aSg$. 
\end{cor}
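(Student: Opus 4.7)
The plan is to verify functoriality in one step, and then read off the left-inverse property directly from the tightness of $\apt$.

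For functoriality, note that Theorem~\ref{tightfactor}\eqref{tightfactor4} supplies the object part: $S/{\approx}\in\Sg^{!}$ for every $S\in\aSg$. For a morphism $f:S\to T$ in $\aSg$, Theorem~\ref{tightfactor}\eqref{tightfactor2} yields a strongly extensional map $\bar{f}:S/{\approx}\to T/{\approx}$ with $\bar f([x])=[f(x)]$. That $\bar f$ preserves multiplication is immediate from the corresponding property of $f$:
\[ \bar f([x][y]) = \bar f([xy]) = [f(xy)] = [f(x)f(y)] = [f(x)][f(y)] = \bar f([x])\bar f([y]). \]
Hence $\bar f$ is a morphism in $\Sg^{!}$. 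Preservation of identities and composition is routine: $\overline{\mathrm{id}_S}([x])=[\mathrm{id}_S(x)]=[x]$, and $\overline{g\circ f}([x])=[g(f(x))]=\bar g([f(x)])=(\bar g\circ\bar f)([x])$ for composable $f,g$ in $\aSg$.

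For the left-inverse property, let $\iota:\Sg^{!}\to\aSg$ be the inclusion, and let $S\in\Sg^{!}$. Since the apartness on $S$ is tight, Corollary~\ref{kappanegcompl} (or direct inspection) gives $(\approx)=\neg(\apt)=(=)$, so the coarse equality on $\iota(S)$ agrees with its own equality. Following the setoid convention recalled in the introduction, the factor set $\iota(S)/{\approx}$ is therefore the set $S$ with its original equality and apartness, and its induced multiplication coincides with the original one. Thus $\iota(S)/{\approx}=S$ as an object of $\Sg^{!}$. For a morphism $f:S\to T$ between objects of $\Sg^{!}$, the identity $\bar f([x])=[f(x)]=f(x)$ under this identification shows $\bar{\iota(f)}=f$. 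Combining these gives $(\,\cdot\,/{\approx})\circ\iota=\mathrm{id}_{\Sg^{!}}$, as required.

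The only potentially subtle point is the identification $\iota(S)/{\approx}=S$, which rests on the setoid convention that a factor by equality returns the original set; once this is accepted, everything else is a direct verification.
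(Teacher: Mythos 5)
Your proof is correct and is exactly the routine verification the paper intends: the corollary is stated without proof as an immediate consequence of Theorem~\ref{tightfactor}, and you assemble parts \eqref{tightfactor2} and \eqref{tightfactor4} together with the setoid convention ($X/(=)=X$) in the expected way. The only cosmetic remark is that $(\approx)=(=)$ on a tight-apartness semigroup is just the definition of tightness, so the appeal to Corollary~\ref{kappanegcompl} is unnecessary (as you yourself note).
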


\begin{proof}[Proof of Theorem~\ref{tightfactor}]
\eqref{tightfactor1}
Let $x,y\in X$. By definition, $x\approx y \;\equ\; \neg(x\apt  y)$, meaning that $\apt $ is tight on $X/{\approx}$.

\eqref{tightfactor2}
Denote by $\pi_X:X\to X/{\approx}$ and $\pi_Y:Y\to Y/{\approx}$ the quotient projections. The map $f$ being strongly extensional means that $f(x)\apt  f(y)\;\impl\;x\apt  y$ for all $x,y\in X$. Thus $x\approx y \;\impl\; f(x)\approx f(y)$, which proves that $f$ factors through $\pi_X:X\to X/{\approx}$; in other words: there exists a map $\theta:X/{\approx}\to Y$ such that $f=\theta\pi_X$. 
Since $\pi_X(x)\apt \pi_X(y)\;\equ x\apt  y$, it follows that $\theta$ is strongly
extensional. 
Set $\bar{f}=\pi_Y\theta:X/{\approx}\to Y/{\approx}$. By definition, 
$\bar{f}([x])=\pi_Y\theta([x]) = \pi_Yf(x)=[f(x)]$. Since both $\theta$ and $\pi_Y$ are
strongly extensional, so is $\bar{f}$. 

\eqref{tightfactor3}
Set $f:X\times Y\to X/{\approx}\times Y/{\approx}, \: (x,y)\mapsto ([x],[y])$. By
Proposition~\ref{basicfactor}(3), $f$ induces an injective map 
$t=\theta:(X\times Y)/(\ker f)\to (X/{\approx})\times (Y/{\approx})$, which is surjective since $f$ is surjective. 
Now 
\begin{multline*}
  f(x,y)=f(z,w)\;\equ\; ([x],[y]) = ([z],[w]) \;\equ\; x\approx z \;\wedge\;
  y\approx w  \\
\;\equ\; \neg (x\apt z \eller y\apt w)
\;\equ\; (x,y)\approx(z,w) \;\equ\; [(x,y)]=[(z,w)]
\end{multline*}
that is, $\ker f = (\approx)\subset (X\times Y)^2$.
Moreover,
\[ \cker f = 
\{((x,y),(z,w))\in(X\times Y)^2 \mid ([x],[y])\apt ([z],[w])\}= (\apt )\subset (X\times Y)^2
\] 
so, by Proposition~\ref{basicfactor}\eqref{basicfactor4}, the map 
$t:(X\times Y)/{\approx}\to (X/{\approx})\times (Y/{\approx})$ is injective and strongly extensional. Thus, is is an isomorphism in $\aSets$.

\eqref{tightfactor4}
The multiplication map $X\times X\to X$ is strongly extensional and so, by \eqref{tightfactor2}, it induces a strongly extensional map $\bar{\mu}:(X\!\times\! X)/{\approx} \to X/{\approx}$. Now
\[ \bar{\mu}t\inv:(X/{\approx})\!\times\!(X/{\approx}) \to X/{\approx},\:([x],[y])\mapsto[xy] \]
gives the semigroup structure on $X/{\approx}$. It is clearly associative, and strongly
extensional since $t\inv$ and $\bar{\mu}$ are strongly extensional.
\end{proof}

\subsection{Filled product and the constant domains property} \label{filledcd}

The filled product, defined below, is a constructive friend of the composition ``$\circ$''. The concept originates in work by Romano, see for example \cite{romano02}.

\begin{dfn}
  Let $\a$ and $\b$ be relations on $X$. The \emph{filled product} of $\a$ and $\b$ is the relation
  \[ \a*\b = \left\{(x,y)\in X\times X \mid \forall_{z\in X}(x\a z \eller z\b y) \right\} \subset X\times X. \]
\end{dfn}

The following are some basic properties of the filled product. 

\begin{lma} \label{*properties}
  Let $\a$ and $\b$ be relations on $X$.
  \begin{enumerate}
      \item The inclusion $(\neg\a) * (\neg\b)\subset\neg(\a\circ\b)$ holds.
    \label{*properties1}
  \item The identity $(\a*\b)\inv = \b\inv*\a\inv$ holds. In particular, if $\a$ is symmetric then $\a*\a$ is symmetric.
    \label{*properties2}
      \item
    If $\forall_{x \in X}\exists_{w\in X} \neg(x\a w)$, then $\a*\b\subset \b$.
    If $\forall_{y \in X}\exists_{w\in X} \neg(w\b y)$, then $\a*\b\subset \a$.
    \label{*properties3}
  \item The relation $\a$ is cotransitive if and only if $\a\subset\a*\a$.
    \label{*properties4}
  \item If $\a$ is strongly irreflexive, then it is a co-quasiorder if and only if $\a=\a*\a$.
    \label{*properties5}
  \item If $\a\subset\gamma$ and $\b\subset\d$ then $\a*\b\subset\gamma*\d$.
    \label{*properties6}
      \item If $\kappa\subset\a\cap\b$ is a cotransitive relation, then $\kappa\subset \a*\b$.
    \label{*properties7}
  \end{enumerate}
\end{lma}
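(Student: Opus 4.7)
The plan is to treat each item as a short deduction from the defining condition $\a*\b=\{(x,y)\mid\forall_{z}(x\a z\eller z\b y)\}$.

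For (1), I would assume $(x,y)\in(\neg\a)*(\neg\b)$ and suppose for contradiction that $(x,y)\in\a\circ\b$; the latter produces a $z$ with $x\a z$ and $z\b y$, while the former, instantiated at this same $z$, gives $\neg(x\a z)\eller\neg(z\b y)$, yielding a contradiction in either case. For (2), unpacking the definitions shows that both $(y,x)\in(\a*\b)\inv$ and $(y,x)\in\b\inv*\a\inv$ reduce to $\forall_{z}(x\a z\eller z\b y)$; the symmetry claim is then immediate from $\a\inv=\a$.

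For (3), the strategy is to instantiate the $\forall z$ in the filled-product condition at the witness supplied by the hypothesis, thereby cancelling one side of the disjunction and leaving the desired relation. Part (4) is just the observation that $\forall_{x,y,z}(x\a z\impl x\a y\eller y\a z)$ and $\a\subset\a*\a$ are literally the same formula after permuting quantifiers. For (5), the inclusion $\a\subset\a*\a$ is (4); for the converse $\a*\a\subset\a$, given $(x,z)\in\a*\a$ I would take $y=x$ in the defining disjunction to get $x\a x\eller x\a z$, then eliminate the first alternative via strong irreflexivity ($x\a x\impl x\apt x$, absurd by irreflexivity of $\apt$).

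Part (6) is monotonicity: enlarging $\a$ to $\gamma$ and $\b$ to $\d$ only enlarges the inner disjunction $x\a z\eller z\b y$, so the universal condition is preserved. For (7), given $(x,y)\in\kappa$ and any $z$, cotransitivity of $\kappa$ yields $x\kappa z\eller z\kappa y$; the inclusions $\kappa\subset\a$ and $\kappa\subset\b$ then upgrade this to $x\a z\eller z\b y$, which is precisely the defining condition for $(x,y)\in\a*\b$. The only step demanding any care is (3), where the instantiating $z$ must be chosen so that the hypothesis eliminates the $\a$- or $\b$-disjunct.
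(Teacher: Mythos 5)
Items (1), (2), (4), (6) and (7) are correct and essentially identical to the paper's arguments: your reductio in (1) is just the constructive proof of a negation, and your direct verifications of (7) merely unpack the paper's appeal to (4) and (6). Your (5) also matches the paper in substance, combining (4) with the irreflexivity argument.

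The gap is in (3). After instantiating $\forall_{z}(x\a z\eller z\b y)$ at the witness $w$ with $\neg(x\a w)$, what survives is $w\b y$ --- a statement about $w$, not the desired $x\b y$ --- and there is no way to transfer it to $x$. Indeed, under the hypothesis exactly as printed the claim is false: on $X=\{0,1\}$ take $\a=\{(0,0)\}$ and $\b=\{(1,1)\}$; then $\forall_{x\in X}\exists_{w\in X}\neg(x\a w)$ holds, and $(0,1)\in\a*\b$ (the disjunction is witnessed by the $\a$-disjunct for $z=0$ and by the $\b$-disjunct for $z=1$), yet $(0,1)\notin\b$. The hypothesis the paper's own proof actually uses --- and the one in force in every application of this item --- is that $\a$ (resp.\ $\b$) is \emph{irreflexive}, so that the witness may be taken to be $x$ itself: instantiating $z=x$ gives $x\a x\eller x\b y$, and $\neg(x\a x)$ leaves exactly $x\b y$. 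You make precisely this move inside your proof of (5), where strong irreflexivity lets you put $y=x$; the same specialisation is what (3) needs, so the fix is to read the hypothesis as irreflexivity and instantiate at $z=x$ (resp.\ $z=y$) rather than at an arbitrary witness.
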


\begin{proof}
\eqref{*properties1}
For all $x,y\in X$,
\[(x,y)\in (\neg\a)*(\neg\b) \:\equ\: \forall_z(\neg(x\a z)\eller \neg(z\b y))
\:\impl\: \neg\exists_z(x\a z \,\wedge\, z\b y) \:\equ\: \neg(x(\a\circ\b)y) \,,\] 
hence, $(\neg\a) * (\neg\b)\subset\neg(\a\circ\b)$.

\eqref{*properties2} For all $x,y\in X$,
\begin{align*}
 (x,y)\in(\a*\b)\inv \;&\equ\; (y,x)\in\a*\b
  \;\equ\; \forall_z\left( y\a z \eller z\b x\right) \\
  \;&\equ\; \forall_z\left( z\a\inv y \eller x\b\inv z\right) 
  \;\equ\; (x,y)\in\b\inv*\a\inv\,.
\end{align*}

\eqref{*properties3}
If $\a$ is irreflexive then
$$(x,y)\in\a*\b \;\equ\; \forall_z\left( x\a z \eller z\b y\right) \;\stackrel{z=x}{\impl}\;
(x\a x \eller x\b y) \;\impl\; (x,y)\in\b \,.$$
The case when $\b$ is irreflexive is analogous. 

\eqref{*properties4}
$\displaystyle \a \mbox{ is cotransitive} \;\equ\;$
\[\forall_{x,y} \left(x\a y \;\impl\; \forall_z(x\a z \eller z\a y) \right) \;\equ\; 
\forall_{x,y} \left(x\a y \;\impl\; (x,y)\in\a*\a \right) \;\equ\; \a\subset\a*\a\,.\]

\eqref{*properties5}
This is immediate from \eqref{*properties2} and \eqref{*properties3}.

\eqref{*properties6}
$\displaystyle (x,y)\in \a*\b \;\equ\; \forall_z(x\a z \eller z\b y)
\;\stackrel{a\subset\gamma,\,\b\subset\d}{\implies}\;  
\forall_z(x\gamma z \eller z\d y) \;\equ\; (x,y)\in\gamma*\d$.

\eqref{*properties7}
From \eqref{*properties4} and \eqref{*properties6}, it follows that $\kappa\subset\kappa*\kappa \subset\a*\b$.
\end{proof}

\begin{rmk}
  With respect to Lemma~\ref{*properties}\eqref{*properties1}, it is easy to prove that if $\a$ and $\b$ are detachable subsets of $X\times X$, then $(\neg\a)*(\neg\b) = \neg(\a\circ\b)$. However, in the general case, it is not possible to prove equality.
  For example, given a proposition $P$, define relations $\a$ and $\b$ on the set $X=\{0\}$ by $0\a 0 \:\equ\: P$ and $\b= \neg\a$. Then $(0,0)\in\neg(\a\circ\b)$ is equivalent to $\neg(P\wedge\neg P)$ and hence always true, whilst $(0,0)\in(\neg\a)*(\neg\b) \:\equ\: \neg P \vee \neg\neg P$, which is not constructively provable in general.
\end{rmk}

We say that a set $X$ has the constant domains property if
\begin{equation} \label{cdaxiom}
  \forall x(P(x) \eller Q) \,\impl\, (\forall x\,P(x)) \eller Q 
\end{equation}
holds for all predicates $P(x)$ and $Q$, with $x$ not free in $Q$. Note that the converse implication is always constructively valid.
It is straightforward to see that every finite set has the constant domains property, and that this property is preserved by surjective images: if $f:X\to Y$ is a surjective function, and $X$ has the constant domains property, then so does $Y$.  In particular, the following holds.

\begin{lma} \label{fecd}
  Every finitely enumerable set has the constant domains property.
\end{lma}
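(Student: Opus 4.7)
The plan is to derive the lemma from the two intermediate observations flagged in the paragraph immediately preceding it: (a) every finite set has the constant domains property, and (b) the property is preserved under surjective images. Once both are established, the lemma is immediate from the definition of \emph{finitely enumerable}: a finitely enumerable set $C$ admits, by definition, a surjection $B\to C$ from a finite set $B$; by (a) the set $B$ has the constant domains property, and by (b) so does $C$.

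For (a), I would argue by induction on the cardinality $n$ of the finite set, which by the definition of finiteness may be taken to be $\{1,\ldots,n\}$. The base case $n=0$ is trivial since $\forall x\in\emptyset\,P(x)$ holds vacuously, so the disjunction $(\forall x\,P(x))\vee Q$ holds. For the step, from $\forall x\in\{1,\ldots,n{+}1\}(P(x)\vee Q)$, the induction hypothesis applied to the restriction yields $(\forall x\in\{1,\ldots,n\}P(x))\vee Q$, and combining this with the instance $P(n{+}1)\vee Q$ via the intuitionistically valid distributivity $(A\vee Q)\wedge(B\vee Q)\vdash (A\wedge B)\vee Q$ gives $(\forall x\in\{1,\ldots,n{+}1\}P(x))\vee Q$.

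For (b), let $f:X\to Y$ be surjective with $X$ having the constant domains property, and suppose $\forall y\in Y(P(y)\vee Q)$. Setting $P'(x):=P(f(x))$ gives $\forall x\in X(P'(x)\vee Q)$, whence by the property on $X$ either $\forall x\in X\,P'(x)$ or $Q$. In the second case we are done; in the first, given $y\in Y$ surjectivity supplies some $x\in X$ with $f(x)=y$, so $P(y)$ holds, whence $\forall y\in Y\,P(y)$. The only point that requires care is to verify that both steps above remain constructive: the induction uses only the distributivity of disjunction over conjunction, which is intuitionistically valid, and the surjective-transport step uses only an explicit witness $x$ furnished by surjectivity, so neither step appeals to LEM or any other non-constructive principle.
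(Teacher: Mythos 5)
Your proof is correct and follows exactly the route the paper intends: the paper states the two facts (finite sets have the constant domains property; the property is preserved by surjective images) in the paragraph preceding the lemma and leaves the details as straightforward, while you have filled them in correctly, with the induction via distributivity of $\vee$ over $\wedge$ and the constructive witness from surjectivity both being valid intuitionistically.
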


By contrast, the constant domains property cannot be proved to be preserved by subsets. For example, let $P$ be a proposition, $X=\{0\}$, $A=\{x\in X \mid P\}\subset X$, and $Q(x) \Leftrightarrow \bot$. Then $\forall_{x\in A}( Q(x)\eller P)$ is true, whereas \[(\forall_{x\in A}\,Q(x)) \eller P \:\equ\: ((P\impl Q(0))\eller P) \:\equ\: (\neg P \eller P) \,.\]
So while $X$ has the constant domains property, its validity in the subset $A\subset X$ entails LEM.

The following result, connecting the constant domains property with associativity of the filled produt, will be of importance for us later.

\begin{prop} \label{cd*assoc}
  A set $X$ has the constant domains property if and only if $\a*(\b*\gamma) = (\a*\b)*\gamma$ holds for all relations $\a,\b,\gamma$ on $X$.
\end{prop}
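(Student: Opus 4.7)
The strategy is to observe that both ways of associating the filled product can be reduced to a single symmetric expression, $\forall_z\forall_w(x\alpha z \eller z\beta w \eller w\gamma y)$, and that the constant domains property is precisely what permits the quantifier--disjunction swaps needed for this reduction. Both directions of the equivalence will then follow by matching the two expansions against this normal form.

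For the forward direction, I would unfold definitions: $(x,y)\in\alpha*(\beta*\gamma)$ means $\forall_z(x\alpha z \eller \forall_w(z\beta w \eller w\gamma y))$, whereas $(x,y)\in(\alpha*\beta)*\gamma$ means $\forall_w(\forall_z(x\alpha z \eller z\beta w) \eller w\gamma y)$. To bring either expression into the symmetric form above I apply constant domains twice: once with $Q := x\alpha z$ and predicate $w\mapsto z\beta w\eller w\gamma y$, and once with $Q := w\gamma y$ and predicate $z\mapsto x\alpha z\eller z\beta w$. The reverse implications are free, so both expansions collapse to $\forall_z\forall_w(x\alpha z \eller z\beta w \eller w\gamma y)$, and hence coincide.

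For the converse, I would encode an arbitrary instance of the constant domains axiom as a specific instance of associativity. Given a predicate $P$ on $X$ and a proposition $Q$ with $x$ not free in $Q$, the case where $X$ is empty is trivial, so I may choose $x_0\in X$ and set $\alpha = \emptyset$, $z\beta w \equ P(z)$ (independent of $w$) and $w\gamma y \equ Q$ (independent of $w$ and $y$). A short calculation, using inhabitedness of $X$ to discharge vacuous universal quantifiers over formulas not containing the bound variable, yields $(x_0,x_0)\in\alpha*(\beta*\gamma) \equ \forall_z(P(z)\eller Q)$ and $(x_0,x_0)\in(\alpha*\beta)*\gamma \equ \forall_z P(z) \eller Q$. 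The inclusion $\alpha*(\beta*\gamma) \subset (\alpha*\beta)*\gamma$ provided by associativity then delivers exactly the constant domains implication $\forall_z(P(z)\eller Q) \impl \forall_z P(z)\eller Q$.

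The main obstacle is bookkeeping: each expansion involves several elementary implications between disjunction-and-quantifier formulas, and for each one must note precisely which direction is constructively free and which requires constant domains. Once this is in order, the two halves of the proof fit together symmetrically and no further ingenuity is needed.
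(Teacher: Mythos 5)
Your proof is correct and follows essentially the same route as the paper's: the forward direction reduces both associations to the symmetric form $\forall_z\forall_w(x\a z\eller z\b w\eller w\gamma y)$ via the constant domains principle (the other implications being free), and the converse uses exactly the same three relations ($\a=\emptyset$, $\b$ depending only on its first argument via $P$, $\gamma$ constantly $Q$). Your explicit appeal to an element $x_0\in X$ (and the case split on emptiness) is no more of a restriction than the paper's own argument, which likewise evaluates the relations at a pair of elements $a,b\in X$.
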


\begin{proof}
Let $\a\subset X\times Y$, $\b\subset Y\times Z$ and $\gamma\subset Z\times W$ be relations. Then $\a*(\b*\gamma) = (\a*\b)*\gamma$ means, by definition, that the following equivalence holds for all $x\in X$ and $w\in W$:
\begin{equation} \label{*assoc}
\forall_{y\in Y} \left(\, x\a y \eller \forall_{z\in Z}(y\b z \eller z\gamma w) \,\right)
\;\equ\;
\forall_{z\in Z} \left(\, \forall_{y\in Z}(x\a y \eller y\b z) \eller z\a w \,\right)
\end{equation}
Assuming that the constant domains principle \eqref{cdaxiom} holds over $Y$ and $Z$, each side of \eqref{*assoc} becomes equivalent to the proposition
\[ \forall_{y\in Y,z\in Z} \left(\, x\a y \eller y\b z \eller z\gamma w \,\right)
\]
and thus, this principle entails the associativity of the filled product. Indeed, the converse is true as well:
if $\a*(\b*\gamma) = (\a*\b)*\gamma$ for all relations $\a$, $\b$ and $\gamma$ on a set $X$, then \eqref{cdaxiom} holds over $X$.
To see this, let $P(x)$ and $Q$ be formulae, with $x$ not free in $Q$, and define 
\[ x\a y \:\equ\: \bot,\quad x\b y \:\equ\: P(x), \quad\mbox{and}\quad
x\gamma y \:\equ\: Q \,. \]
Then, for any elements $a,b\in X$:
\begin{align*}
  a(\a*(\b*\gamma))b \:&\equ\:
  \forall_{x}(\,\bot \eller\forall_{y}(P(x)\eller Q) \,) \:\equ\:
  \forall_{x}(P(x)\eller Q \,) \,, \;\mbox{and}\\
  a((\a*\b)*\gamma)b \:&\equ\:
  \forall_{y}(\,\forall_{x}(\bot \eller P(x))\eller Q) \,) \:\equ\:
  \forall_{x}P(x)\eller Q \,.
\end{align*}
In particular, if $\a*(\b*\gamma) = (\a*\b)*\gamma$ then \eqref{cdaxiom} holds in $X$. 
\end{proof}

\begin{rmk}
  With a somewhat more elaborate example, one can prove that even the (much) weaker condition of \emph{third-power associativity} $\a*\a^{*2}=\a^{*2}*\a $ of the filled product entails the constant domains property.
\end{rmk}

%%%%%%%%%%%%%%%%%%%%%%%%%%%%%%%%%%%%%%%%%%%%%%%%%%%%%%%%%%%%%%%%%%%%%%%%%%%%%%%%
\section{Kernels} \label{kernels}

In this section, we shall introduce constructive friends of the transitive closure and the congruence closure of a relation $\a$: the cotransitive kernel and the co-congruence kernel, respectively. The cotransitive kernel -- when it exists -- is the maximal cotransitive subrelation of $\a$ and, similarly, the co-congruence kernel is the maximal co-congruence contained in $\a$.

One of the motivating problems is to characterise when a given equivalence (or congruence) $\e$ on $X$ can be obtained as the negation of a coequivalence (co-congruence) and thus, when there exists a tight apartness on $X/\e$.

Recall that a \emph{kernel operator} on a partially ordered set $(P,\le)$ is a function $k:P\to P$ satisfying
\begin{enumerate} \renewcommand{\labelenumi}{\roman{enumi})}
\item $k(x)\le x$;
\item $x\le y \:\impl\: k(x)\le k(y)$;
\item $k^2 = k$,
\end{enumerate}
for all $x,y\in P$.
A \emph{closure operator} on $P$ is the same thing as a kernel operator on the opposite poset $(P, (\le)^{-1})$. 
  
We start with a brief look at the classical concepts of transitive closure and congruence closure, from a constructive viewpoint.

%%%%%%%%%%%%%%%%%%%%%%%%%%%%%%%%%%%%%%%%
\subsection{Transitive closure and congruence closure}

These notions are essentially constructive, and present no real difficulties in our setting. Below, we spell out the basic results -- the proofs are the same as in the classical setting (as presented, for example, in Howie's book \cite{howie95}). 

It is easy to see that composition ``$\circ$'' of relations is associative; in particular, it follows that $\a^{\circ m}\circ\a^{\circ n} = \a^{\circ (m+n)}$ for all $\a\subset X\times X$, $m,n\ge1$.

\begin{dfn}
  For any relation $\a$ on $X$, define $\a^\infty = \bigcup_{n\ge
    1}\a^{\circ n}$, and  
  $\a^e = (\a\cup \a\inv\cup (=))^\infty$. 
  The relation $\a^\infty$ is called the \emph{transitive closure} of $\a$. 
\end{dfn}

\begin{prop}
  Let $\mathcal{X}$ be a set of relations on $X$, and $\a,\b\in\mathcal{X}$. 
  \begin{enumerate}
  \item
The functions $\mathcal{X}\cup\mathcal{X}^\infty\to \mathcal{X}\cup\mathcal{X}^\infty,\;\a\mapsto \a^\infty$, and
$\mathcal{X}\cup\mathcal{X}^e\to \mathcal{X}\cup\mathcal{X}^e,\; \a\mapsto \a^e$, are closure operators.
  \item $\a^\infty = \a$ if and only if $\a$ is transitive, and $\a^e = \a$ if and
    only if $\a$ is an equivalence relation.
  \item If $\a$ and $\b$ are equivalence relations satisfying $\a\circ\b=\b\circ\a$, then $(\a\cup\b)^{\infty} = \a\circ\b$. \label{viso}
  \end{enumerate}
\end{prop}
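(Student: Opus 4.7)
The plan is a standard two-sided inclusion argument, proceeding by induction on composition length for the nontrivial direction. The key algebraic input is the identity $\gamma\circ\gamma=\gamma$ valid for any equivalence relation $\gamma$ (from reflexivity: $x\gamma y \impl x\gamma x\gamma y$, so $\gamma\subset\gamma\circ\gamma$; from transitivity: $\gamma\circ\gamma\subset\gamma$). This proof does not interact with the apartness framework and is identical to the classical argument.

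The inclusion $\a\circ\b\subset(\a\cup\b)^\infty$ is immediate, since $\a\circ\b\subset(\a\cup\b)\circ(\a\cup\b)=(\a\cup\b)^{\circ 2}\subset(\a\cup\b)^\infty$.

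For the reverse inclusion, I would prove by induction on $n$ that $(\a\cup\b)^{\circ n}\subset\a\circ\b$ for all $n\ge 1$. The base case $n=1$ follows from reflexivity: $\a=\a\circ(=)\subset\a\circ\b$ (composing with reflexive $\b$), and similarly $\b\subset\a\circ\b$, so $\a\cup\b\subset\a\circ\b$. For the inductive step, assuming $(\a\cup\b)^{\circ n}\subset\a\circ\b$, compute
\[
(\a\cup\b)^{\circ(n+1)}=(\a\cup\b)^{\circ n}\circ(\a\cup\b)
\subset(\a\circ\b)\circ(\a\cup\b)=(\a\circ\b\circ\a)\cup(\a\circ\b\circ\b).
\]
Using the hypothesis $\a\circ\b=\b\circ\a$, associativity of $\circ$, and the identity $\gamma\circ\gamma=\gamma$, one finds $\a\circ\b\circ\a=\a\circ\a\circ\b=\a\circ\b$ and $\a\circ\b\circ\b=\a\circ\b$, so the union lies in $\a\circ\b$ as required. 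Taking unions over $n\ge 1$ yields $(\a\cup\b)^\infty\subset\a\circ\b$.

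I do not anticipate any genuine obstacle here: every manipulation is purely relational and uses only reflexivity, transitivity, associativity of composition, and the commutation hypothesis. Symmetry of $\a$ and $\b$ is not even invoked, so the result in fact holds for any pair of reflexive and transitive relations that commute under composition.
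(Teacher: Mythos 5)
Your argument for item (3) is correct and is precisely the standard classical proof that the paper itself relies on: the authors give no proof of this proposition, remarking that ``the proofs are the same as in the classical setting'' and citing Howie. The one step that deserves a constructive second look --- distributing composition over the union, $(\a\circ\b)\circ(\a\cup\b)=(\a\circ\b\circ\a)\cup(\a\circ\b\circ\b)$ --- amounts to distributing an existential quantifier over a disjunction, which is intuitionistically valid (compare the paper's own explicit verification of $(\a\cup\b)^c=\a^c\cup\b^c$, where exactly this kind of disjunction is checked). So your claim that the argument transfers verbatim is right, and your closing observation that symmetry is never used (only reflexivity, transitivity and the commutation hypothesis) is also correct. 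The only shortfall is coverage: the proposition has three items and you prove only item (3). Items (1) and (2) are routine --- extensivity is $\a=\a^{\circ 1}\subset\a^\infty$, monotonicity follows from $\a\subset\b\impl\a^{\circ n}\subset\b^{\circ n}$, and idempotence from $\a^{\circ m}\circ\a^{\circ n}=\a^{\circ(m+n)}$ --- but a complete proposal should at least record them.
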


\begin{dfn}
  For any relation $\a$ on a semigroup $S$, define $\a^c$ by
\[ x\a^cy \:\equ\: 
x\a y \eller\exists_{s,t,u,v\in S} (\, u\a v \,\wedge\, (x,y)=(sut,svt) \,) \,. \]
\end{dfn}

\begin{prop} \label{compatibleclosure}
    Let $\mathcal{X}$ be a set of relations on $X$, and $\a,\b\in\mathcal{X}$.  
    \begin{enumerate}
    \item The function $\mathcal{X}\cup\mathcal{X}^c\to \mathcal{X}\cup\mathcal{X}^c$, given by $\a\mapsto \a^c$, is a closure operator.
    \item $\a^c=\a$ if and only if $\a$ is left and right compatible with the multiplication
      in $S$.
    \item $(\a^c)\inv = (\a\inv)^c$
    \item $(\a\cup\b)^c = \a^c\cup \b^c$. \label{compatibleunion}
    \end{enumerate}
\end{prop}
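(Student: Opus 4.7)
My plan is to verify the four items by direct unfolding of the definition of $\a^c$, following the classical argument (e.g.\ Howie~\cite{howie95}) with no essential constructive modification; the partial order on relations is set inclusion $\subset$.

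For (1), extensivity $\a\subset\a^c$ and monotonicity $\a\subset\b\impl\a^c\subset\b^c$ both read off immediately from the two disjuncts in the definition. The substantive verification is idempotence $(\a^c)^c=\a^c$. Having ``$\supset$'' for free, I prove ``$\subset$'' by unfolding twice: if $x(\a^c)^c y$, then either $x\a^c y$ (done), or $x=s_1u_1t_1$ and $y=s_1v_1t_1$ with $u_1\a^c v_1$; in the latter case, either $u_1\a v_1$ (and the existing outer decomposition already witnesses $x\a^c y$), or $u_1=s_2u_2t_2$ and $v_1=s_2v_2t_2$ with $u_2\a v_2$, in which case associativity of the multiplication collapses the two decompositions into one: $x=(s_1s_2)u_2(t_2t_1)$, $y=(s_1s_2)v_2(t_2t_1)$, so $x\a^c y$.

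For (2), $\Leftarrow$ applies right and then left compatibility to any pair $(sut,svt)$ with $u\a v$. Conversely, if $\a^c=\a$, then the second disjunct of the definition yields $sut\a svt$ whenever $u\a v$; taking $s=1$ or $t=1$ (\ie\ reading $s,t$ over $S^1$, as is standard) recovers left and right compatibility individually. Item (3) is a relabelling: $x(\a^c)\inv y\equiv y\a^c x$ unfolds as $y\a x$ (\ie\ $x\a\inv y$) or $(y,x)=(sut,svt)$ with $u\a v$; renaming $u'=v,\,v'=u$, the latter reads $(x,y)=(su't,sv't)$ with $u'\a\inv v'$, which is $x(\a\inv)^c y$. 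Item (4) is similar: $\a^c\cup\b^c\subset(\a\cup\b)^c$ by monotonicity from (1) applied to $\a,\b\subset\a\cup\b$, and the reverse inclusion follows because any witness $u(\a\cup\b)v$ in the second disjunct splits as $u\a v$ or $u\b v$, placing $(x,y)$ in $\a^c$ or $\b^c$ respectively.

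The only mildly delicate point is the $\Rightarrow$ direction of (2), where $s,t$ must be read over $S^1$ for left and right compatibility to be recoverable separately (otherwise one obtains only the combined two-sided version $sut\a svt$). This is the standard Howie convention and introduces no constructive subtleties: all arguments above are intuitionistically valid, and none depends on decidability, on LEM, or on tightness of the apartness.
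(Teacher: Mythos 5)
Your proof is correct and takes essentially the same approach as the paper: the paper writes out only item (4) explicitly, as a chain of equivalences distributing the disjunction through the existential and the conjunction, which is exactly your case-split on the witness $u(\a\cup\b)v$, and it defers (1)--(3) to the standard classical argument that you have spelled out. Your remark that $s,t$ must range over $S^1$ in the $\Rightarrow$ direction of (2) is a correct and worthwhile catch, since the paper's displayed definition of $\a^c$ quantifies $s,t$ over $S$ only, under which left and right compatibility cannot be recovered separately in a semigroup without identity.
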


\begin{proof}[Proof of Proposition~\ref{compatibleclosure}\eqref{compatibleunion}]
As disjunctions can be treacherous in constructive mathematics, we give the proof of this
result. 
For any $x,y\in S$,
\begin{align*}
  &x(\a\cup\b)^cy \;\equ\;
  (x\a y \eller x\b y) \eller 
  \exists_{s,t,u,v\in S} \,[\, u(\a\cup\b)v \wedge (x,y)=(sut,svt) \,] \\
  &\;\equ\;
  (x\a y \eller x\b y) \eller 
  \exists_{s,t,u,v\in S} \,[\, (u\a v \eller u\b v) \wedge (x,y)=(sut,svt) \,] \\
  &\;\equ\;
  (x\a y \eller x\b y) \eller 
  \exists_{s,t,u,v\in S} \,[\, (u\a v \wedge (x,y)=(sut,svt)) \eller
    (u\b v \wedge (x,y)=(sut,svt))\,] \\
  &\;\equ\;
  x\a y \eller x\b y \eller 
  \exists_{s,t,u,v\in S} \,[\, (u\a v \wedge (x,y)=(sut,svt)) \,] 
  \eller \exists_{s,t,u,v\in S} \,[\,(u\b v \wedge (x,y)=(sut,svt))\,] \\
  &\;\equ\; x\a^c y \eller x\b^cy \;\equ\; x(\a^c\cup\b^c)y \,.
\end{align*}
\end{proof}

\begin{dfn}
  The \emph{congruence closure} of a relation $\a\subset S\times S$ is the relation
  $\a^k = (\a^c)^e$.
\end{dfn}

\begin{prop} \label{closureprop}
    Let $\mathcal{X}$ be a set of relations on $X$, and $\a,\b\in\mathcal{X}$. 
  \begin{enumerate}
  \item The function $\mathcal{X}\cup\mathcal{X}^k\to \mathcal{X}\cup\mathcal{X}^k$, given by $\a\mapsto \a^k$, is a closure operator.
  \item $\a^k=\a$ if and only if $\a$ is a congruence on $S$. 
  \item If $\a$ and $\b$ are congruences on $S$, then $(\a\cup\b)^k = (\a\cup\b)^\infty$. 
  \end{enumerate}
\end{prop}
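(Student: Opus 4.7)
The plan is to reduce all three parts to the corresponding properties of the closure operators $(\cdot)^c$ and $(\cdot)^e$ established in Proposition~\ref{compatibleclosure} and in the analogous statements for $(\cdot)^e$ just above it, together with the observation that the classes of left compatible, right compatible, symmetric, and reflexive relations are each closed under union, inversion, and relational composition.

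For (1), extensivity $\a\subset \a^c\subset (\a^c)^e = \a^k$ and monotonicity $\a\subset \b \impl \a^k\subset \b^k$ follow immediately from the extensivity and monotonicity of the two constituent closure operators. For idempotence, my strategy is to first show that $\a^k$ is always a congruence, and then use part~(2) to conclude $(\a^k)^k = \a^k$. That $\a^k$ is an equivalence is built into the definition via $(\cdot)^e$. For compatibility, I would note that $\a^c$ is left and right compatible by Proposition~\ref{compatibleclosure}(2); that these properties are preserved by $\b\mapsto \b\inv$ (Proposition~\ref{compatibleclosure}(3)) and trivially by $\b\mapsto \b\cup(=)$; and that the composition of two left (resp.\ right) compatible relations is again left (resp.\ right) compatible. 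Hence $\a^e$, and \emph{a fortiori} $\a^k = (\a^c)^e$, is left and right compatible; being transitive, it is then fully compatible, so $\a^k$ is a congruence.

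Part (2) then follows in both directions: the implication $\a^k = \a \impl \a$ is a congruence is immediate from the preceding argument, while the converse follows from Proposition~\ref{compatibleclosure}(2) and the analogous characterisation of $\a^e = \a$. For (3), if $\a$ and $\b$ are congruences then Proposition~\ref{compatibleclosure}(4) combined with (2) yields $(\a\cup\b)^c = \a^c\cup\b^c = \a\cup\b$, so
\[ (\a\cup\b)^k = (\a\cup\b)^e = \bigl(\, (\a\cup\b) \cup (\a\cup\b)\inv \cup (=) \,\bigr)^\infty. \]
Symmetry of $\a$ and $\b$ gives $(\a\cup\b)\inv = \a\cup\b$, and reflexivity gives $(=)\subset\a\cup\b$; the right-hand side therefore collapses to $(\a\cup\b)^\infty$.

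The only step that is not purely formal is the verification that left- and right-compatibility are preserved under relational composition, used in (1). This is a routine check: assuming $x\gamma z \impl ax\gamma az$ and $z\delta y \impl az\delta ay$, the equivalence $x(\gamma\circ\delta)y \:\equ\: \exists_z(x\gamma z\wedge z\delta y)$ yields $ax(\gamma\circ\delta)ay$, and similarly for right multiplication. This is the main, if modest, obstacle.
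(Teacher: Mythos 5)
Your proof is correct and is essentially the argument the paper intends: the paper omits the proof of this proposition entirely, stating that the proofs in this subsection are the same as in the classical setting, and your reduction to the closure operators $(\cdot)^c$ and $(\cdot)^e$ is exactly that standard argument. The only constructively delicate step, the disjunction behind $(\a\cup\b)^c=\a^c\cup\b^c$, is precisely the one the paper singles out and proves as Proposition~\ref{compatibleclosure}\eqref{compatibleunion}, and you correctly invoke it rather than reprove it.
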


%%%%%%%%%%%%%%%%%%%%%%%%%%%%%%%%%%%%%%%%

\subsection{The cotransitive kernel}

The principal aim of this section is to define a constructive analogue of the transitive closure  -- the cotransitive kernel. 
Unfortunately, as we shall see in Section~\ref{notadmissible}, it turns out that the natural candidate for such a relation cannot be shown to be cotransitive in general. We will, however, attain our goal for some classes of relations, including all relations on finitely enumerable sets -- see Section~\ref{finenumsets}.

Given $\a\subset X\times X$, define a set $\<\a\>^*$ of relations on $X$ inductively by $\a\in\<\a\>^*$ and $\b*\gamma\in\<\a\>^*$ whenever $\b,\gamma\in\<\a\>^*$.
That is, $\<\a\>^*$ is the set of relations on $X$ generated by $\a$ under the filled product operation.
We remark that every element $\b$ in $\<\a\>^*$ can be written as $\b = g(\a)$ for some non-associative, non-commutative monomial $g(T)$ in a formal variable $T$ (although not necessarily in a unique way).
Here, $g(\a)$ is defined recursively by
\[ g(\a) =
\begin{cases}
  \a &\mbox{if}\quad g(T) = T,\\
  g_1(\a)*g_2(\a) &\mbox{if}\quad g(T) = g_1(T)g_2(T).
\end{cases}
\]
If filled products of elements in $\<\a\>^*$ are associative, then $\<\a\>^* = \{\a^{*n}\}_{n\ge1}$, where $\a^{*1} = \a$ and $\a^{*(n+1)} = \a*\a^{*n}$.

Set $\tilde{\a} = \bigcap\<\a\>^* = \bigcap_{\beta\in\<\a\>^*}\beta\subset X\times X$.
From Lemma~\ref{*properties}(\ref{*properties4},\ref{*properties6}), it follows that if $\kappa\subset X\times X$ is a cotransitive relation contained in $\a$, then $\kappa\subset \tilde{\a}$. Assuming LEM gives that $\tilde{\a}$ is cotransitive and $\tilde{\a} = \neg((\neg\a)^\infty)$, but these statements are not constructively valid in general.

\begin{dfn}
  \begin{enumerate}
  \item A relation $\a\subset X\times X$ is said to be \emph{(cotransitively) admissible} if $\tilde{\a}= \bigcap\<\a\>^*$ is cotransitive.
  \item If $\a$ is admissible then $\tilde{\a}$ is called the \emph{cotransitive kernel} of $\a$, and denoted by $c(\a)$.  
\item
  A relation $\a\subset X\times X$ is \emph{short} if there exists a positive integer $n$ and $\b_1,\ldots,\b_n\in \<\a\>^*$ such that $\bigcap\<\a\>^*=\b_1\cap\cdots\cap\b_n$.
  \end{enumerate}
\end{dfn}

\begin{rmk}
If $\a\subset X\times X$ is irreflexive, then so is any sub-relation $\b\subset\a$. So if $\b_1\cap\cdots\cap\b_n = \tilde{\a}$ for $\b_1,\ldots,\b_n\in\<\a\>^*$ then, by Lemma~\ref{*properties}\eqref{*properties3}, $\b_1*\cdots*\b_n\subset\b_1\cap\cdots\cap\b_n=\tilde{\a}$. As $\b_1*\cdots*\b_n\in\<\a\>^*$, we get $\tilde{\a}\subset \b_1*\cdots*\b_n\subset\b_1\cap\cdots\cap\b_n=\tilde{\a}$, and thus $\tilde{\a} = \b_1*\cdots*\b_n\in\<\a\>^*$.
This means that an irreflexive relation $\a$ is short if and only if $\tilde{\a}\in\<\a\>^*$.
\end{rmk}

In practice, it may of course be difficult to determine whether or not a given relation is admissible. The concept of a short relation gives us a more workable, sufficient criterion for admissibility.

\begin{prop} \label{atilde-cotrans}
  Let $\a$ be a relation on $X$, such that either
  \begin{enumerate}
  \item $\a$ is short, or \label{atilde-cotrans1}
  \item the set $\<\a\>^*$ has the constant domains property. \label{atilde-cotrans2}
  \end{enumerate}
  Then $\a$ is admissible.
\end{prop}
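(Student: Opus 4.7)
The plan is to verify cotransitivity of $\tilde{\a}$ directly: for arbitrary $x,z$ with $x\tilde{\a} z$ and an arbitrary ``midpoint'' $y\in X$, I would deduce $x\tilde{\a} y\eller y\tilde{\a} z$. The shared starting point in both cases is the observation that $\b*\gamma\in\<\a\>^*$ whenever $\b,\gamma\in\<\a\>^*$, whence $\tilde{\a}\subset\b*\gamma$ for every such pair. Unfolding the definition of the filled product, $x\tilde{\a} z$ therefore yields, for every $y\in X$ and every pair $\b,\gamma\in\<\a\>^*$,
\[ x\b y \eller y\gamma z . \]

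The desired conclusion may be rewritten as $(\forall \b\in\<\a\>^*\colon x\b y)\eller(\forall\gamma\in\<\a\>^*\colon y\gamma z)$. To extract this from the two-parameter disjunction, the idea is to apply the constant domains principle~\eqref{cdaxiom} twice, peeling off one parameter at a time. First in $\b$, with $\gamma$ held fixed and the $\gamma$-dependent clause $y\gamma z$ serving as the ``$Q$'' in~\eqref{cdaxiom}, to obtain $x\tilde{\a} y\eller y\gamma z$ for each $\gamma$; then in $\gamma$, with the now $\gamma$-free clause $x\tilde{\a} y$ playing the role of $Q$, to obtain $x\tilde{\a} y\eller y\tilde{\a} z$. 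Under hypothesis~\eqref{atilde-cotrans2}, both applications are legitimate directly by assumption on $\<\a\>^*$, and the argument concludes.

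Under hypothesis~\eqref{atilde-cotrans1}, I would fix $\b_1,\dots,\b_n\in\<\a\>^*$ witnessing $\tilde{\a}=\b_1\cap\cdots\cap\b_n$ and run the same argument with $\b$ and $\gamma$ restricted to the finite index set $\{1,\dots,n\}$. The starting disjunctions $x\b_i y\eller y\b_j z$ still hold for all $i,j$ because $\b_i*\b_j\in\<\a\>^*$ contains $\tilde{\a}$; note that this product need not itself be among the $\b_k$, but this is harmless, since the universal quantifiers appearing in the subsequent reduction only range over $\{1,\dots,n\}$. The finite set $\{1,\dots,n\}$ possesses the constant domains property (Lemma~\ref{fecd} and the remarks preceding it), so the same two-step reduction applies and yields $x\tilde{\a} y\eller y\tilde{\a} z$.

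The one point requiring care is the bookkeeping of parameter dependencies across the two successive applications of constant domains -- specifically, checking that the disjunct carried across each quantifier is genuinely free of that parameter. Given the two choices of $Q$ above, this is immediate, so I do not expect any substantive obstacle beyond unwinding the definitions.
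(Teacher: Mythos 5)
Your proposal is correct and follows essentially the same route as the paper's proof: in both cases the key point is that $\tilde{\a}\subset\b*\gamma$ for all $\b,\gamma\in\<\a\>^*$, combined with a pull-out of universal quantifiers past the disjunction. The only cosmetic difference is that the paper packages the conclusion as $\tilde{\a}\subset\tilde{\a}*\tilde{\a}$ and invokes Lemma~\ref{*properties}\eqref{*properties4}, and in the short case performs the finite distributivity of $\eller$ over $\bigwedge$ directly rather than phrasing it as the constant domains property of the finite index set; these are equivalent.
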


\begin{proof}
\eqref{atilde-cotrans1}
Assume that $\a$ is short.
For all $x,y\in X$, we have
\begin{align*}
  x(\tilde{\a}*\tilde{\a})y
  \:\equ\:&
   \forall_{z\in X}\left( x\tilde{\a}z \eller z\tilde{\a}y \right) 
   \:\equ\:
   \forall_{z\in X}\left(\bigwedge_{i=1}^n(x\b_i z) \eller \bigwedge_{j=1}^n(z\b_j y)  \right) \\
   \:\equ\:&
   \forall_{z\in X} \left(\bigwedge_{i,j=1}^n (x\b_i z \eller z\b_j y) \right)
   \:\equ\:
   \bigwedge_{i,j=1}^n \left(\forall_{z\in X}(x\b_i z \eller z\b_j y) \right) 
   \:\equ\:
   \bigwedge_{i,j=1}^n x(\b_i*\b_j)y \,,
\end{align*}
that is, $\tilde{\a}*\tilde{a} = \bigcap_{i,j=1}^n(\b_i*\b_j)$.
But as $\b_i*\b_j\in\<\a\>^*$ for all $i$ and $j$, the inclusion $ \tilde{\a} = \bigcap\<\a\>^* \subset \bigcap_{i,j=1}^n(\b_i*\b_j)$ holds, and thus $\tilde{\a}\subset\tilde{\a}*\tilde{\a}$. So the relation $\tilde{\a}$ is cotransitive, by Lemma~\ref{*properties}\eqref{*properties4}.

\eqref{atilde-cotrans2}
Assume that $\<\a\>^*$ has the constant domains property. Then, for all $x,y\in X$,
\begin{align*}
  x(\tilde{\a}*\tilde{\a})y &\;\equ\; \forall_{z\in X}(x\tilde{\a}z \eller z\tilde{\a}y)
  \;\equ\;
  \forall_{z\in X}\left(\forall_{\b\in\<\a\>^*}(x\b z) \eller \forall_{\gamma\in\<\a\>^*}(z\gamma y)\right) \\
  &\stackrel{\rm CD}{\;\equ\;}
  \forall_{\b,\gamma\in\<\a\>^*}  \forall_{z\in X}((x\b z) \eller (z\gamma y))
  \;\equ\;
  \forall_{\b,\gamma\in\<\a\>^*}\,  x(\b*\gamma) y \,.
\end{align*}
On the other hand, if $x\tilde{\a}y$ then $x(\b*\gamma) y$ holds for all $\b,\gamma\in\<\a\>^*$, and thus $x(\tilde{\a}*\tilde{\a})y$. This proves that $\tilde{\a}\subset\tilde{\a}*\tilde{\a}$, whence Lemma~\ref{*properties}\eqref{*properties4} implies that  $\tilde{\a}$ is cotransitive.
\end{proof}

\begin{prop}\label{cproperties}
  Let $\mathcal{X}$ be a set of admissible relations on $X$, and $\a\in\mathcal{X}$.
  \begin{enumerate}
  \item $c$ is a kernel operator on the set $\mathcal{X}\cup c(\mathcal{X})$. 
    \label{cproperties1}
    \item The identity $c(\a) = \a$ holds if and only if $\a$ is cotransitive.
      \label{cproperties2}
  \item If $\a$ is symmetric/irreflexive/strongly irreflexive, then so is $c(\a)$.    \label{cproperties3}
    \item If $\kappa\subset\a$ is a cotransitive relation, then $\kappa\subset c(\a)$. 
  \label{cproperties4}
\end{enumerate}
\end{prop}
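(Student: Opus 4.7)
The plan is to prove the items in the order \eqref{cproperties4}, \eqref{cproperties2}, \eqref{cproperties1}, \eqref{cproperties3}, since each of the later parts reduces to \eqref{cproperties4} together with Lemma~\ref{*properties}. A preliminary step I would take, which makes no use of admissibility, is to establish a \emph{key lemma}: for any relation $\a$ on $X$ and any cotransitive $\kappa\subset\a$, one has $\kappa\subset\tilde{\a}=\bigcap\<\a\>^*$. The proof proceeds by structural induction on $\<\a\>^*$: the base case $\kappa\subset\a$ is the hypothesis, and in the inductive step, if $\kappa\subset\b\cap\gamma$ for $\b,\gamma\in\<\a\>^*$ then Lemma~\ref{*properties}\eqref{*properties7} gives $\kappa\subset\b*\gamma$.

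Part \eqref{cproperties4} is now immediate. Part \eqref{cproperties2} splits into two implications: $c(\a)\subset\a$ is trivial since $\a\in\<\a\>^*$, while cotransitivity of $\a$ allows us to apply \eqref{cproperties4} with $\kappa=\a$ to obtain the reverse inclusion; conversely, admissibility of $\a$ gives cotransitivity of $c(\a)$, so $c(\a)=\a$ forces $\a$ to be cotransitive. A useful corollary to record here is that every cotransitive relation $\kappa$ is itself admissible: the key lemma applied to $\kappa\subset\kappa$ yields $\kappa\subset\tilde{\kappa}$, and the automatic reverse inclusion gives $\tilde{\kappa}=\kappa$, which is cotransitive.

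For \eqref{cproperties1}, the containment $c(\a)\subset\a$ is as above. Monotonicity follows from a routine induction using Lemma~\ref{*properties}\eqref{*properties6}: $\a\subset\b$ implies $g(\a)\subset g(\b)$ for every non-associative monomial $g$, so $c(\a)\subset g(\a)\subset g(\b)$ for all such $g$, hence $c(\a)\subset c(\b)$. Idempotency then follows from \eqref{cproperties2}: by admissibility, $c(\a)$ is cotransitive; by the corollary above, $c(\a)$ itself is admissible; and applying \eqref{cproperties2} to $c(\a)$ we obtain $c(c(\a))=c(\a)$.

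Finally, in \eqref{cproperties3}, irreflexivity and strong irreflexivity pass from $\a$ to $c(\a)$ because $c(\a)\subset\a$. For symmetry, my plan is to show by structural induction, using Lemma~\ref{*properties}\eqref{*properties2}, that when $\a\inv=\a$ the involution $\b\mapsto\b\inv$ maps $\<\a\>^*$ to itself: the base case is the hypothesis, and the inductive step uses $(\b*\gamma)\inv=\gamma\inv*\b\inv\in\<\a\>^*$. This gives
\[
c(\a)\inv \,=\, \Bigl(\bigcap\<\a\>^*\Bigr)\inv \,=\, \bigcap_{\b\in\<\a\>^*}\b\inv \,=\, \bigcap\<\a\>^* \,=\, c(\a)\,.
\]
The main subtlety I anticipate lies precisely here: one must be careful to invoke \eqref{*properties2} rather than any commutativity of the filled product, since $\b*\gamma$ and $\gamma*\b$ need not agree; the reverse-order composition $\gamma\inv*\b\inv$ is what makes the induction close up. Apart from this small bookkeeping point, the entire proof is essentially routine once the key lemma is established.
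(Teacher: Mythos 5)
Your proof is correct and follows essentially the same route as the paper: the ``key lemma'' you isolate (cotransitive $\kappa\subset\a$ implies $\kappa\subset\tilde{\a}$, via Lemma~\ref{*properties}) is exactly the observation the paper records just before defining admissibility, and the remaining steps use the same ingredients --- $\a\in\<\a\>^*$ for the containment, Lemma~\ref{*properties}\eqref{*properties6} for monotonicity, Lemma~\ref{*properties}\eqref{*properties2} for symmetry, and $c(\kappa)=\kappa$ for cotransitive $\kappa$ for idempotency. The only difference is organizational: you prove \eqref{cproperties4} directly and derive idempotency from \eqref{cproperties2}, whereas the paper deduces \eqref{cproperties4} from \eqref{cproperties1}; this is immaterial.
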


\begin{proof}
  \eqref{cproperties1}
  First, we remark that every cotransitive relation $\kappa$ is short, and $c(\kappa)=\kappa$. Thus every relation in the set $\mathcal{X}\cup c(\mathcal{X})$ is indeed admissible, and $c(c(\a)) = c(\a)$.
Since $c(\a)= \bigcap\<\a\>^*$ and $\a\in\<\a\>^*$, we have $c(\a)\subset\a$. Monotonicity follows, by induction, from Lemma~\ref{*properties}\eqref{*properties6}. 

\eqref{cproperties2}
As noted above, $c(\a)= \a$ if $\a$ is cotransitive. Since $c(\a)$ is cotransitive by definition, the converse implications holds as well.

\eqref{cproperties3}
By an induction using Lemma~\ref{*properties}\eqref{*properties2}, we get that $c(\a)$ is symmetric if $\a$ is symmetric. The relation $\a$ being irreflexive means that $\a\subset(\neq)$, and since $c(\a)\subset \a$ by \eqref{cproperties1}, it follows that $c(\a)$ is irreflexive too. Strong irreflexivity follows analogously. 

\eqref{cproperties4}
Applying \eqref{cproperties1} to the set $\mathcal{X}=\{\kappa,\a\}$, the inclusion $\kappa\subset\a$ gives $\kappa = c(\kappa) \subset c(\a)$. 
\end{proof}

Under suitable admissibility assumptions, we can now characterise the transitive relations that occur as negations of cotransitive relations.

\begin{cor} \label{complementcor}
  \begin{enumerate}
  \item Let $\a\subset X\times X$ be a transitive relation such that $\neg\a$ is admissible. The following two statements are equivalent:
    \begin{enumerate}
    \item $\a=\neg \kappa$ for some cotransitive relation $\kappa\subset X\times X$;
    \item $\a=\neg c(\neg\a)$.
    \end{enumerate}
    \label{complementcor1}
  \item Let $\a\subset X\times X$ be a transitive and reflexive relation such that $\tild\a$ is admissible. The following two statements are equivalent:
    \begin{enumerate}
    \item $\a=\neg \kappa$ for some co-quasiorder $\kappa\subset X\times X$;
    \item $\a=\neg c(\tild\a)$.
    \end{enumerate}
    \label{complementcor2}
  \end{enumerate}
\end{cor}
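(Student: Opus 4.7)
My plan is to prove both equivalences by the same template: (b)$\Rightarrow$(a) holds by taking the cotransitive kernel itself as witness, and (a)$\Rightarrow$(b) rests on the maximality of this kernel established in Proposition~\ref{cproperties}\eqref{cproperties4}. Admissibility is exactly what is needed to know that the kernel exists and is cotransitive.

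For part~\eqref{complementcor1}, the direction (b)$\Rightarrow$(a) is immediate with $\kappa=c(\neg\a)$, which is cotransitive by admissibility of $\neg\a$. Conversely, suppose $\a=\neg\kappa$ for a cotransitive $\kappa$. Applying $\neg$ to both sides gives $\neg\a=\neg\neg\kappa\supset\kappa$, so Proposition~\ref{cproperties}\eqref{cproperties4} yields $\kappa\subset c(\neg\a)$, and hence $\neg c(\neg\a)\subset\neg\kappa=\a$. The reverse inclusion $\a\subset\neg c(\neg\a)$ is the contrapositive of $c(\neg\a)\subset\neg\a$, which is part of the kernel-operator property (Proposition~\ref{cproperties}\eqref{cproperties1}).

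Part~\eqref{complementcor2} follows the same pattern, with two extra observations. First, in (b)$\Rightarrow$(a) one needs $c(\tild\a)$ to be a co-quasiorder and not merely cotransitive: the reflexivity of $\a$ makes $\tild\a$ strongly irreflexive by Lemma~\ref{reflexiveconsistent}\eqref{reflexiveconsistent2}, and Proposition~\ref{cproperties}\eqref{cproperties3} says $c$ preserves strong irreflexivity. Second, in (a)$\Rightarrow$(b), if $\kappa$ is a co-quasiorder with $\a=\neg\kappa$, then Corollary~\ref{kappanegcompl} rewrites this as $\a=\tild\kappa$, whence $\kappa\subset\tild\a$ by Remark~\ref{apartrmk}\eqref{apartrmk3}. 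The rest mirrors part~\eqref{complementcor1}, with the inclusion $\a\subset\neg c(\tild\a)$ coming from the chain $c(\tild\a)\subset\tild\a\subset\neg\a$.

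I do not anticipate a real obstacle: all the substantive content has already been packaged into Proposition~\ref{cproperties} and Corollary~\ref{kappanegcompl}. The only point requiring care is constructive hygiene: every containment of the form $\a\subset\neg\beta$ must be obtained as the contrapositive of $\beta\subset\neg\a$, rather than by cancelling a double negation.
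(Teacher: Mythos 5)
Your proof is correct and follows essentially the same route as the paper: (b)$\Rightarrow$(a) is immediate, and (a)$\Rightarrow$(b) uses the maximality of the cotransitive kernel (Proposition~\ref{cproperties}\eqref{cproperties4}) to get $\kappa\subset c(\neg\a)\subset\neg\a$ (resp.\ $\kappa\subset c(\tild\a)\subset\tild\a$) and then passes to complements. You even supply a detail the paper leaves implicit, namely that in part~\eqref{complementcor2} the kernel $c(\tild\a)$ is strongly irreflexive (hence a co-quasiorder) because $\a$ is reflexive.
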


With respect to \eqref{complementcor2} above, recall from Corollary~\ref{kappanegcompl} that every co-quasiorder $\kappa$ satisfies $\neg\kappa = \tild\kappa$. 

\begin{proof}
The implication (b)$\Rightarrow$(a) is trivial in both cases.

\eqref{complementcor1}
Assume that $\a = \neg\kappa$ for a cotransitive relation $\kappa$. Then $\kappa\subset\neg\a$ and, by Proposition~\ref{cproperties}\eqref{cproperties4}, we have $\kappa\subset c(\neg\a)\subset\neg\a$. Negating this chain of inclusion gives us
\[ \a\subset \neg\neg\a \subset\neg c(\neg\a) \subset \neg\kappa = \a, \]
that is, $\a = \neg c(\neg\a)$.

\eqref{complementcor2}
This is parallel to the proof of \eqref{complementcor1}, using the apartness complement instead of the logical complement.
Assume that $\a = \neg\kappa = \tild\kappa$ for some co-quasiorder $\kappa$. Then $\kappa\subset\tild\a$ and thus $\kappa\subset c(\tild\a)\subset\tild\a$ by Proposition~\ref{cproperties}\eqref{cproperties4}. Taking the apartness complement now gives
\[\a\subset \tild\tild\a \subset\tild c(\tild\a)\subset\tild\kappa = \a \]
and hence $\a = \tild c(\tild\a) = \neg c(\tild\a)$. 
\end{proof}

We can also define a constructive friend of the equivalence closure $\a^e$ of a relation $\a$. 

\begin{dfn}
  Let $\a$ be a relation on $X$ such that $\a\cap\a\inv\cap (\apt)\subset X\times X$ is admissible. The \emph{coequivalence kernel} of $\a$ is the relation $q(\a) = c(\a\cap\a\inv\cap (\apt))$.
\end{dfn}

From Proposition~\ref{cproperties}, one readily deduces the following properties of the coequivalence kernel. 

\begin{prop}\label{qproperties}
Let $\a\subset X\times X$.
  \begin{enumerate}
  \item Let $\mathcal{X}$ be a set of relations on $X$, such that $\b\cap\b\inv\cap(\apt)\subset X\times X$ is admissible for all $\b\in\mathcal{X}$. Then $q$ is a kernel operator on $\mathcal{X}\cup q(\mathcal{X})$.
    \label{qproperties1}
  \item The identity $q(\a) = \a$ holds if and only if $\a$ is a coequivalence.
    \label{qproperties2}
  \end{enumerate}
\end{prop}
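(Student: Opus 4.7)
The plan is to deduce Proposition~\ref{qproperties} directly from Proposition~\ref{cproperties} by analysing the auxiliary operation $\a\mapsto\a\cap\a\inv\cap(\apt)$. First I would observe that $\a\cap\a\inv\cap(\apt)$ is always symmetric and strongly irreflexive, so by Proposition~\ref{cproperties}\eqref{cproperties3} the coequivalence kernel $q(\a)=c(\a\cap\a\inv\cap(\apt))$ is itself symmetric and strongly irreflexive (and cotransitive by construction). Consequently
\[ q(\a)\cap q(\a)\inv\cap(\apt) \;=\; q(\a) \,, \]
and since every cotransitive relation is short and hence admissible, each $q(\b)$ for $\b\in\mathcal{X}$ is an admissible relation whose ``decoration'' $q(\b)\cap q(\b)\inv\cap(\apt)$ is again admissible. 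This is the one technical point that needs care, and it is precisely what makes the kernel operator $q$ well-defined on $\mathcal{X}\cup q(\mathcal{X})$.

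For~\eqref{qproperties1}, contraction $q(\b)\subset\b$ is immediate from $q(\b)\subset\b\cap\b\inv\cap(\apt)\subset\b$ together with Proposition~\ref{cproperties}\eqref{cproperties1}. Monotonicity follows by noting that $\b\subset\gamma$ implies $\b\cap\b\inv\cap(\apt)\subset\gamma\cap\gamma\inv\cap(\apt)$, and both of these are admissible by hypothesis (or by the preceding observation, when $\b,\gamma\in q(\mathcal{X})$); applying the monotonicity clause of Proposition~\ref{cproperties}\eqref{cproperties1} to this pair yields $q(\b)\subset q(\gamma)$. Idempotence reduces to the identity above: $q(q(\b))=c(q(\b))=q(\b)$ by Proposition~\ref{cproperties}\eqref{cproperties2}, since $q(\b)$ is cotransitive.

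For~\eqref{qproperties2}, if $\a$ is a coequivalence then it is symmetric, strongly irreflexive and cotransitive, so $\a\cap\a\inv\cap(\apt)=\a$ and Proposition~\ref{cproperties}\eqref{cproperties2} gives $q(\a)=c(\a)=\a$. Conversely, assume $\a=q(\a)=c(\a\cap\a\inv\cap(\apt))$. Since $\a$ arises as an application of $c$ it is cotransitive; and from $\a=q(\a)\subset\a\cap\a\inv\cap(\apt)$ we read off $\a\subset\a\inv$ (whence $\a$ is symmetric) and $\a\subset(\apt)$ (whence $\a$ is strongly irreflexive). Thus $\a$ is a coequivalence, as required.

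The only real obstacle is the bookkeeping of admissibility when iterating $q$; everything else is a routine unpacking of definitions once one records that the operation $\b\mapsto\b\cap\b\inv\cap(\apt)$ is monotone and becomes the identity on coequivalences.
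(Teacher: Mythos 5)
Your proposal is correct and follows exactly the route the paper intends: the paper gives no explicit proof, stating only that the result is "readily deduced" from Proposition~\ref{cproperties}, and your argument supplies precisely that deduction (symmetry and strong irreflexivity of $\a\cap\a\inv\cap(\apt)$ passed through $c$ via Proposition~\ref{cproperties}\eqref{cproperties3}, the identity $q(\a)\cap q(\a)\inv\cap(\apt)=q(\a)$, and admissibility of cotransitive relations to make the iteration well-defined). The attention you give to the admissibility bookkeeping on $q(\mathcal{X})$ is the one point the paper glosses over, and you handle it correctly.
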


It follows that $q(\a)$ is the maximal coequivalence contained in $\a$. In particular, $\a$ is a coequivalence if and only if $q(\a)=\a$.

\begin{rmk}
Some formalisations of constructive mathematics, such as the \emph{Intuitionistic Zermelo--Fraenkel set theory} IZF (see, e.g., \cite[Chapter~VIII]{beeson85}), allow the formation of the power set $\wp(X)$ of an arbitrary set $X$. Within such a framework, one can define the cotransitive kernel $c(\a)$ of \emph{any} relation $\a\subset X\times X$, by
\[
c(\a) = \bigcup \left\{\b\in\wp(\a) \mid \forall_{x,y,z\in X}\,(x\b y \impl x\b z \eller z\b y) \right\} \,.
\]
In other words, $c(\a)$ is the union of all cotransitive subrelations of $\a$. Clearly, defined in this way, $c(\a)$ is the unique maximal cotransitive sub-relation of $\a$. In particular, it coincides with $\tilde{\a}$ if (and only if) $\a$ is admissible.

However, this definition does not provide any kind of algorithm for computing $c(\a)$. For this reason, we find it questionable if it can be given any meaningful constructive interpretation. 
\end{rmk}

%%%%%%%%%%%%%%%%%%%%%%%%%%%%%%%%%%%%%%%%
\subsection{The co-congruence kernel} \label{cocongrkerop}

Let $S$ be a semigroup with apartness.

\begin{dfn} Given a relation $\a$ on $S$, define a new relation $\eta(\a)$ by
\[\eta(\a) = \{(x,y)\in S\times S \mid 
    \forall_{u,v\in S,\:s,t\in S^1}(u\a v \eller (x,y)\!\apt\!(sut,svt)) \} \subset S\times S \,. \]
\end{dfn}

\begin{lma} \label{etalemma}
Let $\a$ and $\b$ be relations on $S$.
  \begin{enumerate}
  \item The relation $\eta(\a)$ is left and right co-compatible with the multiplication
    on $S$; \label{etacocomp}
  \item $\eta(\a)\subset\a$; \label{etashrinks}
  \item if $\a\subset\b$ then $\eta(\a)\subset\eta(\b)$; \label{etainclusion}
  \item if $\a$ is a co-quasiorder that is co-compatible with the multiplication, then $\a = \eta(\a)$. \label{etafix}
  \item An irreflexive and cotransitive relation is co-compatible if and only if it is left co-compatible and right co-compatible. \label{leftandright}
  \end{enumerate}
\end{lma}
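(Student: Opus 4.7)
The plan is to treat the five items in order of difficulty. Parts (2), (3) and (5) are short consequences of the definition of $\eta(\a)$ and the co-quasiorder axioms. For (2), I instantiate the universal quantifier in the definition of $\eta(\a)$ at $u=x$, $v=y$, $s=t=1$; the second disjunct reduces to $(x,y)\apt(x,y)$, which is impossible by irreflexivity of $\apt$, forcing $x\a y$. For (3), if $\a\subset\b$ then replacing $u\a v$ by the weaker $u\b v$ throughout the universally quantified disjunction yields $\eta(\a)\subset\eta(\b)$. For (5), the forward direction follows by specialising co-compatibility at $b=a$ or $x=y$ and discarding the resulting reflexive disjunct via irreflexivity; the converse applies cotransitivity to $(ax)\zeta(by)$ with intermediate point $ay$ to obtain $(ax)\zeta(ay)\eller(ay)\zeta(by)$, whence left, respectively right, co-compatibility give $x\zeta y \eller a\zeta b$.

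For (1), I prove left co-compatibility; the right version is symmetric. Given $(ax)\eta(\a)(ay)$ and parameters $u,v\in S$, $s,t\in S^1$, I instantiate the defining property of $\eta(\a)$ at $(u,v,as,t)$, which is legitimate since $as\in S^1$ whether $s\in S$ or $s=1$. The resulting disjunction is
\[ u\a v \,\eller\, (ax,ay)\apt(a(sut),a(svt)), \]
and strong extensionality of $\l_a$ transfers the apartness in the second disjunct componentwise to $(x,y)\apt(sut,svt)$, as required. The right-sided version uses the parameters $(u,v,s,ta)$ and strong extensionality of $\rho_a$.

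For (4), the inclusion $\eta(\a)\subset\a$ is (2), so only $\a\subset\eta(\a)$ remains. Fix $x\a y$ and arbitrary $u,v\in S$, $s,t\in S^1$. Two applications of cotransitivity yield
\[ x\a sut \,\eller\, sut\a svt \,\eller\, svt\a y; \]
the outer cases give $x\apt sut$, respectively $y\apt svt$, by strong irreflexivity, and therefore absorb into the desired disjunct $(x,y)\apt(sut,svt)$. The middle case $sut\a svt$ is handled by a case split on whether each of $s$ and $t$ lies in $S$ or equals $1$, using the decidable decomposition $S^1 = S\mathrel{\dot{\cup}}\{1\}$; in each of the four subcases, repeated application of co-compatibility, together with irreflexivity of $\a$ on $S$ to eliminate any $s\a s$ or $t\a t$ disjuncts that arise, leaves $u\a v$.

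The main obstacle is the bookkeeping in (4): co-compatibility is a relation on $S$ and not $S^1$, so identity factors must be peeled off separately before co-compatibility applies. Apart from this, everything reduces to the two structural principles already in play — strong extensionality of $\l_a,\rho_a$ for (1) and cotransitivity combined with (strong) irreflexivity for (4) and (5).
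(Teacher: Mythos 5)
Your proposal is correct and follows essentially the same route as the paper's proof: the same instantiations $(u,v,1,1)$ for (2) and $s\mapsto as$ for (1), the same double use of cotransitivity plus strong irreflexivity and co-compatibility for (4), and the same specialisations for (5). The only (welcome) difference is that you make explicit the case split over $S^1=S\mathrel{\dot{\cup}}\{1\}$ when applying co-compatibility in (4), a point the paper leaves implicit.
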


\begin{proof}
\eqref{etacocomp}
Given $a,x,y\in S$, we have
\begin{align*}
  (ax)\eta(\a)(ay) \quad\equ\quad
  &\forall_{u,v\in S,\:s,t\in S^1}(u\a v \eller (ax,ay)\apt(sut,svt))  \\
  \quad\stackrel{s=as'}{\impl}\hspace*{-5.5pt}\quad
  &\forall_{u,v\in S,\:s',t\in S^1}(u\a v \eller (ax,ay)\apt(as'ut,as'vt)) \\
  \quad\impl\quad
  &\forall_{u,v\in S,\:s',t\in S^1}(u\a v \eller (x,y)\apt(s'ut,s'vt)) 
  \quad\equ\quad   x\eta(\a)y \,,
\end{align*}
so $\eta(\a)$ is left compatible with the multiplication on $S$. Right compatibility follows similarly.

\eqref{etashrinks}
Assume that $x\eta(\a)y$. Inserting $(u,v)=(x,y)$, $(s,t)=(1,1)$ in the definition of
$\eta(\a)$ gives $x\a y$ or  $(x,y)\apt(x,y)$, hence $x\a y$. 

\eqref{etainclusion}
This is clear from the definition, since $u\a v$ implies $u\b v$.

\eqref{etafix}
Assume that $x\a y$. Then, for all $u,v\in S$ and $s,t\in S^1$, we have
\begin{align*}
  & x\a(sut) \eller (sut)\a(svt) \eller (svt)\a y 
  && \mbox{(by cotransitivity), hence}\\
 & x\apt sut \eller u\a v \eller svt\apt y 
  && \mbox{(by strong irreflexivity and co-compatibility), hence}\\
 & u\a v \eller (x,y)\apt(sut,svt) \,,
\end{align*}
that is, $x\eta(\a)y$ holds.

\eqref{leftandright}
Let $\a$ be an irreflexive and cotransitive relation.
If $\a$ is co-compatible then $(ax)\a(ay)$ implies $a\a a$ or $x\a y$
whence, by irreflexivity, $x\a y$ holds. Similarly, $(ax)\a(bx)$ implies $a\a b$, so $\a$ is left and right compatible.
Conversely, if $\a$ is left and right co-compatible then, by cotransitivity,
\[(ax)\a(by) \:\impl\: (ax)\a(ay) \eller (ay)\a(by)  
\:\impl\: x\a y \eller a\a b   \]
for all $x,y,a,b\in S$. Hence $\a$ is co-compatible. 
\end{proof}

The statements in the lemma below follow readily from the definitions.

\begin{lma} \label{preservescocomp}
  Let $\mathcal{X}$ be a set of relations on $S$. 
 \begin{enumerate}
 \item If every $\a\in\mathcal{X}$ is left (right) co-compatible with the multiplication on $S$, then so are the relations $\bigcap \mathcal{X}$ and
   $\bigcup \mathcal{X}$. 
   \label{capsandcups}
 \item If $\a$ and $\b$ are left (right) co-compatible relations on $S$, then so is $\a*\b$. 
   \label{filled}
 \item If $\a$ is left (right) co-compatible, then $\a\inv$ is right (left) co-compatible.
   \label{inverse}
 \end{enumerate}
\end{lma}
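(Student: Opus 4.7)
All three parts follow from unfolding the definitions, with (2) being the only one requiring any move beyond bookkeeping. My plan is to dispatch them in order.

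For (1), I would first argue that $(ax)\bigl(\bigcap\mathcal{X}\bigr)(ay)$ says $(ax)\alpha(ay)$ for every $\alpha\in\mathcal{X}$, and left co-compatibility of each $\alpha$ then cancels the $a$ uniformly, yielding $x\bigl(\bigcap\mathcal{X}\bigr)y$. The union case is dual: a witness $\alpha$ with $(ax)\alpha(ay)$ produces, by left co-compatibility of that particular $\alpha$, a witness to $x\bigl(\bigcup\mathcal{X}\bigr)y$. The right-co-compatible cases are entirely parallel, with $a$ placed on the right.

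For (2), the key move is to instantiate the universal quantifier in the definition of the filled product at the right point. Unfolding, $(ax)(\alpha*\beta)(ay)$ reads $\forall_{z\in S}\bigl((ax)\alpha z \eller z\beta(ay)\bigr)$, and I want to deduce $\forall_{z'\in S}(x\alpha z' \eller z'\beta y)$. For arbitrary $z'\in S$, I would apply the hypothesis at $z = az'$, then peel off the leftmost $a$ using left co-compatibility of $\alpha$ in one disjunct and of $\beta$ in the other. The right-co-compatible variant uses $z = z'a$.

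For (3), I would simply observe that the definition of $\alpha\inv$ makes the relevant co-compatibility property equivalent, after a renaming of variables, to a direct application of the hypothesis: $(ax)\alpha\inv(ay) \equ (ay)\alpha(ax)$, on which left co-compatibility of $\alpha$ acts immediately to give $y\alpha x \equ x\alpha\inv y$.

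The only non-automatic step in the whole lemma is the choice $z := az'$ in part (2), which is forced by the shape of the available hypothesis; I do not anticipate any real obstacle.
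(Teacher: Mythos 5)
Your arguments for (1) and (2) are correct and are precisely the routine verifications that the paper leaves implicit (it offers no proof beyond the remark that the statements follow readily from the definitions). In particular, the instantiation $z:=az'$ (respectively $z:=z'a$) of the universal quantifier in the filled product is the one step in (2) that requires a choice, and you make it correctly; note only that you need left co-compatibility of \emph{both} $\a$ and $\b$ there, which is indeed what the hypothesis supplies.

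For (3) there is a discrepancy, though it lies in the statement rather than in your reasoning. What you prove is that $(ax)\a\inv(ay)\impl x\a\inv y$, i.e.\ that $\a\inv$ is again \emph{left} co-compatible when $\a$ is; the lemma as printed asserts instead that $\a\inv$ becomes \emph{right} co-compatible. Unravelling the definition, right co-compatibility of $\a\inv$, namely $(xa)\a\inv(ya)\impl x\a\inv y$, is (after renaming $x\leftrightarrow y$) literally right co-compatibility of $\a$ itself, and that does not follow from left co-compatibility in a general noncommutative semigroup. So the printed statement appears to have the sides swapped --- compare Lemma~\ref{etalemma}, where cancelling a common \emph{left} factor is what is called left co-compatibility --- and the version you actually establish, ``if $\a$ is left (right) co-compatible then so is $\a\inv$'', is the correct and evidently intended one. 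The discrepancy is harmless downstream: the only use of part (3) is in Proposition~\ref{cqcocomp}, where $\a$ is assumed both left and right co-compatible, so either reading yields what is needed. You should, however, say explicitly that you are proving the statement with the handedness preserved rather than reversed.
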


\begin{prop} \label{cocongrprop}
  A relation $\a$ on $S$ is a co-congruence if and only if it is strongly irreflexive and symmetric, and satisfies $\eta(\a)*\eta(\a) = \a$. 
\end{prop}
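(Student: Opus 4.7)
The plan is to prove the two directions separately, using the established lemmas on the filled product and the operator $\eta$ as the main tools. Throughout, I will exploit the fact that a co-congruence, by definition, is precisely a coequivalence (i.e., a strongly irreflexive, symmetric, cotransitive relation) that is co-compatible with the multiplication.

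For the forward implication, assume $\a$ is a co-congruence. The strong irreflexivity and symmetry are immediate from the definition of a coequivalence. Since $\a$ is strongly irreflexive and cotransitive, it is a co-quasiorder, and since it is co-compatible with multiplication, Lemma~\ref{etalemma}\eqref{etafix} yields $\eta(\a) = \a$. Being a strongly irreflexive co-quasiorder, $\a$ satisfies $\a = \a * \a$ by Lemma~\ref{*properties}\eqref{*properties5}, so $\eta(\a) * \eta(\a) = \a * \a = \a$.

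For the converse, assume $\a$ is strongly irreflexive, symmetric, and $\eta(\a)*\eta(\a) = \a$. I first verify cotransitivity: from $\eta(\a)\subset\a$ (Lemma~\ref{etalemma}\eqref{etashrinks}) and monotonicity of the filled product (Lemma~\ref{*properties}\eqref{*properties6}), we get
\[ \a = \eta(\a)*\eta(\a) \subset \a*\a, \]
so $\a$ is cotransitive by Lemma~\ref{*properties}\eqref{*properties4}. Together with strong irreflexivity and symmetry, this shows $\a$ is a coequivalence.

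It remains to show that $\a$ is co-compatible with the multiplication. By Lemma~\ref{etalemma}\eqref{etacocomp}, $\eta(\a)$ is both left and right co-compatible. Lemma~\ref{preservescocomp}\eqref{filled} then propagates both properties through the filled product, so $\a = \eta(\a) * \eta(\a)$ is left and right co-compatible. Since $\a$ is irreflexive (as it is strongly irreflexive) and cotransitive, Lemma~\ref{etalemma}\eqref{leftandright} upgrades this to full co-compatibility. Hence $\a$ is a co-congruence. No step is really an obstacle here; the only minor care required is the order of application of the lemmas, since the proof of cotransitivity must precede the invocation of Lemma~\ref{etalemma}\eqref{leftandright} in the last step.
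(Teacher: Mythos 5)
Your proof is correct and follows essentially the same route as the paper's: the forward direction via Lemma~\ref{etalemma}\eqref{etafix} and Lemma~\ref{*properties}\eqref{*properties5}, and the converse via Lemma~\ref{etalemma}(\ref{etacocomp},\ref{etashrinks},\ref{leftandright}), Lemma~\ref{preservescocomp}\eqref{filled} and Lemma~\ref{*properties}(\ref{*properties4},\ref{*properties6}). Your explicit remark that cotransitivity of $\a$ must be established before applying Lemma~\ref{etalemma}\eqref{leftandright} to upgrade left and right co-compatibility to full co-compatibility is a welcome clarification of a point the paper's proof leaves implicit.
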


\begin{proof}
  Let $\a$ be a co-congruence on $S$.
Then $\eta(\a)=\a$ by Lemma~\ref{etalemma}\eqref{etafix} and, since $\a$ is cotransitive, $\eta(\a)*\eta(\a) = \a*\a = \a$ by Lemma~\ref{*properties}\eqref{*properties5}.

For the converse, assume that $\a$ is strongly irreflexive and symmetric, and that $\eta(\a)*\eta(\a)=\a$. By Lemma~\ref{etalemma}(\ref{etacocomp},\ref{leftandright}), $\eta(\a)$ is co-compatible, and from Lemma~\ref{preservescocomp}\eqref{filled}, it follows that so is $\a$. Moreover, by Lemma~\ref{etalemma}\eqref{etashrinks} and Lemma~\ref{*properties}\eqref{*properties6} we have that $\a= \eta(\a)*\eta(\a) \subset \a*\a$, whence Lemma~\ref{*properties}\eqref{*properties4} gives cotransitivity.
\end{proof}

\begin{prop} \label{cqcocomp}
 Let $\a$ be a relation on $S$. 
 \begin{enumerate}
 \item If $\a$ is left (or right) co-compatible, then so is $c(\a)$. \label{ccocomp}
 \item If $\a$ is left and right co-compatible, then so is $q(\a)$.  \label{qcocomp}
\end{enumerate}
\end{prop}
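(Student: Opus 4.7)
The plan is to reduce both statements to the closure properties of left/right co-compatibility established in Lemma~\ref{preservescocomp}, together with the inductive construction of $\<\a\>^*$.

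For \eqref{ccocomp}, I would first observe that $c(\a) = \bigcap \<\a\>^*$ by definition, where $\<\a\>^*$ is the set of relations generated from $\a$ by iterated filled products. Suppose $\a$ is left co-compatible. I would prove by structural induction on the monomial expression $g(\a)$ representing elements of $\<\a\>^*$ that every $\b\in\<\a\>^*$ is left co-compatible: the base case $\b=\a$ is the hypothesis, and the inductive step $\b = \b_1*\b_2$ is precisely Lemma~\ref{preservescocomp}\eqref{filled}. Once every element of $\<\a\>^*$ is left co-compatible, Lemma~\ref{preservescocomp}\eqref{capsandcups} gives that $c(\a) = \bigcap\<\a\>^*$ is left co-compatible as well. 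The proof for ``right'' is word-for-word identical.

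For \eqref{qcocomp}, recall that $q(\a) = c(\a\cap\a\inv\cap(\apt))$. Assuming $\a$ is both left and right co-compatible, I would verify that each of $\a$, $\a\inv$ and $(\apt)$ is left and right co-compatible. For $\a\inv$ this follows from Lemma~\ref{preservescocomp}\eqref{inverse}: right co-compatibility of $\a$ yields left co-compatibility of $\a\inv$, and vice versa. For $(\apt)$, strong extensionality of the multiplication on $S$ gives $(ax)\apt(ay) \impl a\apt a \eller x\apt y$, and the first disjunct is absurd by irreflexivity, so $(\apt)$ is left co-compatible; right co-compatibility is symmetric. Applying Lemma~\ref{preservescocomp}\eqref{capsandcups} to intersections then shows that $\a\cap\a\inv\cap(\apt)$ is left and right co-compatible, and part \eqref{ccocomp} of the present proposition finishes the job.

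I do not anticipate any real obstacle here; the statement is essentially a bookkeeping exercise, and the work has already been done in Lemma~\ref{preservescocomp} and in verifying that filled products and intersections behave nicely with respect to the left/right versions of co-compatibility. The only point worth stressing in the write-up is that one must carry both ``left'' and ``right'' through in \eqref{qcocomp}, since the inverse operation swaps these properties -- this is why the hypothesis of \eqref{qcocomp} requires both.
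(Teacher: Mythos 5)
Your proposal is correct and follows essentially the same route as the paper: structural induction via Lemma~\ref{preservescocomp}\eqref{filled} plus \eqref{capsandcups} for part (1), and reduction of $q(\a)=c(\a\cap\a\inv\cap(\apt))$ to part (1) after checking that $\a$, $\a\inv$ and $(\apt)$ are all left and right co-compatible. The extra details you supply (the explicit check for $(\apt)$ via strong extensionality of multiplication, and the remark that inversion swaps left and right, which is why part (2) needs both hypotheses) are correct and merely spell out what the paper leaves implicit.
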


\begin{proof}
Both of these statements follow from Lemma~\ref{preservescocomp}.
Assume that $\a$ is left co-compatible. From Lemma~\ref{preservescocomp}\eqref{filled} it follows by induction every $\b\in\<\a\>^*$ is left co-compatible and thus, by Lemma~\ref{preservescocomp}\eqref{capsandcups}, the same holds for $c(\a) = \bigcap\<\a\>^*$.

For $q(\a)$, Lemma~\ref{preservescocomp} implies that if $\a$ is left and right
co-compatible then $\a\cap\a\inv\cap(\apt)$ is so, too. 
Since these properties are preserved by $c$, it follows that 
$q(\a) = c\left(\a\cap\a\inv\cap(\apt)\right)$ is left and right co-compatible. 
\end{proof}

We can now define a constructive friend of the congruence closure of a relation.

\begin{dfn}
  Let $\a\subset S\times S$ be a relation such that $\eta(\a)\cap\eta(\a)\inv\cap(\apt)$ is admissible. 
   The \emph{co-congruence kernel} of $\a$ is the relation $\zeta(\a) = q\eta(\a)$. 
\end{dfn}

From the properties of $\eta$ and $q$, we deduce the following result.

\begin{prop}\label{zetaproperties}
 Let $\mathcal{X}$ be a set of relations on $S$, such that $\eta(\a)\cap\eta(\a)\inv\cap(\apt)\subset S\times S$ is admissible for all $\a\in\mathcal{X}$.
  \begin{enumerate}
  \item The relation $\zeta(\a)$ is a co-congruence for all $\a\in\mathcal{X}$; \label{zetaproperties1}
  \item $\zeta$ is a kernel operator on $\mathcal{X}\cup \zeta(\mathcal{X})$.
    \label{zetaproperties2}
  \end{enumerate}
\end{prop}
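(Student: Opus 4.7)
My plan is to deduce both statements directly from the results already established about $\eta$, $q$, and $c$, treating $\zeta = q\circ\eta$ as the composition of two well-understood operators.

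For part~\eqref{zetaproperties1}, I would argue as follows. By Lemma~\ref{etalemma}\eqref{etacocomp}, $\eta(\a)$ is both left and right co-compatible with the multiplication, so by Proposition~\ref{cqcocomp}\eqref{qcocomp} the relation $\zeta(\a) = q(\eta(\a))$ inherits these two properties. On the other hand, $\zeta(\a)$ is a coequivalence by Proposition~\ref{qproperties}\eqref{qproperties2} (given the admissibility hypothesis on $\eta(\a)\cap\eta(\a)\inv\cap(\apt)$); in particular it is strongly irreflexive and cotransitive. Now Lemma~\ref{etalemma}\eqref{leftandright} converts left and right co-compatibility into full co-compatibility, so $\zeta(\a)$ is a co-compatible coequivalence, \ie, a co-congruence.

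For part~\eqref{zetaproperties2}, the three axioms for a kernel operator follow by combining the corresponding properties of $\eta$ and $q$. The inclusion $\zeta(\a)\subset\a$ is the chain $q(\eta(\a))\subset\eta(\a)\subset\a$, where the first step uses that $q$ is a kernel operator (Proposition~\ref{qproperties}\eqref{qproperties1}) and the second is Lemma~\ref{etalemma}\eqref{etashrinks}. Monotonicity is analogous: if $\a\subset\b$ in $\mathcal{X}$, then $\eta(\a)\subset\eta(\b)$ by Lemma~\ref{etalemma}\eqref{etainclusion}, and applying the monotonicity of $q$ yields $\zeta(\a)\subset\zeta(\b)$. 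For idempotence, note that by part~\eqref{zetaproperties1} the relation $\zeta(\a)$ is a co-congruence; in particular it is a co-quasiorder that is co-compatible with multiplication, so Lemma~\ref{etalemma}\eqref{etafix} gives $\eta(\zeta(\a)) = \zeta(\a)$, and since $\zeta(\a)$ is already a coequivalence, Proposition~\ref{qproperties}\eqref{qproperties2} gives $q(\zeta(\a)) = \zeta(\a)$. Hence $\zeta(\zeta(\a)) = \zeta(\a)$.

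The only subtlety I anticipate is bookkeeping around admissibility: to even speak of $\zeta(\zeta(\a))$ in the idempotence step, one needs that $\eta(\zeta(\a))\cap\eta(\zeta(\a))\inv\cap(\apt)$ is admissible. But this is automatic, because that relation equals $\zeta(\a)$ itself (by the computation just given, together with symmetry of $\zeta(\a)$ and $\zeta(\a)\subset(\apt)$), which is already cotransitive and therefore trivially admissible with $c$-image equal to itself. So the operator $\zeta$ is well-defined on $\mathcal{X}\cup\zeta(\mathcal{X})$, and the three kernel-operator axioms hold there.
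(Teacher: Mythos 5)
Your proof is correct and follows essentially the same route as the paper's: part (1) combines Lemma~\ref{etalemma}, Proposition~\ref{cqcocomp} and Proposition~\ref{qproperties} exactly as the authors do, and part (2) derives contractivity, monotonicity and idempotence from the corresponding properties of $\eta$, $q$ and $c$ in the same way. Your closing remark that $\eta(\zeta(\a))\cap\eta(\zeta(\a))\inv\cap(\apt)$ equals the cotransitive relation $\zeta(\a)$ itself, and is therefore automatically admissible, makes explicit a point the paper leaves implicit, and you handle it correctly.
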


In particular, $\zeta(\a)$ is the maximal co-congruence contained in $\a$.

\begin{proof}
\eqref{zetaproperties1}
Let $\a\in\mathcal{X}$.
By Lemma~\ref{etalemma}\eqref{etacocomp}, $\eta(\a)$ is left and right co-compatible, and hence so is $\zeta(\a)=q(\eta(\a))$, by Proposition~\ref{cqcocomp}. Moreover, $\zeta(\a) = q(\eta(\a))$ is a coequivalence by Proposition~\ref{qproperties}.
In sum, $\zeta(\a)$ is a left and right co-compatible coequivalence, and hence a co-congruence by Lemma~\ref{etalemma}\eqref{leftandright}.

\eqref{zetaproperties2}
Let $\a$ and $\b$ be relations on $S$, such that $\a\subset\b$. 
From Lemma~\ref{etalemma}(\ref{etashrinks},\ref{etainclusion}) follows that $\eta(\a)\subset\a$ and $\eta(\a)\subset\eta(\b)$. 
Since $q$ is a kernel operator by Proposition~\ref{qproperties}\eqref{qproperties1}, it follows that
$\zeta(\a)=q\eta(\a)\subset\eta(\a)\subset\a$, and $\zeta(\a)=q\eta(\a)\subset q\eta(\b)=\zeta(\b)$.
Finally, as $\zeta(\a)$ is a co-congruence by \eqref{zetaproperties1}, we have
\[ \zeta^2(\a) = c(\zeta(\a)\cap\zeta(\a)\inv\cap(\apt)) = c(\zeta(\a)) = \zeta(a) \]
by Proposition~\ref{cproperties}\eqref{cproperties2}.
Hence, $\zeta$ is a kernel operator.
\end{proof}

%%%%%%%%%%%%%%%%%%%%%%%%%%%%%%%%%%%%%%%%
\subsection{Relations and kernels on finitely enumerable sets}  \label{finenumsets}

  In this section, we study relations on finitely enumerable sets and semigroups.
We show that on a such set, every relation $\a$ is cotransitively admissible, and thus the kernels $c(\a)$, $q(\a)$ and $\zeta(\a)$ are always defined.
  We characterise the relations that can be written as negations of co-quasiorders and co-congruences, and derive some facts about the unique maximal apartness relation on a finitely enumerable set.

Throughout this section, unless otherwise stated, let $X=\{x_1,\ldots,x_d\}$ be a finitely enumerable set with apartness.

\begin{thm}\label{finiteisshort}
 Every relation on a finitely enumerable set is short.
\end{thm}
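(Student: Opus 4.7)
The plan is to combine two observations. First, by Lemma~\ref{fecd} the set $X$ has the constant domains property, and Proposition~\ref{cd*assoc} therefore makes the filled product associative on $X$; consequently $\<\a\>^* = \{\a^{*n}\}_{n\ge 1}$, where $\a^{*1}=\a$ and $\a^{*(n+1)} = \a*\a^{*n}$. It suffices to show that this sequence takes only finitely many distinct values as subsets of $X\times X$, since then a list of representatives $\a^{*n_1},\ldots,\a^{*n_k}$ will satisfy $\bigcap_n\a^{*n}=\a^{*n_1}\cap\cdots\cap\a^{*n_k}$, which exhibits the shortness of $\a$.

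To show finiteness of $\{\a^{*n}\}$, fix a surjection $e:\{1,\ldots,d\}\to X$. Because $X$ is finite, the universal quantifier defining a filled product expands to a finite conjunction:
\[
e(i)(\b*\b)e(j)\;\equ\; \bigwedge_{k=1}^d\bigl(\,e(i)\b e(k)\eller e(k)\b e(j)\,\bigr).
\]
Introducing formal atoms $A_{kl}$ for $1\le k,l\le d$, a routine induction on $n$ produces monotone propositional formulas $F_n(i,j)$ in the $A_{kl}$---built using only $\och$ and $\eller$---such that $e(i)\a^{*n}e(j)$ is equivalent to the interpretation of $F_n(i,j)$ under the valuation $A_{kl}\mapsto (e(k)\a e(l))$.

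The free distributive lattice $L$ on $d^2$ generators is finite; its elements may be represented concretely by antichains of subsets of the generator set, a description with decidable equality. Each $F_n(i,j)$ determines an element of $L$, so the map $n\mapsto ([F_n(i,j)])_{i,j}\in L^{d^2}$ lands in a finite set. Since the passage from $F_n$ to $F_{n+1}$ is deterministic in $L^{d^2}$, the sequence is eventually periodic and only finitely many equivalence classes occur. As equal elements of $L$ yield equal truth values under every interpretation, powers $\a^{*m}$ and $\a^{*n}$ whose representing formulas coincide in $L$ define the same relation on $X$, yielding the required finite collection of representatives. The point requiring the most care is the constructive handling of this finiteness---decidability of equality in $L$ and effective computation of the period---both of which are supplied by the antichain representation.
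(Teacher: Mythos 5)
Your proof is correct, but it takes a genuinely different route from the paper's. Both arguments begin identically: constant domains (Lemma~\ref{fecd}) gives associativity of the filled product (Proposition~\ref{cd*assoc}), so $\<\a\>^*=\{\a^{*n}\}_{n\ge1}$. From there the paper argues combinatorially: it writes $x\a^{*(n+1)}y$ as a universally quantified $n$-fold disjunction over chains $z_1,\ldots,z_n$, uses the pigeonhole principle (applied to indices under an enumeration $e:\{1,\ldots,d\}\to X$) to find a repeated link $z_{i+1}=z_{j+1}$, collapses the chain, and invokes an induction hypothesis; this yields the sharp uniform identity $\bigcap\<\a\>^*=\a\cap\a^{*2}\cap\cdots\cap\a^{*(d+2)}$. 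You instead encode $e(i)\a^{*n}e(j)$ as a monotone $\{\och,\eller\}$-formula in the $d^2$ atoms $e(k)\a e(l)$ and exploit the finiteness, with decidable equality via antichain normal forms, of the free distributive lattice on $d^2$ generators. The two points needing care both check out: expanding $\forall_{z\in X}$ into a finite conjunction is legitimate because $e$ is surjective and relations are extensional, and equality in the free distributive lattice does transfer to equivalence of the interpreted propositions, since the distributive-lattice identities for $\och$ and $\eller$ are intuitionistically valid (propositions form a Heyting algebra, hence a distributive lattice), so the interpretation factors through $L$. Your approach actually proves something slightly stronger, namely that the sequence $(\a^{*n})_{n\ge1}$ takes only finitely many values and is eventually periodic -- a fact the paper's argument does not give, since the powers of a general (not necessarily irreflexive) relation need not be nested -- and it is effective in that the period can be computed for a given $\a$. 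What it loses is the bound: your a priori list of representatives has length governed by the Dedekind number of $d^2$, which is doubly exponential in $d$, whereas the paper gets by with the first $d+2$ powers.
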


\begin{proof}
Let $\a$ be a relation on $X$.
By Lemma~\ref{fecd}, $X$ has the constant domains property and hence, by Proposition~\ref{cd*assoc}, filled products of relations on $X$ are associative. In particular, it follows that $\<\a\>^* = \{\a^{*l}\mid l\ge1\}$.

We shall prove, by induction on $n$, that $\a\cap\a^{*2}\cap\cdots\cap\a^{*(d+2)} \subset \a^{*(n+1)}$ for all $n\in\N$, whence it follows that $\a\cap\a^{*2}\cap\cdots\cap\a^{*(d+2)} = \bigcap\<\a\>^*$.
The statement is clear for $n\le d+1$.

Let $n\ge d+2$, and $(x,y)\in \a\cap\a^{*2}\cap\cdots\cap \a^{*(d+2)}\subset X\times X$.
We need to show that $x\a^{*(n+1)}y$ which, in view of the constant domains principle \eqref{cdaxiom}, is equivalent to the statement
\[\forall_{z_1,\ldots,z_n\in X} \left(\, (x\a z_1) \eller (z_1\a z_2) \eller \cdots \eller (z_{n-1}\a z_n) \eller (z_n\a y) \, \right) \,.\]

Let $z_1\ldots, z_n\in X$. Since $n-1> d$, there exist $i,j\in\{1,\ldots,n-1\}$ such that $i<j$ and $z_{i+1} = z_{j+1}$.
Now, as $n-j+i+1 < n+1$, we have $\a\cap\a^{*2}\cap\cdots\cap \a^{*(d+2)}\subset \a^{*(n-j+i+1)}$ by the induction hypothesis, and thus $(x,y)\in\a^{*(n-j+i+1)}$. This means that, for all $w_1,\ldots, w_{n-j+i}$, the disjunction
\[ (x\a w_1) \eller (w_1\a w_2) \eller \cdots \eller (w_{n-j+i-1}\a w_{n-j+i}) \eller (w_{n-j+i}\a y) \,. \]
holds. In particular, setting
\[ w_l =
\begin{cases}
  z_l & \mbox{for } l\le i, \\ z_{l+j-i} & \mbox{for } l>i,
\end{cases}
\]
and recalling that $z_{i+1} = z_{j+1}$, we get
\[ (x\a z_1) \eller (z_1\a z_2) \eller \cdots \eller (z_i\a z_{i+1}) \eller (z_{j+1}\a z_{j+2})\eller \cdots  \eller (z_{n-1}\a z_n) \eller (z_n\a y)\]
and hence
\[ (x\a z_1) \eller (z_1\a z_2) \eller \cdots \eller (z_{n-1}\a z_n) \eller (z_n\a y) \,. \]
This proves that $x\a^{* (n+1)}y$. Consequently, the inclusion $\a\cap\a^2\cap \cdots\cap \a^{*(d+2)}\subset\a^{*(n+1)}$ holds for all $n\in\N$, concluding the proof of our result.
\end{proof}

Combining Proposition~\ref{atilde-cotrans}\eqref{atilde-cotrans1} with Theorem~\ref{finiteisshort} immediately gives the following result.

\begin{cor}
  Every relation on a finitely enumerable set is admissible.
\end{cor}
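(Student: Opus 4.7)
The plan is a one-line composition of the two preceding results. By Theorem~\ref{finiteisshort}, any relation $\a$ on a finitely enumerable set is short, meaning there exist $n\ge 1$ and $\b_1,\ldots,\b_n\in\<\a\>^*$ with $\bigcap\<\a\>^*=\b_1\cap\cdots\cap\b_n$. Feeding this into Proposition~\ref{atilde-cotrans}\eqref{atilde-cotrans1} immediately yields that $\tilde{\a}=\bigcap\<\a\>^*$ is cotransitive, which is exactly the admissibility of $\a$.

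No obstacle remains, because the two substantive ingredients have already been discharged: the reduction of $\<\a\>^*$ to a finite collection of filled powers (using Lemma~\ref{fecd} and Proposition~\ref{cd*assoc} to get associativity of $*$, then a pigeonhole argument on repeated entries in a chain of length exceeding $d=|X|$), and the general principle that cotransitivity of the intersection of a short family can be checked termwise via Lemma~\ref{*properties}\eqref{*properties4}. In particular, one does \emph{not} need to invoke Proposition~\ref{atilde-cotrans}\eqref{atilde-cotrans2}; although $X$ itself enjoys the constant domains property by Lemma~\ref{fecd}, the set $\<\a\>^*$ of iterated filled products is not obviously finitely enumerable as a set of relations, so CDP over $\<\a\>^*$ is not directly available, and the shortness route is therefore the natural one.

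Accordingly, the entire proof reduces to writing: \emph{By Theorem~\ref{finiteisshort}, $\a$ is short; by Proposition~\ref{atilde-cotrans}\eqref{atilde-cotrans1}, $\a$ is admissible.}
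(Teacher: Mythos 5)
Your proof is correct and is exactly the paper's argument: the corollary is obtained by combining Theorem~\ref{finiteisshort} (every relation on a finitely enumerable set is short) with Proposition~\ref{atilde-cotrans}\eqref{atilde-cotrans1} (short relations are admissible). Your additional remark about not needing the constant domains route via Proposition~\ref{atilde-cotrans}\eqref{atilde-cotrans2} is accurate but not required.
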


We can now give a refinement of Corollary~\ref{complementcor}, characterising the relations arise as negations of cotransitive relations and co-quasiorders. 
Note that if a relation $\a$ can be written as $\a=\neg\b$ for \emph{any} relation $\b$, then
\[ \neg\neg\a = \neg\neg\neg\b = \neg\b = \a \]
that is, $\a$ is stable. Conversely, if $\a$ is stable then $\a = \neg(\neg\a)$. It now turns out that for a reflexive and transitive relation  $\a$, stability is equivalent to being the complement of a \emph{cotransitive} relation.

\begin{lma} \label{negneglma}
  Let $\a\subset X\times X$ be an irreflexive and transitive relation on $X$. Then
  \begin{enumerate}
  \item $\neg\b = \neg\neg\a$ for all $\b\in\<\neg\a\>^*$; \label{negneglma1}
  \item $\neg\b = \neg\neg\a$ for all $\b\in\<\tild\a\>^*$, if the apartness $\apt$ on $X$ is standard. \label{negneglma2}
  \end{enumerate}
\end{lma}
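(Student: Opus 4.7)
My plan is to prove both parts by induction on the monomial structure of $\b$. Writing $\b = g(T)$ for a non-associative, non-commutative monomial in a formal variable $T$, evaluated at $\neg\a$ for part (1) or at $\tild\a$ for part (2), I would induct on the number of occurrences of $T$ in $g$. For the base case $g(T)=T$, part (1) is the tautology $\neg(\neg\a)=\neg\neg\a$. For part (2), the required identity $\neg\tild\a=\neg\neg\a$ should follow from standardness of $\apt$: the hypothesis $\neg((\apt)\cup(=))=\emptyset$ combined with irreflexivity of $\a$ (which gives $\a\cap(=)=\emptyset$) should force the logical complements of $\tild\a$ and $\neg\a$ to agree.

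For the inductive step $\b=\b_1*\b_2$ with $\neg\b_i=\neg\neg\a$ for $i=1,2$, the hypothesis first yields $\b_i \subset \neg\neg\b_i = \neg\a$. An auxiliary induction using Lemma~\ref{*properties}\eqref{*properties6} and the reflexivity of $\neg\a$ (a consequence of irreflexivity of $\a$) provides the reverse containment $\neg\a \subset \b_i$: reflexive relations are preserved by, and contained in, their filled products. Monotonicity of $*$ then sandwiches $\b_1*\b_2$ as $\neg\a \subset \b_1*\b_2 \subset (\neg\a)*(\neg\a)$, and Lemma~\ref{*properties}\eqref{*properties1} refines the upper bound to $\neg(\a\circ\a)$. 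Taking logical complements and invoking transitivity of $\a$ should then collapse everything back to $\neg\neg\a$, yielding $\neg(\b_1*\b_2)=\neg\neg\a$.

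The main obstacle I foresee is bridging $\neg(\a\circ\a)$ and $\neg\a$. Transitivity of $\a$ yields $\a\circ\a\subset\a$ and hence $\neg\a\subset\neg(\a\circ\a)$, which is the \emph{wrong} direction for the sandwich to close. I expect that the real work lies in unfolding $(x,y)\in(\neg\a)*(\neg\a)$ directly and specialising the universal quantifier over $z$ at $x$ and at $y$ to discharge the degenerate cases via irreflexivity, then combining the residual information with transitivity through a double-negation argument exploiting the stability of $\neg\neg\a$. Once the containment $(\neg\a)*(\neg\a) \subset \neg\a$ is secured under the stated hypotheses, part (2) follows from part (1) by reducing to it via the base-case identification $\neg\tild\a=\neg\neg\a$ established under standardness of $\apt$.
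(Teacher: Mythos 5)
There is a genuine gap, and it sits exactly where you located ``the real work'' --- but on the other side. The crucial constructive obstruction is not the inclusion $(\neg\a)*(\neg\a)\subset\neg\a$ but the opposite one, $\neg\a\subset(\neg\a)*(\neg\a)$, which you claim follows from reflexivity of $\neg\a$. It does not: $\gamma\subset\gamma*\gamma$ is \emph{cotransitivity} of $\gamma$ (Lemma~\ref{*properties}\eqref{*properties4}), not reflexivity --- the equality relation on a two-element set is reflexive but not contained in its filled square. Constructively one cannot show that the complement of a transitive relation is cotransitive, so your auxiliary induction giving $\neg\a\subset\b_i$ has no valid base step. The paper obtains $\neg\a\subset(\neg\a)*(\neg\a)$ only under the extra hypothesis that $\a$ is detachable, and then discharges that hypothesis by the mechanism that is the whole point of the lemma and which your sketch never invokes: since $X=\{x_1,\ldots,x_d\}$ is finitely enumerable (the standing assumption of Section~\ref{finenumsets}), detachability of $\a$ is the single proposition $\bigwedge_{i,j}\left(x_i\a x_j\eller\neg(x_i\a x_j)\right)$; assuming $(\neg\a)\cap(\neg\b)$ inhabited, one derives the negation of that proposition, contradicting Glivenko's theorem. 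Note that the statement proved directly is the \emph{negative} one, $(\neg\a)\cap(\neg\b)=\emptyset$ (equivalently $\neg\b\subset\neg\neg\a$), which is why a double-negation argument suffices; your vague appeal to ``stability of $\neg\neg\a$'' does not identify this, and without finite enumerability the argument collapses entirely.

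Two further points. First, your worry about $(\neg\a)*(\neg\a)\subset\neg\a$ comes from reading the hypothesis literally: with $\a$ irreflexive the statement is actually false (on $X=\{0,1\}$ with $\a=\{(0,1)\}$, which is irreflexive and transitive, one computes $(\neg\a)*(\neg\a)=X\times X$, so $\neg\b=\emptyset\ne\neg\neg\a$). The hypothesis must be read as ``reflexive and transitive'' --- this is how the lemma is applied in Theorem~\ref{complementthm}, and it is what the paper's opening claim $\b\subset\neg\a$ requires: then $\neg\a$ is irreflexive, and Lemma~\ref{*properties}\eqref{*properties3} gives $\b*\gamma\subset\b\cap\gamma$ for irreflexive relations, so the containment you single out as the main difficulty is in fact the trivial direction. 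Second, your reduction of part (2) to part (1) via $\neg\tild\a=\neg\neg\a$ does not go through: the filled product is not determined by the negations of its factors, so agreement of $\neg\tild\a$ with $\neg\neg\a$ says nothing about $\<\tild\a\>^*$ versus $\<\neg\a\>^*$. The paper instead reruns the whole Glivenko argument with the disjunction $x_i\a x_j\eller x_i(\tild\a)x_j$, which is a classical tautology precisely because the apartness is standard.
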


\begin{proof}
In both cases, $\b\subset\neg\a$ and thus $\neg\neg\a\subset\neg\b$. It remains to prove that $\neg\b\subset\neg\neg\a$ or, equivalently, $(\neg\a)\cap(\neg\b)=\emptyset$.

\eqref{negneglma1}
Assume, for a contradiction, that $(\neg\a)\cap(\neg\b)$ is inhabited.
Assume further that $\a\subset X\times X$ is detachable, that is, that
\begin{equation}
  \bigwedge_{i,j=1}^d\left(\, x_i\a x_j \eller \neg(x_i\a x_j)\,\right) \,. \label{glivenko}
\end{equation}
In this case, the transitivity of $\a$ implies that $\neg\a$ is cotransitive and thus $\neg\a\subset(\neg\a)*(\neg\a)$ by Lemma~\ref{*properties}\eqref{*properties4}. By induction, we conclude that $\neg\a\subset\b$. But then $\neg\b\subset\neg\neg\a$, and hence $(\neg\a)\cap(\neg\b) \subset (\neg\neg\a)\cap(\neg\a) = \emptyset$, which is impossible since $(\neg\a)\cap(\neg\b)$ is inhabited.
This proves $\neg \bigwedge_{i,j=1}^d\left(\, (x_i\a x_j) \eller \neg(x_i\a x_j)\,\right)$, which again is impossible, by Glivenko's theorem \cite{glivenko29}.
Thus $(\neg\a)\cap(\neg\b)$ is not inhabited, that is, $(\neg\a)\cap(\neg\b) = \emptyset$.

\eqref{negneglma2}
This is a variation of the proof of \eqref{negneglma1}. Assume that  $(\neg\a)\cap(\neg\b)$ is inhabited and that the statement
\begin{equation} \label{glivenko2}
  \bigwedge_{i,j=1}^d\left(\, x_i\a x_j \eller x_i(\tild\a) x_j\,\right)
\end{equation}
holds. Then $\tild\a = \neg\a$, and this relation is cotransitive. As before, we get that $\neg\a\subset\b$, whence $\neg\b\subset\neg\neg\a$ and $(\neg\a)\cap(\neg\b)\subset(\neg\a)\cap(\neg\neg\a)=\emptyset$, which is impossible since $(\neg\a)\cap(\neg\b)$ is inhabited. Thus
$\neg \bigwedge_{i,j=1}^d\left(\, x_i\a x_j \eller x_i(\tild\a) x_j\,\right)$
holds.
But since the apartness is standard, the proposition \eqref{glivenko2} is classically tautological, and therefore
$\neg \bigwedge_{i,j=1}^d\left(\, x_i\a x_j \eller x_i(\tild\a) x_j\,\right)$
is impossible by Glivenko's theorem. So $(\neg\a)\cap(\neg\b)=\emptyset$, as desired.
\end{proof}

\begin{thm} \label{complementthm}
  Let $\a$ be a reflexive and transitive relation on $X$.
  The following statements are equivalent:
  \begin{enumerate} \renewcommand{\labelenumi}{(\alph{enumi})}
  \item $\a = \neg\neg\a$;
  \item $\a=\neg\b$ for some cotransitive relation $\b\subset X\times X$.
  \end{enumerate}
If the apartness on $X$ is standard, then the above is also equivalent to the following statement:  
  \begin{enumerate}  \renewcommand{\labelenumi}{(\alph{enumi})} \addtocounter{enumi}{2}
  \item $\a=\neg\kappa$ for some co-quasiorder $\kappa\subset X\times X$.
  \end{enumerate}
\end{thm}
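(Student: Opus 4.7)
The plan is to establish the cycle $(b)\Rightarrow(a)\Rightarrow(b)$, and then, under the standing assumption of a standard apartness, to derive $(a)\Rightarrow(c)$ and to note that $(c)\Rightarrow(b)$ is immediate. The implication $(b)\Rightarrow(a)$ is the stability observation made immediately before the theorem statement: if $\a=\neg\b$ then $\neg\neg\a=\neg\neg\neg\b=\neg\b=\a$.

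For $(a)\Rightarrow(b)$ I would take $\b=c(\neg\a)$, the cotransitive kernel of $\neg\a$. Since $X$ is finitely enumerable, every relation on $X$ is short (Theorem~\ref{finiteisshort}) and hence cotransitively admissible by Proposition~\ref{atilde-cotrans}\eqref{atilde-cotrans1}, so $c(\neg\a)$ is defined and cotransitive. Because $\a$ is reflexive, $\neg\a$ is irreflexive, and the remark following the definition of \emph{short} yields $c(\neg\a)=\widetilde{\neg\a}\in\<\neg\a\>^*$. Lemma~\ref{negneglma}\eqref{negneglma1} therefore gives $\neg c(\neg\a)=\neg\neg\a$, and (a) upgrades this to $\a=\neg c(\neg\a)$, producing the desired cotransitive witness.

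Assuming now that $\apt$ is standard, $(c)\Rightarrow(b)$ is trivial, since every co-quasiorder is in particular cotransitive. For $(a)\Rightarrow(c)$ I would replace $\neg\a$ by the apartness complement and set $\kappa=c(\tild\a)$. By Lemma~\ref{reflexiveconsistent}\eqref{reflexiveconsistent2}, reflexivity of $\a$ makes $\tild\a$ strongly irreflexive, and hence irreflexive; the same short-relation argument as before then gives $\kappa\in\<\tild\a\>^*$. Moreover $\kappa$ is cotransitive by construction and strongly irreflexive, because $\kappa\subset\tild\a\subset(\apt)$, so $\kappa$ is a co-quasiorder. Applying Lemma~\ref{negneglma}\eqref{negneglma2} (where the standardness assumption enters) yields $\neg\kappa=\neg\neg\a=\a$, completing the chain.

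The main obstacle is not in the present theorem but has already been absorbed into Lemma~\ref{negneglma} and Theorem~\ref{finiteisshort}; what remains here is essentially the bookkeeping of identifying the correct witnesses $c(\neg\a)$ and $c(\tild\a)$ and of checking the two hypotheses needed to plug into the lemma --- admissibility of the base relation (supplied by finite enumerability) and the inclusion $c(\gamma)\in\<\gamma\>^*$ (supplied by irreflexivity of $\gamma=\neg\a$ or $\gamma=\tild\a$, which in turn follows from reflexivity of $\a$).
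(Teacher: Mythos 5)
Your proposal is correct and follows essentially the same route as the paper: the trivial implications (c)$\Rightarrow$(b)$\Rightarrow$(a), the witness $\b=c(\neg\a)$ for (a)$\Rightarrow$(b) via irreflexivity of $\neg\a$ and Lemma~\ref{negneglma}\eqref{negneglma1}, and the witness $\kappa=c(\tild\a)$ for (a)$\Rightarrow$(c) via Lemma~\ref{negneglma}\eqref{negneglma2}. You are in fact somewhat more explicit than the paper in checking admissibility (via Theorem~\ref{finiteisshort}) and the membership $c(\neg\a)\in\<\neg\a\>^*$, which is a welcome addition rather than a deviation.
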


\begin{proof}
The implications (c)$\Rightarrow$(b)$\Rightarrow$(a) are trivial.
Since $\a$ is reflexive, $\neg\a$ is irreflexive, and thus $c(\neg\a)\in\<\a\>^*$. By Lemma~\ref{negneglma}\eqref{negneglma1}, this implies that $\neg c(\neg\a) = \neg\neg\a$. Since $c(\neg\a)$ is cotransitive, this gives the implication (a)$\Rightarrow$(b).
Similarly, if the apartness is standard then $\neg c(\tild\a) = \neg\neg\a$ by Lemma~\ref{negneglma}\eqref{negneglma2} and, since $c(\tild\a)$ is a co-quasiorder, the implication (a)$\Rightarrow$(c) follows.
\end{proof}

\begin{rmk}
The implication (a)$\Rightarrow$(c) does not hold in any case unless the apartness on $X$ is standard. Indeed, let $\a=\neg\neg(=)$, and assume that $\a=\neg\kappa$ for some strongly irreflexive relation $\kappa$ on $X$. Then $\kappa\subset(\apt)$ implies $\neg(\apt)\subset\neg\kappa = \a = \neg\neg(=)$, and thus $\neg(\apt)\cap \neg(=) = \emptyset$, that is, $\apt$ is standard.
\end{rmk}

The \emph{fine apartness} on $X$ is the relation $(\bowtie) = c(\neg(=))\subset X\times X$.  This the maximal irreflexive, symmetric and cotransitive relation on $X$, and hence any other apartness relation on $X$ is contained in $\bowtie$. 
If $X=(X,\apt)$ is a set with apartness, we shall say that the apartness is fine if
$(\apt) = (\bowtie)$. 

\begin{prop} \label{fineapartness}
  If $X$ is a set with fine apartness, then every detachable subset of $X$ is strongly extensional, and thus clopen in the apartness topology.
\end{prop}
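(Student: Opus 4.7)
The plan is to realise, for each detachable subset $A\subset X$, the ``symmetric difference with $A$'' as an irreflexive, symmetric and cotransitive relation on $X$, which will then be contained in the fine apartness $(\bowtie)=(\apt)$ by maximality. Strong extensionality of $A$ will drop out immediately.

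Concretely, I would define $\kappa_A\subset X\times X$ by
\[ x\kappa_A y \:\equ\: (x\in A \och y\notin A) \eller (x\notin A \och y\in A) \,. \]
Symmetry is built into the definition, and irreflexivity follows since $x\in A \och x\notin A$ is contradictory. For cotransitivity, given $x\kappa_A y$ and any $z\in X$, I would apply detachability of $A$ to $z$: in each of the resulting sub-cases, one of $x\kappa_A z$ or $z\kappa_A y$ is immediate. Since $\bowtie$ is the maximal irreflexive, symmetric and cotransitive relation on $X$, and $(\apt)=(\bowtie)$ by hypothesis, this gives $\kappa_A\subset(\apt)$. Strong extensionality of $A$ then follows at once: for $a\in A$ and $x\in X$, detachability yields $x\in A$ or $x\notin A$, and in the latter case $x\kappa_A a$, whence $x\apt a$.

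For the clopen assertion, $A$ is open by definition of the apartness topology. For closedness, note that $\neg A$ is detachable whenever $A$ is (the disjunction $x\in A \eller x\notin A$ supplies $x\notin\neg A \eller x\in\neg A$), so the first part, applied to $\neg A$, gives that $\neg A$ is strongly extensional. By Lemma~\ref{senegcompl}\eqref{senegcompl1}, the strong extensionality of $A$ gives $\tild A=\neg A$, which is therefore open; hence $A$ is closed. The only non-routine step in the whole argument is the cotransitivity check for $\kappa_A$, where the detachability hypothesis does its essential work.
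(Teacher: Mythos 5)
Your proof is correct and follows essentially the same route as the paper: the paper defines $x\kappa y \:\equ\: x\apt y \eller (x\in A\not\ni y)\eller(x\notin A\ni y)$, checks it is an apartness relation, and concludes $\kappa=(\apt)$ from fineness, which is your argument with the harmless extra disjunct $x\apt y$ included. Your variant, dropping that disjunct and invoking directly that $\bowtie=c(\neg(=))$ is the maximal irreflexive, symmetric, cotransitive relation, is a cosmetic simplification; the clopen step via detachability of $\neg A$ and Lemma~\ref{senegcompl}\eqref{senegcompl1} likewise matches the paper.
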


A subset $A\subset X$ is clopen in the apartness topology if both $A$ and $\tild A$ are strongly extensional in $X$ (see Section~\ref{fundamental}). 

\begin{proof}
Assume that $\apt$ is fine. Given a detachable subset $A\subset X$, define
a relation $\kappa$ on $X$ by 
\[x\kappa y \:\Leftrightarrow\: x\apt y \eller (x\in A\not\ni y) \eller (x\notin A\ni y) \,.\]
One readily verifies that $\kappa$ is an apartness relation with respect to which
$A\subset X$ is strongly extensional. But since $(\apt)\subset \kappa$ and $\apt$ is fine, we have $(\apt)=\kappa$. Hence $A\subset X$ is strongly extensional with respect to $\apt$.

As $\tild A=\neg A\subset X$ is also detachable and thus strongly extensional, it follows
that $A\subset X$ is clopen. 
\end{proof}

The following lemma is straightforward.

\begin{lma} \label{weakapartness}
Given any sets $Y$ and $Z$, a function $f:Y\to Z$ and an apartness relation $\apt$ on $Z$, define a relation $\apt_f$ on $Y$ by $x\apt_f y \:\equ f(x)\apt f(y)$. Then $\apt_f$ is an apartness relation on $Y$, and $f:(Y,\apt_f)\to (Z,\apt)$ is strongly extensional.
\end{lma}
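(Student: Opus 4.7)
The proof plan is to simply verify the three apartness axioms for $\apt_f$ by pulling them back through $f$ from the corresponding axioms for $\apt$ on $Z$, and then to observe that strong extensionality of $f$ follows tautologically from the definition.

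First, I would unfold the definition: for all $x,y \in Y$, $x \apt_f y$ means $f(x) \apt f(y)$. Irreflexivity of $\apt_f$ is immediate, since $x \apt_f x$ would mean $f(x) \apt f(x)$, which contradicts irreflexivity of $\apt$. Symmetry is equally direct: if $x \apt_f y$ then $f(x) \apt f(y)$, hence $f(y) \apt f(x)$ by symmetry of $\apt$, i.e.\ $y \apt_f x$. For cotransitivity, suppose $x \apt_f z$, so $f(x) \apt f(z)$; given any $y \in Y$, cotransitivity of $\apt$ on $Z$ yields $f(x) \apt f(y) \eller f(y) \apt f(z)$, which is precisely $x \apt_f y \eller y \apt_f z$.

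Finally, strong extensionality of $f : (Y, \apt_f) \to (Z, \apt)$ is exactly the implication $f(x) \apt f(y) \impl x \apt_f y$, which holds by the very definition of $\apt_f$ (indeed, the two statements are equivalent).

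There is no real obstacle here; the lemma is essentially a matter of chasing definitions. The only thing worth highlighting is that the construction is entirely constructive because each of the three apartness axioms transfers pointwise, using only the assumed axioms on $Z$ and no additional choice principles.
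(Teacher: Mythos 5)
Your proof is correct and is exactly the routine verification the paper has in mind (the paper labels the lemma ``straightforward'' and omits the argument): each apartness axiom for $\apt_f$ pulls back pointwise through $f$ from the corresponding axiom on $Z$, and strong extensionality of $f$ is definitional. Nothing further is needed.
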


\begin{prop} \label{allse}
\begin{enumerate}
\item Let $X,Y\in\aSets$, where $X$ is finitely enumerable and has fine apartness. Then any function $f:X\to Y$ is strongly extensional.
  \label{allse1}
\item If $S$ is a finitely enumerable semigroup, then $(S,\bowtie)$ is a semigroup with apartness.
  \label{allse2}
\item Let $S$ be a semigroup with apartness, and $\rho\subset S\times S$ a congruence such that $S/\rho$ is finitely enumerable. Then $c(\neg\rho)$ is a co-congruence on $S$, and hence $c(\neg\rho)= \zeta(\neg\rho)$. 
  \label{allse3}
\end{enumerate}
\end{prop}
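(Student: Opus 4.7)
For (1), I would exploit the maximality of the fine apartness: given $f:X\to Y$, Lemma~\ref{weakapartness} supplies the pulled-back apartness $\apt_f$ on $X$ defined by $x\apt_f y \equ f(x)\apt f(y)$. As an apartness relation on $X$, $\apt_f$ is contained in the maximal irreflexive, symmetric and cotransitive relation on $X$, namely the fine apartness $\bowtie$, which by assumption coincides with $\apt_X$. Unfolding $\apt_f\subset{\apt_X}$ is precisely the strong extensionality of $f$.

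Part (2) would then follow by applying (1) to the left- and right-multiplication maps $\lambda_a,\rho_a:S\to S$ for each $a\in S$: each is a function on the finitely enumerable set $S$ with fine apartness and hence strongly extensional. As remarked preceding Lemma~\ref{groupcondition}, strong extensionality of the multiplication $S\times S\to S$ is equivalent to simultaneous strong extensionality of all the $\lambda_a$ and $\rho_a$, so $(S,\bowtie)$ is a semigroup with apartness.

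For (3), my plan is to transfer the problem to the quotient $S/\rho$ via the canonical projection $\pi:S\to S/\rho$ and invoke (2). The key technical step is to show that filled products of $\rho$-extensional relations commute with descent to $S/\rho$: for $\rho$-extensional $\alpha,\beta$ the quantifier $\forall_{z\in S}(x\a z\eller z\b y)$ is equivalent to $\forall_{[z]\in S/\rho}([x]\a[z]\eller [z]\b[y])$. Since $\neg\rho$ is $\rho$-extensional (as $\rho$ is transitive) and descends to $\neg(=_{S/\rho})$, an induction on the inductive definition of $\<\cdot\>^*$ produces a correspondence between $\<\neg\rho\>^*$ on $S$ and $\<\neg(=_{S/\rho})\>^*$ on $S/\rho$. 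Intersecting, $c(\neg\rho)$ descends to $c(\neg(=_{S/\rho}))=\bowtie$ on $S/\rho$, and in particular $\neg\rho$ is admissible. By (2) applied to $S/\rho$, the relation $\bowtie$ is a co-congruence on $(S/\rho,\bowtie)$, and cotransitivity, symmetry, co-compatibility with multiplication, and strong irreflexivity (against the apartness pulled back through $\pi$) transfer back to $c(\neg\rho)$ on $S$. Finally, $c(\neg\rho)=\zeta(\neg\rho)$ follows from Proposition~\ref{zetaproperties} (giving $c(\neg\rho)\subset\zeta(\neg\rho)$ since $c(\neg\rho)$ is a co-congruence contained in $\neg\rho$) combined with Proposition~\ref{cproperties}\eqref{cproperties4} (giving the reverse inclusion since $\zeta(\neg\rho)$ is a cotransitive sub-relation of $\neg\rho$). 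The main obstacle I anticipate is making the descent-via-$\pi$ argument completely precise—in particular, verifying that strong irreflexivity of $c(\neg\rho)$ holds with respect to the given apartness $\apt_S$ on $S$ rather than only with respect to the pulled-back apartness from $S/\rho$.
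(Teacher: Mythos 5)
Your proposal is correct and takes essentially the same route as the paper's proof: (1) by maximality of the fine apartness over the pulled-back apartness $\apt_f$, (2) by applying (1) to the maps $\lambda_a$ and $\rho_a$, and (3) by identifying $c(\neg\rho)$ with the fine apartness on the finitely enumerable quotient $S/\rho$, invoking (2) there, and then sandwiching $\zeta(\neg\rho)$ against $c(\neg\rho)$ via the maximality properties of the two kernel operators. The only real difference is that your descent-via-$\pi$ machinery in (3) is unnecessary in this setoid framework: $S/\rho$ is by definition the same set with equality coarsened to $\rho$, so $\neg\rho$ \emph{is} the relation $\neg(=)$ on $S/\rho$, the universal quantifiers in the filled products range over the same elements, and hence $\<\neg\rho\>^*$ and $c(\neg\rho)$ coincide verbatim with their counterparts on $S/\rho$ -- no induction or transfer argument is required.
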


Note that if $S\in\aSg$ is finitely enumerable, then Proposition~\ref{allse}\eqref{allse3} applies to \emph{every} congruence $\rho$ on $S$.

\begin{proof}
\eqref{allse1}
This follows from Lemma~\ref{weakapartness}: if the apartness on $X$ is fine, then it contains the apartness $\apt_f$, and thus $f$ is strongly extensional. 

\eqref{allse2}
By \eqref{allse1}, every function $(S,\bowtie)\to (S,\bowtie)$ is strongly extensional. In particular, this applies to the functions $\lambda_a:x\mapsto ax $ and $\rho_a:x\mapsto xa$ for any $a\in S$, whence it follows that the multiplication in $S$ is strongly extensional.

\eqref{allse3}
The factor set $S/\rho$ is finitely enumerable, with fine apartness given by  $(\bowtie)=c(\neg\rho)$. 
By \eqref{allse2}, the multiplication in $S/\rho$ is strongly extensional with respect to $\bowtie$, which is to say that $c(\neg\rho)$ is co-compatible with the multiplication. Thus $c(\neg\rho)$ is a co-congruence.
Since $\eta(\neg\rho)\subset\neg\rho$ (by Lemma~\ref{etalemma}\eqref{etashrinks}) and $q$ is a kernel operator (Proposition~\ref{qproperties}), we have $\zeta(\neg\rho) = q(\eta(\neg\rho))\subset q(\neg\rho) = c(\neg\rho)$.
On the other hand, $c(\neg\rho)$ is a co-congruence contained in $\neg\rho$, so $c(\neg\rho)\subset\zeta(\neg\rho)$ by Proposition~\ref{zetaproperties}.
Hence $c(\neg\rho)=\zeta(\neg\rho)$. 
\end{proof}

We spell out the following two consequences of Proposition~\ref{allse}.

\begin{cor}
  A congruence on a finitely enumerable semigroup with apartness is the complement of a co-congruence if and only if it is stable. 
\end{cor}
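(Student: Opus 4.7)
The strategy is to assemble the pieces that the preceding development has already put in place: the characterisation of complements of cotransitive relations on finitely enumerable sets (Theorem~\ref{complementthm}), the identification $c(\neg\rho) = \zeta(\neg\rho)$ as a co-congruence (Proposition~\ref{allse}\eqref{allse3}), and the corollary to Theorem~\ref{finiteisshort} which guarantees admissibility of every relation on $S$.

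For the ``only if'' direction, the plan is to observe that if $\rho = \neg\sigma$ for some co-congruence (or indeed any relation) $\sigma$, then $\neg\neg\rho = \neg\neg\neg\sigma = \neg\sigma = \rho$, so $\rho$ is stable. This is the easy direction and requires nothing more than triple-negation elimination.

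For the ``if'' direction, assume that $\rho$ is a stable congruence on $S$. Because $\rho$ is a congruence it is in particular reflexive and transitive, and $S$ being finitely enumerable means that Theorem~\ref{complementthm} applies to $\rho$ (as a relation on the finitely enumerable set $S$). The stability hypothesis $\rho = \neg\neg\rho$ is condition (a) of that theorem, which delivers condition (b): there exists a cotransitive relation $\beta \subset S\times S$ with $\rho = \neg\beta$. Since every relation on a finitely enumerable set is admissible, Corollary~\ref{complementcor}\eqref{complementcor1} then upgrades this to the equality $\rho = \neg c(\neg\rho)$. Finally, Proposition~\ref{allse}\eqref{allse3} (applied to the congruence $\rho$, whose quotient $S/\rho$ is automatically finitely enumerable) asserts that $c(\neg\rho)$ is in fact a co-congruence on $S$. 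Setting $\sigma := c(\neg\rho)$ therefore exhibits $\rho$ as the logical complement of a co-congruence.

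There is no real obstacle here beyond verifying that each invoked result applies: the finitely enumerable hypothesis on $S$ is used twice, once to access Theorem~\ref{complementthm} and once to access Proposition~\ref{allse}\eqref{allse3}, and stability is used precisely to supply hypothesis (a) of Theorem~\ref{complementthm}. The proof is therefore a short chain of references rather than a calculation.
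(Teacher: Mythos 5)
Your proof is correct and follows essentially the same route as the paper, which simply cites Proposition~\ref{allse}\eqref{allse3} and Theorem~\ref{complementthm}; you have merely unpacked the chain of references (including the harmless extra step through Corollary~\ref{complementcor}\eqref{complementcor1} to pin down the co-congruence as $c(\neg\rho)$). All hypotheses are checked correctly, so nothing further is needed.
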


\begin{proof}
  Immediate from Proposition~\ref{allse}\eqref{allse3} and Theorem~\ref{complementthm}.
\end{proof}

Let $\Sets_{\rm fe}$, $\Sets^{\apt}_{\rm fe}$, $\Sg_{\rm fe}$ and $\Sg^{\apt}_{\rm fe}$ be the full subcategories formed by all finitely enumerable objects in $\Sets$, $\aSets$, $\Sg$ and $\aSg$, respectively. 
For all $X,Y\in\Sets_{\rm fe}$ and $f:X\to Y$, let $A(X) = (X,\bowtie)$ and $A(f)=f$.
Denote by $V:\Sets^{\apt}_{\rm fe}\to\Sets_{\rm fe}$, and $V:\Sg^{\apt}_{\rm fe}\to\Sg_{\rm fe}$ the forgetful functors. 

\begin{cor}
  The assignment $A$ defines a full and faithful functor $A:\Sets\to\aSets$, satisfying $V\circ A = \I_{\Sets}$. It induces a full and faithful functor $A:\Sg\to\aSg$ satisfying $V\circ A = \I_{\Sg}$.
\end{cor}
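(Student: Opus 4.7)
The plan is to verify functoriality, the identity $V\circ A=\I$, and the full-faithfulness separately, all of which reduce to routine consequences of Proposition~\ref{allse}.

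First, I would check that $A$ is a well-defined functor $\Sets_{\rm fe}\to\Sets^{\apt}_{\rm fe}$. On objects, $A(X)=(X,\bowtie)$ is a set with apartness by construction of the fine apartness. On morphisms, given $f:X\to Y$ in $\Sets_{\rm fe}$, the source $A(X)$ is finitely enumerable with fine apartness, so Proposition~\ref{allse}\eqref{allse1} guarantees that $f$ is strongly extensional as a map $A(X)\to A(Y)$. Hence $A(f)=f$ is a morphism in $\Sets^{\apt}_{\rm fe}$. Preservation of identities and composition is automatic since $A$ acts as the identity on both the underlying set and the underlying function. The equality $V\circ A=\I_{\Sets_{\rm fe}}$ is then immediate, since $V$ simply discards the apartness and $A$ does not change the underlying data.

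Second, for fullness and faithfulness, I would note that the induced map
\[
A_{X,Y}:\Hom_{\Sets_{\rm fe}}(X,Y)\longrightarrow\Hom_{\Sets^{\apt}_{\rm fe}}(A(X),A(Y)),\qquad f\mapsto f
\]
is essentially the inclusion of function sets. Faithfulness is immediate because $A(f)=A(g)$ as functions forces $f=g$. For fullness, any strongly extensional $g:A(X)\to A(Y)$ is in particular a function $X\to Y$, and is thus in the image of $A_{X,Y}$. Conversely, Proposition~\ref{allse}\eqref{allse1} shows that every function $X\to Y$ is automatically strongly extensional with respect to the fine apartness on $X$, so $A_{X,Y}$ is a bijection.

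For the semigroup statement, the same argument works verbatim, with Proposition~\ref{allse}\eqref{allse2} supplying that $A(S)=(S,\bowtie)\in\aSg$ whenever $S\in\Sg_{\rm fe}$ (i.e.\ the multiplication on $S$ is strongly extensional with respect to $\bowtie$), and Proposition~\ref{allse}\eqref{allse1} again showing that any semigroup homomorphism $f:S\to T$ in $\Sg_{\rm fe}$ is strongly extensional as a map $A(S)\to A(T)$. Since $f$ already preserves multiplication by hypothesis, $A(f)=f$ is a morphism in $\Sg^{\apt}_{\rm fe}$, and the same bijection-of-Hom-sets argument gives full-faithfulness, with $V\circ A=\I_{\Sg_{\rm fe}}$ holding for the same trivial reason.

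There is no real obstacle here: all the substantive content has already been packaged into Proposition~\ref{allse}. The only thing worth pointing out is that one must read the target categories as $\Sets^{\apt}_{\rm fe}$ and $\Sg^{\apt}_{\rm fe}$ rather than the full categories $\aSets$ and $\aSg$, since $A(X)=(X,\bowtie)$ only makes sense (and only receives every function as a morphism) when $X$ is finitely enumerable.
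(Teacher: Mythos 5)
Your proof is correct and follows exactly the route the paper intends: the corollary is stated without proof precisely because it is an immediate unpacking of Proposition~\ref{allse}\eqref{allse1} and \eqref{allse2}, which is what you do. Your closing remark is also well taken --- the functors must be read as $A:\Sets_{\rm fe}\to\Sets^{\apt}_{\rm fe}$ and $A:\Sg_{\rm fe}\to\Sg^{\apt}_{\rm fe}$ (as the preceding paragraph defining $A$ and $V$ makes clear), since the fine apartness $\bowtie = c(\neg(=))$ is only guaranteed to exist on finitely enumerable sets, where every relation is admissible.
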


\begin{rmk} \label{finermk}
\begin{enumerate}
\item
  By Theorem~\ref{complementthm}, the fine apartness it tight if and only if the equality relation is stable. Similarly, if the fine apartness relation is stable, then
  \[(\bowtie) = c(\neg(=)) = \neg\neg c(\neg(=)) = \neg\neg\neg(=) = \neg(=) \] by Lemma~\ref{negneglma}. So $\bowtie$ is stable if and only if $\neg(=)$ is cotransitive.
  \label{finermk1}
\item
  Neither of the two properties \emph{tight} respectively \emph{stable} implies the other for the fine apartness relation $\bowtie$. For example, let $X=\{0,1\}$ with equality relation $\e$ defined by $0\e 1 \:\equ\: P\eller\neg P$, where $P$ is some proposition. Then $0(\neg\e)1 \:\equ\: \neg(P\eller\neg P) \:\equ\: \bot$, and thus $ \neg\e = \emptyset$, which is a cotransitive relation. So $\bowtie$ is stable, by \eqref{finermk1}. On the other hand, $\bowtie$ is tight if and only if $\e$ is stable, if and only if $P$ is decidable.

  Next, let $Y=\{0,1,2\}$ with equality relation $\d$ determined by $0\d 1 \:\equ\: \neg P$, $1\d 2 \:\equ\: \neg\neg P$, and $\neg(0\d2)$.
  Clearly, the relation $\d$ is stable, so $\bowtie$ is tight. On the other hand, if $\neg\d$ is cotransitive then either $0(\neg\d)1$ or $2(\neg\d)2$ holds, that is, either $\neg\neg P$ or $\neg P$. Thus the implication ``\,$\bowtie$ is tight'' $\:\Rightarrow\:$ ``\,$\bowtie$ is stable'' entails WLEM. 
  \label{finermk2}
\item 
  The conclusion of Proposition~\ref{fineapartness} does not hold if we replace ``fine'' with ``tight'' in its premise. For let $P$ be any proposition, and define an apartness $\apt$ on $X=\{0,1\}$ by $0\apt1\:\equ\: (P\eller\neg P)$. 
  Now $\neg(0\apt1)$ is equivalent to $\neg (P\eller \neg P)$ which is impossible; consequently, $\neg(x\apt y)$ holds only if $x=y$. The subset $\{0\}\subset X$ is clearly detachable, but not strongly extensional unless $P\eller\neg P$ holds. Thus, the statement that every detachable subset of $X$ is strongly extensional entails LEM.
  \label{finermk3}
\item 
  The apartness relation $\apt$ in \eqref{finermk3} is an example of a tight apartness that is not necessarily fine.
    \label{finermk4}
  \item
    By Proposition~\ref{fineapartness}, every detachable subset of $X$ is clopen with respect to the topology of the fine apartness relation on $X$. However, it is not possible to prove in general that clopen sets are detachable, or even stable. For example, given a proposition $P$, let $X=\{0\}$ and $A_P=\{x\in X\mid P \}\subset X$. Then $A_P$ and $\neg A_P=\{x\in X \mid \neg P\}$ are strongly extensional in $X$ (with respect to the unique apartness relation $(\apt) = \emptyset$ on $X$), and $\neg\neg A_P=\{x\in X \mid \neg\neg P\}$. So $A_P\subset X$ is detachable if and only if $P$ is decidable, and stable if and only if $P$ is stable.
  \label{finermk5}
\end{enumerate}
\end{rmk}

%%%%%%%%%%%%%%%%%%%%%%%%%%%%%%%%%%%%%%%%
\subsection{Not every relation is cotransitively admissible}
\label{notadmissible}

The purpose of this section is to prove the following result. 

\begin{thm}\label{irreflexivelpo}
  The statement that every irreflexive relation is cotransitively admissible is equivalent to the limited principle of omniscience (LPO).
\end{thm}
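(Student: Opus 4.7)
I would split this into the two implications and attack the counterexample direction first, since it is more concrete.

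For the direction ``admissibility of every irreflexive relation $\implies$ LPO,'' I would construct, from a given binary sequence $a \in \{0,1\}^\N$ with $P \equiv \exists_n(a_n=1)$, an irreflexive relation $\alpha_a$ on the set
\[
X_a \;=\; \{s,t,\omega_0,\omega_1\}\cup\N
\]
(all elements distinct, discrete apartness) such that cotransitivity of $\tilde{\alpha_a}$ delivers $\neg P\eller P$. The specification would have $\alpha_a$ be the ``full'' irreflexive relation on $X_a$ except for the following deviations, designed to make the key disjunctions pick out LPO:
\begin{itemize}
\item $s\,\alpha_a\,n \equiv (a_n=0)$, $\ n\,\alpha_a\,t \equiv (a_n=1)$, $\ n\,\alpha_a\,\omega_0 \equiv (a_n=0)$ for $n\in\N$;
\item $\omega_0\,\alpha_a\,\omega_1\equiv\neg P$ and $\omega_1\,\alpha_a\,t\equiv P$;
\item all other non-diagonal pairs are in $\alpha_a$.
\end{itemize}
A direct computation then shows (i) that at every $z\in X_a$ the disjunction $s\,\alpha_a\,z \eller z\,\alpha_a\,t$ reduces either to a trivial case or to the decidable $(a_n=0)\eller(a_n=1)$, hence $(s,t)\in \alpha_a^{*n}$ for every~$n$ and in fact $(s,t)\in\beta$ for every $\beta\in\<\alpha_a\>^*$; so $(s,t)\in\tilde{\alpha_a}$ unconditionally. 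On the other hand (ii) $(s,\omega_0)\in\alpha_a*\alpha_a$ reduces at $z=n$ to $(a_n=0)\eller(a_n=0)\equiv(a_n=0)$, giving $(s,\omega_0)\in\tilde{\alpha_a}\impl\neg P$; and (iii) $(\omega_0,t)\in\alpha_a*\alpha_a$ at $z=\omega_1$ is exactly $\neg P\eller P$, so $(\omega_0,t)\in\tilde{\alpha_a}\impl(\neg P\eller P)$. Instantiating cotransitivity of $\tilde{\alpha_a}$ at $(s,t)$ with $z=\omega_0$ therefore yields $\neg P\eller(\neg P\eller P)\equiv\neg P\eller P$, which is LPO for~$a$.

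For the direction ``LPO $\implies$ admissibility,'' the plan is as follows. Given LPO and an irreflexive relation $\alpha$ on $X$, I would show $\tilde\alpha\subset\tilde\alpha*\tilde\alpha$ using Lemma~\ref{*properties}\eqref{*properties4}. Fix $(x,y)\in\tilde\alpha$ and $z\in X$. Enumerate the countably generated set $\<\alpha\>^*$ as $\{\beta_n\}_{n\in\N}$ and use LPO on the resulting sequence to separate two cases: either $(x,z)\in\beta_n$ for every $n$, in which case $(x,z)\in\tilde\alpha$ and we are done; or there is some $\beta_{n_0}\in\<\alpha\>^*$ with $(x,z)\notin\beta_{n_0}$. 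In the latter case, for an arbitrary $\beta\in\<\alpha\>^*$ the relation $\beta_{n_0}*\beta$ also lies in $\<\alpha\>^*$, so $(x,y)\in\tilde\alpha\subset\beta_{n_0}*\beta$, which unfolds as $(x,z)\in\beta_{n_0}\eller(z,y)\in\beta$; having excluded the first disjunct, we get $(z,y)\in\beta$. Since $\beta$ was arbitrary, $(z,y)\in\tilde\alpha$.

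The main obstacle is the first direction: ensuring that the constraint $(s,t)\in\beta$ propagates unconditionally through \emph{all} of $\<\alpha_a\>^*$, not merely the powers $\alpha_a^{*n}$. I expect this to require a careful induction verifying, for each compound $\beta=\beta_1*\beta_2$, that the $z$-level disjunction stays trivially true at every point of $X_a$, even as the auxiliary predicates $s\,\beta_i\,z$ and $z\,\beta_i\,t$ accumulate conditions involving the $\N$- and $\omega$-elements. The $\omega_0$ and $\omega_1$ placements, together with the symmetric ``full everywhere else'' design, are precisely what make this induction go through while leaving $(s,\omega_0)$ and $(\omega_0,t)$ nontrivially conditional on $\neg P$ and LPO, respectively.
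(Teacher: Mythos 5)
Both directions of your proposal have genuine gaps; the first is fatal to your construction as stated.

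For ``admissibility $\impl$ LPO'': your claim (i), that $(s,t)\in\beta$ holds unconditionally for \emph{every} $\beta\in\<\alpha_a\>^*$, is false, and the obstacle you flagged cannot be overcome for this $\alpha_a$. Consider $\beta=(\alpha_a*\alpha_a)*(\alpha_a*\alpha_a)\in\<\alpha_a\>^*$ and instantiate the outer quantifier at $z=\omega_0$: you need $s\,(\alpha_a*\alpha_a)\,\omega_0 \eller \omega_0\,(\alpha_a*\alpha_a)\,t$. But $s\,(\alpha_a*\alpha_a)\,\omega_0$ unfolds at $w=n$ to $(a_n=0)\eller(a_n=0)$, so it is equivalent to $\forall_n(a_n=0)=\neg P$; and $\omega_0\,(\alpha_a*\alpha_a)\,t$ unfolds at $w=\omega_1$ to $\neg P\eller P$. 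Hence $(s,t)\in\beta$ already entails $\neg P\eller P$, so $(s,t)\in\tilde{\alpha_a}=\bigcap\<\alpha_a\>^*$ cannot be established outright, and your cotransitivity instance at $(s,t)$, $z=\omega_0$ proves nothing. This is the essential difficulty of the theorem: the very deviations that make $(s,\omega_0)$ and $(\omega_0,t)$ conditional re-enter $(s,t)$ through products of compound relations. The paper's counterexample is the relation ``differ in at least two coordinates'' on $\{0,1\}^{\N}$, where $0\,\tilde{\a}\,1$ \emph{is} provable -- but only via a nontrivial characterisation of $\tilde{\a}$ (Proposition~\ref{verydifferent}) obtained by projecting to the finite truncations $X_n$ (where the filled product is associative and a counting argument applies) and lifting back (Lemma~\ref{liftalpha}); the sequence $b_n=\max_{i\le n}a_i$ then plays the role of your $\omega_0$.

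For ``LPO $\impl$ admissibility'': the dichotomy ``either $(x,z)\in\beta_n$ for every $n$, or $(x,z)\notin\beta_{n_0}$ for some $n_0$'' is not an instance of LPO, because $(x,z)\in\beta_n$ is a nested universally quantified statement, not a decidable one; for a constant predicate your dichotomy is literally $Q\eller\neg Q$, which LPO does not yield. (Your handling of the second case, once reached, is fine.) The paper instead reduces $\<\a\>^*$ to the decreasing chain $\a_0=\a$, $\a_{n+1}=\a_n*\a_n$, observes that $A=\{n\mid x\a_nz\}$ and $B=\{n\mid z\a_ny\}$ are downward closed and satisfy $A\cup B=\N$, and invokes the genuine LPO-consequence (with countable choice) that one of two sets covering $\N$ is infinite; downward closure then upgrades ``infinite'' to ``$=\N$''. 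Your enumeration of all of $\<\a\>^*$ discards exactly the monotonicity that makes this work.
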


As usual, we write $\tilde{\a} = \bigcap\<\a\>^{*}$.

\begin{proof}[Proof of the implication ``$\Leftarrow$'']
Given an irreflexive relation $\a$ on a set $X$, define relations $\{\a_n\}_{n\in\N}$ inductively by $\a_0=\a$ and $\a_{n+1} = \a_n*\a_n$.
From Lemma~\ref{*properties}(\ref{*properties3},\ref{*properties6}), it follows that  $\a_n$ is irreflexive and $\a_{n+1}\subset\a_n$ for all $n\in\N$.
Let $\b\subset\a_l$ and $\gamma\subset\a_m$ for some $l,m\in\N$. Then $\b,\gamma\subset\a_n$ for $n=\max\{l,m\}$, and thus $\b*\gamma\subset\a_n*\a_n = \a_{n+1}$ by Lemma~\ref{*properties}\eqref{*properties6}. By induction, it follows that every $\b\in\<\a\>^*$ is contained in $\a_n$ for some $n\in\N$, and hence that $\tilde{\a} = \bigcap\<\a\>^* = \bigcap\{\a_n\mid n\in\N\}$.

Let $x,y,z\in X$, and assume that $x\tilde{\a}y$.
Set $A=\{n\in\N \mid x\a_n z\}$ and  $B=\{n\in\N \mid z\a_n y\}$.
For each $n\in\N$, we have $(x,y)\in\a_{n+1}=\a_n*\a_n$ and hence $x\a_nz$ or $z\a_ny$. This means that $A\cup B =\N$. By \cite[Proposition~1.2.4]{diener20}, LPO now implies that either $A$ or $B$ is infinite. Since $n\in A \:\impl\: \forall_{m\le n}\,(m\in A)$, infiniteness of $A$ is equivalent to $A=\N$. Similarly,  $B$ is infinite if and only if $B=\N$. This proves that $A=\N$ or $B=\N$, that is, $(x,z)\in\bigcap\{\a_n\mid n\in\N\}=\tilde{\a}$ or $(z,y)\in\bigcap\{\a_n\mid n\in\N\}=\tilde{\a}$. Hence $\tilde{\a}$ is cotransitive.
\end{proof}

\begin{rmk}
Proposition~1.2.4 in \cite{diener20}, which is used in the above proof,  relies on the axiom of countable choice.
\end{rmk}

Define a relation $\a$ on  $X = \{0,1\}^{\mathbb{N}}$ by
\[x\a y \:\equ\: \exists_{i,j\in\N} ( \, i\ne j \och x_i\ne y_i \och x_j\ne y_j \,)\,.\]
To conclude  the proof of Theorem~\ref{irreflexivelpo}, we shall prove that cotransitivity of $\tilde{\a}$ implies LPO.

\begin{prop} \label{verydifferent}
  Let $x,y\in X$. Then $x\tilde{\a} y$ if and only if there exists an injective function $f:\N\to N$ such that $x_n\ne y_n \:\equ\: \exists_{m\in\N}: n=f(m)$ holds for all $n\in\N$.
\end{prop}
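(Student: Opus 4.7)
The strategy is to quantify $x\alpha_n y$ in terms of the cardinality of $D_{xy}=\{n\in\N:x_n\ne y_n\}$, using the recursive description $\alpha_0=\alpha$, $\alpha_{n+1}=\alpha_n*\alpha_n$ and the identity $\tilde\alpha=\bigcap_{n\ge 0}\alpha_n$ established in the preceding proof. Two quantitative lemmas, both by induction on $n$, do the work: first, that the existence of $2^n+1$ distinct indices $i_0,\ldots,i_{2^n}$ with $x_{i_j}\ne y_{i_j}$ for all $j$ implies $x\alpha_n y$; and second, that if $D_{xy}$ is contained in an enumerated finite set $F\subset\N$ of cardinality at most $2^n$, then $\neg(x\alpha_n y)$.

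For the first lemma, given $w$, one decides for each of the $2^{n+1}+1$ witnesses whether $w_{i_j}=x_{i_j}$ or $w_{i_j}=y_{i_j}$; by the pigeonhole principle either at least $2^n+1$ of the indices $i_j$ lie in $D_{xw}$ or at least that many lie in $D_{wy}$, and the inductive hypothesis yields $x\alpha_n w \eller w\alpha_n y$. For the second lemma, partition $F=F_1\cup F_2$ with $|F_1|,|F_2|\le 2^n$ and let $u$ agree with $x$ on $F_1\cup(\N\setminus F)$ and with $y$ on $F_2$. A direct check shows $D_{xu}\subseteq F_2$ and $D_{uy}\subseteq F_1$, so by induction both $\neg(x\alpha_n u)$ and $\neg(u\alpha_n y)$ hold, contradicting the instantiation of $x\alpha_{n+1}y$ at this specific $u$.

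The direction ``$\Leftarrow$'' is immediate from the first lemma: given the injection $f$, the elements $f(0),\ldots,f(2^n)$ provide $2^n+1$ distinct witnesses for every $n$, so $x\alpha_n y$ holds for all $n$, whence $x\tilde\alpha y$. For ``$\Rightarrow$'', assume $x\tilde\alpha y$, fix $N\in\N$, and define $z_N\in X$ to agree with $y$ on $[0,N)$ and with $x$ on $[N,\infty)$. Then $D_{xz_N}\subseteq\{0,\ldots,N-1\}$ has cardinality at most $N$; choosing $m$ with $2^m\ge N$, the second lemma gives $\neg(x\alpha_m z_N)$, while $x\alpha_{m+1}y$ forces $x\alpha_m z_N \eller z_N\alpha_m y$. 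Consequently $z_N\alpha_m y$ and hence $z_N\alpha y$, which produces two indices in $D_{z_Ny}=D_{xy}\cap[N,\infty)$; in particular $\exists n\ge N$ with $x_n\ne y_n$. Since equality of bits is decidable, $f:\N\to\N$ can now be defined by bounded minimisation as $f(0)=\min D_{xy}$ and $f(k+1)=\min\{n>f(k):n\in D_{xy}\}$; this function is strictly increasing, hence injective, and $\im f=D_{xy}$.

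The main obstacle is the second lemma, whose induction relies on invoking the contrapositive of the inductive hypothesis; this is unproblematic here because all the cardinalities involved are those of explicitly enumerated finite subsets of $\N$, so every inequality and every negation encountered is decidable. In particular, no appeal is made to a non-constructive principle, and the extracted witnesses in the ``$\Rightarrow$'' direction are obtained by concrete finite searches.
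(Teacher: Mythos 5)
Your argument is correct and constructively sound, but it takes a genuinely different route from the paper. The paper handles the ``if'' direction by truncating to the finite sets $X_l=\{0,1\}^{\N_l}$, where the constant domains property makes the filled product associative, running an additivity argument for the counting function $\d$ there, and then lifting arbitrary non-associative monomials back to $X$ via Lemma~\ref{liftalpha}; for ``only if'' it uses a chain of one-bit modifications $z^{(0)},\ldots,z^{(r)}$ interpolating between $x$ and $y$. You instead stay on $X$ and work with the doubling sequence $\a_0=\a$, $\a_{n+1}=\a_n*\a_n$, sandwiching $\a_n$ between two explicit quantitative bounds ($2^n+1$ witnessed differences force $x\a_n y$; a decidable superset of $D_{xy}$ of size at most $2^n$ refutes it), and you replace the chain by a single intermediate point $z_N$. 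This is cleaner and dispenses with the truncation machinery; both pigeonhole steps concern explicitly listed finite decidable data, and your second lemma is a plain refutation (instantiate $x\a_{n+1}y$ at $u$ and note both disjuncts contradict negative induction hypotheses), so no appeal to contraposition is actually needed there. The one point you should make explicit rather than delegate to ``the preceding proof'': your argument needs $\bigcap_{n}\a_n\subset\bigcap\<\a\>^{*}=\tilde{\a}$, i.e.\ that every $\b\in\<\a\>^{*}$ \emph{contains} some $\a_N$, whereas the induction actually spelled out in the proof of Theorem~\ref{irreflexivelpo} establishes the reverse containment $\b\subset\a_n$. The missing half is a one-line structural induction ($\a_{N+1}=\a_N*\a_N\subset\b_1*\b_2$ for $N$ at least the depths of $\b_1,\b_2$, using Lemma~\ref{*properties}(3) and (6)); it plays exactly the role that Lemma~\ref{liftalpha} plays in the paper, namely controlling arbitrary bracketings of the filled product, and should be stated.
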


We shall use the notation $\N_n = \{0,\ldots, n\}$ and $X_n = \{0,1\}^{\N_n}$. Note that the sets $\N_n$, $X_n$ and $\{t\in\N_n \mid x_t\ne y_t\}$ are finite for all $n\in\N$. 

\begin{proof}[Proof of ``$\Rightarrow$'']
Let $n\in\N$. Set $\{t_1,\ldots, t_r\} = \{t\in\N_n \mid x_t\ne y_t\}$.
Define $z^{(0)},\ldots,z^{(r)}\in X$ by $z^{(0)} = x$ and 
\[
z_i^{(s)} =
\begin{cases}
  z_i^{(s-1)} & \mbox{if } i\ne t_s \,,\\
  y_i & \mbox{if } i = t_s \,.
\end{cases}
\]
for $1\le s\le r$. 
Then $\neg(z^{(s)}\a z^{(s+1)})$ for all $s\in\N_{r-1}$, that is,
$\neg(x\a z^{(0)} \eller z^{(0)}\a z^{(1)} \eller \cdots z^{(r-1)}\a z^{(r)} )$.
On the other hand, $x\tilde{\a}y$ implies
$x\a z^{(0)} \eller z^{(0)}\a z^{(1)} \eller \cdots \eller z^{(r-1)}\a z^{(r)} \eller z^{(r)}\a y$; hence, $z^{(r)}\a y$. In particular, there exists some $m\in\N$ such that $z^{(r)}_m \ne y_m$. 
By construction, $z^{(r)}_i = y_i$ for $i\le n$, and  $z^{(r)}_i = x_i$ for $i> n$. Thus, $m>n$ and $x_m = z^{(r)}_m\ne y_m$.

We have proved that for every $n\in\N$ there exists a number $m>n$ such that $x_m\ne y_m$. Now define a function $f:\N\to\N$ recursively by
\begin{align*}
  f(0) &= \min\{i\in\N_l \mid x_i\ne y_i \}, &&\mbox{where $l\in\N$ is such that $x_l\ne y_l$}; \\
    f(n+1) &= \min\{ i\in\N_m\setminus\N_{f(n)} \mid x_i\ne y_i \}, &&\mbox{where $m>f(n)$ is such that $x_m\ne y_m$}.
  \end{align*}
Injectivity of $f$ is immediate from the definition.
The property $x_m\ne y_m \:\equ\: m\in \im f$ follows, by induction on $m$, from the fact that the set $\{t\in\N_m \mid x_t\ne y_t\}$ is finite.
\end{proof}

For the proof of the ``if'' part of Proposition~\ref{verydifferent}, we shall use some auxiliary notation and results.
For a positive integer $d$ and a relation $\b$ on a set $Y$, we denote by $\<\b\>^{*d}$ the degree $d$ part of $\<\b\>^*$, that is,
\[
\<\b\>^{*1} = \{\b\}, \qquad\mbox{and}\qquad \<\b\>^{*d} = \bigcup_{l+m=d}\left\{\b_1*\b_2 \mid \b_1\in\<\b\>^{*l},\: \b_2\in\<\b\>^{*m}\right\} \quad\mbox{for}\; d>1.
\]
Clearly, $\<\b\>^* = \bigcup_{d\ge1} \<\b\>^{*d}$ and thus, given $a,b\in Y$, the relation $a\tilde{\b}b$ holds if and only if $(a,b)\in\bigcap\<\b\>^{*d}$ for all $d\ge 1$.
Observe that, in the case $Y=X_n$, the filled product is associative by Lemma~\ref{fecd} and Proposition~\ref{cd*assoc}, and thus $\<\b\>^{*d} = \{\b^{*d}\}$ for all $\b\subset X_n\times X_n$ and $d\ge1$. 

Given $n\in\N$, define $\tau_n: X\to X_n,\: a\mapsto a'=(a_i)_{i\le n}$. For $u,v\in X_n$, let
\[\d(u,v) = |\{i\in\N_n\mid u_i\ne v_i\}|\in\N\]
be the cardinality of the finite set $\{i\in\N_n\mid u_i\ne v_i\}$.
A relation $\a_n$ on $X_n$ is defined by $u\a_n v \:\equ\: \d(u,v)\ge 2$.

\begin{lma} \label{liftalpha}
  Let $a,b\in X$, and $d\ge 1$.  If $(\tau_n(a),\tau_n(b))\in \a_n^{*d}$ for some $n\in\N$, then $(a,b)\in\bigcap\<\a\>^{*d}$.
\end{lma}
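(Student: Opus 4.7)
The plan is to prove the lemma by induction on $d$, exploiting the fact that in the finitely enumerable set $X_n$, the filled product is associative and so $\<\a_n\>^{*d}$ collapses to a single relation $\a_n^{*d}$.

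For the base case $d=1$, we have $\<\a\>^{*1}=\{\a\}$, so the claim reads: if $\d(\tau_n(a),\tau_n(b))\ge 2$ then $a\a b$. This is immediate from the definitions, since two indices $i\ne j$ in $\N_n$ with $a_i\ne b_i$ and $a_j\ne b_j$ witness $a\a b$ in $X$.

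For the inductive step, fix $d\ge 2$ and assume the statement for all $d'<d$. Suppose $(\tau_n(a),\tau_n(b))\in\a_n^{*d}$. We must show $(a,b)\in\b$ for every $\b\in\<\a\>^{*d}$. Any such $\b$ can be written $\b=\b_1*\b_2$ with $\b_1\in\<\a\>^{*l}$, $\b_2\in\<\a\>^{*m}$ and $l+m=d$, $l,m\ge 1$. Using associativity of the filled product on $X_n$ we have $\a_n^{*d}=\a_n^{*l}*\a_n^{*m}$, so by definition of ``$*$'' applied at the element $u=\tau_n(z)\in X_n$ for arbitrary $z\in X$,
\[
(\tau_n(a),\tau_n(z))\in\a_n^{*l} \eller (\tau_n(z),\tau_n(b))\in\a_n^{*m}.
\]
The induction hypothesis (applied with the same $n$ and degrees $l,m<d$) then gives $(a,z)\in\bigcap\<\a\>^{*l}\subset\b_1$ or $(z,b)\in\bigcap\<\a\>^{*m}\subset\b_2$. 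As $z\in X$ was arbitrary, we conclude $a(\b_1*\b_2)b$, that is, $a\b b$. Since $\b\in\<\a\>^{*d}$ was arbitrary, $(a,b)\in\bigcap\<\a\>^{*d}$.

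The two facts that make the induction go through are both established earlier in the text: associativity of the filled product on $X_n$ (Lemma~\ref{fecd} together with Proposition~\ref{cd*assoc}), and the monotonicity property of $*$ (Lemma~\ref{*properties}\eqref{*properties6}) that lets the disjunction $(a,z)\in\bigcap\<\a\>^{*l}$ or $(z,b)\in\bigcap\<\a\>^{*m}$ be transferred to the chosen representatives $\b_1,\b_2$. No real obstacle arises: the only point requiring mild care is making sure the decomposition $d=l+m$ always has $l,m\ge 1$ so that the induction hypothesis is applicable, which is automatic for $d\ge 2$.
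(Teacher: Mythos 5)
Your proof is correct and follows essentially the same route as the paper's: induction on the degree $d$ (the paper phrases it as induction on the structure of the monomial $g$ with $\b=g(\a)$, you phrase it via the decomposition $\b=\b_1*\b_2$ from the definition of $\<\a\>^{*d}$, which is the same thing), using associativity of the filled product on the finite set $X_n$ to identify $\a_n^{*d}$ with $\a_n^{*l}*\a_n^{*m}$ and transferring the disjunction pointwise along $\tau_n$. The only cosmetic difference is that you invoke Lemma~\ref{*properties}\eqref{*properties6} where the trivial inclusion $\bigcap\<\a\>^{*l}\subset\b_1$ already suffices.
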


\begin{proof}
Let $\b \in\<\a\>^{*d}$. Then $\b = g(\a)$ for some non-associative and non-commutative polynomial $g(T)$ of degree $d$. Consequently, $g(\a_n)\in\< \a_n\>^{*d} =\{\a_n^{*d}\}$, i.e., $g(\a_n) = \a_n^{*d}$.
  
Assume that $d=1$ and thus $g(T) = T$. Then $(\tau_n(a),\tau_n(b))\in \a^{*d} \a_n$ means that there exist $i\ne j \in \N_n$ such that $a_i\ne b_i$ and $a_j\ne b_j$. Hence, $(a,b) \in \a = g(\a)$.

Assume that $d>1$ and $g(T) = g_1(T)g_2(T)$. Let $z\in X$. Since
\[
(a',b') = (\tau_n(a),\tau_n(b))\in \a_n^{*d}= g(\a_n) = g_1(\a_n)*g_2(\a_n)\,,
\]
it follows that either $(a',z')\in g_1(\a_n)$ or $(z',b')\in g_2(\a_n)$ holds. By induction we infer that $(a,z)\in g_1(\a)$ in the first case, and $(z,b)\in g_2(\a)$ in the second. Hence $(a,b)\in g_1(\a)*g_2(\a) = g(\a)=\b$.

We conclude that $(a,b)\in\b$ for all $\b\in\<\a\>^{*d}$, that is, $(a,b)\in\bigcap\<\a\>^{*d}$.
\end{proof}

\begin{proof}[Proof of the implication ``$\Leftarrow$'' in Proposition~\ref{verydifferent}]
Let $n\in\N$, and set $l=f(n+2)$. For all $z_1,\ldots,z_n\in X_{l}$, we have
  \[ \d(x',z_1) + \d(z_1,z_2) +\cdots+ \d(z_{n-1},z_n) + \d(z_n,y') \ge \d(x',y') = n+2 \,.\]
Using the finiteness of $\N_{l}$, it follows that at least one term in this sum has to be greater than or equal to $2$, which means that the disjunction
\[ x'\a_{l} z_1 \eller z_1\a_{l}z_2 \eller \cdots\eller z_{n-1}\a_{l}z_n \eller z_n\a_{l}y' \]
holds. This proves that $(x',y')\in \a_{l}^{*n}$, and hence $(x,y)\in\bigcap\<\a\>^{*n}$, by Lemma~\ref{liftalpha}. Since $n$ was arbitrary, it follows that $(x,y)\in\bigcap\<\a\>^* = \tilde{\a}$. 
\end{proof}

The following result proves the ``only if'' part of Theorem~\ref{irreflexivelpo}.

\begin{prop}
  If $\tilde{\a}$ is cotransitive then LPO holds.
\end{prop}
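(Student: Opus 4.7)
The plan is to derive LPO from cotransitivity of $\tilde\a$ by instantiating it on a triple of sequences constructed from an arbitrary binary sequence $a=(a_n)_{n\in\N}$. First I would set $x=(0,0,0,\ldots)$ and $y=(1,1,1,\ldots)$ in $X$, and define
\[
z_n =
\begin{cases}
  1 & \mbox{if } \exists m\le n\,(a_m=1), \\
  0 & \mbox{otherwise,}
\end{cases}
\]
which is well-defined since bounded quantification over $\N$ is decidable. The crucial feature of $z$ is monotonicity: once $z_n=1$ we have $z_{n+k}=1$ for all $k\ge 0$. Consequently $\exists n(z_n=1)\equ \exists n(a_n=1)$ and $\forall n(z_n=0)\equ\forall n(a_n=0)$, so producing the LPO dichotomy for $z$ produces it for $a$.

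Next I would observe that $x\tilde\a y$: the identity map on $\N$ is injective and $x_n\ne y_n$ for every $n$, so the criterion in Proposition~\ref{verydifferent} is trivially met. Cotransitivity of $\tilde\a$ applied at $z$ then yields $x\tilde\a z\eller z\tilde\a y$, and it remains to read off one half of LPO from each branch.

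If $x\tilde\a z$, Proposition~\ref{verydifferent} furnishes an injection $f:\N\to\N$ with $\{n\in\N\mid z_n=1\}=\im f$; in particular $f(0)$ is a witness for $\exists n(z_n=1)$, whence $\exists n(a_n=1)$. If instead $z\tilde\a y$, the same proposition gives an injection $g:\N\to\N$ with $\im g=\{n\in\N\mid z_n=0\}$. Suppose $z_{n_0}=1$ for some $n_0$; by monotonicity $z_{n_0+k}=1$ for all $k\ge 0$, so $\im g\subset\{0,\ldots,n_0-1\}$, which contradicts the existence of an injection from $\N$ into a finite set. Hence $\neg\exists n(z_n=1)$, and by decidability of $\{0,1\}$ this upgrades to $\forall n(z_n=0)$, i.e.\ $\forall n(a_n=0)$.

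The main delicate point I anticipate is the second branch: one must convert the ``infinite disagreement'' statement $z\tilde\a y$ into the universal statement $\forall n(a_n=0)$. The monotone design of $z$ is exactly what makes this work, by reducing the problem to the constructively valid pigeonhole fact that no injection $\N\to\{0,\ldots,n_0-1\}$ exists; everything else is a direct application of Proposition~\ref{verydifferent} and the decidability of equality on $\{0,1\}$.
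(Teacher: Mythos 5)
Your proof is correct and is essentially the paper's own argument: your monotone sequence $z$ is exactly the paper's $b_n=\max\{a_i\mid i\in\N_n\}$, and your direct appeal to cotransitivity at $z$ is the same step the paper phrases via $0(\tilde{\a}*\tilde{\a})1$. The only difference is that you spell out, via Proposition~\ref{verydifferent} and the finite pigeonhole, the two implications that the paper records as the equivalences $0\tilde{\a}b\equ\exists_n(a_n=1)$ and $b\tilde{\a}1\equ\forall_n(a_n=0)$.
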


\begin{proof}
From Proposition~\ref{verydifferent}, it is clear that $0\tilde{\a}1$, and hence $0(\tilde{\a}*\tilde{\a})1$ by Lemma~\ref{*properties}\eqref{*properties4}. 

Given a sequence $a\in X$, define $b\in X$ by $b_n = \max\{a_i\mid i\in \N_n\}$ for all $n\in\N$.
Then
\begin{align*}
  0\tilde{\a}b &\:\equ\: \exists_{n\in\N}\,b_n=1 \:\equ\: \exists_{n\in\N}\,a_n=1,\quad \mbox{and}\\
  b\tilde{\a}1 &\:\equ\: \neg\exists_{n\in\N}\,b_n=1 \:\equ\: \forall_{n\in\N}\,a_n=0.  
\end{align*}
Now $0(\tilde{\a}*\tilde{\a})1$ implies that either $0\tilde{\a}b$ or $b\tilde{\a}1$, that is, either $\exists_{n\in\N}\,a_n=1$ or $\forall_{n\in\N}\,a_n=0$.
 Thus LPO holds.
\end{proof}

%%%%%%%%%%%%%%%%%%%%%%%%%%%%%%%%%%%%%%%%%%%%%%%%%%%%%%%%%%%%%%%%%%%%%%%%%%%%%%%%
\section{Some constructions on semigroups with apartness} \label{semigroups}

In this section, we consider some basic constructions on semigroups with apartness: free semigroups, monogenic and periodic semigroups, and Rees factors, and prove some results about the set of idempotents.

%%%%%%%%%%%%%%%%%%%%%%%%%%%%%%%%%%%%%%%%
\subsection{Free semigroups with apartness}

Given a set with apartness $X$, let $X^*$ be the set of finite sequences of elements
(words) in $X$, with the natural equality relation. 
For two elements 
$\bar{x}=x_1x_2\cdots x_m$ and $\bar{y}=y_1y_2\cdots y_n$ in $X^*$, set
$\bar{x}\apt \bar{y}$ if either $m\ne n$ or $(m=n)\wedge\exists_{i\le m}(x_i\apt y_i)$. 
It is straightforward to see that this defines an apartness relation on $X^*$, which is tight if and only if the apartness on $X$ is tight.

Let $F(X)=X^*$ be the semigroup with multiplication given by juxtaposition of words:
$\bar{x}\cdot\bar{y}=\bar{x}\bar{y}=x_1\cdots x_my_1\cdots y_n$. 
Clearly, this operation is strongly extensional, so $F(X)$ is a semigroup with
apartness. 

Given a strongly extensional map $f:X\to Y$ between sets with apartness, define
$F(f):X^*\to Y^*$ inductively by
\[
F(f)(x)=f(x) \mbox{ for $x\in X$}, \qquad 
F(f)(\bar{x}x) = F(f)(\bar{x})\cdot f(x) \mbox{ for $\bar{x}\in X^*$, $x\in X$}.
\]
Again, it is straightforward to verify that $F(f):F(X)\to F(Y)$ is a strongly extensional
morphism, $F(1_X) = 1_{F(X)}$ and that $F(gf)=F(g)F(f)$ for all strongly extensional maps 
$f:X\to Y$, $g:Y\to Z$.
Thus, $F$ is a functor from $\aSets$ to $\aSg$.

\begin{thm} \label{freethm}
  The functor $F:\aSets\to \aSg$ is left adjoint to the forgetful functor $V:\aSg\to\aSets$. 
\end{thm}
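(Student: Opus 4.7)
The plan is to exhibit, for each $X\in\aSets$ and $S\in\aSg$, a natural bijection
\[\Phi_{X,S}:\Hom_{\aSg}(F(X),S)\;\longrightarrow\;\Hom_{\aSets}(X,V(S))\]
by restriction along a unit morphism $\eta_X:X\to V(F(X))$, $x\mapsto x$ (the one-letter word). First I would check that $\eta_X$ is strongly extensional: for $x,y\in X$, the one-letter words $x,y\in X^*$ are apart precisely when $x\apt y$ in $X$, so $\eta_X$ is even apartness-injective.

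Next, given a strongly extensional $f:X\to V(S)$, I would define $\bar f:F(X)\to S$ inductively by $\bar f(x)=f(x)$ for $x\in X$, and $\bar f(\bar w x)=\bar f(\bar w)\cdot f(x)$ for $\bar w\in X^*$, $x\in X$. By construction $\bar f$ is a semigroup homomorphism and $V(\bar f)\circ\eta_X=f$. Uniqueness is then immediate: any morphism $g:F(X)\to S$ in $\aSg$ with $g\circ\eta_X=f$ must, by the semigroup property, satisfy $g(x_1\cdots x_n)=g(x_1)\cdots g(x_n)=f(x_1)\cdots f(x_n)=\bar f(x_1\cdots x_n)$.

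The hard part will be verifying that $\bar f$ is strongly extensional, since this is where the apartness structure really enters. Suppose $\bar f(\bar x)\apt\bar f(\bar y)$ in $S$, with $\bar x=x_1\cdots x_m$ and $\bar y=y_1\cdots y_n$. Because $\N$ is discrete, either $m\ne n$---in which case $\bar x\apt\bar y$ directly by the definition of the apartness on $F(X)$---or $m=n$, which I would handle by induction on $m$. For $m=1$, strong extensionality of $f$ yields $x_1\apt y_1$. For the inductive step, writing $\bar f(\bar x)=\bar f(x_1\cdots x_{m-1})\cdot f(x_m)$ and similarly for $\bar y$, strong extensionality of the multiplication on $S$ gives either $\bar f(x_1\cdots x_{m-1})\apt\bar f(y_1\cdots y_{m-1})$---to which the induction hypothesis applies, producing some $i<m$ with $x_i\apt y_i$---or $f(x_m)\apt f(y_m)$, whence $x_m\apt y_m$ by strong extensionality of $f$. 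In either case $\bar x\apt \bar y$ in $F(X)$.

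Finally, naturality of $\Phi_{X,S}$ in both variables is routine from the functoriality of $F$ and $V$ and the definition of $\eta$, so no genuine work is needed there. Thus $F\dashv V$ with unit $\eta$, and $\Phi_{X,S}(g)=V(g)\circ\eta_X$ provides the required natural bijection.
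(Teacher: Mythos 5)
Your proposal is correct and follows essentially the same route as the paper: the inductive definition of the extension $\bar f$, the length case-split (using discreteness of $\N$) followed by induction on word length via strong extensionality of the multiplication, and the bijection given by restriction along the unit. No substantive differences to report.
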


Denote by $\iota_X:X\to F(X),\:x\mapsto x$ the natural inclusion of $X$ into the set $F(X)=X^*$.
The usual extension property for maps out of a free object follows immediately from Theorem~\ref{freethm}:

\begin{cor} \label{freecor}
Let $X$ be a set with apartness, and $S$ a semigroup with apartness. 
For every strongly extensional function $f:X\to S$ there exists a unique strongly extensional morphism $\phi:F(X)\to S$ such that $f=\phi\iota_X$.
\end{cor}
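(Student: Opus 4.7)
The plan is to derive Corollary~\ref{freecor} as a direct unpacking of the adjunction in Theorem~\ref{freethm}. The inclusion $\iota_X \colon X \to VF(X)$, $x \mapsto x$, is the unit of the adjunction $F \dashv V$ at $X$, and the universal property of the unit is precisely the statement of the corollary: for every $S \in \aSg$ and every strongly extensional function $f \colon X \to VS$, there is a unique morphism $\phi \colon F(X) \to S$ in $\aSg$ with $V\phi \circ \iota_X = f$. Since $V$ is the forgetful functor, this reads $\phi \circ \iota_X = f$, and the conclusion follows with no further work from Theorem~\ref{freethm}.

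If instead one wants a self-contained argument, I would construct $\phi$ by hand. Define $\phi \colon F(X) \to S$ on a word by $\phi(x_1 \cdots x_n) = f(x_1) \cdots f(x_n)$, the product being taken in $S$; associativity of multiplication in $S$ makes this unambiguous. The formula shows immediately that $\phi(\bar{x}\bar{y}) = \phi(\bar{x})\phi(\bar{y})$ and that $\phi(\iota_X(x)) = f(x)$, so $\phi$ is a semigroup morphism extending $f$.

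The key step is to verify that $\phi$ is strongly extensional, and this is the point where one must actually work with the apartness. Assume $\phi(\bar{x}) \apt \phi(\bar{y})$, where $\bar{x} = x_1 \cdots x_m$ and $\bar{y} = y_1 \cdots y_n$. Since equality on $\N$ is decidable, either $m \ne n$, in which case $\bar{x} \apt \bar{y}$ holds by definition of the apartness on $X^*$, or $m = n$. In the second case, splitting $f(x_1)\cdots f(x_m)$ as $f(x_1) \cdot (f(x_2) \cdots f(x_m))$ and using strong extensionality of multiplication in $S$ -- together with induction on $m$ -- yields an index $i \le m$ such that $f(x_i) \apt f(y_i)$; strong extensionality of $f$ then gives $x_i \apt y_i$, and therefore $\bar{x} \apt \bar{y}$.

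Uniqueness is routine: any strongly extensional morphism $\psi \colon F(X) \to S$ with $\psi \circ \iota_X = f$ must agree with $f$ on one-letter words and, being multiplicative, is then determined on every word by induction on length, so $\psi = \phi$. Of the steps above, the only one that genuinely uses the constructive setting is the strong extensionality check, and it is the single place where cotransitivity-style disjunctions (through the strong extensionality of the multiplication in $S$ and of $f$) are invoked; everything else is formally identical to the classical construction of a free semigroup.
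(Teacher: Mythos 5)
Your proposal is correct and matches the paper: the corollary is stated there as an immediate consequence of the adjunction in Theorem~\ref{freethm}, and your self-contained fallback (explicit definition of $\phi$ on words, the decidability-of-$\N$ case split, and the induction using strong extensionality of multiplication and of $f$) is essentially the argument the paper already carries out inside the proof of that theorem when constructing $\eta_{(X,S)}(f)$.
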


\begin{proof}[Proof of Theorem~\ref{freethm}]
The proof is the usual one; we just need to verify that everything makes sense
constructively, and that apartness is preserved.

Given $X\in\aSets$, $S\in\aSg$ and $f\in\aSets(X,V(S))$, define
$\eta_{(X,S)}(f)=\phi:F(X)\to S$ by  $\phi(x)=f(x)$ and
$\phi(\bar{x}x)=\phi(\bar{x})f(x)$ for $x\in X$, $\bar{x}\in X^*$.
By associativity, it follows that $\phi(\bar{x}\bar{y})=\phi(\bar{x})\phi(\bar{y})$ holds for all
$\bar{x},\bar{y}\in F(X)$. To see that $\phi$ is strongly extensional, assume that
$\phi(\bar{x})\apt \phi(\bar{y})$ for some $\bar{x}=x_1\cdots x_m,\bar{y}=y_1\cdots y_n\in
F(X)$.
If $m\ne n$ then $\bar{x}\apt \bar{y}$ and we are done. 
If $m=n=1$ then $f(x_1)=\phi(\bar{x})\apt \phi(\bar{y})=f(y_1)$, and thus 
$\bar{x}=x_1\apt  y_1=\bar{x}$ by strong extensionality of $f$. 
Assume that $m=n>1$, so that $\bar{x}=\bar{x}'x$ and $\bar{y}=\bar{y}'y$ for some
$\bar{x}', \bar{y}'\in X^*$, $x,y\in X$. Then 
\[
\phi(\bar{x}')\phi(x) = \phi(\bar{x})\apt \phi(\bar{y})=\phi(\bar{y}')\phi(y)
\] 
and thus either $\phi(\bar{x}')\apt\phi(\bar{y}')$ or $\phi(x)\apt\phi(y)$. 
By induction we conclude that either $\bar{x}'\apt\bar{y}'$ or $x\apt y$; in
both cases, $\bar{x}\apt\bar{y}$.
This shows that $\eta_{(X,S)}(f)\in\aSg(F(X),S)$.

As in the classical situation, one proves that $\eta:\aSets(-,V(?)) \to \aSg(F(-),?)$ is a
natural transformation.

For an inverse of $\eta$, define $\zeta_{(X,S)}:\aSg(F(X),S) \to \aSets(X,V(S))$ by 
$\zeta_{(X,S)}(\phi) = \phi\iota_X:X\to$ for all $X\in\aSets$, $S\in\aSg$ and
$\phi\in\aSg(F(X),S)$. As $\iota_X$ and $\phi$ are strongly extensional, so is 
$\zeta_{(X,S)}(\phi)$. 
Again, one verifies that the above defines a natural transformation 
$\zeta:\aSg(F(-),?) \to \aSets(-,V(?))$, which is inverse to $\eta$. 

Hence $\aSg(F(-),?) \simeq \aSets(-,V(?))$, that is, $(F,V)$ is an adjoint pair. 
\end{proof}

%%%%%%%%%%%%%%%%%%%%%%%%%%%%%%%%%%%%%%%%
\subsection{Periodicity and monogenic semigroups}

A semigroup $S$ with apartness is \emph{monogenic} if it is generated by a single element,
that is, if there exists a surjective morphism $\f$ from the free semigroup 
$F(\{a\})\simeq (\Z_{>0},+)$ to $S$.
By Proposition~\ref{basicfactor}, such a semigroup $S$ is isomorphic to $F(\{a\})/\ker\f$,
with apartness given by the relation $\cker\f$. Hence, every monogenic semigroup is
specified by a pair $(\rho, \zeta)$ consisting of a congruence and a co-congruence on
$(\Z_{>0},+)$, satisfying $\rho\cap\zeta=\emptyset$. 

In the classical case, every congruence $\rho$ on the additive semigroup $\Z_{>0}$ has the
form 
$\rho = \rho_{r,d}=\{(x,y)\in \Z_{>0}\times\Z_{>0} \mid x,y\ge r
\,\wedge\, (x-y)\in d\Z\}$
for some positive integers $r$ and $d$. This description relies on the result that
every non-empty subset of $\Z_{>0}$ has a smallest element, and hence is not valid
constructively. 
However, some properties still carry over from the classical situation.

Let $S$ be a semigroup, not necessarily monogenic.
An element $a$ in $S$ is said to be \emph{periodic} if there exist positive integers
$r$ and $m$ such that $a^{m+r}=a^m$. In this case, $r$ is a \emph{period} of $a$.
The semigroup $S$ is said to be periodic if all its elements are periodic. 
Obviously, set set of periods of $a$ is closed under addition, and under multiplication
with arbitrary elements in $\Z_{>0}$. 
Moreover, the following properties are easily verified. 

\begin{prop} \label{periodicprop}
Let $a\in S$, and $m,r,s\in\Z_{>0}$.
\begin{enumerate}
\item If $a^{m+r} = a^m$ then $a^{n+r} = a^n$ for all $n\ge m$.
\item If $r$ and $s$ are periods of $a$, then so is $\gcd(r,s)$.
\item A monogenic semigroup $\<a\>$ is periodic if and only if it contains a periodic element. In this case, all elements in $\<a\>$ have the same periods.
\item If $a^{m+r}=a^m$ then $\{a^n \mid n\ge m\}$ is a cyclic subgroup of $\<a\>$.
\item If $a$ is periodic then there exists a positive integer $n$ such that $a^n$ is an idempotent. \label{periodicprop5}
\item Every subfinite semigroup, and every finitely enumerable semigroup, is periodic. 
\end{enumerate}
\end{prop}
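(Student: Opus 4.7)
The plan is to prove the six parts in order, with earlier parts feeding into later ones. Part (1) is a short induction on $n$: the base case $n=m$ is the hypothesis, and the inductive step reads $a^{(n+1)+r}=a\cdot a^{n+r}=a\cdot a^n=a^{n+1}$. For (2), use (1) to align starting points: for $M$ at least as large as both starting points, $a^{M+r}=a^{M+s}=a^M$ and hence $a^{M+kr}=a^{M+ls}=a^M$ for all $k,l\ge 0$. By B\'ezout, integer solutions to $kr-ls=\gcd(r,s)$ exist and can be shifted (adding $(s,r)$ to $(k,l)$) to make $k,l>0$; then $a^{(M+ls)+\gcd(r,s)}=a^{M+kr}=a^{M+ls}$, showing $\gcd(r,s)$ is a period. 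For (3), the forward direction is trivial; for the reverse, $(a^k)^{N+r}=(a^k)^N$ expands to $a^{kN+kr}=a^{kN}$, so $a$ is periodic, and iterating (1) shows that any period $r_0$ of $a$ with starting point $m$ satisfies $(a^j)^{m+r_0}=a^{jm+jr_0}=a^{jm}=(a^j)^m$, so every $a^j$ inherits the same period.

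For (4)--(5), assume $a^{m+r}=a^m$ and let $G=\{a^n : n\ge m\}$, which is closed under multiplication. Pick $k$ with $kr\ge m$ and set $e=a^{kr}$. Iterating (1), $e\cdot a^n = a^{n+kr}=a^n$ for all $n\ge m$, so $e$ is a two-sided identity on $G$ (recalling $\<a\>$ is commutative); this already delivers (5), since $e\cdot e=e$. For the inverse of $a^n\in G$, choose $j$ with $jr-n\ge m$ and set $l=jr-n$: then $a^n\cdot a^l=a^{jr}$, and since by (1) all $a^{Nr}$ with $Nr\ge m$ coincide, $a^{jr}=a^{kr}=e$. Thus $G$ is a cyclic subgroup of $\<a\>$.

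For (6), the subfinite case is standard pigeonhole: an injection $S\hookrightarrow B$ into a finite set transfers decidable equality to $S$, so among $a,a^2,\ldots,a^{|B|+1}$ two powers must coincide, yielding a period. The finitely enumerable case, with surjection $f:B\twoheadrightarrow S$, is the main obstacle: for each $i\in\{1,\ldots,|B|+1\}$, surjectivity gives an inhabited preimage $f^{-1}(a^i)\subseteq B$, and finite choice (valid constructively over a finite index set by induction) selects $j_i\in B$ with $f(j_i)=a^i$; pigeonhole on $B$ then forces $j_i=j_{i'}$ for some $i<i'$, whence $a^i=a^{i'}$. The delicate point is that in the finitely enumerable case we lack decidable equality on $S$ itself, and must route the pigeonhole through $B$ by means of finite choice to extract the coincidence.
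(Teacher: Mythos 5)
The paper itself gives no proof of this proposition (it is dismissed as ``easily verified''), so there is nothing to compare against; judging your argument on its own merits, parts (1), (2), (5) and (6) are correct. In particular, your treatment of the finitely enumerable case of (6) isolates exactly the constructive difficulty: one cannot run the pigeonhole argument on $S$ directly for lack of decidable equality, so one chooses preimages $j_i\in B$ of the powers $a^i$ (finite choice over a finite index set is provable by induction) and applies the pigeonhole principle in $B$, where equality is decidable. The subfinite case via transported decidable equality is also fine.

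Two points need attention. First, in (4) you prove that $G=\{a^n\mid n\ge m\}$ is a subgroup but never that it is \emph{cyclic}; this does not follow from every element being a power of $a$, since $a$ itself need not lie in $G$. The missing line: choose $N\ge m$ with $N\equiv 1\pmod{r}$; for any $n\ge m$ the exponents $nN$ and $n$ are both $\ge m$ and congruent modulo $r$, so $(a^N)^n=a^{nN}=a^n$ by iterating (1), whence $a^N$ generates $G$. Second, in (3) you prove only that every period of $a$ is a period of every $a^j$, i.e.\ that the elements of $\<a\>$ share common periods; the literal assertion that all elements have the \emph{same} set of periods would require the converse inclusion, which is in fact false --- in the cyclic group of order $4$ generated by $a$, the integer $2$ is a period of $a^2$ but not of $a$. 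Your argument establishes the only reading of the second sentence of (3) that is actually true, so this is arguably a defect of the statement rather than of your proof, but you should state explicitly which inclusion you are proving rather than asserting ``the same period''.
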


To illustrate the additional considerations that the constructive frameworks leads us to, consider the following example. 

\begin{ex}
 Let $P$ be a proposition, and $\rho_P$ the congruence relation on $\Z_{>0}=(\Z_{>0},+)$ defined by  
 \[m\rho_P n \;\Leftrightarrow\; 
 (P \,\wedge\, (m-n)\in 2\Z) \,\eller\, (\neg P \,\wedge\,(m-n)\in3\Z), \] 
 and $S_P=\Z_{>0}/\rho_P$. 
 For any two positive integers $m$ and $n$, the statement $m\rho_Pn$ entails $P\eller\neg P$ so, in order to establish the periodicity of an element in $S_P$, we must prove either $P$ or its negation. 
 
Moreover, the existence of an inhabited apartness relation $\apt$ on $S_P$ is equivalent  to $\neg P \eller \neg\neg P$. To see this, notice first that whenever $[x]\apt[y]$ for some $x,y\in\Z_{>0}$, we have $\neg(x\rho_P y)$ and thus
\[\neg(P \,\wedge\, (x-y)\in 2\Z) \,\eller\, \neg(\neg P \,\wedge\,(x-y)\in3\Z) \]
holds.
Now, let $m,n\in\Z_{>0}$, such that $[m]\apt[n]$ in $S_P$. By the division algorithm, $m$ is congruent modulo three to either $n$, or $n+1$ or $n+2$.
If $m\equiv n$ then $\neg\neg P$ holds, since $\neg P$ implies $m\rho_Pn$. 
Assume instead that $m\equiv n+1$. Then $m-(n-2)\in 3\Z$ and $n-(n-2)\in2\Z$, and hence
\[[m]\apt[n] \:\impl\: [m]\apt[n-2] \eller [n-2]\apt [n] \:\impl\: 
\neg(m\rho_P (n-2)) \eller \neg((n-2)\rho_P n) \:\impl\: 
(\neg\neg P) \eller (\neg P)\,.
\]
Last, assume that $m\equiv n+2$ modulo $3$. Then $m-(n+2)\in3\Z$ and $n-(n+2)\in2\Z$
whence, similarly, 
\[[m]\apt[n] \:\impl\:  [m]\apt[n+2] \eller [n+2]\apt [n] \:\impl\: (\neg\neg P)
\eller (\neg P) \,.\]
Conversely, if $\neg\neg P$ holds then $m\rho_Pn$ only if $(m-n)\in 2\Z$, and one readily verifies that the denial inequality $\neg\rho_P$ defines an apartness relation on $S_P$. If $\neg P$ holds then $m\rho_Pn \:\Leftrightarrow (m-n)\in2\Z$, and again $\neg\rho_P$ gives an apartness relation on $S_P$.

The group $S_P/{\equiv_6}$, where $\equiv_6$ denotes congruence modulo six, is an example  of a monogenic semigroup in which periodic elements exist, whilst existence of a smallest period implies decidability of the proposition $P$.
\end{ex}

%%%%%%%%%%%%%%%%%%%%%%%%%%%%%%%%%%%%%%%%
\subsection{Idempotents and apartness} \label{idempotents}

Here we collect some observations about the set $E(S)$ of idempotents in a semigroup $S$ with apartness.
Let $C = \{x\in S \mid x\apt x^2\}\subset S$.

\begin{lma}\label{idplma}
  The subset $C\subset S$ is strongly extensional, $C\subset\tild E(S)$, and  $\tild\, C = \neg C = E(S/{\approx}) \subset S$. 
\end{lma}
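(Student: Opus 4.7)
The three assertions can all be obtained by routine, if careful, applications of cotransitivity and strong extensionality of the multiplication.

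For strong extensionality of $C$, I would fix $a\in C$ (so $a\apt a^2$) and an arbitrary $x\in S$, and show that $x\in C$ or $x\apt a$. Applying cotransitivity to $a\apt a^2$ with $x^2$ inserted in the middle gives $a\apt x^2$ or $x^2\apt a^2$. In the latter case, strong extensionality of multiplication immediately yields $x\apt a$. In the former case, cotransitivity with $x$ inserted between $a$ and $x^2$ gives $a\apt x$ (and hence $x\apt a$ by symmetry) or $x\apt x^2$ (i.e.\ $x\in C$).

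For $C\subset\tild\, E(S)$, I would fix $x\in C$ and $e\in E(S)$ and argue that $x\apt e$. From $x\apt x^2=xx$, cotransitivity gives $x\apt e$ (done) or $e\apt xx$. In the second case, using $e=ee$, one has $ee\apt xx$, and strong extensionality of the multiplication gives $e\apt x$, hence $x\apt e$ by symmetry.

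For (iii), the identity $\tild\, C=\neg C$ is now immediate from part (i) together with Lemma~\ref{senegcompl}\eqref{senegcompl1}. For $\neg C=E(S/{\approx})$, I would unfold definitions: recall that $S/{\approx}$ has the same underlying elements as $S$, with equality given by the coarse equality $\approx=\neg(\apt)$ and with the multiplication inherited from $S$ (Theorem~\ref{tightfactor}\eqref{tightfactor4}). Hence
\[
E(S/{\approx})=\{x\in S\mid x\approx x^2\}=\{x\in S\mid \neg(x\apt x^2)\}=\neg C.
\]

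None of the steps presents a real obstacle; the only point requiring attention is in (ii), where one must be careful to rewrite $e$ as $ee$ before invoking strong extensionality of multiplication, rather than attempting a direct symmetry argument.
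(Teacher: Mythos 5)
Your proof is correct. The only difference from the paper is presentational: for strong extensionality of $C$, the paper observes that $C=\phi^{-1}(\apt)$ for the strongly extensional map $\phi:S\to S\times S$, $x\mapsto(x,x^2)$, and invokes the general fact that preimages of strongly extensional subsets under strongly extensional maps are strongly extensional, whereas you unwind that into a direct two-step cotransitivity computation (which is exactly what the general fact amounts to here). Likewise, for $C\subset\tild\,E(S)$ the paper deduces the inclusion abstractly from $C\subset\tild\tild\,C=\tild\,E(S/{\approx})\subset\tild\,E(S)$, while you verify $x\apt e$ element by element; your remark that one must rewrite $e$ as $ee$ before applying strong extensionality of the multiplication is precisely the point that makes the direct argument go through. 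Part (iii) is handled identically in both proofs. Your version is slightly longer but more self-contained; the paper's is shorter because it leans on the preimage lemma and the identity $\tild\tild\tild A=\tild A$ machinery already established.
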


\begin{proof}
  Let $\phi:S\to S\times S,\,x\mapsto (x,x^2)$. Clearly, $\phi$ is a strongly extensional map. Now $C = \phi^{-1}(\apt)\subset S\times S$, and since $(\apt)\subset S\times S$ is a strongly extensional subset, so is $C\subset S$.

  Since $C\subset S$ is strongly extensional, the identity $\tild C = \neg C$ holds by Proposition~\ref{senegcompl}\eqref{senegcompl1}. Moreover,
  \[\neg C = \{x\in S \mid \neg(x\apt x^2)\} = \{x\in S \mid x\approx x^2\} = E(S/{\approx}),\]
and thus also
  $C\subset\tild\tild C = \tild E(S/{\approx}) \subset \tild E(S)$.
\end{proof}

As in the classical case, one proves that an element $s\in S$ is regular if and only if the set $V(s)$ of inverses of $s$ in inhabited.
The following is a co-congruence analogue of Lallement's lemma.

\begin{lma} \label{co-lallement}
  Let $\kappa$ be a co-congruence on $S$, and $a,x\in S$ such that $a^2$ is regular and $x\in V(a^2)$. If $a\kappa(axa)$ then $a\kappa a^2$.
\end{lma}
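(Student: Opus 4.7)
The plan is to derive $a\kappa a^2$ by repeatedly applying cotransitivity and co-compatibility of the co-congruence $\kappa$, using the regularity identity $a^2xa^2=a^2$ as the crucial substitution.

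First I would apply cotransitivity to the hypothesis $a\kappa(axa)$, inserting $a^2$ between $a$ and $axa$. This gives the disjunction
\[ a\kappa a^2 \quad\eller\quad a^2\kappa(axa) \,. \]
The first disjunct is the conclusion we seek, so it suffices to treat the second.

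Next, since $x\in V(a^2)$, we have $a^2=a^2xa^2$, and hence the second disjunct can be rewritten as $(a^2xa^2)\kappa(axa)$. Factor the left side as $a^2\cdot xa^2$ and the right side as $a\cdot xa$. Applying co-compatibility of $\kappa$ yields
\[ a^2\kappa a \quad\eller\quad xa^2\kappa xa \,. \]
If $a^2\kappa a$, then by symmetry of $\kappa$ we are done. Otherwise, $xa^2\kappa xa$; factoring both sides as $x\cdot a^2$ and $x\cdot a$ and applying co-compatibility once more gives $x\kappa x \eller a^2\kappa a$. The first option is ruled out by strong irreflexivity of $\kappa$, so $a^2\kappa a$ holds, and symmetry again yields $a\kappa a^2$.

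The main step is spotting the right substitution: rewriting $a^2$ as $a^2xa^2$ is what enables both sides of $\kappa$ to be factored in parallel, so that co-compatibility successively cancels an outer $a$ and then the remaining $x$. No essentially new difficulty appears; the proof is a short chain of the disjunctive principles available for co-congruences, entirely analogous in spirit to how Lallement's lemma uses the congruence axioms in the classical setting, but reading the implications in the apartness-dual direction.
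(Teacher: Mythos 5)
Your proof is correct: each application of cotransitivity and co-compatibility is legitimate under the paper's definitions, and the final appeal to strong irreflexivity to discard $x\kappa x$ is valid. The argument is essentially the same as the paper's — both hinge on the substitution $a^2=a^2xa^2$ and a short chain of cotransitivity/co-compatibility steps; the paper merely inserts the two intermediate terms $a^2xa$ and $a^2xa^2$ at once and cancels each link in parallel, whereas you insert $a^2$ first and then cancel in two successive co-compatibility steps.
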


\begin{proof}
Assume that $a\kappa(axa)$. Then, by cotransitivity and co-compatibility,
\[
\begin{array}{cr@{}lcr@{}lcr@{}lcc}
    &(axa)&\kappa(a^2xa) &\eller& (a^2xa)&\kappa(a^2xa^2)=a^2 &\eller& a^2&\kappa a \\
  \Rightarrow &a&\kappa a^2 &\eller& a&\kappa a^2 &\eller& a^2&\kappa a 
  &\quad\Leftrightarrow\quad & a\kappa a^2 \,. \end{array}
\]
\end{proof}

\begin{prop}\label{idpprop}
  \begin{enumerate}
\item
  \label{idpprop1}
    If the apartness on $S$ is tight, then
    \[\tild\tild E(S) = \neg\neg E(S) = \neg\tild E(S) = E(S) \,.\]
  \item
    If $a\in S$ is periodic and $a\in\tild E(S)$, then $a\in C$.
    \label{idpprop2}
  \item
    Let $a,x\in S$ such that $a^2$ is regular and $x\in V(a^2)$. Then $a\in\tild E(S) \:\equ\: a\apt(axa) \:\equ\: a\apt a^2$.%
        \label{idpprop3}
  \item
    If $S$ is a periodic or a regular semigroup, then $\tild E(S) = C$ and hence $E(S)\subset S$ is closed in the apartness topology.
\label{idpprop4}
  \end{enumerate}
\end{prop}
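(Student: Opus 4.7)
\medskip
\noindent\textbf{Proof plan.}

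For \eqref{idpprop1}, the inclusion $E(S)\subset\neg\tild E(S)$ is automatic since $x\in E(S)\cap\tild E(S)$ would give $x\apt x$ by reading off the condition at $e=x$; and always $E(S)\subset\neg\neg E(S)\subset\tild\tild E(S)$, with $\tild\tild E(S)\subset\neg\tild E(S)$ by Remark~\ref{apartrmk}\eqref{apartrmk3}. So the four-way equality reduces to the reverse inclusion $\neg\tild E(S)\subset E(S)$. Here I would invoke Lemma~\ref{idplma}: $C\subset\tild E(S)$, so $\neg\tild E(S)\subset \neg C = E(S/{\approx})$. Under tight apartness, $(\approx)=(=)$, so $E(S/{\approx})=E(S)$, which closes the chain.

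For \eqref{idpprop2}, Proposition~\ref{periodicprop}\eqref{periodicprop5} gives some $n\in\Z_{>0}$ with $a^n\in E(S)$; replacing $n$ by $2n$ if needed (noting that $a^{2n}=(a^n)^2=a^n$ remains idempotent) we may assume $n\ge 2$. The hypothesis $a\in\tild E(S)$ yields $a\apt a^n$. I would now show by induction on $n\ge 2$ that $a\apt a^n$ forces $a\apt a^2$: for the inductive step, cotransitivity gives $a\apt a^2$ or $a^2\apt a^{n+1}$, and writing $a^2=a\cdot a$ and $a^{n+1}=a\cdot a^n$, strong extensionality (i.e., co-compatibility) of multiplication together with irreflexivity rules out $a\apt a$ and leaves $a\apt a^n$, to which we apply the induction hypothesis. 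The base case $n=2$ is trivial.

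For \eqref{idpprop3}, the crucial observation is that the apartness $\apt$ is itself a co-congruence: it is strongly irreflexive, symmetric, cotransitive by definition, and co-compatibility with multiplication is precisely the strong extensionality of $\mu:S\times S\to S$. So Lemma~\ref{co-lallement} applies with $\kappa=(\apt)$, giving directly $a\apt axa \:\impl\: a\apt a^2$. For $a\in\tild E(S)\:\impl\: a\apt axa$, I would verify that $axa$ is idempotent: $(axa)(axa)=a(xa^2x)a=axa$, using $xa^2x=x$ from $x\in V(a^2)$; then $a\apt axa$ follows from $axa\in E(S)$. Finally, $a\apt a^2\:\impl\: a\in\tild E(S)$: given $e\in E(S)$, cotransitivity yields $a\apt e$ or $e\apt a^2$; in the latter case, writing $e=e\cdot e$ and $a^2=a\cdot a$, co-compatibility of $\apt$ with multiplication gives $e\apt a$, hence $a\apt e$.

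For \eqref{idpprop4}, Lemma~\ref{idplma} already supplies $C\subset\tild E(S)$, and the reverse inclusion follows at once from \eqref{idpprop2} in the periodic case and from \eqref{idpprop3} in the regular case (where $a^2$ is regular for every $a\in S$, so some $x\in V(a^2)$ exists). Once $\tild E(S)=C$, the second half of Lemma~\ref{idplma} asserts $C$ is strongly extensional, which by definition means $E(S)\subset S$ is closed in the apartness topology. The main subtlety throughout is keeping the direction of ``strongly extensional'' straight---one has strong extensionality of $\mu$ available as the co-compatibility of $\apt$, but not apartness-injectivity---and I expect the induction in \eqref{idpprop2} and the recognition that $\apt$ itself is a co-congruence in \eqref{idpprop3} to be the conceptual steps that make the proof go through.
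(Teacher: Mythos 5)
Your proof is correct and follows essentially the same route as the paper's: reduce (1) to $\neg\tild E(S)\subset\neg C=E(S/{\approx})=E(S)$ via Lemma~\ref{idplma}, prove (2) by the same cotransitivity-plus-strong-extensionality induction, prove (3) by noting $axa=(axa)^2$ and applying Lemma~\ref{co-lallement} with $\kappa=(\apt)$ (the paper leaves implicit your correct observation that $\apt$ is itself a co-congruence), and deduce (4) from (2), (3) and the strong extensionality of $C$. One small slip in (1): the intermediate inclusion $\neg\neg E(S)\subset\tild\tild E(S)$ is not constructively valid in general (for $x\in\neg\neg A$ and $y\in\tild A$ one only gets $\neg\neg(x\apt y)$), but it is not needed --- the inclusion you actually use, $\neg\neg E(S)\subset\neg\tild E(S)$, follows directly by negating $\tild E(S)\subset\neg E(S)$, which is how the paper argues.
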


\begin{proof}
  \eqref{idpprop1}
  This follows from Lemma~\ref{idplma}. Tightness means that $S/{\approx} = S$, thus $\tild C = E(S)$ and $\tild\tild E(S) = \tild\tild\tild C = \tild C = E(S)$. Negating the chain of inclusions $C\subset\tild E(S)\subset \neg E(S)$ gives $\neg\neg E(S) \subset\neg\tild E(S)\subset \neg C = E(S)$, whence $\neg\neg E(S) = \neg\tild C = E(S)$. 

\eqref{idpprop2}
By Proposition~\ref{periodicprop}\eqref{periodicprop5}, there exists an integer $n\ge 2$ such that $a^n \in E(S)$. Since $a\in\tild E(S)$, it follows that $a\apt a^n$. By cotransitivity, we have $a\apt a^2$ or $a^2\apt a^n$, which implies $a\apt a^2$ or $a\apt a^{n-1}$. The result follows by induction on $n\ge2$.

\eqref{idpprop3}
The implication $a\in\tild E(S) \:\impl\: a\apt(axa)$ is immediate, since $axa = axa^2xa = (axa)^2$ is an idempotent. The implication $a\apt a^2 \:\impl a\in\tild E(S)$ holds by Lemma~\ref{idplma}, and $a\apt(axa)\:\impl\: a\apt a^2$ by Lemma~\ref{co-lallement}.

\eqref{idpprop4}
The identity $\tild E(S) = C$ is immediate from \eqref{idpprop3} in the regular case, and from \eqref{idpprop2} and Lemma~\ref{idplma} when $S$ is periodic.
Since $C\subset S$ is open in the apartness topology by Lemma~\ref{idplma}, this means that $E(S)\subset S$ is closed.
\end{proof}

\begin{rmk}
The identity $C=\tild E(S)$ is not valid in general (even in the case of tight apartness).
For example, let $S$ be any semigroup with $E(S)=\emptyset$, and $T=S^1$. Then $\tild E(T) = S\subset T$, while it is not necessarily true that $a\apt a^2$ for all $a\in S$. A simple (albeit somewhat artifical) weak counterexample with tight apartness is $S = (\Z_{>0},+)$ with apartness given by $m\apt n \:\equ\: (m\ne n) \och (P\eller \neg P)$ for some proposition $P$.
\end{rmk}

%%%%%%%%%%%%%%%%%%%%%%%%%%%%%%%%%%%%%%%%
\subsection{Rees congruence} \label{rees}
The \emph{Rees congruence} $\rho_I =\{(x,y)\in S\times S \mid x=y \eller (x,y\in I)\}$ associated with an ideal $I\subset S$ gives a factor semigroup $S/\rho_I$ with zero element $I$. To carry out this construction for semigroups with apartness, one needs to define an apartness relation on $S/\rho_I$. 
To this end, it is natural to start from a \emph{co-ideal} $A\subset S$, giving rise to an ideal $\neg A$, and then define apartness on the factor semigroup $S/\rho_{\neg A}$. 

For any subset $A\subset S$, define a relation $\kappa_A$ on $S$ by 
\[ x\kappa_Ay \:\equ\: (\,(x\apt y) \wedge (x\in A \eller y\in A)\,) . \]

\begin{lma} \label{reeslma}
  If $A\subset S$ is strongly extensional then $\kappa_A$ is a coequivalence. 
\end{lma}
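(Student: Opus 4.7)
The plan is to verify the three defining properties of a coequivalence for $\kappa_A$: strong irreflexivity, symmetry, and cotransitivity. The first two are essentially formal and should not require strong extensionality of $A$ at all. Strong irreflexivity $\kappa_A\subset(\apt)$ is immediate from the defining conjunction $x\kappa_A y \equ (x\apt y)\wedge(x\in A \eller y\in A)$. Symmetry follows from symmetry of $\apt$ and the obvious symmetry of the disjunction $x\in A \eller y\in A$.

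The interesting property is cotransitivity. Suppose $x\kappa_A z$, so that $x\apt z$ and $x\in A \eller z\in A$. I would split on the latter disjunction; by symmetry of the situation, the case $z\in A$ is handled in the same way as $x\in A$. Assume then that $x\in A$. Applying strong extensionality of $A$ to the element $y\in S$ gives $y\in A \eller y\apt x$. If $y\apt x$, then $x\apt y$ and $x\in A$ give $x\kappa_A y$ directly. If instead $y\in A$, then we invoke cotransitivity of the apartness $\apt$ on $x\apt z$ to get $x\apt y \eller y\apt z$; combined with $x\in A$ (respectively $y\in A$), this yields $x\kappa_A y \eller y\kappa_A z$. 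The case $z\in A$ is analogous, using strong extensionality of $A$ applied at $z$ to obtain $y\in A \eller y\apt z$.

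I expect no real obstacle: every step is forced by the definitions, and the only substantive constructive input is the interplay between strong extensionality of $A$ (providing a \emph{positive} disjunction from $x\in A$) and cotransitivity of $\apt$ (providing a positive disjunction from $x\apt z$). The proof is essentially a careful case analysis ensuring that in every branch the required witness for membership in $A$ is available to conclude $x\kappa_A y$ or $y\kappa_A z$. No further ingredients such as tightness, discreteness, or properties of the ambient semigroup multiplication are needed, which matches the formulation of the lemma.
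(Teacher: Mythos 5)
Your proof is correct and follows essentially the same route as the paper's: both reduce to the case $x\in A$ by symmetry and then combine cotransitivity of $\apt$ with strong extensionality of $A$ applied at the intermediate element, differing only in the order in which the two case splits are performed. No gaps.
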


\begin{proof}
First, notice that
\begin{align}
 x\kappa_Ay \:&\equ\: (x\apt y\,\wedge\,x\in A) \eller (x\apt y\,\wedge\,y\in A)
\label{kappadef} \\
\intertext{and, since $A$ is strongly extensional, that}
x\in A \:&\equ\: \forall_{z\in S}(z\in A\eller z\apt x) \,\wedge\,x\in A \,.
\label{kappase}
\end{align}

Clearly, the relation $\kappa_A$ is strongly irreflexive and symmetric. 
For cotransitivity, let $(x,y)\in\kappa_A$. Then, by \eqref{kappadef}, without loss of
generality, we may assume that $x\apt y$ and $x\in A$. 
Now, 
\begin{align*}
  &x\apt y \,\wedge\, x\in A \\
  \;\equ\;
  &\forall_{z\in S}(x\apt z \eller z\apt y) \,\wedge\, x\in A \\
  \;\equ\;
  &\forall_{z\in S}[\,(x\apt z\wedge x\in A) \eller (z\apt y \wedge x\in A) \,] \\
  \;\stackrel{\eqref{kappase}}{\equ}\;
  &\forall_{z\in S}[\,(x\apt z\,\wedge\, x\in A) \eller 
    (z\apt y \,\wedge\, x\in A\,\wedge\,(z\in A\eller z\apt x)) \,] \\
  \;\equ\;
  &\forall_{z\in S}[\,(x\apt z\,\wedge\, x\in A) \eller 
    (z\apt y \,\wedge\, x\in A \,\wedge\, z\in A) \eller (z\apt y \,\wedge\, x\in A \,\wedge\, z\apt x) \,] \\
  \;\impl\;
  &\forall_{z\in S}[\, x\kappa_Az \eller z\kappa_Ay \eller z\kappa_Ax \,]
  \;\equ\; 
  \forall_{z\in S}[\, x\kappa_Az \eller z\kappa_Ay\,]
\end{align*}
which proves that $\kappa_A$ is cotransitive, and hence a coequivalence.
\end{proof}

\begin{prop} \label{reesprop}
  Let $A\subset S$ be a co-ideal. Then the following statements hold:
  \begin{enumerate}
  \item $\neg A=\tild A\subset S$ is an ideal; \label{reesprop1}
  \item $\kappa_A$ is a co-congruence; \label{reesprop2}
  \item $\rho_{\neg A}\subset \neg\kappa_A$. \label{reesprop3}
  \end{enumerate}
\end{prop}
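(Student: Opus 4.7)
The plan is to tackle each of the three statements separately, exploiting only the two defining properties of a co-ideal (strong extensionality and convexity) together with the general coequivalence result of Lemma~\ref{reeslma}. I expect statement (2) to be the only one requiring any genuine case-analysis; (1) and (3) should be short and direct.

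For \eqref{reesprop1}, the identity $\neg A=\tild A$ is immediate from Lemma~\ref{senegcompl}\eqref{senegcompl1}, since $A$ is strongly extensional. To see that $\neg A$ is an ideal, I would take $x\in\neg A$ and $s\in S$ and argue by contradiction: if $sx\in A$ then right convexity of $A$ yields $x\in A$, contradicting $x\in\neg A$; hence $sx\in\neg A$. The dual argument using left convexity handles $xs\in\neg A$.

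For \eqref{reesprop2}, I would invoke Lemma~\ref{reeslma} to get that $\kappa_A$ is a coequivalence, so what remains is co-compatibility with the multiplication on $S$. Suppose $(ax)\kappa_A(by)$, i.e.\ $ax\apt by$ together with $ax\in A\eller by\in A$. The disjunction $ax\in A\eller by\in A$ drives the proof; WLOG the first disjunct holds (the second is symmetric). Convexity of $A$ then gives both $a\in A$ and $x\in A$. Strong extensionality of the multiplication turns $ax\apt by$ into the disjunction $a\apt b\eller x\apt y$. In the first case, $a\in A$ together with $a\apt b$ yields $a\kappa_Ab$; in the second, $x\in A$ together with $x\apt y$ yields $x\kappa_A y$. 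Combined with the symmetric argument in case $by\in A$, this delivers $(ax)\kappa_A(by)\:\impl\: a\kappa_A b\eller x\kappa_Ay$, which is precisely co-compatibility. Together with Lemma~\ref{etalemma}\eqref{leftandright} (or directly), $\kappa_A$ is thus a co-congruence.

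For \eqref{reesprop3}, take $(x,y)\in\rho_{\neg A}$, i.e.\ $x=y\eller (x\in\neg A\wedge y\in\neg A)$, and assume $x\kappa_Ay$; it suffices to derive a contradiction. If $x=y$, strong irreflexivity of the coequivalence $\kappa_A$ forbids $x\kappa_Ax$. If $x,y\in\neg A$, then $x\kappa_Ay$ supplies $x\in A\eller y\in A$, each disjunct contradicting membership in $\neg A$. Thus $\neg(x\kappa_Ay)$ in both cases, giving $\rho_{\neg A}\subset\neg\kappa_A$. The only subtlety to keep in mind throughout is the disjunctive character of membership in $\kappa_A$, which is handled cleanly since co-ideals provide both convexity (to turn product membership into factor membership in (2)) and strong extensionality (to identify $\neg A$ with $\tild A$ in (1) and to apply Lemma~\ref{reeslma} in (2)).
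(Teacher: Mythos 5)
Your proposal is correct and follows essentially the same route as the paper: Lemma~\ref{senegcompl} for the identity $\neg A=\tild A$, convexity plus strong extensionality of the multiplication for the four-way case split establishing co-compatibility in (2), and a direct refutation of $x\kappa_Ay$ in both branches of $\rho_{\neg A}$ for (3). The only cosmetic difference is in (1), where you show $sx\in\neg A$ by assuming $sx\in A$ and deriving absurdity, while the paper uses strong extensionality to conclude $sx\apt a$ for all $a\in A$ directly; since $\neg A=\tild A$ is already in hand, both are equally valid.
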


\begin{proof}
\ref{reesprop1})
The identity $\neg A=\tild A$ holds by Lemma~\ref{senegcompl}\eqref{senegcompl1}. 
Let $x\in\neg A$ and $y\in S$. As $A\subset S$ is strongly extensional, for any $a\in A$ either $xy\apt a$ or $xy\in A$ holds. But as $A$ is convex, the latter condition implies that $x\in A$, contradicting the assumption that $x\in\neg A$. 
Therefore, $xy\apt a$ for all $a\in A$, that is, $xy\in\neg A$.
Similarly, one proves that $yx\in\neg A$. 

\ref{reesprop2})
The relation $\kappa_A$ is a coequivalence by Lemma~\ref{reeslma}. 
To establish co-compatibility, let $a,b,x,y\in S$ be such that $(ax)\kappa_A(by)$. By
definition, this means that $ax\apt by$, and either $ax\in A$ or $by\in A$.
Since multiplication in $S$ is strongly extensional, we have $a\apt b$ or $x\apt y$. 
By the convexity of $A$, if $ax\in A$ then $a,x\in A$, and if $by\in A$ then $b,y\in A$.
Hence, at least one of the following four statements holds:
\[a\apt b \;\mbox{and}\;a,x\in A,\quad a\apt b \;\mbox{and}\;b,y\in A,\quad
x\apt y \;\mbox{and}\;a,x\in A,\quad x\apt y \;\mbox{and}\;b,y\in A.\]
In the first two cases we have $a\kappa_Ab$, in the latter two, $x\kappa_A y$ holds. 

\ref{reesprop3})
Let $(x,y)\in\rho_{\neg A}$, so that either $x=y$ or 
$x,y\in \neg A$. If $x=y$ then $x\apt y$ is impossible; if $x,y\in \neg A$ then, by
definition, neither $x\in A$ nor $y\in A$ holds. In either case,
$(x\apt y) \,\wedge\, (x\in A \eller y\in A)$ is impossible, and hence 
$(x,y)\in\neg\kappa_A$.
\end{proof}

\begin{cor} \label{reescor}
Let $A\subset S$ be a co-ideal. Then $(S/\rho_{\neg A}, \kappa_A)$ is a semigroup with
apartness. 
\end{cor}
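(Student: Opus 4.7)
The plan is to assemble the corollary directly from Proposition~\ref{reesprop} together with Proposition~\ref{apartnessonquotient}. First, since $\rho_{\neg A}$ is a congruence on $S$, the multiplication $[x][y]=[xy]$ is well-defined and associative on $S/\rho_{\neg A}$, so all that remains is to install a compatible apartness relation.

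The apartness comes essentially for free from the ingredients already in hand. By Proposition~\ref{reesprop}\eqref{reesprop2}, $\kappa_A$ is a coequivalence on $S$, and by Proposition~\ref{reesprop}\eqref{reesprop3} we have $\rho_{\neg A}\cap\kappa_A\subset(\neg\kappa_A)\cap\kappa_A=\emptyset$. Applying Proposition~\ref{apartnessonquotient} (or the remark following it) to the equivalence $\rho_{\neg A}$ and the coequivalence $\kappa_A$ shows that $\kappa_A$ is extensional on $S/\rho_{\neg A}$ and is an apartness relation there, with $[x]\kappa_A[y]\equ x\kappa_Ay$.

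The remaining task is to verify that multiplication on $S/\rho_{\neg A}$ is strongly extensional with respect to the induced apartness. Given $x,y,z,w\in S$, assume that $[x][y]\kappa_A[z][w]$, that is, $(xy)\kappa_A(zw)$. Since $\kappa_A$ is a co-congruence on $S$ by Proposition~\ref{reesprop}\eqref{reesprop2}, co-compatibility yields $x\kappa_Az$ or $y\kappa_Aw$, \ie, $[x]\kappa_A[z]$ or $[y]\kappa_A[w]$. This is precisely strong extensionality of multiplication on $S/\rho_{\neg A}$.

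There is no real obstacle here: the corollary is essentially a packaging of Proposition~\ref{reesprop}, with the only point worth spelling out being that the co-compatibility of $\kappa_A$ on $S$ translates verbatim into strong extensionality of multiplication on the Rees factor. No additional constructive subtleties arise, since both the disjointness $\rho_{\neg A}\cap\kappa_A=\emptyset$ and the co-compatibility have already been established in Proposition~\ref{reesprop}.
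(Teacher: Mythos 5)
Your proof is correct and follows essentially the same route as the paper: Proposition~\ref{reesprop}(3) gives $\rho_{\neg A}\cap\kappa_A=\emptyset$ so that Proposition~\ref{apartnessonquotient} installs the apartness, and co-compatibility from Proposition~\ref{reesprop}(2) yields strong extensionality of the multiplication. No differences worth noting.
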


\begin{proof}
By Proposition~\ref{reesprop}(3), we have $\rho_{\neg A}\subset \neg\kappa_A$ and hence $\rho_{\neg A}\cap \kappa_A=\emptyset$. 
Thus, the coequivalence $\kappa_A$ defines and apartness relation on the factor set $S/\rho_{\neg A}$, by Proposition~\ref{apartnessonquotient}.
From Proposition~\ref{reesprop}(2) we get that $\kappa_A$ is a co-congruence, which implies that the multiplication is strongly extensional with respect to $\kappa_A$. 
\end{proof}
 
\begin{rmk}
While true classically, the inclusion $\rho_{\neg A}\supset \neg\kappa_A$ cannot be proved in general in a constructive framework, not even when the apartness is tight.
Counterexamples with non-tight apartness are easy to find;
\eg, $A=S$ gives $\neg\kappa_S=(\approx)$, whilst $\rho_{\neg S}=(=)$.

For a weak counterexample in a semigroup with tight apartness, consider the monoid
$S=\{0,1\}^\N$ of binary sequences with pointwise multiplication, and apartness defined by 
$x\apt y \:\equ\: \exists_{t\in\N}(x_t\ne y_t)$. 
Given any $z\in S$, let $A_z=\{x\in S\mid x\apt 0 \:\wedge\: z=1\}\subset S$, 
where $1\in S$ denotes the identity element. One readily verifies that $A_z$ is a co-ideal in $S$.
Now, 
$z\kappa_{A_z}1 \:\equ\: (z\apt 1)\,\wedge (z=1)$, so $(z,1)\in\neg\kappa_{A_z}$.
On the other hand,
\[
(z,1)\in\rho_{\neg A_z} \;\equ\; (z=1)\,\eller\,( z\in \neg A_z \,\wedge\,1\in \neg A_z )
\;\impl\; 
(z=1)\,\eller\,(1\in \neg A_z)
\;\equ\; 
(z=1)\eller\neg(z=1)
\]
which is not contructively provable in general.%
\footnote{The statement that, for each $z\in \{0,1\}^{\N}$, either $z=1$ or $\neg(z=1)$
  holds, is know as the \emph{weak limited principle of omniscience}.}
Hence we cannot prove that $\neg\kappa_A\subset\rho_{\neg A}$ holds for all co-ideals
$A\subset S$.
 
This means that the Rees construction gives rise to semigroups with apartness which is not necessarily tight, also in cases where the apartness on the original semigroup is tight.
\end{rmk}

%%%%%%%%%%%%%%%%%%%%%%%%%%%%%%%%%%%%%%%%%%%%%%%%%%%%%%%%%%%%%%%%%%%%%%%%%%%%%%%%
\section{Green's relations} \label{greensrel}

%%%%%%%%%%%%%%%%%%%%%%%%%%%%%%%%%%%%%%%%
\subsection{Constructivisation of classical results} \label{greenclassical}

The fundamental theory of Green's relations goes through with minimal modifications in the constructive setting. Below, we summarise the results, including proofs only in the cases where special consideration is required.

Throughout, $S$ is a semigroup.

\begin{dfn}
  Let $a,b\in S$. 
  \begin{enumerate}
  \item $a\le_L b \: :\equ \: a\in S^1 b, \quad
    a\le_R b \: :\equ \: a\in bS^1, \quad
    a\le_J b \: :\equ \: a\in S^1bS^1$.
  \item $a\Lrel b \: :\equ \: (a\le_L b) \:\wedge\: (b\le_L a),\quad
    a\Rrel b \: :\equ \: (a\le_R b) \:\wedge\: (b\le_R a),\quad
    a\Jrel b \: :\equ \: (a\le_J b) \:\wedge\: (b\le_J a)$.
  \item $\Hrel = \Lrel \cap \Rrel,\qquad \Drel = (\Lrel\cup\Rrel)^\infty$.
  \end{enumerate}
\end{dfn}
By construction, $\Lrel$, $\Rrel$, $\Jrel$, $\Hrel$ and $\Drel$ are equivalence relations; moreover, $\Lrel$ is a right congruence and $\Rrel$ is a left congruence.
Given $a\in S$, let $L_a = a\Lrel$, $R_a = a\Rrel$, etc.
Some basic results about these relations are summarised in Proposition~\ref{basicgreen} below.

\begin{prop} \label{basicgreen}
  Let $a,b\in S$.
  \begin{enumerate}
  \item $\Lrel \circ\Rrel = \Rrel\circ\Lrel$;
  \item $\Drel = \Lrel\circ\Rrel$;
  \item $a\Drel b \quad\equ\quad L_a\cap R_b \mbox{ is inhabited} \quad\equ\quad
    R_a\cap L_b \mbox{ is inhabited}$;
  \item $\Drel \subset \Jrel $;
  \item $S/\Lrel$, $S/\Rrel$, $S/\Jrel$ are posets under the induced order relations: $\le_L$, $\le_R$ respectively $\le_J$.
  \end{enumerate}
\end{prop}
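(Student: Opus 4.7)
The plan is to import the classical proofs essentially verbatim: each of the relations $\le_L$, $\le_R$, $\le_J$ is defined by an explicit existential statement about membership in a principal ideal of $S^1$, so all disjunctions and existentials that arise naturally carry their witnesses, and no appeal to LEM is needed at any point.

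The only step with real algebraic content is part (1); the remaining statements follow routinely from it. To prove $\Lrel\circ\Rrel\subset\Rrel\circ\Lrel$, I would suppose $a\Lrel c\Rrel b$ and unpack the four witnesses: $x,y,u,v\in S^1$ with $c=xa$, $a=yc$, $b=cu$, and $c=bv$. Set $d=au$. Since $yc=a$, one has $au=ycu=yb$, so $d\in S^1 b$; and $b=cu=xau=xd$, so $b\in S^1 d$; together these give $d\Lrel b$. Using $au=yb$ again, $auv=ybv=yc=a$, hence $a=dv\in dS^1$, while $d=au\in aS^1$ is immediate; so $a\Rrel d$. The reverse inclusion follows by the left-right symmetric argument.

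Part (2) is then an immediate consequence of (1) together with the earlier result that $(\a\cup\b)^\infty=\a\circ\b$ for commuting equivalences $\a$ and $\b$. Part (3) unpacks the definition: $a\Drel b$ means $a(\Lrel\circ\Rrel)b$, which by construction says exactly that $L_a\cap R_b$ is inhabited; the equivalent formulation in terms of $R_a\cap L_b$ uses (1). For (4), the same witnesses as in (1) give $a=yc=ybv\in S^1 b S^1$ and $b=cu=xau\in S^1 a S^1$, so $a\Jrel b$. For (5), it suffices to check that each $\le_*$ descends to a partial order on the factor set: reflexivity and transitivity are inherited from $S^1$ being a monoid, extensionality with respect to the equivalence (i.e., well-definedness on the factor set) is immediate from transitivity of $\le_*$, and antisymmetry modulo the equivalence is the very definition of $\Lrel$, $\Rrel$ and $\Jrel$.

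I foresee no genuine constructive obstacle here: the key computation in (1) is a short chain of substitutions in $S^1$, and the rest amounts to bookkeeping around the earlier transitive-closure proposition. The main point to be careful about is that every ``there exists'' statement is read in the Bishop sense, so that unpacking $a\Lrel c\Rrel b$ really does furnish the four explicit elements $x,y,u,v\in S^1$ needed to build the witness $d$.
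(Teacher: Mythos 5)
Your proof is correct and is exactly the standard classical argument (the witness $d=au$ with the chain $au=ycu=yb$, $b=xau$, $auv=ybv=a$) that the paper invokes by reference: the authors state Proposition~\ref{basicgreen} without proof, noting that the classical theory goes through with minimal modification, and your check that every existential carries an explicit witness is precisely why. No discrepancy to report.
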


\begin{prop}
  If $S$ is periodic then $\Drel = \Jrel$. 
\end{prop}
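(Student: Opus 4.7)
The plan is to establish the non-trivial inclusion $\Jrel \subset \Drel$; the reverse $\Drel \subset \Jrel$ is already recorded in Proposition~\ref{basicgreen}. Suppose $a \Jrel b$, so there exist $x,y,u,v \in S^1$ with $a = xby$ and $b = uav$. Substituting one into the other gives $a = xuavy = (xu)\,a\,(vy)$, and a routine induction on $n$ yields
\[ a = (xu)^n\, a\, (vy)^n \qquad \text{for all } n \ge 1. \]

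Next, I would invoke periodicity to produce idempotents out of $xu$ and $vy$. By Proposition~\ref{periodicprop}\eqref{periodicprop5} (treating the trivial cases $xu = 1$ and $vy = 1$ separately), there exist positive integers $p, q$ such that $e := (xu)^p$ and $f := (vy)^q$ are idempotents in $S^1$. Setting $n = pq$ gives $(xu)^n = e^q = e$ and $(vy)^n = f^p = f$, so $a = e\,a\,f$. From this single identity, I then extract $ea = e(eaf) = e^2 a f = eaf = a$ and $af = (eaf)f = eaf^2 = eaf = a$, hence
\[ a = ea = af. \]

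With these equalities in hand, set $c = ua$. The plan is to show $a \Lrel c$ and $c \Rrel b$, which places $(a,b) \in \Lrel \circ \Rrel \subset \Drel$. For $a \Lrel c$: the inequality $c = ua \le_L a$ is immediate, and the identity $a = ea = (xu)^n a = (xu)^{n-1} x \cdot ua = (xu)^{n-1}x \cdot c$ gives $a \le_L c$. For $c \Rrel b$: since $b = uav = cv$, one has $b \le_R c$, and the computation $c = ua = af$ yields $c = u \cdot a(vy)^n = uav \cdot y(vy)^{n-1} = b \cdot y(vy)^{n-1}$, so $c \le_R b$. Composing, $a \Lrel c \Rrel b$, whence $a \Drel b$ by Proposition~\ref{basicgreen}(2).

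I do not foresee a real constructive obstacle here: every step produces an explicit witness (the exponents $p, q, pq$ and the specific elements $(xu)^{n-1}x$ and $y(vy)^{n-1}$), and the only non-trivial input is the existence of an idempotent power of a periodic element, which has already been treated constructively. The only mild care required is in the bookkeeping of the cases where $xu$ or $vy$ equals the adjoined identity $1 \in S^1$, which is handled uniformly since $1$ is itself idempotent.
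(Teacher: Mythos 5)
Your proof is correct and is exactly the standard classical argument (iterate $a=(xu)^n a(vy)^n$, use periodicity to make both powers idempotent, then exhibit $c=ua$ with $a\Lrel c\Rrel b$), which is what the paper implicitly relies on when it states this result without proof as one of the facts that ``go through with minimal modifications.'' All existential witnesses are explicit and the case split on whether $xu$ or $vy$ equals the adjoined identity is decidable for the disjoint union $S^1=S\mathrel{\dot\cup}\{1\}$, so the argument is constructively unobjectionable.
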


The next result, Green's lemma, similarly presents no additional obstacle from a constructive viewpoint.

\begin{lma}[Green's lemma] \label{greenslma}
  Let $a,b,s,s'\in S$, and assume that $as=b$, $bs'=a$ (so that $a\Rrel b$). Then the following hold.
  \begin{enumerate}
  \item The map $\rho_s$ induces an invertible map $L_a\to L_b,\:x\mapsto xs$, with inverse induced by $\rho_{s'}$;
  \item $\forall_{x\in L_a}: x\Rrel\rho_s(x)$;
  \item hence, $\rho_s$ and $\rho_{s'}$ induce mutually inverse maps between $H_a$ and $H_b$.
  \end{enumerate}
\end{lma}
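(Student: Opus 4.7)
The plan is to follow the classical argument, which rests on direct algebraic manipulation and transfers to the constructive setting with no real obstacle. First I would check that the right-multiplication $\rho_s$ restricts to a well-defined map $L_a\to L_b$. Given $x\in L_a$, the definition of $\Lrel$ provides explicit $u,v\in S^1$ with $x=ua$ and $a=vx$. Then $\rho_s(x)=xs=uas=ub$ gives $\rho_s(x)\le_L b$, while $b=as=vxs=v\rho_s(x)$ gives $b\le_L\rho_s(x)$; hence $\rho_s(x)\Lrel b$. Symmetrically, $\rho_{s'}$ restricts to $L_b\to L_a$. Strong extensionality of both restrictions is inherited from strong extensionality of the multiplication in $S$, so they are in fact morphisms in $\aSets$.

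Next I would verify that these restrictions are mutually inverse. For $x=ua\in L_a$,
\[ \rho_{s'}\rho_s(x) \,=\, xss' \,=\, uass' \,=\, ubs' \,=\, ua \,=\, x, \]
using $as=b$ and $bs'=a$; the symmetric identity $\rho_s\rho_{s'}(y)=y$ for $y\in L_b$ is proved in the same way. This establishes (1).

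For (2), the identity $x=xss'$ just obtained shows $x\le_R xs=\rho_s(x)$, and trivially $\rho_s(x)=xs\le_R x$, so $x\Rrel\rho_s(x)$. For (3), if $x\in H_a=L_a\cap R_a$, then $\rho_s(x)\in L_b$ by (1), and $\rho_s(x)\Rrel x\Rrel a\Rrel b$ by (2) and the hypothesis $a\Rrel b$, so $\rho_s(x)\in R_b$; hence $\rho_s$ restricts to $H_a\to H_b$, and likewise $\rho_{s'}$ restricts to $H_b\to H_a$, these restrictions being mutually inverse by (1) restricted further to $H_a$ and $H_b$.

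There is no genuine obstacle here. The classical proof uses no case distinctions on decidable properties and no choice principle beyond extracting the witnesses $u,v\in S^1$, which are already part of the definition of $\Lrel$. Because the equivalence classes $L_a$, $L_b$, $H_a$, $H_b$ carry only the apartness induced from $S$, and both $\rho_s$ and $\rho_{s'}$ are strongly extensional on all of $S$, no additional apartness-theoretic verification is needed; invertibility in $\Sets$ (or equivalently in $\aSets$) follows directly from the computation above.
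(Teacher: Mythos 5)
Your proof is correct and is exactly the standard classical argument that the paper invokes without writing out (the paper only remarks that Green's lemma "presents no additional obstacle from a constructive viewpoint"); the witnesses $u,v\in S^1$ are indeed all that is needed, and the computation $xss'=uass'=ubs'=ua=x$ is the key step. One small caution on wording: invertibility in $\Sets$ is \emph{not} in general equivalent to invertibility in $\aSets$ (the paper notes that a strongly extensional bijection need not have a strongly extensional inverse), but your argument is fine because the explicit inverse $\rho_{s'}$ is itself strongly extensional.
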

In particular, whenever $a\Drel b$, there exists an invertible map $\phi:H_a\to H_b$ of the form $\phi = \rho_s\lambda_t$, where $s,t\in S$. Moreover, if $a,b\in S$ are such that $ab\in H_a$, then $\rho_b$ induces an invertible map $H_a\to H_a$. 

\begin{thm}[Green's theorem] \label{greensthm}
  Let $H$ be an $\Hrel$-class in $S$. If $H\cap H^2$ is inhabited, then $H$ is a group. 
\end{thm}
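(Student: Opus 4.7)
The plan is to extract from $H\cap H^2$ a left identity $e\in H$, observe that it is automatically a two-sided idempotent identity on $H$, and then use Green's lemma again to produce two-sided inverses inside $H$.

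First I would pick $a,b\in H$ with $ab\in H$. Since $H=H_a$, the final clause of Green's lemma (Lemma~\ref{greenslma}) gives that $\rho_b\colon H\to H$ is invertible. Surjectivity of $\rho_b$ yields some $e\in H$ with $eb=b$. For any $h\in H$ the relation $h\Rrel b$ provides $t\in S^1$ with $h=bt$, and then $eh=e(bt)=(eb)t=bt=h$. Thus $e$ is a left identity on $H$, and in particular $e^2=e$, so $H=H_e$. Symmetrically, since $h\Lrel e$ for every $h\in H$ there exists $u\in S^1$ with $h=ue$, giving $he=u(ee)=ue=h$. So $e$ is a two-sided identity on $H$.

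For each $h\in H$ the equation $eh=h\in H=H_e$ lets me apply Green's lemma once more to conclude that $\rho_h\colon H\to H$ is invertible. This has two consequences: closure of $H$ under multiplication, since for any $h,k\in H$ we have $kh=\rho_h(k)\in H$; and the existence of a left inverse $h'\in H$ of $h$, satisfying $h'h=e$. Applying the same reasoning to $h'$ produces $h''\in H$ with $h''h'=e$, whence
\[ h''=h''e=h''(h'h)=(h''h')h=eh=h, \]
so $hh'=h''h'=e$, exhibiting $h'$ as a two-sided inverse of $h$ inside $H$. Hence $H$ is a group.

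I do not foresee any genuine obstacle: the entire argument is a repeated application of Green's lemma together with the definitions of $\Lrel$ and $\Rrel$, and it uses neither LEM nor any choice principle beyond the existence of individual preimages under the surjections $\rho_b$ and $\rho_h$, which is guaranteed by the fact that these maps are already established (via Green's lemma) to be bijections on $H$.
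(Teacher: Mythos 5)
Your proof is correct and follows essentially the same route as the paper: two applications of Green's lemma (Lemma~\ref{greenslma}) yield that the translations $\rho_h$ (and, in the paper, also $\lambda_h$) are invertible maps $H\to H$, and everything is constructively unobjectionable. The only difference is cosmetic: where you extract the identity $e$ and the inverses $h'$ explicitly from surjectivity, the paper simply invokes Lemma~\ref{groupcondition} (a semigroup is a group iff all left and right translations are surjective), which packages exactly the computation you carry out by hand.
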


\begin{proof}
  Assuming that $H\cap H^2$ is inhabited, there exist $a,b\in H$ such that $ab\in H$. By Lemma~\ref{greenslma}, $\rho_b$ and $\lambda_a$ induce invertible maps $H\to H$. In particular, for all $h\in H$, we have $hb, ah\in H$. Lemma~\ref{greenslma} now implies that $\rho_h,\lambda_h:H\to H$ are invertible (and in particular surjective) and from Lemma~\ref{groupcondition} follows that $H$ is a group. 
\end{proof}

\begin{rmk}
In the classical context, Green's theorem is usually formulated as a disjunction: either $H\cap H^2 = \emptyset$, or $H$ is a group.
This is, of course, not possible for us. For a weak counterexample, let $P$ be a proposition, $S=\{0,a\}$ a two element semigroup under zero multiplication,
$\rho_P = \{(x,y)\in S\times S \mid (x=y)\eller  P\}$ and $H=H_{[a]}\subset S/\rho_P$.
Then $H\cap H^2 = \emptyset \:\equ\:\neg P$, whilst $H$ being a group is equivalent to $P$.
Similarly, in our formulation of Green's theorem, it is necessary to assume that $H\cap H^2$ is inhabited, not merely non-empty: in general, it is not possible to prove that $\neg\neg P \impl P$ holds.
\end{rmk}

\begin{prop} \label{regularprop}
  \begin{enumerate}
  \item If $a\in S$ is regular then all elements in $D_a$ is regular.
  \item In a regular $\Drel$-class, every $\Lrel$-class and every $\Rrel$-class contains an idempotent. 
  \item Every idempotent $e\in S$ is a left identity in $R_e$ and a right identity in $L_e$. 
  \end{enumerate}
\end{prop}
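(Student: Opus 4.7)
The plan is to mirror the classical argument at each step, verifying that the existential witnesses needed are supplied directly by the hypotheses. The two key tools will be Green's lemma (Lemma~\ref{greenslma}) and, crucially, the identity $\Drel = \Lrel\circ\Rrel$ from Proposition~\ref{basicgreen}(2), which produces an explicit intermediate element for any $\Drel$-equivalence.

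For (1), I would first prove that regularity propagates along $\Rrel$ and along $\Lrel$. Assume $a\Rrel b$ with $axa=a$; by definition of $\Rrel$ there are $s,s'\in S^{1}$ with $as=b$ and $bs'=a$. A direct computation
\[ b(s'x)b \;=\; (bs')x\cdot b \;=\; ax(as) \;=\; (axa)s \;=\; as \;=\; b \]
exhibits $b$ as regular. The $\Lrel$-case is handled symmetrically: if $sa=b$ and $s'b=a$, then
\[ b(xs')b \;=\; sa\cdot xs'\cdot sa \;=\; sa\cdot x\cdot(s'sa) \;=\; sax\cdot a \;=\; s(axa) \;=\; sa \;=\; b. \]
Now, given $c\in D_{a}$, Proposition~\ref{basicgreen}(2) supplies some $d\in S$ with $a\Lrel d\Rrel c$, and applying the two observations in succession yields regularity of $c$.

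For (2), I would take an $\Lrel$-class $L$ inside a regular $\Drel$-class and pick any $b\in L$. By (1) there is some $y\in S$ with $byb=b$; then $e:=yb$ satisfies $e^{2}=y(byb)=yb=e$, and moreover $e\Lrel b$ since $e=yb\in S^{1}b$ while $b=byb=b\cdot e\in S^{1}e$. Thus $e\in L$ is the desired idempotent. The $\Rrel$-case is dual, with $e=by$. For (3), if $x\in L_{e}$ then $x=se$ for some $s\in S^{1}$, whence $xe=se^{2}=se=x$; and dually if $x\in R_{e}$, then $x=et$ gives $ex=e^{2}t=et=x$.

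I do not expect a genuine obstacle: every existential is witnessed by the hypotheses, and no appeal to LEM or to choice is needed. The one delicate point worth flagging is that in (1) the identity $\Drel=\Lrel\circ\Rrel$ of Proposition~\ref{basicgreen}(2) is essential — the bare definition of $\Drel$ as $(\Lrel\cup\Rrel)^{\infty}$ would yield only an unbounded chain of $\Lrel$- and $\Rrel$-steps, requiring an induction on chain length. Proposition~\ref{basicgreen}(2) collapses this to a single intermediate element $d$, making the constructive argument immediate.
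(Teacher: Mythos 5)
Your proof is correct and is exactly the classical argument (regularity propagates along $\Lrel$ and $\Rrel$, composed via $\Drel=\Lrel\circ\Rrel$; idempotents $yb$ and $by$ from an inverse; the identity computations for (3)), which is precisely what the paper intends, since it omits the proof as one requiring no special constructive consideration. Your observation that every existential is explicitly witnessed, and your remark on why Proposition~\ref{basicgreen}(2) avoids an induction over chain length, are both accurate.
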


\begin{thm} \label{regularthm}
  Let $D\subset S$ be a regular $\Drel$-class, and $a\in D$.
  \begin{enumerate}
  \item $V(a)\subset D$.
    \label{regularthm_a}
  \item If $a'\in V(a)$ then $aa'\in R_a\cap L_{a'}$ and $a'a\in L_a\cap R_{a'}$ (and these elements are idempotents).%
    \label{regularthm_b}
  \item Let $e\in R_a$ and $f\in L_a$ be idempotents. Then there exists an element $a^*\in V(a)\cap L_e\cap R_f$ such that $aa^*=e$ and $a^*a=f$.
    \label{regularthm_c}
  \item If $a',a^*\in V(a)$ belong to the same $\Hrel$-class, then $a'=a^*$.
    \label{regularthm_d}
  \item Let $e,f\in S$ be idempotents. Then $e\Drel f$ if and only if there exist $a,a'\in S$ such that $a'\in V(a)$ and $aa'=e$, $a'a=f$. 
  \end{enumerate}
\end{thm}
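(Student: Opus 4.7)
The plan is to follow the classical proof strategy, checking that no step requires case-analysis beyond what we have. The arguments are essentially algebraic identities, so the constructive framework causes little trouble.

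For (1) and (2): Given $a'\in V(a)$, the defining identities $aa'a=a$ and $a'aa'=a'$ immediately yield the relevant Green relations. From $a=(aa')a$ and $aa'=a\cdot a'$ we get $a\Rrel aa'$; from $a'=a'(aa')$ and $aa'=a\cdot a'\in S^1a'$ we get $a'\Lrel aa'$. Hence $a\Drel a'$, proving (1). The same style of computation gives $a'a\Rrel a'$ and $a'a\Lrel a$. Finally, $(aa')^2=(aa'a)a'=aa'$ and $(a'a)^2=a'(aa'a)=a'a$, proving (2).

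For (3): since $D$ is regular, pick any $a'\in V(a)$ and set $a^*=fa'e$. By Proposition~\ref{regularprop}(3), the idempotent $e$ acts as a left identity on $R_e=R_a$, so $ea=a$; likewise $af=a$. Since $aa'$ and $e$ are both idempotents in $R_e$, the idempotent $aa'$ is a left identity on $R_e$, giving $(aa')e=e$; symmetrically $f(a'a)=f$. Therefore
\begin{align*}
aa^*&=(af)a'e=aa'e=(aa')e=e,\\
a^*a&=fa'(ea)=f(a'a)=f,
\end{align*}
so $aa^*a=ea=a$ and $a^*aa^*=fa^*=f(fa'e)=fa'e=a^*$, i.e.\ $a^*\in V(a)$. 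Containment in $L_e\cap R_f$ is then immediate: $a^*=fa'e\in S^1e$ and $e=aa^*\in S^1a^*$, and analogously on the right.

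For (4): the key sublemma is that an $\Hrel$-class contains at most one idempotent, for if $u,v$ are idempotents with $u\Hrel v$ then $u\in R_v$ and $u\in L_v$, so Proposition~\ref{regularprop}(3) gives $vu=u=uv$ and $uv=v=vu$, forcing $u=v$. Applied to the idempotents $aa',aa^*\in R_a\cap L_{a'}=R_a\cap L_{a^*}$ (using $a'\Lrel a^*$) and to $a'a,a^*a\in L_a\cap R_{a'}=L_a\cap R_{a^*}$, this yields $aa'=aa^*$ and $a'a=a^*a$. Then
\[a'=a'aa'=a^*aa'=a^*aa^*=a^*.\]

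For (5): the direction $(\Leftarrow)$ is part (2). For $(\Rightarrow)$, Proposition~\ref{basicgreen}(3) supplies $a\in R_e\cap L_f$; every idempotent is regular (since $eee=e$ gives $e\in V(e)$), so $D_a=D_e$ is a regular $\Drel$-class, and part (3) produces the required $a^*\in V(a)$ with $aa^*=e$, $a^*a=f$.

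The only mildly subtle point is the idempotent-uniqueness step in (4); but it reduces to a short chain of equations using Proposition~\ref{regularprop}(3), so no genuine constructive obstacle arises. Throughout, all existence assertions are witnessed by explicit elements ($aa'$, $a'a$, $fa'e$, etc.), consistent with the constructive framework.
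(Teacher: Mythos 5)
Your proof is correct, and at the level of overall strategy it is the classical argument that the paper also follows; but in the two places where the paper actually writes out details, your route differs slightly, and in both cases your version is at least as clean. For part (3), the paper takes $x,y$ with $ax=e$, $ya=f$ (using $e\in R_a\subset aS$, $f\in L_a\subset Sa$) and sets $a^*=fxe$, so that $aa^*=(af)xe=axe=e^2=e$ falls out directly; you instead take an arbitrary $a'\in V(a)$ and set $a^*=fa'e$, which forces you to justify $(aa')e=e$ and $f(a'a)=f$ via Proposition~\ref{regularprop}(3) applied to the idempotents $aa'$ and $a'a$ --- a correct extra step, and one that makes the dependence on $D$ being regular (so that $V(a)$ is inhabited) explicit. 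For part (4), the paper derives uniqueness of the idempotent in an $\Hrel$-class from Green's theorem (Theorem~\ref{greensthm}: the class is a group, hence has one idempotent), whereas you prove the same sublemma by a direct two-line computation with left/right identities from Proposition~\ref{regularprop}(3); your argument is more elementary and avoids invoking the group structure. Your treatments of (1), (2) and (5) coincide with the standard ones the paper leaves implicit, and, as you note, every existential is witnessed explicitly, so nothing in the constructive setting is at risk.
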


\begin{proof}[Sketch of proof]
  \eqref{regularthm_c}
  Since $D$ is regular, we have $a=ata\in aS\cap Sa$ for some $t\in S$, and thus $S^1a=Sa$ and $aS^1=aS$. 
Now $e\in R_a\subset aS$, so $ax=e$ for some $x\in S$. Similarly, $f=ya$ for some $y\in S$. 
Set $a^*=fxe = yaxax = ye$.
Straightforward computations show that $aa^*=e$, $a^*a=f$, $a^*\in V(a)$, $e\Lrel a^*$ and $f\Rrel a^*$.

\eqref{regularthm_d}
If $a',a^*\in V(a)$ belong to the same $\Hrel$-class, then $aa'$ and $aa^*$ are idempotents in the $\Hrel$-class $H = L_a\cap R_{a^*} = L_a\cap R_{a'}$, whence $aa'= aa^*$ by Theorem~\ref{greensthm}. Similarly, $a'a = a^*a$.
It follows that $a^* = a^*aa^* = a^*aa'=a'aa'=a$. 
\end{proof}

\begin{cor}
  Let $a,b\in S$ such that $a\Drel b$.
  \begin{enumerate}
  \item If $H_a$ and $H_b$ are groups then there exist $c\in S$ and $c'\in V(c)$ such that $\rho_c\lambda_{c'}:H_a\to H_b$ is an isomorphism. 
  \item $ab\in R_a\cap L_b \;\equ\; L_a\cap R_b \mbox{ contains an idempotent.}$ 
  \end{enumerate}
\end{cor}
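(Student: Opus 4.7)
For part~(1), the plan is the following. Since $a\Drel b$, Proposition~\ref{basicgreen}(3) supplies an element $c\in R_a\cap L_b$. Because $H_a$ and $H_b$ are groups, they contain idempotents $e_a$ and $e_b$; moreover $a$ is regular, so the $\Drel$-class $D_a=D_b$ is regular by Proposition~\ref{regularprop}(1), and hence $c$ is regular as well. I will then apply Theorem~\ref{regularthm}\eqref{regularthm_c} with the idempotents $e_a\in R_c$ and $e_b\in L_c$ to produce $c'\in V(c)\cap L_{e_a}\cap R_{e_b}$ satisfying $cc'=e_a$ and $c'c=e_b$. Setting $\phi=\rho_c\lambda_{c'}:x\mapsto c'xc$, my candidate inverse is $\psi:y\mapsto cyc'$; the equalities $\psi\phi=\I_{H_a}$, $\phi\psi=\I_{H_b}$ and the multiplicativity $\phi(xy)=\phi(x)\phi(y)$ then reduce to direct calculations using $cc'=e_a$, $c'c=e_b$ and the fact that $e_a$, $e_b$ are the identity elements of $H_a$, $H_b$.

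The main obstacle in part~(1) will be verifying that $\phi$ actually takes $H_a$ into $H_b$, which amounts to showing $c'xc\Hrel e_b$ for each $x\in H_a$. I plan to handle this by a four-point membership check: first the absorption identities $e_b(c'xc)=c'xc=(c'xc)e_b$, and then the equations $(c'xc)(c'x^{-1}c)=e_b=(c'x^{-1}c)(c'xc)$, where $x^{-1}$ denotes the group inverse of $x$ inside $H_a$. Together these give $c'xc\le_R e_b$, $e_b\le_R c'xc$ and their left-handed analogues, so that $c'xc\Hrel e_b$.

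Part~(2) will split into two short arguments. For $(\Leftarrow)$, given an idempotent $e\in L_a\cap R_b$, Proposition~\ref{regularprop}(3) yields $ae=a$ and $eb=b$; writing $e=xa$ and $e=by$ (from $e\Lrel a$ and $e\Rrel b$) then converts these into $a=(ab)y$ and $b=x(ab)$, so $ab\in R_a\cap L_b$. For $(\Rightarrow)$, I will pick $s,t\in S^1$ with $abs=a$ and $tab=b$ and propose $h:=bs=ta$ (well-defined since $bs=tabs=ta$) as the candidate idempotent; the relations $ah=a$, $hb=b$ together with $h\in S^1 a\cap bS^1$ will give $h\in L_a\cap R_b$, and the one-line computation $h^2=(ta)(bs)=t(ab)s=ta=h$ will finish the proof.
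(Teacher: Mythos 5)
Your proof is correct and takes the intended route: the paper states this corollary without proof as a direct consequence of Theorem~\ref{regularthm}, Green's lemma and Proposition~\ref{regularprop}, which is exactly the machinery you invoke, and all existential witnesses ($c$ from the inhabitedness of $R_a\cap L_b$, the inverse $c'$ from Theorem~\ref{regularthm}(3), and the idempotent $h=bs=ta$ in part~(2)) are produced explicitly, so the argument is also constructively sound. The only step left implicit is that $\psi:y\mapsto cyc'$ lands in $H_a$, but this is the mirror image of your four-point check for $\phi$ and needs no new idea.
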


\begin{prop}
  Let $S$ be regular, and $a,b\in S$. Then
  \begin{align*}
    a\Lrel b \:&\equ\: \exists a'\in V(a),\, b'\in V(b): \, a'a = b'b \,,\\
    a\Rrel b \:&\equ\: \exists a'\in V(a),\, b'\in V(b): \, aa' = bb' \,,\\
    a\Hrel b \:&\equ\: \exists a'\in V(a),\, b'\in V(b): \,(\, a'a = b'b \,\wedge\, aa'=bb'\,) \,.
  \end{align*}
\end{prop}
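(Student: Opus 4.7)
The plan is to treat each of the three equivalences separately. For each, the ``$\Leftarrow$'' direction is essentially a direct calculation using the inverse identities, while the ``$\Rightarrow$'' direction requires producing explicit inverses with the prescribed properties; the $\Hrel$ case will rest on the constructive content of Theorem~\ref{regularthm}\eqref{regularthm_c}.

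First I will dispose of the three ``$\Leftarrow$'' directions uniformly. Given $a' \in V(a)$ and $b' \in V(b)$ with $a'a = b'b$, the identity $a = aa'a = ab'b \in S^1 b$ shows $a \le_L b$, and symmetrically $b \le_L a$, giving $a\Lrel b$. Likewise $aa' = bb'$ yields $a = aa'a = bb'a \in bS^1$ and hence $a \Rrel b$. The $\Hrel$ case follows from the two others, since $\Hrel = \Lrel\cap\Rrel$.

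For ``$\Rightarrow$'' in the $\Lrel$ case, suppose $a\Lrel b$, so that there exist $u,v\in S^1$ with $a = ub$ and $b = va$. By regularity, pick any $a' \in V(a)$, and define $b' = a'u \in S$. Then I need to verify $bb'b = b$ and $b'bb' = b'$: for the first, $bb'b = b a' u b = b a' a = v a a' a = v a = b$; for the second, $b'bb' = a'u\cdot ba'u = a' a a' u = a' u = b'$. Hence $b' \in V(b)$, and $b'b = a' u b = a'a$, as required. The $\Rrel$ direction is completely symmetric: choose $a'\in V(a)$, write $a = bu$ and $b = av$, set $b' = ua'$, and verify the dual identities.

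The main step, and the one that needs a little more care, is ``$\Rightarrow$'' for $\Hrel$. Given $a \Hrel b$, I have $L_a = L_b$ and $R_a = R_b$. By Proposition~\ref{regularprop}(2), the regular class $D_a = D_b$ contains an idempotent in each $\Lrel$- and each $\Rrel$-class, so one can exhibit idempotents $e \in R_a = R_b$ and $f \in L_a = L_b$. Applying Theorem~\ref{regularthm}\eqref{regularthm_c} to $a$ produces $a^* \in V(a)$ with $aa^* = e$ and $a^*a = f$; applying it to $b$ with the \emph{same} pair of idempotents produces $b^* \in V(b)$ with $bb^* = e$ and $b^*b = f$. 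Setting $a' = a^*$, $b' = b^*$ then gives $aa' = e = bb'$ and $a'a = f = b'b$ simultaneously. The only subtlety to check is that the proof of Theorem~\ref{regularthm}\eqref{regularthm_c} yields explicit constructive witnesses (which it does, via the formula $a^* = fxe$ with $ax = e$ and $ya = f$), so no non-constructive choices are introduced.
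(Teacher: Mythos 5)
Your proof is correct. The paper states this proposition without proof (it is one of the classical results deemed to carry over without special consideration), and your argument is precisely the standard one with all witnesses constructed explicitly --- $b'=a'u$ for the $\Lrel$ case, $b'=ua'$ for the $\Rrel$ case, and the elements supplied by Theorem~\ref{regularthm}\eqref{regularthm_c} for the $\Hrel$ case --- so it is constructively valid. One minor streamlining for $\Hrel$: instead of invoking Proposition~\ref{regularprop}(2) to find idempotents, you can fix $a'\in V(a)$ and take $e=aa'$, $f=a'a$, which are idempotents in $R_a=R_b$ and $L_a=L_b$ by Theorem~\ref{regularthm}\eqref{regularthm_b}; then Theorem~\ref{regularthm}\eqref{regularthm_c} needs to be applied only to $b$.
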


\begin{prop} Let $U\subset S$ be a regular subsemigroup. Then the following hold.
  \begin{enumerate}
  \item $(\le_L^U) = (\le_L^S)\cap (U\times U),\quad (\le_R^U) = (\le_R^S)\cap (U\times U)$;
  \item $\Lrel^U = \Lrel^S\cap(U\times U),\quad
    \Rrel^U = \Rrel^S\cap(U\times U),\quad
    \Hrel^U = \Hrel^S\cap(U\times U)$.
  \end{enumerate}
\end{prop}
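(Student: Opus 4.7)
The plan is to prove (1) directly, after which (2) follows formally from the definitions $\Lrel = (\le_L)\cap(\le_L)\inv$, $\Rrel = (\le_R)\cap(\le_R)\inv$, and $\Hrel = \Lrel\cap\Rrel$. For (1), I will prove the two inclusions for $\le_L$; the statement for $\le_R$ is entirely symmetric.

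The forward inclusion $(\le_L^U)\subset (\le_L^S)\cap(U\times U)$ is immediate: if $a\le_L^U b$ then $a\in U^1 b\subset S^1 b$, and by definition $a,b\in U$.

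For the reverse inclusion, suppose $a,b\in U$ with $a\le_L^S b$, so that $a\in S^1 b$, meaning $a=b\eller\exists_{s\in S}(a=sb)$. In the first case, $a\in U^1 b$ trivially. In the second, regularity of $U$ gives some $b'\in V(b)\cap U$, so $bb'b=b$ and
\[ a = sb = s(bb'b) = (sb)(b'b) = a(b'b) = (ab')b \,. \]
Since $a,b'\in U$ and $U$ is a subsemigroup, $ab'\in U$, and hence $a=(ab')b\in Ub\subset U^1 b$, i.e.\ $a\le_L^U b$. The argument for $\le_R^U$ is the mirror image: one writes $a = b(b'a)$ with $b'a\in U$.

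For (2), we use (1):
\begin{align*}
\Lrel^U
&= (\le_L^U)\cap(\le_L^U)\inv
= \bigl((\le_L^S)\cap(U\!\times\!U)\bigr)\cap\bigl((\le_L^S)\inv\cap(U\!\times\!U)\bigr)
= \Lrel^S\cap(U\!\times\!U) \,,
\end{align*}
and analogously $\Rrel^U = \Rrel^S\cap(U\times U)$. Intersecting yields $\Hrel^U = \Hrel^S\cap(U\times U)$. No obstacle is expected: the constructive content is just the explicit witness $ab'\in U$ (resp.\ $b'a\in U$) for the backward direction, obtained from the inverse $b'$ supplied by the regularity hypothesis. Note that the argument does not invoke $\Jrel$ or $\Drel$, for which an analogous statement would require more care.
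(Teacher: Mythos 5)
Your proof is correct. It takes a more direct route than the paper's: the paper first invokes Theorem~\ref{regularthm}\eqref{regularthm_b} to replace $a$ and $b$ by the idempotents $a'a$ and $b'b$ (the printed proof writes $aa'$ and $bb'$, but since $aa'\in R_a$ and $a'a\in L_a$ it is $a'a$, $b'b$ that are meant for the $\le_L$ case), transfers the relation $a\le_L^S b$ to $a'a\le_L^S b'b$, and only then uses that the idempotent $b'b$ acts as a right identity on $S^1b'b$ to land back in $U$. You skip the transfer entirely: a single inverse $b'\in V(b)\cap U$ and the computation $a=sb=s(bb'b)=a(b'b)=(ab')b$ already produce the witness $ab'\in U$, so you never need $a'$, the $\Lrel$-classes, or Theorem~\ref{regularthm}. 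The underlying algebraic mechanism is the same (absorbing the idempotent $b'b$ on the right), but your version is shorter and avoids the slip in the paper's statement of which idempotent lies in which class. Your handling of the constructive content is also sound: $a\in S^1b$ supplies an explicit $x\in S^1$ with $a=xb$, and the disjoint-union structure of $S^1$ lets you split into the cases $x=1$ and $x\in S$ (in fact the case split is avoidable, since $ab'=bb'\in U$ when $a=b$ as well). Part (2) is indeed purely formal from (1), exactly as in the paper.
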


Clearly, the inclusions ``$\subset$'' are true for any subsemigroup $U\subset S$.
The identities $\Drel^U = \Drel^S\cap(U\times U)$ and $\Jrel^U = \Jrel^S\cap(U\times U)$ are not true in general a regular subsemigroup $U\subset S$.

\begin{proof}
Clearly, the second part of the proposition follow directly from the first.
Let $a,b\in U$, and assume that $a\le_L^S b$. As $U$ is regular,
there exists inverses $a'$ and $b'$ in $U$ of $a$ and $b$ respectively. By Theorem~\ref{regularthm}\eqref{regularthm_b}, the relations $aa'\Lrel^U a$ and $bb'\Lrel^U b$, and thus also $aa'\Lrel^S a$ and $bb'\Lrel^S b$, hold. Hence $aa'\le_L^S bb'$, that is, $aa'\in Sbb'$. As $bb'$ is a right identity element in $Sbb'$, we have $aa'= (aa')(bb')\in Ubb'$, so $aa'\le_L^U bb'$. Combined with the relations $aa'\Lrel^U a$ and $bb'\Lrel^U b$, this implies that $a\le_L^U b$.

From the above, we conclude that $(\le_L^U) = (\le_L^S)\cap (U\times U)$.
The proof of the identity $(\le_R^U) = (\le_R^S)\cap (U\times U)$ is completely analogous.
\end{proof}

\begin{lma}[Lallement's lemma]
  Let $\rho$ be a congruence on a regular semigroup $S$, and $[a]\in S/\rho$ an idempotent. Then there exists an idempotent $e\in S$ such that $[e]=[a]$, and $e\le_L a$, $e\le_R a$.
\end{lma}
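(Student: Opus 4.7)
The plan is a direct constructive translation of the classical argument; no special care beyond explicit witnesses is needed. Since $[a]\in S/\rho$ is an idempotent we have $[a]=[a]^2=[a^2]$, i.e. $a\rho a^2$. Because $S$ is regular, so is $a^2$, hence $V(a^2)$ is inhabited and we may choose some $x\in V(a^2)$, which by definition satisfies $a^2xa^2=a^2$ and $xa^2x=x$. I would then set $e=axa$ and verify the three required properties: $e^2=e$, $[e]=[a]$, and $e\le_L a$, $e\le_R a$.

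For idempotency, compute
\[ e^2 = (axa)(axa) = a(xa^2x)a = axa = e, \]
using associativity and $xa^2x=x$. For $[e]=[a]$, use that $\rho$ is a congruence together with $a\rho a^2$:
\[ [e] = [axa] = [a][x][a] = [a^2][x][a^2] = [a^2xa^2] = [a^2] = [a], \]
the penultimate step using $a^2xa^2=a^2$. Finally, writing $e=(ax)a$ shows $e\in Sa\subset S^1a$, so $e\le_L a$; and writing $e=a(xa)$ shows $e\in aS\subset aS^1$, so $e\le_R a$.

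There is no real obstacle here: the classical proof produces an explicit formula $e=axa$ for the desired idempotent, and every step (the existence of $x$ via regularity, the algebraic identities, and the passage to the quotient) is manifestly constructive once the inverse $x\in V(a^2)$ has been named. In particular, no disjunction or excluded-middle argument enters, and no use is made of tightness of any apartness, so the proof goes through for an arbitrary congruence on a regular semigroup with apartness.
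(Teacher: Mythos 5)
Your proof is correct and is exactly the standard argument (take $x\in V(a^2)$, which is inhabited by regularity, and set $e=axa$) that the paper implicitly relies on — it states Lallement's lemma without proof in the section where classical proofs are claimed to go through unchanged, and its own co-congruence analogue (Lemma~\ref{co-lallement}) is built around the same element $axa$. All your steps are constructively unproblematic, so nothing further is needed.
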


A congruence $\rho$ on $S$ is \emph{idempotent separating} if $e\rho f$ implies $e=f$ for all idempotents $e,f\in S$. 

\begin{prop}
  Let $\rho$ be a congruence on a regular semigroup $S$. Then $\rho$ is idempotent separating if and only if $\rho\subset\Hrel$. 
\end{prop}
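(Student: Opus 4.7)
The plan is to prove the two directions separately, using Green's theorem (Theorem~\ref{greensthm}) in one direction and Lallement's lemma together with idempotent separation in the other.

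For the implication $\rho\subset\Hrel \:\Rightarrow\:$ ``$\rho$ is idempotent separating,'' I will suppose $e,f\in E(S)$ satisfy $e\rho f$. By assumption $e\Hrel f$, so $e$ and $f$ lie in a common $\Hrel$-class $H$. Since $e\cdot e=e\in H\cap H^2$, Green's theorem forces $H$ to be a group; a group has a unique idempotent, so $e=f$.

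For the converse, I will assume $\rho$ is idempotent separating and take $a\rho b$. Regularity supplies some $a'\in V(a)$, and multiplying the relation $a\rho b$ on the right and left by $a'$ yields $aa'\rho ba'$ and $a'a\rho a'b$, with $aa',a'a\in E(S)$. Thus $[ba']=[aa']$ and $[a'b]=[a'a]$ are idempotent classes in $S/\rho$. Applying Lallement's lemma to each, I obtain idempotents $e,f\in S$ with $[e]=[ba']$, $[f]=[a'b]$, $e\le_R ba'$ and $f\le_L a'b$. Idempotent separation then forces $e=aa'$ and $f=a'a$, giving $aa'\le_R ba'$ and $a'a\le_L a'b$. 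Writing $aa'=ba's$ and $a'a=ta'b$ for suitable $s,t\in S^1$ and using $a=aa'a$, I recover $a\le_R b$ and $a\le_L b$. The symmetric argument starting from some $b'\in V(b)$ produces $b\le_R a$ and $b\le_L a$, whence $a\Hrel b$.

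The main obstacle is the lifting step in the converse: one must pass from the idempotent class $[ba']\in S/\rho$ to an actual idempotent in $S$ via Lallement, identify it with $aa'$ via idempotent separation, and then propagate the resulting $\Rrel$- and $\Lrel$-order information from the products $aa', ba'$ (respectively $a'a, a'b$) back up to $a$ and $b$ themselves. Once the identification $e=aa'$ is in place, the rest is a short calculation and a symmetry argument.
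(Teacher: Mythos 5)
Your proof is correct and is exactly the standard argument (the one the paper implicitly defers to, since it states this proposition without proof as part of the classical theory that ``goes through''): Green's theorem gives at most one idempotent per $\Hrel$-class for the forward direction, and Lallement's lemma plus idempotent separation identifies $e=aa'$ and yields $a\le_R b$, $a\le_L b$ for the converse. All steps are constructively unproblematic — regularity supplies an inhabited $V(a)$, Lallement's lemma produces the idempotent explicitly, and $H\cap H^2$ is inhabited by $e=e^2$ — so nothing further is needed.
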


%%%%%%%%%%%%%%%%%%%%%%%%%%%%%%%%%%%%%%%%
\subsection{Constructive friends of Green's relations} \label{greenfriends}

In this section, we shall define constructive friends of Green's relations, and derive some basic properties of these. Throughout, $S$ is a semigroup with apartness.

We will need the following assumption on the logic in $S$:
Let $P(x)$ be a predicate of the form $u(x)\apt v$ or $\forall_s(u(s,x)\apt v)$ where $u$ and $v$ are terms and $x$ does not occur in $v$.
Let $Q$ be a predicate of the form $u\apt v$, $\forall_s(u(s)\apt v)$, or $\forall_{s,t}(u(s,t)\apt v)$, where $u$ and $v$ are terms, $x$ does not occur in either $u$ or $v$, and $s$ and $t$ do not occur in $v$.
Then $S$ satisfies the \emph{constant comains principle} 
\begin{equation} \label{cdinS}
  \forall x(P(x) \eller Q) \;\impl\; \forall x\,P(x) \eller Q 
\end{equation}
for $P(x)$ and $Q$.

\begin{lma} \label{seideals}
  \begin{enumerate}
  \item For every $b\in S$, the subsets $Sb$, $bS$, $SbS$, $S^1b$, $bS^1$ and $S^1bS^1$ of $S$ are closed in the apartness topology. \label{seideals1}
  \item For every $a\in S$, the sets
    \begin{align*}
    {}_aD^l &= \{ b\in S\mid a\in\tild\,(Sb)\} , &
    {}_aD^r &=\{ b\in S\mid a\in\tild\,(bS)\} , &
    {}_aD^j &=\{ b\in S\mid a\in\tild\,(SbS)\}, \\
    {}_a\tilde{D}^l &= \{ b\in S\mid a\in\tild\,(S^1b)\} , &
    {}_a\tilde{D}^r &= \{ b\in S\mid a\in\tild\,(bS^1)\},\;\mbox{and} &
    {}_a\tilde{D}^j &= \{ b\in S\mid a\in\tild\,(S^1bS^1)\}
    \end{align*}
    are strongly extensional subsets of $S$. \label{seideals2}
  \end{enumerate}
\end{lma}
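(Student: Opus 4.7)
The plan is to combine cotransitivity of $\apt$ with the constant domains principle \eqref{cdinS} that, by assumption, holds for apartness-type predicates in $S$.

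For (1), closedness of $Sb$ means strong extensionality of $\tild(Sb) = \{x \in S \mid \forall_{s \in S}(x \apt sb)\}$. Given $x \in \tild(Sb)$ and $y \in S$, cotransitivity applied at each $s$ (with $y$ as intermediate point and symmetry of $\apt$) produces $\forall_{s}(y \apt sb \eller y \apt x)$, and \eqref{cdinS} with $P(s) := y \apt sb$ and $Q := y \apt x$ pulls the disjunction outside the quantifier to yield $y \in \tild(Sb) \eller y \apt x$. The case $bS$ is symmetric; $SbS$ uses two nested invocations of \eqref{cdinS}, first over the inner variable, then over the outer, the outer predicate $\forall_t(u(s,t)\apt v)$ being of the form admitted by the assumption. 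For the monoid versions I decompose $S^1b = Sb \cup \{b\}$, so $\tild(S^1b) = \tild(Sb) \cap \{x \in S \mid x \apt b\}$; the second factor is strongly extensional because $(\apt) \subset S \times S$ is a co-quasiorder and hence strongly extensional by Proposition~\ref{kappaisse}, so its slice is strongly extensional by Proposition~\ref{serelations}. Since strongly extensional subsets form a topology, the intersection is strongly extensional. The cases $bS^1$ and $S^1bS^1 = SbS \cup Sb \cup bS \cup \{b\}$ are handled by the same recipe.

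For (2), take $b \in {}_aD^l$ and $y \in S$, so $a \apt sb$ for every $s \in S$. For each such $s$, cotransitivity gives $a \apt sy \eller sy \apt sb$, and strong extensionality of left multiplication by $s$ turns the second disjunct into $y \apt b$. Hence $\forall_s(a \apt sy \eller y \apt b)$, and \eqref{cdinS} delivers $\forall_s(a \apt sy) \eller y \apt b$, i.e., $y \in {}_aD^l \eller y \apt b$. The remaining cases run in parallel: ${}_aD^r$ uses right multiplication; ${}_a\tilde{D}^l$ and ${}_a\tilde{D}^r$ quantify over $S^1$ instead of $S$, which is unproblematic since the identity adjoined in $S^1$ is compatible with apartness; and the two-sided cases ${}_aD^j$, ${}_a\tilde{D}^j$ combine left and right multiplication with a two-variable application of \eqref{cdinS} exactly as in part (1).

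The only delicate point is checking, at each invocation of \eqref{cdinS}, that the two disjuncts lie within the restricted syntactic class permitted by the assumption. In all uses above $P$ and $Q$ are either atomic apartness statements $u \apt v$ or single universal quantifications $\forall_s(u(s,\cdot) \apt v)$ of such, which matches the admitted forms; apart from this bookkeeping the argument is routine.
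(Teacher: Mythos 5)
Your proof is correct and follows essentially the same route as the paper's: cotransitivity of $\apt$ followed by the constant domains principle \eqref{cdinS} (applied twice, nested, in the two-sided case), with strong extensionality of multiplication converting $sy\apt sb$ into $y\apt b$ in part (2). The only cosmetic difference is in the $S^1$-variants, where the paper closes $S^1b=\{b\}\cup Sb$ via finite unions of closed sets and writes ${}_a\tilde{D}^l={}_aD^l\cap\tild\,\{a\}$, while you use $\tild\,(S^1b)=\tild\,(Sb)\cap\tild\,\{b\}$ and quantify over $S^1$ directly; both are valid and equivalent.
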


\begin{proof}
\eqref{seideals1}
Let $a\in \tild\,(Sb)$ and $x\in S$. Then $a\apt sb$ for all $s\in S$ and hence, by cotransitivity, $a\apt x$ or $x\apt sb$. By \eqref{cdinS}, we can now infer that either $a\apt x$ or $\forall_{s\in S}(x\apt sb)$ holds, so either $a\apt x$ or $x\in\tild\,(Sb)$. This proves that $\tild\,(Sb)\subset S$ is strongly extensional, that is, $Sb\subset S$ is closed.
As the collection of closed subsets of $S$ is closed under finite unions, and $\{b\}\subset S$ is closed by Proposition~\ref{separation}\eqref{separation1}, it follows that $S^1b = \{b\}\cup Sb\subset S$ is closed, too. 
The proof for right ideals is completely dual, and the two-sided case is analogous. 

\eqref{seideals2}
This is similar to \eqref{seideals1}. We prove the claim for the sets ${}_aD^l$ and ${}_a\tilde{D}^l$ only. 
Let $b\in {}_aD^l$ and $c\in S$. Then, for all $s\in S$, we have $a\apt sb$ and hence $a\apt sc$ or $sc\apt sb$ by cotransitivity. The latter implies $c\apt b$, and thus we have proved
$\forall_{s\in S}(a\apt sc \eller c\apt b)$.
By \eqref{cdinS}, this is equivalent to $\forall_{s\in S}(a\apt sc ) \eller c\apt b$, that is, $a\in\tild\,(Sb)$ or $c\apt b$. Hence, ${}_aD^l\subset S$ is a strongly extensional subset, and it follows that so is $ {}_a\tilde{D}^l= {}_aD^l\cap\tild\,\{a\}$.
\end{proof}

The following consequence of Lemma~\ref{seideals}\eqref{seideals1} may be of independent interest. 

\begin{prop} \label{sefgideals}
 Every finitely generated (left/right/two-sided) ideal in $S$ is closed in the apartness topology. 
\end{prop}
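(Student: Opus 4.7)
The plan is to reduce the proposition to Lemma~\ref{seideals}\eqref{seideals1} together with the fact, recorded in Section~\ref{atopology}, that finite unions of closed subsets in the apartness topology are again closed. I will describe the two-sided case; the one-sided variants are entirely parallel.

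First I would unwind the definitions. Let $I\subset S$ be a finitely generated two-sided ideal, generated by a finitely enumerable subset $B\subset S$. By the definition of finite enumerability, there exists a finite set $\{1,\ldots,n\}$ and a surjective map $f\colon\{1,\ldots,n\}\to B$; writing $b_i = f(i)$, the ideal of $S$ generated by $B$ can be expressed as the \emph{finite} union
\[ I \;=\; \bigcup_{i=1}^{n} S^{1}b_{i}S^{1} \]
of principal ideals. Each $S^{1}b_{i}S^{1}$ is closed by Lemma~\ref{seideals}\eqref{seideals1}, and a finite union of closed subsets of $S$ is again closed; therefore $I$ is closed, as required.

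For the one-sided cases the argument is the same, with $S^{1}b_{i}S^{1}$ replaced by $S^{1}b_{i}$ for left ideals and by $b_{i}S^{1}$ for right ideals, each of which is closed by the same lemma.

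I do not anticipate any genuine obstacle: all the constructive subtlety has been absorbed into Lemma~\ref{seideals}, whose proof relies on the constant domains principle \eqref{cdinS} in order to handle the universal quantifier hidden in $\tild\,(Sb)$, $\tild\,(bS)$ and $\tild\,(SbS)$. Once that lemma is granted, finite enumerability of the generating set lets us rewrite $I$ as a finite union of principal ideals, and the conclusion follows at once from the stability of closedness under finite unions.
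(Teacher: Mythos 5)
Your proof is correct and follows exactly the paper's argument: the paper likewise observes that a finitely generated ideal is a finite union of principal ideals, each closed by Lemma~\ref{seideals}\eqref{seideals1}, and that finite unions of closed sets are closed. Your version merely spells out the decomposition via the finitely enumerable generating set, which the paper leaves implicit.
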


\begin{proof}
  As a finite union of closed sets is closed, this follows from Lemma~\ref{seideals}\eqref{seideals1}. 
\end{proof}

\begin{dfn}[Green co-quasiorder relations]
  Let $a,b\in S$. Define relations $\succ_l$, $\succ_r$ and $\succ_j$ on $S$ by 
  \[ a\succ_lb \:\equ\: a\in\tild\,(S^1b), \quad
  a\succ_rb \:\equ\: a\in\tild\,(bS^1), \quad
  a\succ_jb \:\equ\: a\in\tild\,(S^1bS^1). \]
\end{dfn}

\begin{prop} \label{greencoordersse}
 The relations $\succ_l$, $\succ_r$ and $\succ_j$ are strongly extensional subsets of $S\times S$.
\end{prop}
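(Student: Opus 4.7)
The plan is to invoke Proposition~\ref{serelations}, which reduces strong extensionality of a relation $\alpha\subset S\times S$ to strong extensionality of all ``rows'' $a\alpha\subset S$ and ``columns'' $\alpha b\subset S$. I would carry out the argument in detail for $\succ_l$, and then simply note that the cases $\succ_r$ and $\succ_j$ are parallel, with $S^1b$ replaced by $bS^1$ and $S^1bS^1$ respectively.

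Fix $a,b\in S$. By definition,
\[ a\succ_l \;=\; \{y\in S\mid a\in\tild\,(S^1y)\} \;=\; {}_a\tilde{D}^l \,,
\qquad
\succ_l b \;=\; \{x\in S\mid x\in\tild\,(S^1b)\} \;=\; \tild\,(S^1b) \,. \]
The right-hand set $\tild\,(S^1b)$ is strongly extensional because $S^1b\subset S$ is closed in the apartness topology by Lemma~\ref{seideals}\eqref{seideals1}, and a subset is closed precisely when its apartness complement is open (equivalently, strongly extensional). The left-hand set ${}_a\tilde{D}^l$ is strongly extensional by Lemma~\ref{seideals}\eqref{seideals2}. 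Hence both rows and columns of $\succ_l$ are strongly extensional, so by Proposition~\ref{serelations} the relation $\succ_l\subset S\times S$ is strongly extensional.

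The same argument, using the corresponding parts of Lemma~\ref{seideals}, handles $\succ_r$ (with rows ${}_a\tilde{D}^r$ and columns $\tild\,(bS^1)$) and $\succ_j$ (with rows ${}_a\tilde{D}^j$ and columns $\tild\,(S^1bS^1)$).

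I do not foresee any genuine obstacle: the work has really been done in Lemma~\ref{seideals}, where the constant domains principle \eqref{cdinS} was used to push cotransitivity across the universal quantifier over $S$ or $S\times S$. The only thing one must be careful about is checking that the sets appearing as rows and columns of the three relations are literally the sets defined in Lemma~\ref{seideals}\eqref{seideals2} (for rows) and the apartness complements of principal ideals from Lemma~\ref{seideals}\eqref{seideals1} (for columns); this is immediate from unwinding the definitions.
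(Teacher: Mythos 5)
Your proposal is correct and follows exactly the paper's own argument: identify the rows of $\succ_l$ with the sets ${}_a\tilde{D}^l$ from Lemma~\ref{seideals}\eqref{seideals2} and the columns with the apartness complements $\tild\,(S^1b)$ from Lemma~\ref{seideals}\eqref{seideals1}, then apply Proposition~\ref{serelations}. Your remark that the real work (via the constant domains principle) is already contained in Lemma~\ref{seideals} is also accurate.
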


\begin{proof}
By Lemma~\ref{seideals}\eqref{seideals1} and \eqref{seideals2}, the subsets $(a\!\succ_l)\subset S$ and $(\succ_l\!a)\subset S$ are strongly extensional for all $a\in S$. Proposition~\ref{serelations} now implies that $(\succ_l)\subset S\times S$ is strongly extensional. The same argument works for $\succ_r$ and $\succ_j$.
\end{proof}

\begin{rmk} \label{notsermk}
  Proposition~\ref{greencoordersse} is not provable without additional assumptions on the logic in $S$. Indeed, in the multiplicative monoid $\R$, $1\in\R b$ holds whenever $b\apt0$, and hence
  \[(1\succ_l) \subset \neg\,(\apt0) = \{0\}. \]
  As moreover $1\in\tild\,\{0\} = \tild\,(\R0)$, we have $1\succ_l0$, and thus $(1\succ_l) = \{0\}$.
  Now, if $(\succ_l)\subset \R\times\R$ is strongly extensional then so is $\{0\}=(1\succ_l)\subset\R$ by Proposition~\ref{serelations}. Hence, for all $x,y\in\R$, $x-y=0$ or $(x-y)\apt0$, that is, $x=y$ or $x\apt y$.
  This means that the $\R$ is discrete with respect to the usual apartness, a statement which is equivalent to the limited principle of omniscience (see for example \cite[Section~II.3]{mrr88}). 
\end{rmk}

In Proposition~\ref{coorderproperties} below, we collect some properties of the relations $\succ_l$, $\succ_r$ and $\succ_j$.
We will need the following observation.

\begin{lma} \label{complisconv}
 If $I\subset S$ is a left (right) ideal, then $\tild I\subset S$ is right (left) convex.
\end{lma}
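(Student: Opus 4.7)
The plan is to prove the left-ideal case directly from the definitions; the right-ideal case is then completely dual. The key ingredients are (i) the definition of the apartness complement $\tild I = \{x \in S\mid \forall_{i\in I}(x\apt i)\}$, (ii) the left-ideal property $SI\subset I$, and (iii) the strong extensionality of multiplication, which gives $\mu(a,b)\apt\mu(a',b') \:\impl\: (a,b)\apt(a',b') \:\equ\: a\apt a' \eller b\apt b'$, together with irreflexivity of $\apt$.

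Concretely, I would take $a,b\in S$ with $ab\in\tild I$ and show $b\in\tild I$. Fix an arbitrary $i\in I$. Since $I$ is a left ideal, $ai\in I$, and hence the assumption $ab\in\tild I$ yields $ab\apt ai$. By strong extensionality of the multiplication in $S$, this gives $a\apt a \eller b\apt i$; the left disjunct is impossible by irreflexivity, so $b\apt i$. As $i\in I$ was arbitrary, $b\in\tild I$, which is precisely right convexity of $\tild I$.

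For the right-ideal statement, one replaces left multiplication by right multiplication throughout: given a right ideal $I$ and $ab\in\tild I$, we have $ib\in I$ for every $i\in I$, so $ab\apt ib$, whence $a\apt i$ by strong extensionality and irreflexivity. This gives $a\in\tild I$, so $\tild I$ is left convex.

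There is no real obstacle here; the only thing to watch is that the argument only uses strong extensionality of multiplication (which is part of the definition of a semigroup with apartness) and the irreflexivity of $\apt$, both of which are unproblematic constructively. No appeal to cotransitivity, tightness, or the constant domains principle is needed.
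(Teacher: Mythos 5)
Your proof is correct and follows essentially the same route as the paper's: specialise the universal statement $ab\apt i$ (for all $i\in I$) to elements of the form $ai\in I$ and then cancel the common factor $a$ using strong extensionality of multiplication. The only cosmetic difference is that you pass through the disjunction $a\apt a \eller b\apt i$ and discard the first disjunct by irreflexivity, whereas the paper invokes strong extensionality of the left-multiplication map $\lambda_a$ directly; these are equivalent.
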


\begin{proof}
For all $x,y\in S$:\;
$xy\in\tild I \:\equ\: \forall_{a\in I}(xy\apt a) \:\stackrel{[a=xb]}{\impl}\:    \forall_{b\in I}(xy\apt xb) \:\impl\: \forall_{b\in I}(y\apt b) \:\equ\: y\in \tild I$.
\end{proof}

\begin{prop} \label{coorderproperties}
  Let $a,b,c\in S$.
  \begin{enumerate}
      \item\label{coorderproperties1} The following identities hold:
    \[(\succ_l) = \tild\,(\le_L),\quad
  (\succ_r) = \tild\,(\le_R),\quad
  (\succ_j) = \tild\,(\le_J); \]
  \item
    the relation $\succ_l$ is right co-compatible, and $\succ_r$ is left co-compatible, with the multiplication in $S$;
    \label{coorderproperties2}
  \item\label{coorderproperties3}
    the subset $(\succ_la)=\tild\,(S^1a)\subset S$ is a left co-ideal, $(\succ_ra)=\tild\,(aS^1)\subset S$ is a right co-ideal, and $(\succ_ja)=\tild\,(S^1aS^1)\subset S$ is a co-ideal;
  \item
    if $A\subset S$ is a left (respectively right, two-sided) co-ideal and $a\notin A$, then $A\subset (\succ_la)$ (respectively $A\subset (\succ_ra)$, $A\subset (\succ_ja)$);
    \label{coorderproperties4}
  \item
    the relations $\succ_l$, $\succ_r$ and $\succ_j$ are co-quasiorders;
    \label{coorderproperties5}
  \item if $a\succ_lc$ and $a\le_Lb$ then $b\succ_lc$.
      \label{coorderproperties6}
  \end{enumerate}
\end{prop}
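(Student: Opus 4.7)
My plan is to treat the six claims in order, dealing with $\succ_l$ in detail; $\succ_r$ and $\succ_j$ are proved by the same pattern with multiplication on the appropriate side.

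For \eqref{coorderproperties1}, I would unfold both sides. By definition, $(a,b)\in\tild\,({\le_L})$ is the assertion $(a,b)\apt(sy,y)$ for all $y\in S$, $s\in S^1$. Picking $y=b$ gives $a\apt sb\eller b\apt b$, and irreflexivity forces $a\apt sb$, so $(a,b)\in \tild\,({\le_L})\Rightarrow a\succ_l b$. Conversely, given $a\succ_l b$ and an arbitrary pair $(sy,y)\in{\le_L}$, cotransitivity of $\apt$ with intermediate $sy$ gives $a\apt sy\eller sy\apt sb$, and the second disjunct yields $b\apt y$ by strong extensionality of $\lambda_s$. For \eqref{coorderproperties2}, right co-compatibility of $\succ_l$ reduces to: if $ac\apt sbc$ for all $s\in S^1$, then $a\apt sb$; this is immediate by strong extensionality of $\rho_c$. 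Statement \eqref{coorderproperties3} breaks into strong extensionality (which is already contained in Lemma~\ref{seideals}) and convexity. For example, to see that $\tild(S^1a)$ is right convex, given $xy\in \tild(S^1a)$ and $s\in S^1$, apply the hypothesis to $xs\in S^1$ to get $xy\apt x(sa)$, and use strong extensionality of $\lambda_x$ to obtain $y\apt sa$. Left convexity of $\tild(aS^1)$ and two-sided convexity of $\tild(S^1aS^1)$ are analogous, using $\rho_y$ and $\lambda_x$/$\rho_y$ respectively. For \eqref{coorderproperties4}, given $x\in A$ and $s\in S^1$, strong extensionality of $A$ gives $sa\in A\eller sa\apt x$; in the first case, right convexity (which is what defines left co-ideal) forces $a\in A$, contradicting $a\notin A$; hence $sa\apt x$, as required. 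The two-sided and right co-ideal cases are dual.

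The main obstacle is \eqref{coorderproperties5}, specifically cotransitivity, because we cannot directly distribute $\forall$ over $\vee$. Strong irreflexivity is easy: $a\succ_l b$ instantiated at $s=1\in S^1$ yields $a\apt b$. For cotransitivity, suppose $a\succ_l c$. Applying \eqref{cdinS} twice (since the relevant predicates $a\apt sb$ and $b\apt tc$ have the form permitted by \eqref{cdinS}), the conclusion $a\succ_l b \vee b\succ_l c$ is equivalent to $\forall s,t\in S^1(a\apt sb \eller b\apt tc)$. For fixed $s,t\in S^1$, use the hypothesis at $st\in S^1$ to get $a\apt (st)c$, and cotransitivity of $\apt$ with intermediate $sb$ gives $a\apt sb \eller sb\apt s(tc)$; in the second disjunct, strong extensionality of $\lambda_s$ yields $b\apt tc$. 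The argument for $\succ_r$ is symmetric with $\rho_t$ in place of $\lambda_s$, and for $\succ_j$ one argues with four parameters in $S^1$, inserting the intermediate $sbt$ and applying strong extensionality of $\lambda_s$ and $\rho_t$ in sequence.

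Finally, for \eqref{coorderproperties6}, write $a=tb$ with $t\in S^1$. Given any $s\in S^1$, applying $a\succ_l c$ to $ts\in S^1$ gives $tb=a\apt (ts)c = t(sc)$, and strong extensionality of $\lambda_t$ yields $b\apt sc$, establishing $b\succ_l c$.
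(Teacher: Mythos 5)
Your proof is correct, and it reaches the same conclusions by a noticeably more hands-on route than the paper in several places. The paper leans on previously established structural facts: the inclusion $(\succ_l)\subset\tild\,(\le_L)$ in (1) is obtained from strong extensionality of $(\succ_l)\subset S\times S$ (Proposition~\ref{greencoordersse}) together with Lemma~\ref{senegcompl}\eqref{senegcompl2}; convexity in (3) is quoted from Lemma~\ref{complisconv}; (4) again goes through Lemma~\ref{senegcompl}\eqref{senegcompl2}; cotransitivity in (5) is derived from strong extensionality and right convexity of the subset $(\succ_l b)$ followed by a \emph{single} application of \eqref{cdinS}; and (6) is deduced from cotransitivity plus $\neg(a\succ_l b)$. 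You instead verify each of these pointwise: a direct cotransitivity-of-$\apt$ argument for the second inclusion in (1), an inlined proof of convexity (which is in substance the proof of Lemma~\ref{complisconv}), a direct case split on strong extensionality of $A$ in (4), a fully unfolded cotransitivity proof in (5) using \emph{two} applications of \eqref{cdinS} to distribute the quantifiers (both instances are of the syntactic shapes the paper permits, so this is legitimate), and a one-line computation with $a=tb$ in (6). Both routes are sound; the paper's is shorter because it reuses its lemmas, while yours is more self-contained and makes the role of \eqref{cdinS} and of strong extensionality of the translation maps completely explicit. One small bonus of your version: your direct argument for $(\succ_l)\subset\tild\,(\le_L)$ avoids Proposition~\ref{greencoordersse} altogether, whereas the paper's route for that inclusion implicitly invokes \eqref{cdinS} through Lemma~\ref{seideals}.
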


\begin{proof}
\eqref{coorderproperties1}
Let $a,b\in S$, and assume that $(a,b)\in\tild\,(\le_L)$. Then
\begin{align*} 
  \forall_{x,y\in S}(x\in S^1y \:\impl\: (a,b)\apt(x,y)) \quad 
  \stackrel{[y=b]}{\impl} \quad & \forall_{x\in S}(x\in S^1b \:\impl\: a\apt x)) \\
  \equ\quad\;\, & a\in \tild\,(S^1b) \;\equ\;  a\succ_l b,
\end{align*}
which proves the inclusion $\tild\,(\le_l) \subset (\succ_l)$.
On the other hand, by Proposition~\ref{greencoordersse}, $(\succ_l)\subset S\times S$ is strongly extensional, and from the definitions it is clear that $(\succ_l)\subset \neg(\le_L)$.
It follows that $(\succ_l)\subset \tild\,(\le_L)$, by Lemma~\ref{senegcompl}\eqref{senegcompl2}.

\eqref{coorderproperties2}
Let $x,y,z\in S$, and assume that $xz\succ_lyz$. Then, for all $s\in S^1$, we have $xz\apt syz$ and hence $x\apt sy$ since the multiplication in $S$ is strongly extensional. This means that $x\succ_ly$, so $\succ_l$ is right co-compatible. 

\eqref{coorderproperties3}
By Lemma~\ref{seideals}\eqref{seideals1}, the subsets $(\succ_la) = \tild\,(S^1a)$, $(\succ_ra) = \tild\,(aS^1)$ and $(\succ_ja) = \tild\,(S^1aS^1)$ of $S$ are strongly extensional.
Right/left/two-sided convexity follows from Lemma~\ref{complisconv}. 

\eqref{coorderproperties4}
Let $A\subset S$ be a left co-ideal, and $a\notin A$. If $sa\in A$ for some $s\in S^1$ then $a\in A$ by right convexity. Hence, $A\subset \neg(S^1a)$ and, as $A\subset S$ is strongly extensional, Lemma~\ref{senegcompl}\eqref{senegcompl2} gives that $A\subset\tild\,(S^1a)=(\succ_la)$. 

\eqref{coorderproperties5}
Strong irreflexivity is clear from the definitions.
Let $a,b,c\in S$ with $a\succ_l b$.
Observing that $(\succ_lb)\subset S$ is strongly extensional (by Lemma~\ref{seideals} \eqref{seideals1}) and right convex (by \eqref{coorderproperties3}), we get
\begin{align*}
  a\succ_lb \:\impl\:&\forall_{t\in S^1} (\,a\apt tc \eller (tc\succ_lb)\,) \:\impl\:
  \forall_{t\in S^1} (\,a\apt tc \eller (c\succ_lb)\,) \stackrel{\eqref{cdinS}}{\equ}\: \forall_{t\in S^1} (a\apt tc) \eller (c\succ_lb) \\
  \:\equ\:
  &(a\in \tild\,(S^1c)) \eller (c\succ_lb) \;\equ\; (a\succ_l c) \eller (c\succ_l b),
\end{align*}
so $\succ_l$ is cotransitive. 

\eqref{coorderproperties6}
Since $\succ_l$ is cotransitive (by \eqref{coorderproperties5}), $a\succ_lc$ implies that either $a\succ_lb$ or $b\succ_lc$ holds. But from $a\le_L b$ follows that $\neg(a\succ_lb)$, hence, $b\succ_lc$. 
\end{proof}

\begin{dfn} \label{def_cosandy}
  Set $\lrel = (\succ_l)\cup (\succ_l)^{-1}$,  $\rrel = (\succ_r)\cup (\succ_r)^{-1}$,  $\jrel = (\succ_j)\cup (\succ_j)^{-1}$, and $\hrel = \lrel\cup\rrel$. 
\end{dfn}

\begin{lma} \label{lrjd-coeq}
  The relations $\lrel$, $\rrel$, $\jrel$ and $\hrel$ are co-equivalences.
\end{lma}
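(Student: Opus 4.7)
The approach is to reduce the lemma to two elementary observations that are independent of the specific ideal-theoretic content of $\succ_l$, $\succ_r$ and $\succ_j$, and then apply these observations using only the fact, already established in Proposition~\ref{coorderproperties}\eqref{coorderproperties5}, that the three Green co-quasiorders are in fact co-quasiorders.

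First I would verify the following general fact: if $\alpha\subset X\times X$ is a co-quasiorder, then $\alpha\cup\alpha\inv$ is a coequivalence. Strong irreflexivity is immediate, since $\alpha\subset(\apt)$ and symmetry of the apartness relation gives $\alpha\inv\subset(\apt)$ too. Symmetry holds by construction. For cotransitivity, suppose $a(\alpha\cup\alpha\inv)c$ and let $b\in X$. If $a\alpha c$, cotransitivity of $\alpha$ yields $a\alpha b \eller b\alpha c$; if $c\alpha a$, it yields $c\alpha b \eller b\alpha a$. In either case, $a(\alpha\cup\alpha\inv)b \eller b(\alpha\cup\alpha\inv)c$ follows.

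Next I would record that the union $\kappa_1\cup\kappa_2$ of two coequivalences on $X$ is again a coequivalence. Strong irreflexivity and symmetry are clear from the definitions. For cotransitivity, if $a(\kappa_1\cup\kappa_2)c$ then $a\kappa_i c$ for some $i\in\{1,2\}$, and applying cotransitivity of $\kappa_i$ to any $b\in X$ gives $a\kappa_ib\eller b\kappa_ic$, whence $a(\kappa_1\cup\kappa_2)b\eller b(\kappa_1\cup\kappa_2)c$.

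With these two observations in place, the lemma is immediate: by Proposition~\ref{coorderproperties}\eqref{coorderproperties5} each of $\succ_l$, $\succ_r$ and $\succ_j$ is a co-quasiorder, so by the first observation $\lrel$, $\rrel$ and $\jrel$ are coequivalences, and then by the second observation $\hrel=\lrel\cup\rrel$ is a coequivalence. I do not anticipate any constructive obstacle here: every step is a direct manipulation of the disjunction defining cotransitivity, and no appeal to the filled product, the kernel operators, or the constant domains principle is required. The only point that needs any care is to handle the two cases in the disjunction $\alpha\cup\alpha\inv$ symmetrically when invoking cotransitivity, but this is routine.
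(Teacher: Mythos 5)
Your proof is correct and follows essentially the same route as the paper, which likewise reduces the lemma to the observation that unions of strongly irreflexive, cotransitive relations (including the inverse of a co-quasiorder) are again strongly irreflexive and cotransitive, invoking Proposition~\ref{coorderproperties}\eqref{coorderproperties5}. Your explicit case split for $\alpha\cup\alpha\inv$ just spells out what the paper leaves implicit.
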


\begin{proof}
  Observe that the union of a collection of strongly irreflexive/cotransitive relations is again strongly irreflexive/cotransitive.  
 As $\succ_l$, $\succ_r$ and $\succ_j$ are strongly irreflexive and cotransitive by Proposition~\ref{coorderproperties}\eqref{coorderproperties2}, it follows that so are $\lrel =(\succ_l)\cup (\succ_l)^{-1}$, $\rrel=(\succ_r)\cup (\succ_r)^{-1}$, $\jrel =(\succ_j)\cup (\succ_j)^{-1}$ and $\hrel = \lrel\cup\rrel$.
Symmetry is clear from the definitions.
\end{proof}

\begin{thm} \label{landrcommute}
  $\rrel*\lrel = \lrel*\rrel$
\end{thm}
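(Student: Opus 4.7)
The plan is to prove a single inclusion $\rrel * \lrel \subseteq \lrel * \rrel$; the reverse then follows by taking inverses, since by Lemma~\ref{*properties}\eqref{*properties2} and the symmetry of $\lrel$ and $\rrel$ we have $(\rrel * \lrel)^{-1} = \lrel^{-1} * \rrel^{-1} = \lrel * \rrel$, so the inclusion on one side is equivalent to the inclusion on the other.

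To show $\rrel * \lrel \subseteq \lrel * \rrel$, fix $(x,y) \in \rrel * \lrel$, so $\forall z \in S: x \rrel z \vee z \lrel y$, and an arbitrary $w \in S$; the target is $x \lrel w \vee w \rrel y$. I would first specialize the hypothesis at $z = y$ and $z = x$: by strong irreflexivity of the coequivalences $\lrel$ and $\rrel$ (Lemma~\ref{lrjd-coeq}), these reduce to $x \rrel y$ and $x \lrel y$ respectively. Cotransitivity of $\rrel$ and of $\lrel$ at the intermediate $w$ then yields the disjunctions $x \rrel w \vee w \rrel y$ and $x \lrel w \vee w \lrel y$, so the target holds immediately unless we are in the ``hard case'' $x \rrel w \wedge w \lrel y$.

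For the hard case, the plan is to apply the hypothesis at further auxiliary elements depending on $w$, in particular $z = sw$ for each $s \in S^1$ and $z = wt$ for each $t \in S^1$. Each such application gives a four-way disjunction; by setting the inner universal quantifier to $1$ (for example, $x \succ_r sw = \forall t'(x \apt swt')$ instantiated at $t' = 1$ yields $x \apt sw$) and using strong extensionality of multiplication to convert apartnesses of products into apartnesses of factors (e.g.\ $sw \apt xw \Rightarrow s \apt x$, since $w \apt w$ is impossible; and $sw \apt sy \Rightarrow w \apt y$), one extracts, for each fixed $s$ and $t$, a disjunction of pointwise apartness statements involving the fixed data $x,w,y$. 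The constant domains principle \eqref{cdinS}, repeatedly applied to move the $S^1$-quantifiers past $z$-independent disjuncts, is then used to assemble these pointwise apartnesses into universally-quantified statements of the form $x \succ_l w$, $w \succ_l x$, $w \succ_r y$, or $y \succ_r w$, which together comprise $x \lrel w \vee w \rrel y$.

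The principal technical difficulty is precisely this hard case: cotransitivity of $\rrel$ and $\lrel$ alone merely rearranges the known facts without effecting the left/right swap that distinguishes $\lrel * \rrel$ from $\rrel * \lrel$. The essential algebraic input is the strong extensionality of multiplication, which supplies the conversion of ``apart products'' into ``apart factors'' needed to bridge the left/right asymmetry, and the constant domains principle then lifts the resulting pointwise apartnesses into the universally-quantified form required. This is the constructive counterpart of the classical identity $\mathcal{L} \circ \mathcal{R} = \mathcal{R} \circ \mathcal{L}$, with the filled product standing in for composition and CD supplying the ``existential witness'' on the opposite side that classical logic would otherwise provide.
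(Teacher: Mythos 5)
Your overall toolkit is the right one --- weakening the inner universal quantifiers of the hypothesis to single instances, substituting products for the bound variable, cancelling via strong extensionality of multiplication, and reassembling with the constant domains principle --- and your observation that the converse inclusion follows by taking inverses via Lemma~\ref{*properties}\eqref{*properties2} is correct. But the execution plan has a genuine gap at exactly the point you call the hard case. The target $x\lrel w\eller w\rrel y$ unfolds into a disjunction of \emph{four} universally quantified statements, $\forall_s(x\apt sw)$, $\forall_s(w\apt sx)$, $\forall_t(w\apt yt)$, $\forall_t(y\apt wt)$, and \eqref{cdinS} can only recover these from a pointwise disjunction in which each disjunct carries \emph{its own} quantified parameter: the principle \eqref{cdaxiom} pulls $\forall_s$ past an $s$-free disjunct, but it cannot split $\forall_s(P(s)\eller Q(s))$ into $\forall_sP(s)\eller\forall_sQ(s)$. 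Your substitutions $z=sw$ and $z=wt$ each carry only one free parameter, and the pointwise statements they yield after instantiating at $1$ and cancelling are either pairs of disjuncts depending on the same parameter (e.g.\ $x\apt sw\eller y\apt sw$ from $z=sw$), or are not instances of any of the four target disjuncts at all: $s\apt x$ is useless since $\forall_s(s\apt x)$ fails at $s=x$; $\forall_s(y\apt sw)$ is $y\succ_l w$ and $\forall_p(w\apt py)$ is $w\succ_l y$, neither of which occurs in $x\lrel w\eller w\rrel y$. This last point is precisely the left/right swap for the pair $(w,y)$ that your moves never effect. The preliminary reduction to $x\rrel w\wedge w\lrel y$ is valid but does not help, since that data again concerns $x,w$ only in the $\rrel$-direction and $w,y$ only in the $\lrel$-direction.

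The paper's proof resolves this by substituting the \emph{threefold} product $d=ycu$ for the bound variable (in the paper's notation, with the target intermediate $c$ flanked by two fresh parameters $y$ and $u$), and then applying cotransitivity of $\apt$ with the carefully chosen intermediates $ybv,\ yc$ and $xau,\ cu$ before cancelling. After cancellation one arrives at $\forall_{c,v,u,y,x}(\,cu\apt b\eller bv\apt c\eller yc\apt a\eller xa\apt c\,)$, in which each disjunct contains exactly one of the four quantified parameters $u,v,y,x$; only then can \eqref{cdinS} be applied four times to produce $\forall_c(b\rrel c\eller c\lrel a)$, i.e.\ $(a,b)\in\lrel*\rrel$. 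That combinatorial arrangement --- one independent parameter per target disjunct, engineered by the choice of substitution and of cotransitivity intermediates --- is the heart of the proof and is missing from your plan.
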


\begin{proof}
For all $a,b\in S$, we have
\begin{align*}
  &(a,b)\in\rrel*\lrel \;\equ\; \forall_d(\,a\rrel d \eller d\lrel b\,) \\
  \equ\;
  &\forall_{d}(\,(\forall_v(dv\apt a)\vee \forall_u(d\apt au)) \:\eller\:
  (\forall_y(d\apt yb)\vee \forall_x(xd\apt b)) \,) \\
  \impl\;
  &\forall_{d,v,u,y,x}(dv\apt a \eller d\apt au \eller d\apt yb \eller xd\apt b) \\
  \impl\;
  &\forall_{c,v,u,y,x}(ycuv\apt a \eller ycu\apt au \eller ycu\apt yb \eller xycu\apt b)
  && \mbox{(setting $d=ycu$)}\\
  \impl\;
  &\forall_{c,v,u,y,x}(\,(ycuv\apt ybv \eller ybv\apt yc \eller yc\apt a) \eller ycu\apt au \\
  &\qquad\quad \eller ycu\apt yb \eller (xycu\apt xau \eller xau\apt cu \eller cu\apt b)\,)
  \quad&& \mbox{(by cotransitivity of $\apt$)}\\
  \impl\;
  &\forall_{c,v,u,y,x}(\,(cu\apt b \eller bv\apt c \eller yc\apt a) \eller yc\apt a
  && \mbox{(multiplication is}\\
  &\qquad\quad \eller cu\apt b \eller (yc\apt a \eller xa\apt c \eller cu\apt b)\,)
  && \mbox{\quad\: strongly extensional)}\\
  \equ\;
  &\forall_{c,v,u,y,x}(\,cu\apt b \eller bv\apt c \eller yc\apt a \eller xa\apt c \,)\\
  \impl\;
  &\forall_c(\, \forall_u(cu\apt b)\eller \forall_v(bv\apt c)\eller \forall_y(yc\apt a)\eller \forall_x(xa\apt c) \,)
  &&\mbox{(by \eqref{cdinS})} \\
  \equ\;
  &\forall_c(b\rrel c \eller c\lrel a)
  \;\equ\; (a,b)\in\lrel*\rrel \,.
\end{align*}
So $\rrel*\lrel\subset\lrel*\rrel$ and, by symmetry, $\lrel*\rrel\subset\rrel*\lrel$. Hence, $\rrel*\lrel =\lrel*\rrel$.
\end{proof}

\begin{cor} \label{discoequiv}
The relation $\lrel*\rrel$ is a coequivalence. It is the (unique) maximal cotransitive relation contained in $\lrel\cap\rrel$.
\end{cor}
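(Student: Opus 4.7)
The plan is to prove, in order, that $\lrel*\rrel$ (i) lies in $\lrel\cap\rrel$, (ii) is strongly irreflexive and symmetric, (iii) equals $(\lrel*\rrel)*(\lrel*\rrel)$, and (iv) contains every cotransitive subrelation of $\lrel\cap\rrel$. Throughout I would use that $\lrel$ and $\rrel$ are coequivalences (Lemma~\ref{lrjd-coeq}), hence strongly irreflexive, symmetric co-quasiorders.

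For (i), since $\lrel$ is irreflexive, Lemma~\ref{*properties}\eqref{*properties3} yields $\lrel*\rrel\subset\rrel$; applying the same lemma to the irreflexive $\rrel$ together with Theorem~\ref{landrcommute}, I get $\lrel*\rrel=\rrel*\lrel\subset\lrel$. Hence $\lrel*\rrel\subset\lrel\cap\rrel\subset(\apt)$, which already delivers strong irreflexivity, the first half of (ii). Symmetry then follows from $(\lrel*\rrel)^{-1}=\rrel^{-1}*\lrel^{-1}=\rrel*\lrel=\lrel*\rrel$, using Lemma~\ref{*properties}\eqref{*properties2}, the symmetry of $\lrel$ and $\rrel$, and Theorem~\ref{landrcommute} once more.

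Step (iii) is the technical core. From Lemma~\ref{*properties}\eqref{*properties5} applied to the co-quasiorders $\lrel$ and $\rrel$, one has $\lrel=\lrel*\lrel$ and $\rrel=\rrel*\rrel$. Assuming associativity of the filled product (available here under the constant domains assumption of Section~\ref{greenfriends}, cf.\ Proposition~\ref{cd*assoc}) and using Theorem~\ref{landrcommute} to swap the middle two factors, the computation
\[
(\lrel*\rrel)*(\lrel*\rrel)=\lrel*(\rrel*\lrel)*\rrel=\lrel*(\lrel*\rrel)*\rrel=(\lrel*\lrel)*(\rrel*\rrel)=\lrel*\rrel
\]
yields $\lrel*\rrel\subset(\lrel*\rrel)*(\lrel*\rrel)$, which by Lemma~\ref{*properties}\eqref{*properties4} is cotransitivity of $\lrel*\rrel$. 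Combined with (ii), $\lrel*\rrel$ is then a coequivalence. For maximality (iv), given any cotransitive relation $\kappa\subset\lrel\cap\rrel$, Lemma~\ref{*properties}\eqref{*properties7} applied with $\a=\lrel$ and $\b=\rrel$ gives $\kappa\subset\lrel*\rrel$. Together with (i) and (iii), this identifies $\lrel*\rrel$ as the unique maximal cotransitive relation contained in $\lrel\cap\rrel$.

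The main obstacle is justifying the associativity steps in (iii) within the restricted logical framework of Section~\ref{greenfriends}. Since Proposition~\ref{cd*assoc} links full associativity only to the unrestricted CDP, one should verify that the particular regroupings needed here, which involve only the relations $\lrel$ and $\rrel$, fall within the scope of the version \eqref{cdinS} of CDP assumed in this section. If this is not manifest, one can sidestep the issue by unfolding $\lrel$ and $\rrel$ into statements about $\apt$ and pulling quantifiers through disjunctions via \eqref{cdinS} directly, at the cost of a longer computation analogous to the proof of Theorem~\ref{landrcommute}.
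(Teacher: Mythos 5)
Your proposal is correct and follows essentially the same route as the paper: strong irreflexivity and symmetry of $\lrel*\rrel$ via Lemma~\ref{*properties}(\ref{*properties2},\ref{*properties3}) and Theorem~\ref{landrcommute}, idempotence $(\lrel*\rrel)*(\lrel*\rrel)=\lrel*\rrel$ via associativity and the commutation theorem, and maximality via Lemma~\ref{*properties}\eqref{*properties7}. Your caveat about justifying associativity under the restricted constant domains assumption \eqref{cdinS} is a fair observation, but the paper's own proof invokes associativity in exactly the same unelaborated way.
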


\begin{proof}
By Lemma~\ref{*properties}(\ref{*properties2},\ref{*properties5}), a strongly irreflexive and symmetric relation $\kappa$ is a co-equivalence if and only if $\kappa*\kappa = \kappa$. 
The relations $\lrel$ and $\rrel$ are co-equivalences by Lemma~\ref{lrjd-coeq}. From Lemma~\ref{*properties}(\ref{*properties2},\ref{*properties3}) and Theorem~\ref{landrcommute}, it follows that $\lrel*\rrel = \rrel*\lrel$ is strongly irreflexive and symmetric. Moreover, using Theorem~\ref{landrcommute} together with associativity we get
\[ (\lrel*\rrel)*(\lrel*\rrel) = (\lrel*\lrel)*(\rrel*\rrel) = \lrel*\rrel \,.\]
Hence $\lrel*\rrel$ is a co-equivalence.

For any cotransitive relation $\kappa\subset\lrel\cap\rrel$, the inclusion $\kappa\subset\lrel*\rrel$ holds by Lemma~\ref{*properties}\eqref{*properties7}.
\end{proof}

\begin{dfn} \label{drel-def}
Set $\drel = \lrel*\rrel \subset S\times S$.
\end{dfn}

\begin{prop} \label{cogreenproperties}
  \begin{enumerate}
  \item The relations $\lrel$, $\rrel$, $\jrel$, $\hrel$ and $\drel$ are coequivalences;
    \label{cogreenproperties1}
  \item $\jrel \subset\drel \subset \lrel \subset \hrel$, \quad
    $\drel \subset \rrel \subset \hrel$;
\label{cogreenproperties2}
  \item $\lrel\subset \tild\Lrel$,\quad $\rrel\subset \tild\Rrel$,\quad $\jrel\subset \tild\Jrel$,\quad $\hrel\subset \tild\Hrel$, and\quad $\drel\subset \tild\Drel$;
    \label{cogreenproperties3}
  \item the relation $\lrel$ is a right co-congruence, and $\rrel$ is a left co-congruence, on $S$.
    \label{cogreenproperties4}
  \end{enumerate}
\end{prop}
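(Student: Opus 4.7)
My plan is to handle the four parts in sequence, leveraging the co-quasiorder structure of $\succ_l$, $\succ_r$, $\succ_j$ (Proposition~\ref{coorderproperties}) together with the general properties of the filled product (Lemma~\ref{*properties}) and the interplay between logical and apartness complements (Lemma~\ref{senegcompl}, Remark~\ref{apartrmk}\eqref{apartrmk3}).

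Part~\eqref{cogreenproperties1} is immediate: Lemma~\ref{lrjd-coeq} handles $\lrel$, $\rrel$, $\jrel$ and $\hrel$, while $\drel=\lrel*\rrel$ is a coequivalence by Corollary~\ref{discoequiv}. For part~\eqref{cogreenproperties2}, the ideal containments $S^1b,\, bS^1\subset S^1bS^1$ give $\tild\,(S^1bS^1)\subset\tild\,(S^1b)\cap\tild\,(bS^1)$, hence $\succ_j\,\subset\,\succ_l\cap\succ_r$ and, by taking inverses, $\jrel\subset\lrel\cap\rrel$. Since $\jrel$ is cotransitive (by part~\eqref{cogreenproperties1}), Corollary~\ref{discoequiv} yields $\jrel\subset\drel$. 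The inclusion $\drel\subset\lrel\cap\rrel$ follows from the strong irreflexivity of both $\lrel$ and $\rrel$ via both halves of Lemma~\ref{*properties}\eqref{*properties3}, and $\lrel,\rrel\subset\hrel$ is the definition of $\hrel$.

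For part~\eqref{cogreenproperties3}, the idea for $\lrel$, $\rrel$ and $\jrel$ is to combine Proposition~\ref{coorderproperties}\eqref{coorderproperties1} with the containment $\tild A\cup\tild B\subset\tild(A\cap B)$ from Remark~\ref{apartrmk}\eqref{apartrmk3} and the identity $(\tild\a)^{-1}=\tild(\a^{-1})$; for example,
\[ \lrel \,=\, \tild\,(\le_L)\cup\tild\,((\le_L)^{-1}) \,\subset\, \tild\,((\le_L)\cap(\le_L)^{-1}) \,=\, \tild\,\Lrel, \]
and the case of $\hrel$ then reduces to $\hrel=\lrel\cup\rrel\subset\tild\,\Lrel\cup\tild\,\Rrel\subset\tild\,\Hrel$. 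The delicate case, which I expect to be the main obstacle, is $\drel\subset\tild\,\Drel$: here $\drel$ is not built from a single apartness complement. I would first chain Lemma~\ref{*properties}(\ref{*properties1},\ref{*properties6}) with the previously derived inclusions $\lrel\subset\tild\,\Lrel\subset\neg\Lrel$, $\rrel\subset\neg\Rrel$ and with Proposition~\ref{basicgreen}(2) to get
\[ \drel \,=\, \lrel*\rrel \,\subset\, (\neg\Lrel)*(\neg\Rrel) \,\subset\, \neg(\Lrel\circ\Rrel) \,=\, \neg\Drel. \]
Then, since $\drel$ is a coequivalence and in particular a co-quasiorder, Proposition~\ref{kappaisse} yields that $\drel$ is a strongly extensional subset of $S\times S$, and Lemma~\ref{senegcompl}\eqref{senegcompl2} promotes $\drel\subset\neg\Drel$ to the desired $\drel\subset\tild\,\Drel$.

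Finally, for part~\eqref{cogreenproperties4}: Proposition~\ref{coorderproperties}\eqref{coorderproperties2} gives right co-compatibility of $\succ_l$. A short direct check shows that $(\succ_l)^{-1}$ inherits this property, and right co-compatibility is obviously preserved under finite unions, so $\lrel=\succ_l\cup(\succ_l)^{-1}$ is right co-compatible. Combined with part~\eqref{cogreenproperties1}, this makes $\lrel$ a right co-congruence, and the argument for $\rrel$ is entirely symmetric.
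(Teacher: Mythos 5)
Your proposal is correct and follows essentially the same route as the paper: part (1) via Lemma~\ref{lrjd-coeq} and Corollary~\ref{discoequiv}; part (3) via Proposition~\ref{coorderproperties}\eqref{coorderproperties1}, Remark~\ref{apartrmk}\eqref{apartrmk3}, and the chain $\drel\subset(\neg\Lrel)*(\neg\Rrel)\subset\neg\Drel$ upgraded to $\tild\Drel$ by Proposition~\ref{kappaisse} and Lemma~\ref{senegcompl}\eqref{senegcompl2}; and part (4) exactly as in the paper. The only cosmetic difference is in part (2), where you invoke the maximality statement of Corollary~\ref{discoequiv} to get $\jrel\subset\drel$ while the paper writes $\jrel=\jrel*\jrel\subset\lrel*\rrel$ directly -- these are the same argument, since the Corollary's maximality is itself proved from Lemma~\ref{*properties}\eqref{*properties7}.
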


\begin{proof}
\eqref{cogreenproperties1}
The relations $\lrel$, $\rrel$, $\jrel$ and $\hrel$ are symmetric by construction, and strongly irreflexive and cotransitive by Lemma~\ref{lrjd-coeq}. 
The relation $\drel$ is a coequivalence by Corollary~\ref{discoequiv}.

\eqref{cogreenproperties2}
From the definitions, it is clear that $\jrel\subset \lrel\subset\hrel$ and $\jrel\subset\rrel\subset\hrel$. Using Lemma~\ref{*properties}, we get that
$\jrel = \jrel*\jrel \subset \lrel*\rrel \subset \lrel\cap\rrel$. As $\drel = \lrel*\rrel$, this establishes remaining inclusions.

\eqref{cogreenproperties3}
First, by Proposition~\ref{coorderproperties}\eqref{coorderproperties1}
\[ \lrel=(\succ_l)\cup(\succ_l)^{-1} = \left(\tild\le_L)\cup (\tild\le_L)^{-1}\right) \subset \tild\,\left((\le_L)\cap(\le_L)^{-1}\right) = \tild\Lrel . \]
Similarly, $\rrel\subset\tild\Rrel$ and $\jrel\subset\tild\Jrel$ hold.
From the above now follows that
\[ \hrel = \lrel\cup\rrel \subset \tild\Lrel \cup\tild\Rrel \subset \tild\,(\Lrel\cap\Rrel) = \tild\Hrel.\]
Last, using Lemma~\ref{*properties}\eqref{*properties1}, we get
\[ \drel = \lrel*\rrel \subset (\tild\Lrel)*(\tild\Rrel) \subset (\neg\Lrel)*(\neg\Rrel) \subset \neg(\Lrel\circ \Rrel) \,. \]
By \eqref{cogreenproperties1}, the relation $\drel$ is cotransitive, and hence strongly extensional by Proposition~\ref{kappaisse}.
Thus, Lemma~\ref{senegcompl}\eqref{senegcompl2} implies that $\drel\subset \tild\,(\Lrel\circ \Rrel) = \tild{\Drel}$. 

\eqref{cogreenproperties4}
By Proposition~\ref{coorderproperties}\eqref{coorderproperties2}, the relation $\succ_l$, and thus also $(\succ_l)^{-1}$, is right co-compatible with the multiplication in $S$. It follows that the same is true for $\lrel= (\succ_l)\cup (\succ_l)^{-1}$.
Since $\lrel$ is a coequivalence by \eqref{cogreenproperties1}, this means that it is a right co-congruence. 
\end{proof}

\begin{thm} \label{periodicthm}
  If $S$ is periodic, then $\drel = \jrel$. 
\end{thm}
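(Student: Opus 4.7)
Proposition~\ref{cogreenproperties}\eqref{cogreenproperties2} already gives $\jrel\subset\drel$ without any periodicity hypothesis, so the substance is the reverse inclusion $\drel\subset\jrel$ under the assumption that $S$ is periodic. My plan is to transpose the classical argument $\Jrel\subset\Drel$ on a periodic semigroup into the apartness setting; recall that classically, from $a=ubv$ and $b=xay$ in $S^1$ one sets $p=ux$, $q=yv$, infers $a=p^n a q^n$ for all $n$, uses Proposition~\ref{periodicprop}\eqref{periodicprop5} to find $N$ with $p^N$ idempotent and hence $p^N a=a=a q^M$, and then $c=ay$ realises $a\,\mathcal{R}\,c\,\mathcal{L}\,b$.

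Let $a,b\in S$ satisfy $a\drel b$, \ie, $\forall c\in S(a\lrel c\eller c\rrel b)$. For each $s,t\in S^1$, substitute $c=sbt$ and expand $\lrel,\rrel$ into their two $\succ$-components, producing four alternatives: $a\succ_l sbt$, $sbt\succ_l a$, $sbt\succ_r b$ and $b\succ_r sbt$. In each case, specialising the inner universally quantified variable ($u=1$ or $v=1$) gives either $a\apt sbt$ or $b\apt sbt$; hence
\[
\forall s,t\in S^1\ (a\apt sbt\eller b\apt sbt).
\]
The symmetric substitution $c=sat$ yields $\forall s,t\in S^1\ (a\apt sat\eller b\apt sat)$.

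These pointwise dichotomies must next be lifted to uniform statements. Here periodicity enters: by Proposition~\ref{periodicprop}\eqref{periodicprop5} there exist $N,M\ge 1$ with $a^N$ and $b^M$ idempotent. Using the full universally quantified strength of the four alternatives (not only the specialisations $u=1$, $v=1$) together with the cotransitivity of $\apt$, one can replace intermediate witnesses by appropriate powers of $a$ and $b$; this is the apartness counterpart of the classical identity $p^N a=a$ and collapses the mixed cases, synchronising the two dichotomies. The constant domains principle \eqref{cdinS}, applied with $x=(s,t)\in S^1\times S^1$ and the remaining predicates expressing apartness, then permits the exchange of $\forall(s,t)$ with the binary disjunction, yielding $\forall s,t(a\apt sbt)\eller\forall s,t(b\apt sat)$, \ie, $a\succ_j b\eller b\succ_j a$, that is, $a\jrel b$.

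The principal obstacle is this middle step: rigorously eliminating the ``mixed'' alternatives (where $sbt$ is apart from $b$ while $sat$ is apart from $a$) using the periodic structure of $a$ and $b$ simultaneously, and verifying that the predicates ultimately fed into \eqref{cdinS} lie in the syntactic classes admitted by that principle. The cleaner substitutions ($c=sbt$ and $c=sat$ alone) give the pointwise dichotomies for free; the difficulty is that passing from the pointwise to the uniform conclusion genuinely requires both periodicity of $a$ and periodicity of $b$, not just one of them.
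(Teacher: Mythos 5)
Your overall architecture agrees with the paper's: take $(a,b)\in\drel=\lrel*\rrel$, instantiate the universally quantified middle element $c$ at suitable products, use periodicity to force the resulting apartness statements into the shape $\forall_{x,y,u,v}(a\apt ubv \eller b\apt xay)$, and finish with the constant domains principle \eqref{cdinS}. However, the proof has a genuine gap exactly where you flag ``the principal obstacle'', and the part you do carry out contributes nothing. The pointwise dichotomies $\forall_{s,t}(a\apt sbt\eller b\apt sbt)$ and $\forall_{s,t}(a\apt sat\eller b\apt sat)$ obtained from the specialisations $u=1$, $v=1$ are consequences of $a\apt b$ alone: cotransitivity of $\apt$ with middle element $sbt$ (resp.\ $sat$) yields them directly, and $a\apt b$ already follows from strong irreflexivity of the coequivalence $\drel$ (Corollary~\ref{discoequiv}). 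Since $a\apt b$ certainly does not imply $a\jrel b$, the entire content of the theorem sits in the step you only assert --- ``one can replace intermediate witnesses by appropriate powers of $a$ and $b$'' --- and that step is never performed. (Two further slips: the periodicity actually needed is that of the products $ux$ and $vy$, not of $a$ and $b$, so your $N,M$ with $a^N,b^M$ idempotent are the wrong elements; and your final appeal to \eqref{cdinS} starts from $\forall_{s,t}(a\apt sbt\eller b\apt sat)$, where \emph{both} disjuncts contain $s,t$ free, so it is not an instance of the constant domains principle --- one needs the four-variable form $\forall_{x,y,u,v}(a\apt ubv\eller b\apt xay)$ before CD can separate the disjuncts.)

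For comparison, the paper instantiates $c=xa$ for $x\in S^1$ --- the intermediate element of the classical chain $a\Lrel xa\Rrel b$ --- so that one disjunct of $a\lrel(xa)$ collapses outright (as $xa\in S^1a$), leaving $\forall_{u,m}(a\apt(ux)^m a)$; a cotransitivity computation and periodicity of $ux$ (choose $m$ with $(ux)^{2m}=(ux)^m$) then give $a\apt(ux)^ma(yv)^m$, and the key inductive fact of Lemma~\ref{periodiclma} converts this into $a\apt ubv\eller b\apt xay$. A parallel computation handles the case $(xa)\rrel b$ via periodicity of $vy$. Only after both cases are reduced to the four-variable disjunction does \eqref{cdinS} apply. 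Your substitutions $c=sbt$, $c=sat$ might conceivably be made to work, but you would still have to build an analogue of Lemma~\ref{periodiclma} and carry out the case analysis; as written, the proposal establishes only the trivial inclusion $\jrel\subset\drel$.
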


\begin{lma} \label{periodiclma}
  Let $a,b\in S$, $x,y,u,v\in S^1$, and $m\in\Z_{>0}$.
  \begin{enumerate}
  \item If $a\apt x^may^m$ then $a\apt xay$.
    \label{periodiclma1}
  \item If $a\apt(ux)^ma(yv)^m$ then $a\apt ubv$ or $b\apt xay$.
    \label{periodiclma2}
  \end{enumerate}
\end{lma}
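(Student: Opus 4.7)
The plan is to prove (1) by induction on $m\ge 1$, using cotransitivity of $\apt$ together with the strong extensionality of the multiplication in $S$, and then to derive (2) from (1) by two applications of the same two tools.

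For (1), the base case $m=1$ is trivial. For the inductive step, I would assume the result for $m$ and consider $a\apt x^{m+1}ay^{m+1}$. By associativity, $x^{m+1}ay^{m+1}=x\cdot(x^m a y^m)\cdot y$, so cotransitivity of $\apt$ gives either $a\apt xay$ (and we are done) or $xay\apt x\cdot(x^may^m)\cdot y$. In the latter case, writing $xay = x\cdot(ay)$ and $x\cdot(x^may^m)\cdot y = x\cdot(x^may^{m+1})$ and applying strong extensionality of multiplication yields $ay\apt x^may^{m+1}$. Decomposing further as $a\cdot y$ versus $(x^may^m)\cdot y$ and applying strong extensionality again gives $a\apt x^may^m$, so the induction hypothesis supplies $a\apt xay$.

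For (2), I would first apply (1) to the elements $ux$ and $yv$ of $S^1$ to conclude from $a\apt(ux)^ma(yv)^m$ that $a\apt(ux)a(yv)=uxayv$. Then cotransitivity yields either $a\apt ubv$ (done) or $ubv\apt uxayv$. In the latter case, two uses of strong extensionality of multiplication---writing $ubv=u\cdot(bv)$ versus $uxayv=u\cdot(xayv)$, then $bv=b\cdot v$ versus $xayv=(xay)\cdot v$---give $b\apt xay$, as required.

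I do not anticipate any real obstacle: the statements are essentially constructive telescopings, and the two axioms at our disposal (cotransitivity of $\apt$ and strong extensionality of the multiplication) are precisely what is needed. The only mildly delicate point is keeping track of the associativity rewritings in the inductive step of (1), so that the two applications of strong extensionality peel off exactly one $x$ on the left and one $y$ on the right.
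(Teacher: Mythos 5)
Your proposal is correct and follows essentially the same route as the paper's proof: induction on $m$ for (1), inserting $xay$ by cotransitivity and peeling off the outer $x$ and $y$ by strong extensionality of $\lambda_x$ and $\rho_y$ to reduce to the induction hypothesis, and then deriving (2) from (1) by one application of cotransitivity (inserting $ubv$) followed by cancellation of $u$ and $v$. The only difference is that you spell out the associativity bookkeeping slightly more explicitly than the paper does.
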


\begin{proof}
\eqref{periodiclma1}
Induction on $m$: The case $m=1$ is clear. Assume that $a\apt x^may^m \:\impl\: a\apt xay$ holds.
Then $a\apt x^{m+1}ay^{m+1}$ implies that $a\apt xay$ or $xay\apt x^{m+1}(xay)y^{m+1}$.
In the latter case, we get $a\apt x^may^m$ and thus $a\apt xay$ by the induction hypothesis. 

\eqref{periodiclma2}
We have
\[
a\apt(ux)^ma(yv)^m \;\stackrel{\eqref{periodiclma1}}{\impl}\;
a\apt uxayv \;\impl\; a\apt ubv \eller ubv\apt uxayv \;\impl\; a\apt ubv \eller b\apt xay \,.
\]
\end{proof}

\begin{proof}[Proof of Theorem~\ref{periodicthm}]
The inclusion $\jrel\subset\drel$ holds by Proposition~\ref{cogreenproperties}\eqref{cogreenproperties2}.
For the converse, note that
\begin{equation} \label{DLR}
(a,b)\in\drel=\lrel*\rrel \:\equ\: \forall_{c\in S}(a\lrel c \eller c\rrel b)
\:\impl\: \forall_{x\in S^1}(a\lrel(xa) \eller (xa)\rrel b) \,.
\end{equation}
We shall analyse the two cases $a\lrel(xa)$ and $(xa)\rrel b$ separately below.

First,
\begin{align*}
  &a\lrel(xa) \:\equ\: \left(a\in \tild\,(S^1(xa)) \eller xa\in\tild\,(S^1a)\right) \:\equ\:
  a\in\tild\,(S^1(xa)) \:\impl\: \forall_{u\in S^1,\,m\in\Z_{>0}}\left(a\apt(ux)^ma\right) \\
  &\impl\:
  \forall_{y,u,v\in S^1,\,m\in\Z_{>0}} \\
  &\qquad\qquad\left(\, a\apt (ux)^ma(yv)^m \eller (ux)^ma(yv)^m \apt (ux)^{2m}a(yv)^m \eller (ux)^{2m}a(yv)^m \apt(ux)^ma \,\right) \\
  &\impl\:
  \forall_{y,u,v\in S^1,\,m\in\Z_{>0}} 
  \left(\, a\apt (ux)^ma(yv)^m \eller (ux)^m \apt (ux)^{2m} \eller (ux)^{m}a(yv)^m \apt a \,\right) \\
  &\equ\:
  \forall_{y,u,v\in S^1,\,m\in\Z_{>0}} 
  \left(\, a\apt (ux)^ma(yv)^m \eller (ux)^m \apt (ux)^{2m} \,\right) \,.
\end{align*}
By assumption, $S$ (and thus $S^1$) is periodic so, by Proposition~\ref{periodicprop}\eqref{periodicprop5}, for all $u,x\in S^1$ there exists an $n\in\Z_{>0}$ such that $(ux)^{2n}=(ux)^n$. Hence, we get
\begin{align} \label{aLxa}
  &a\lrel(xa) \:\impl\:
  \forall_{y,u,v\in S^1}\exists_{n\in\Z_{>0}} \left( a\apt (ux)^na(yv)^n \right)
  \:\impl\:
  \forall_{y,u,v\in S^1} \left( a\apt ubv \eller b\apt xay \right) 
\end{align}
by Lemma~\ref{periodiclma}\eqref{periodiclma2}.

Next, consider the case $(xa)\rrel b$. We have
\begin{align*} 
  &(xa)\rrel b \:\equ\: \left( b\in\tild\,((xa)S^1) \eller xa\in\tild\,(bS^1) \right)
  \:\impl\:
  \forall_{y,u,v\in S^1,\,m\in\Z_{>0}}\left(\, b\apt xay \eller xa\apt b(vy)^{m-1} \,\right) \\ %%%%%%%%
  &\impl\:
  \forall_{y,u,v\in S^1,\,m\in\Z_{>0}}
  (\, b\apt xay \eller xa\apt x(ux)^{m+1}a(yv)^{m+1} \\
  &\qquad\qquad
  \eller x(ux)^{m+1}a(yv)^{m+1}\apt (xu)^{m+1}b(vy)^mv \\
  &\qquad\qquad
  \eller (xu)^{m+1}b(vy)^mv \apt (xu)^{m+1}b(vy)^{2m}v \eller (xu)^{m+1}b(vy)^{2m}v \apt b(vy)^{m-1} \,) \\ %%%%%%%%
  &\impl\:
  \forall_{y,u,v\in S^1,\,m\in\Z_{>0}}
    (\, b\apt xay \eller a\apt (ux)^{m+1}a(yv)^{m+1} \eller xay\apt b \\
  &\qquad\qquad
  \eller (vy)^m \apt (vy)^{2m} \eller (xu)^{m+1}b(vy)^{m+1}v \apt b \,) \\ %%%%%%%%
  &\impl\:
  \forall_{y,u,v\in S^1,\,m\in\Z_{>0}}
  (\, b\apt xay \eller ( a\apt ubv \eller b\apt xay )
  \eller (vy)^m \apt (vy)^{2m} \eller ( b\apt xay \eller x\apt ubv ) \,)
\end{align*}
where the last implication uses Lemma~\ref{periodiclma}\eqref{periodiclma2} twice.
Similarly to the previous case, periodicity of $S$ implies the existence of a positive integer $n$ such that $(vy)^n = (vy)^{2n}$. Thus
\begin{equation} \label{xaRb}
  (xa)\rrel b \:\impl\:
  \forall_{y,u,v\in S^1} \exists_{n\in\Z_{>0}}
  (\, b\apt xay \eller a\apt ubv \,) \:\impl\:
  \forall_{y,u,v\in S^1} 
  (\, b\apt xay \eller a\apt ubv \,) \,.
\end{equation}

Finally, combining the equations \eqref{DLR}, \eqref{aLxa} and \eqref{xaRb}, we have
\begin{equation*}
  a\drel b \:\impl\:
  \forall_{x,y,u,v\in S^1} ( a\apt ubv \eller b\apt xay) \:\stackrel{\eqref{cdinS}}{\equ}\:
  \forall_{u,v\in S^1} (a\apt ubv) \eller \forall_{x,y\in S^1} (b\apt xay) \:\equ\:
  a\jrel b \,,
\end{equation*}
that is, $\drel\subset\jrel$. 
\end{proof}

The following result is an analogue of Green's lemma (Lemma~\ref{greenslma}).

\begin{lma} \label{cogreenlma}
  Let $as=b$ and $bt=a$ (so that $a\Rrel b$). Then
  \begin{enumerate}
  \item $\rho_s^{-1}(b\lrel)\subset a\lrel$ and $\rho_t^{-1}(a\lrel)\subset b\lrel$;
\label{cogreenlma1}
\item for all $x\in S$, if  $x\rrel(xs)$ then $x\succ_la$ (and consequently $x\lrel a$) holds.
  \label{cogreenlma2}
  \end{enumerate}
\end{lma}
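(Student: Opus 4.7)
The plan is to reduce part (1) to strong extensionality of the right-multiplication maps $\rho_s,\rho_t$, and to pivot part (2) on the identity $a=ast$ (which follows from $b=as$ and $a=bt$) together with strong extensionality of the binary multiplication.

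For part (1), to show $\rho_s\inv(b\lrel)\subset a\lrel$, I would let $x\in S$ be such that $(xs)\lrel b$, which by definition splits into the two cases $(xs)\succ_lb$ and $b\succ_l(xs)$. In the first case, $xs\apt wb=(wa)s$ for all $w\in S^1$, so strong extensionality of $\rho_s$ gives $x\apt wa$ for all $w$, i.e., $x\succ_la$. In the second case, $b=as\apt w(xs)=(wx)s$ for all $w\in S^1$, and strong extensionality of $\rho_s$ again gives $a\apt wx$ for all $w$, i.e., $a\succ_lx$. Either way $x\lrel a$. The inclusion $\rho_t\inv(a\lrel)\subset b\lrel$ is proved by the symmetric argument, swapping the roles of $(a,s)$ and $(b,t)$.

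For part (2), suppose $x\rrel(xs)$, so either $x\succ_r(xs)$ or $(xs)\succ_rx$. The second alternative means $xs\apt xu$ for every $u\in S^1$; specialising to $u=s$ gives $xs\apt xs$, contradicting irreflexivity, so this case is vacuously true. Thus I may assume $x\succ_r(xs)$, which gives $x\apt xsu$ for every $u\in S^1$; taking $u=t$ produces $x\apt x(st)$. From $b=as$ and $a=bt$ one has $a=ast$, and hence $va=(va)(st)$ for every $v\in S^1$. Fix such a $v$. Cotransitivity of $\apt$ applied to $x\apt x(st)$ at the intermediate point $va$ yields either $x\apt va$, in which case we are done, or $va\apt x(st)$. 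Rewriting the latter as $(va)(st)\apt x(st)$ and invoking strong extensionality of the multiplication forces $va\apt x$ or $st\apt st$; the second disjunct contradicts irreflexivity, so $x\apt va$ again. Since $v\in S^1$ was arbitrary, $x\in\tild\,(S^1a)$, i.e., $x\succ_la$, and $x\lrel a$ follows.

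The only substantive step is the use of the identity $a=ast$ to cancel the common factor $st$ from $va\apt x(st)$ via strong extensionality of multiplication; this is what bridges the hypothesis about $xs$ and the desired conclusion about $S^1a$, and I do not see a direct constructive route that avoids this trick.
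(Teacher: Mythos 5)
Your proof is correct and takes essentially the same route as the paper: part (1) is exactly the right co-compatibility of $\succ_l$ (which the paper invokes as the statement that $\lrel$ is a right co-congruence and which you re-derive inline via strong extensionality of $\rho_s$ after splitting $\lrel$ into its two disjuncts), and part (2) matches the paper's argument step for step — discard the impossible disjunct $(xs)\succ_r x$, specialise $x\apt xsu$ to $u=t$ to get $x\apt xst$, then use $a=ast$ together with cotransitivity and strong extensionality of multiplication to conclude $x\apt va$ for all $v\in S^1$.
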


\begin{proof}
\eqref{cogreenlma1}
If $x\in \rho_s^{-1}(b\lrel)$ then $(xs)\lrel b$. Since $b = as$, and $\lrel$ is a right congruence by Proposition~\ref{cogreenproperties}\eqref{cogreenproperties4}, this implies that $x\lrel a$, that is, $x\in a\lrel$.

\eqref{cogreenlma2}
Assume that $x\rrel(xs)$. Since $xs\in xS^1$, we have $x\apt xsr$ for all $r\in S^1$ so, in particular, $x\apt xst$. Thus,
\[\forall_{u\in S^1}(x\apt ua \eller ua\apt xst) \;\stackrel{a=ast}{\equ}\;
\forall_{u\in S^1}(x\apt ua \eller uast\apt xst) \;\impl\;
\forall_{u\in S^1}(x\apt ua) \;\equ\; x\succ_la \,. \]
\end{proof}

\begin{prop} \label{cogreenprop}
  Let $e\in S$ be an idempotent. Then $e\hrel \subset S$ is an co-subsemigroup. 
\end{prop}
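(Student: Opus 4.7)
My plan is to verify separately the two conditions defining a co-subsemigroup: strong extensionality of $e\hrel$, and the implication $ab\in e\hrel \impl a\in e\hrel \eller b\in e\hrel$.

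Strong extensionality of $e\hrel\subset S$ follows by combining three earlier results: Proposition~\ref{cogreenproperties}\eqref{cogreenproperties1} says $\hrel$ is a coequivalence (hence a co-quasiorder); Proposition~\ref{kappaisse} then gives that $\hrel\subset S\times S$ is strongly extensional as a subset; and Proposition~\ref{serelations} transfers this to the section $e\hrel\subset S$.

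For the co-subsemigroup implication, I would suppose $ab\hrel e$, which decomposes as $ab\lrel e \eller ab\rrel e$. I will treat the first case in detail; the second follows by left-right symmetry. Assuming $ab\lrel e$, cotransitivity of the coequivalence $\lrel$ with pivot $eb$ gives $ab\lrel eb$ or $eb\lrel e$. In the first, right co-compatibility of $\lrel$ (Proposition~\ref{cogreenproperties}\eqref{cogreenproperties4}) cancels $b$ on the right to give $a\lrel e$. In the second, a further cotransitivity step with pivot $b$ gives $b\lrel e$ (done) or $eb\lrel b$; here $eb\succ_l b$ is impossible, since $s=e$ in its definition forces the contradiction $eb\apt eb$, so $b\succ_l eb$ and hence $b\lrel eb$.

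The crux is upgrading $b\lrel eb$ to $b\hrel e$. My plan is to invoke the $\Lrel$-dual of Lemma~\ref{cogreenlma}\eqref{cogreenlma2}, established by the verbatim left-right transposition of the cited proof: under $sa=b$, $tb=a$, one has $x\lrel sx \impl x\succ_r a$ for all $x\in S$. Applied with $a=b=s=t=e$, where $e^2=e$ supplies both hypotheses, and $x=b$, it converts $b\lrel eb$ into $b\succ_r e$, hence $b\rrel e$, hence $b\hrel e$. The mirror argument for $ab\rrel e$ uses the pivots $ae$ and then $a$ to reduce to the residual $a\rrel ae$ (excluding the impossible $ae\succ_r a$ via $s=e$); then Lemma~\ref{cogreenlma}\eqref{cogreenlma2} itself, with $x=a$ and the same idempotent data, yields $a\succ_l e$. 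The main obstacle throughout is exactly this residual step: the statements $b\lrel eb$ and $a\rrel ae$ cannot be upgraded to $\hrel e$ by cotransitivity alone, and Green's apartness lemma is needed as the cancellation tool that exploits the idempotency $e^2=e$.
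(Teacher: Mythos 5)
Your proposal is correct and follows essentially the same route as the paper: strong extensionality via Propositions~\ref{cogreenproperties}, \ref{kappaisse} and \ref{serelations}, then a two-step cotransitivity decomposition through the pivots $eb$ and $b$ (the mirror image of the paper's chain $e$, $x$, $xe$, $xy$), with right/left co-compatibility handling one branch and Lemma~\ref{cogreenlma}\eqref{cogreenlma2} (and its left--right dual, which the paper also uses implicitly via ``dually'') handling the residual branch $b\lrel eb$, resp.\ $a\rrel ae$. The only cosmetic difference is that you make explicit the exclusion of the disjunct $eb\succ_l b$, which the paper leaves inside the statement of the lemma.
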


\begin{proof}
The relation $\hrel$ is strongly extensional and thus, by Proposition~\ref{serelations}, 
$e\hrel$ is a strongly extensional subset of $S$.

If $x,y\in S$ are such that $xy\in e\hrel$, then either $e\lrel (xy)$ or $e\rrel (xy)$.
Assume that $e\rrel (xy)$. Cotransitivity of $\rrel$ (Proposition~\ref{cogreenproperties}\eqref{cogreenproperties1}) implies that either $e\rrel x$, $x\rrel xe$, or $xe\rrel xy$ holds.

Assume that $x\rrel xe$. Then Lemma~\ref{cogreenlma}\eqref{cogreenlma2}, with $a=b=s=t=e$, implies that $x\lrel e$.
On the other hand, if $(xe)\rrel(xy)$ then $e\rrel y$ by Proposition~\ref{cogreenproperties}\eqref{cogreenproperties4}. 

In sum, if $e\rrel (xy)$ then either $e\rrel x$, $x\lrel e$, or $e\rrel y$. 
Dually, if $xy\in e\lrel$ then either $e\lrel x$, $x\rrel e$ or $e\lrel y$.
This proves that $e\hrel = {e\lrel} \,\cup\, {e\rrel}\subset S$ is a co-subsemigroup. 
\end{proof}

\begin{rmk}
Proposition~\ref{cogreenprop} may be viewed as a partial analogue of Green's theorem, for the constructive friend $\hrel$ of $\Hrel$.
With Green's theorem stating that $H_e$ is a group for any idempotent $e\in S$, a full analogue for $\hrel$ would be that $e\hrel$ is a ``co-subgroup'' in $S/{\approx}$, that is, that $\tild\,(e\hrel)/{\approx}$ is a group.
However, this cannot be proved in general, as the following example shows.
\end{rmk}

\begin{ex}
Let $S= (\N \!\times\! \N \!\times\! \N,+)$ with component-wise addition. Given a proposition $P$, define $(\apt_P)\subset S\times S$ by
\begin{align*} (x_1,x_2,x_3)\apt(y_1,y_2,y_3) \equ{} &(P \och ((x_1+y_2\ne x_2+y_1) \eller x_3\ne y_3)\,) \\
  \eller &(\neg P \och ((x_1+y_3\ne x_3+y_1) \eller x_2\ne y_2)\,)
\end{align*}
(where ``$\ne$'' is the denial inequality on $\N$).
It is plain to see that $(S,\apt_P)$ is a semigroup with apartness, and identity element $0=(0,0,0)$.
Moreover, as $S$ is commutative, for any $s\in S$,
\[0\hrel s \;\equ\; 0\lrel s \;\equ\; (0\in\tild\,(S+s)) \eller (s\in\tild\,(S+0)) \;\equ\; 0\in\tild\,(S+s) \;\equ\; \forall_{t\in S} (s+t \apt 0) \]
and hence
\[\tild\,(0\hrel) =\neg(0\hrel) = \{s\in S \mid \neg\forall_{t\in S}(s+t\apt 0)\}\,.\]

Let $x=(1,0,0)\in S$. If $P$ holds then $x+(0,1,0) \approx 0$, and if $\neg P$ holds then $x+(0,0,1)\approx0$; hence $\forall_{t\in S}(x+t\apt 0)$ is impossible, so $x\in\tild\,(0\hrel)$.
On the other hand, invertibility of $[x] \in \tild\,(0\hrel)/{\approx}$ is equivalent to the statement $\exists_{y\in S} \neg(x+y\apt 0)$, that is, to
\begin{equation} \label{nogreeneq}
  \exists_{y_1,y_2,y_3\in\N}\,\neg\left[\,
(P \och ((1+y_1\ne y_2) \eller y_3\ne 0)\,)
\eller (\neg P \och ((1+y_1\ne y_3) \eller y_2\ne 0)\,)
\,\right]
\end{equation}
For any given $y=(y_1,y_2,y_3)\in S$, at least one of the statements
$((1+y_1\ne y_2) \eller y_3\ne 0))$ and $(\neg P \och ((1+y_1\ne y_3) \eller y_2\ne 0))$ must hold. As the inequality relation on $\N$ is decidable, it follows that the statement
$(P \och ((1+y_1\ne y_2) \eller y_3\ne 0)\,)
\eller (\neg P \och ((1+y_1\ne y_3) \eller y_2\ne 0)\,)$
is equivalent to either
\begin{align*}
  &P \quad && \mbox{(if $((1+y_1\ne y_3) \eller y_2\ne 0)$ is false)};\\
  &\neg P && \mbox{(if $((1+y_1\ne y_2) \eller y_3\ne 0)$ is false)};\;\mbox{or}\\
  &P\eller\neg P &&\mbox{(if both are true)}.
\end{align*}
In sum, \eqref{nogreeneq} is equivalent to
$\neg P \eller \neg\neg P \eller \neg(P\eller\neg P)$ and thus to $\neg P \eller \neg\neg P$.

Consequently, the statement that $\tild\,(0\hrel)/{\approx}$ is a group entails WLEM.
\end{ex}

We say that a co-congruence $\kappa$ on $S$ is \emph{idempotent separating} if $e\apt f$ implies $e\kappa f$ for all $e,f\in E(S)$.

\begin{prop}\label{idpsep}
A co-congruence $\kappa$ on a regular semigroup $S$ is idempotent separating if and only if $\hrel\subset \kappa$. 
\end{prop}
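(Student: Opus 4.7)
I establish the biconditional in two directions.

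\textbf{Sufficiency.} Assume $\hrel \subset \kappa$ and let $e,f \in E(S)$ with $e \apt f$; to obtain $e\kappa f$ it suffices to show $e\hrel f$. Applying cotransitivity of $\apt$ to $e\apt f$ at the intermediate point $ef$ yields $e\apt ef \eller ef\apt f$, and at $fe$ yields $e\apt fe \eller fe\apt f$. This gives four cases. In each, I exploit the idempotency $e=e^2$, $f=f^2$ together with strong extensionality of multiplication in $S$---so that, for instance, $tf\apt ef$ forces $t\apt e$, because $f\apt f$ is impossible---to derive, by a further use of cotransitivity, a dichotomy of the form $\forall t\in S^1\,(e\apt tf \eller Q)$ with $Q$ not involving $t$. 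The constant domains principle for $S$ assumed in Section~\ref{greenfriends} then yields either $\forall t\,(e\apt tf)$ or $Q$; the first possibility is the definition of $e\succ_l f$, and the analogous branches in the remaining cases produce one of $f\succ_l e$, $e\succ_r f$ or $f\succ_r e$. Hence $e\hrel f$, and the inclusion $\hrel\subset\kappa$ gives $e\kappa f$.

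\textbf{Necessity.} Assume $\kappa$ is idempotent separating and let $a,b\in S$ with $a\hrel b$. The relation $\hrel = \lrel \cup \rrel$ decomposes into four sub-cases, and by left-right duality it suffices to handle $a\succ_l b$, that is, $\forall t\in S^1\,(a\apt tb)$. Regularity supplies $a'\in V(a)$ and $b'\in V(b)$, so $e=a'a$ and $f=b'b$ are idempotents with $a=ae$ and $b=bf$. Specialising $t=a'ab'$ gives $a\apt ef$; rewriting the left side as $a=ae$ and applying strong extensionality of multiplication to $ae\apt e\cdot f$ yields the disjunction $a\apt e \eller e\apt f$. In the branch $e\apt f$, the idempotent-separating hypothesis yields $e\kappa f$, which combined with $a=ae$, $b=bf$ and co-compatibility of the co-congruence $\kappa$---possibly together with the co-Lallement Lemma~\ref{co-lallement}---produces $a\kappa b$. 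The alternative branch $a\apt e$ reduces to the previous one using the identity $a=aa'a$ and a further application of strong extensionality, ultimately extracting a genuine apartness between two idempotents.

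\textbf{Main obstacle.} The heart of the argument lies in the sufficiency direction, where the constant domains principle is indispensable for promoting the dichotomies supplied by cotransitivity into the universal statements required by $\succ_l$ and $\succ_r$. The necessity direction is more combinatorial: the technical challenge is to select specialisations of $t\in S^1$ in $a\succ_l b$ that cleanly yield an apartness between the idempotents $a'a$ and $b'b$, unavoidably going through the disjunction supplied by strong extensionality of multiplication.
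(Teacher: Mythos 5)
Your sufficiency direction, though vaguely described, is in the right ballpark: cotransitivity applied to $e\apt f$ together with strong extensionality and the idempotent identities does yield $e\hrel f$ (the paper does this with the single chain $e,\,f^2x,\,fe,\,ye^2,\,f$ and one application of the constant domains principle; in fact the split $e\apt ef\eller ef\apt f$ already gives $e\succ_l f\eller f\succ_r e$ without CD, since $tf\apt ef$ rewrites as $(tf)f\apt ef$ and cancels to $e\apt tf$). The claimed mechanism ``a dichotomy $\forall_t(e\apt tf\eller Q)$ with $Q$ independent of $t$'' does not match what actually comes out of the computation, but the direction is salvageable.

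The necessity direction, however, has a genuine gap, and it is exactly at the point you flag as the technical challenge. After specialising $t=a'ab'$ in $a\succ_l b$ you obtain $a\apt ef$ with $e=a'a$, $f=b'b$, and strong extensionality gives $a\apt e\eller e\apt f$. Neither branch leads to $a\kappa b$. In the branch $e\apt f$ you get $(a'a)\kappa(b'b)$, but co-compatibility runs in the wrong direction here: it yields $a'\kappa b'\eller a\kappa b$, and the disjunct $a'\kappa b'$ is a dead end (apartness of two arbitrarily chosen inverses carries no information about $a$ and $b$; every attempt to cotransit back lands you either at $a\kappa b$ or back at $(a'a)\kappa(b'b)$). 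In the branch $a\apt a'a$ the situation is worse: this apartness does not mention $b$ at all (in a group it holds for every non-identity $a$), so it cannot possibly imply $a\kappa b$; the promised ``reduction to the previous branch'' extracting an apartness of idempotents is not substantiated and, as far as I can see, cannot be carried out. The paper's proof avoids both problems by never separating the inverses of $a$ and $b$: from $a\succ_r b$ it uses cotransitivity of $\succ_r$ itself to get $(aa')\succ_r(ba')$ (the two error terms are refuted by specialising the universal quantifier inside $\succ_r$), then compares $c=ba'$ with the idempotent $cxc$ for $x\in V(c^2)$, applies idempotent separation to the pair of idempotents $aa'$ and $cxc$, and uses cotransitivity of $\kappa$ together with the co-Lallement lemma (via Proposition~\ref{idpprop}) to reach $(aa')\kappa(ba')$ --- at which point co-compatibility cancels the \emph{common} factor $a'$ and gives $a\kappa b$. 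You would need to restructure your argument along these lines: the key is to manufacture two idempotents that differ by a common one-sided factor of $a$ and $b$, not the pair $a'a$, $b'b$.
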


\begin{proof}
Assume that $\kappa$ is idempotent separating. Let $(a,b)\in{\hrel}$. This means that least one of the relations $a\succ_r b$, $b\succ_r a$, $a\succ_l b$ and $b\succ_l a$ holds.

Suppose that $a\succ_r b$, and let $a'\in V(a)$. Then $a\succ_r aa' \eller aa'\succ_r ba' \eller ba'\succ_r b$, by cotransitivity of $\succ_r$. But both $a\succ_r aa'$ and $ba'\succ_r b$ are impossible, so $aa'\succ_r ba'$. Set $c=ba'$, and let $x\in V(c^2)$. Then $aa'\succ_r cxc$ or $cxc\succ_r c$, the latter of which is impossible, and hence $aa'\succ_r cxc$. In particular, $aa'\apt cxc$, since $\succ_r$ is strongly irreflexible. As $\kappa$ is idempotent separating, and $aa',cxc\in E(S)$, it follows that $(aa')\kappa(cxc)$, and hence $(aa')\kappa c \eller c\kappa(cxc)$ by the cotransitivity of $\kappa$.

If $c\kappa(cxc)$ then Proposition~\ref{idpprop}\eqref{idpprop3}, applied to the to the factor semigroup $S/(\neg\kappa)$ with apartness $\kappa$, implies that $c\in \tild E(S/(\neg\kappa))$. 
In particular, since $aa'\in E(S)\subset E(S/(\neg\kappa))$, it follows that $(aa')\kappa c$.
Thus, in either case, we have $(aa')\kappa c = ba'$. By co-compatibility, this implies that $a\kappa b$.

By symmetry, each one of the remaining relations $b\succ_r a$, $a\succ_l b$ and $b\succ_l a$ implies $a\kappa b$, too. Thus $\hrel\subset\kappa$, concluding the proof of the ``only if'' part of the proposition.

For the ``if'' part, it suffices to prove that $\hrel$ is idempotent separating, i.e., that $e\apt f$ implies $e\hrel f$ for all $e,f\in E(S)$. 
Let $e,f\in E(S)$, and assume that $e\apt f$. Then, for all $x,y\in S^1$,
\[
\begin{array}{cccccccc}
  & e\apt f^2x &\eller& f^2x\apt fe &\eller& fe\apt ye^2 &\eller& ye^2\apt f \\
  \impl & e\apt fx &\eller& fx\apt e &\eller& f\apt ye &\eller& ye\apt f \\
  \equ &&& e\apt fx &\eller& f\apt ye
\end{array}
\]
that is, $\forall_{x,y\in S^1}(e\apt fx \eller f\apt ye)$. By the constant domains property \eqref{cdinS}, this implies
\[
(\forall_{x\in S^1}:e\apt fx) \eller (\forall_{y\in S^1}:f\apt ye) \,,
\]
that is, $e\succ_r f$ or $f\succ_l e$. In either case, $e\hrel f$. This proves that $\hrel$ is idempotent separating, and thus so is $\kappa$ whenever ${\hrel}\subset{\kappa}$. 
\end{proof}

\begin{rmk}
  It is natural to ask to which extent the constant domains assumption \eqref{cdinS} is fundamentally necessary, and how much of the theory in this section can be made to work without it. As the observant reader will have noticed, \eqref{cdinS} is used, directly or indirectly, in the proofs of almost all the results. Indeed, it seems to us that it would be difficult to develop any meaningful theory for the relations $\lrel$, $\rrel$, $\jrel$, $\drel$ and $\hrel$ in the general setting, at least along the lines of the approach taken in this paper.

The first problem encountered in the general situation would be to give suitable definitions. Take for example the the relation
${\lrel} = (\succ_l) \cup (\succ_l)^{-1} = \tild\,(\le_L)\cup \tild\,(\le_L)^{-1}$.
The equality of the two presentations of $\lrel$ depends on \eqref{cdinS}, and in the general case either one of them could be a candidate for the definition of $\lrel$, together with other possibilities such as, for example, $\tild\Lrel$.
In any case, it seems to us that, in order to be a meaningful analogue of Green's left equivalence $\Lrel = (\le_L)\cap (\le_L)^{-1}$, the relation $\lrel$ should at least satisfy the inclusions
\[
\tild\,(\le_L)\cup \tild\,(\le_L) ^{-1} \subset \lrel \subset \neg{\Lrel} .
\]

However, it turns out that under these assumptions, it is impossible to prove in general that $\lrel$ is a co-quasiorder.
As in Remark~\ref{notsermk}, consider again the multiplicative monoid $(\R,\cdot)$. Since $b\Lrel 1$ holds whenever $b\apt0$, the inclusion $\hrel\subset\neg\Lrel$ implies that $\lrel1\subset \neg(\apt0) = \{0\}$. Conversely, let $a,b\in \R$ be such that $a\le_L b$, that is, $a = sb$ for some $s\in\R$. Since $1\apt0$, cotransitivity implies that either $1\apt a$ or $sb\apt 0= s0$ holds. In the latter case, $b\apt 0$, and thus we have either $1\apt a$ or $0\apt b$. This proves that $(1,0)\apt(a,b)$ whenever $a\le_Lb$, that is, $(1,0)\in \tild\,(\le_L)$. 
Now the assumption $\tild\,(\le_L)\cup \tild\,(\le_L) ^{-1} \subset \lrel$ implies that $1\lrel 0$, and it follows that ${\lrel1} = \{0\}$.
On the other hand, if $\lrel$ is a co-quasiorder then ${\lrel1}\subset \R$ is strongly extensional, by Propositions~\ref{kappaisse} and \ref{serelations}. But strong extensionality of $\{0\}$ in $\R$ is equivalent to LPO, and thus cannot be proved in a general constructive framework. 
\end{rmk}

\section*{Acknowledgements}

ED was partially supported by JSPS Grant-in-Aid for Scientific Research (C) 18K03238, and MM by the Ministry of Education, Science and Technological Development of the Republic of Serbia, contract no.~451-03-9/2021-14/200109.
This work was initiated during a visit by ED to the Faculty of Mechanical Engineering at the University of Ni\v{s} in the winter 2015-16, supported by a scholarship from the Erasmus Mundus EUROWEB+ project. ED wishes to express his gratitude to the colleagues at the Faculty of Mechanical Engineering and the Centre of Applied Mathematics for their hospitaly during that stay.

\bibliographystyle{abbrv}
\bibliography{../litt}

\end{document}